\documentclass[11pt,letterpaper,reqno]{amsart}
\usepackage[foot]{amsaddr}
\usepackage[margin=1in]{geometry}
\usepackage{amsmath}
\usepackage{amssymb}
\usepackage{amsthm}
\usepackage{xcolor}
\usepackage{enumerate}
\usepackage{hyperref}

\allowdisplaybreaks

\def\eps{\varepsilon}
\def\E{\mathbb{E}}
\def\P{\mathbb{P}}
\def\R{\mathbb{R}}
\def\CC{\mathbb{C}}
\def\DD{\mathbb{D}}
\def\N{\mathbb{N}}
\def\OO{\mathbb{O}}
\def\d{\mathrm{d}}
\def\btheta{\boldsymbol{\theta}}
\def\bphi{\boldsymbol{\varphi}}
\def\bxi{\boldsymbol{\xi}}
\def\bLambda{\boldsymbol{\Lambda}}
\def\bPi{\boldsymbol{\Pi}}
\def\X{\mathbf{X}}
\def\W{\mathbf{W}}
\def\M{\mathbf{M}}
\def\H{\mathbf{H}}
\def\A{\mathbf{A}}
\def\B{\mathbf{B}}
\def\S{\mathbf{S}}

\def\Id{\mathrm{Id}}
\def\J{\mathbf{J}}
\def\O{\mathbf{O}}
\def\a{\mathbf{a}}
\def\b{\mathbf{b}}
\def\e{\mathbf{e}}
\def\g{\mathbf{g}}
\def\u{\mathbf{u}}

\def\x{\mathbf{x}}
\def\y{\mathbf{y}}
\def\z{\mathbf{z}}

\def\cE{\mathcal{E}}
\def\cG{\mathcal{G}}
\def\cH{\mathcal{H}}
\def\cR{\mathcal{R}}
\def\cF{\mathcal{F}}

\def\cN{\mathcal{N}}
\def\cO{\mathcal{O}}
\def\cP{\mathcal{P}}
\def\cS{\mathcal{S}}
\def\cT{\mathcal{T}}
\def\cZ{\mathcal{Z}}
\def\GP{\operatorname{GP}}
\def\GOE{\operatorname{GOE}}

\def\diag{\operatorname{diag}}
\def\sign{\operatorname{sign}}
\def\sp{\operatorname{span}}
\def\Haar{\operatorname{Haar}}

\def\Tr{\operatorname{Tr}}
\def\Gibbs{\mathrm{Gibbs}}
\def\Ising{\mathrm{Ising}}
\def\Law{\operatorname{Law}}
\def\Var{\operatorname{Var}}
\def\Cov{\operatorname{Cov}}
\def\op{\mathrm{op}}
\def\Lip{\mathrm{Lip}}

\def\1{\mathbf{1}}
\def\I{\mathrm{I}}
\def\II{\mathrm{II}}
\def\III{\mathrm{III}}

\def\id{\operatorname{id}}
\def\DKL{D_{\mathrm{KL}}}

\def\v{\mathbf{v}}

\def\C{\mathbf{C}}
\def\bR{\mathbf{R}}
\def\bE{\mathbf{E}}
\def\bPhi{\boldsymbol{\Phi}}
\def\bDelta{\boldsymbol{\Delta}}
\def\bSigma{\boldsymbol{\Sigma}}

\newtheorem{theorem}{Theorem}[section]
\newtheorem{lemma}[theorem]{Lemma}
\newtheorem{proposition}[theorem]{Proposition}
\newtheorem{corollary}[theorem]{Corollary}

\theoremstyle{definition}
\newtheorem{assumption}[theorem]{Assumption}
\newtheorem{definition}[theorem]{Definition}
\newtheorem{remark}[theorem]{Remark}

\numberwithin{equation}{section}
\begin{document}

\title[DMFT for the orthogonally-invariant SK model]{Dynamical mean-field limit and replica-symmetric free energy for the orthogonally-invariant SK model}

\author{Zhou Fan}
\author{Theodor Misiakiewicz}
\author{Leda Wang}
\author{Garrett G.\ Wen}
\address{Yale University, Department of Statistics and Data Science}
\email{zhou.fan@yale.edu}
\email{theodor.misiakiewicz@yale.edu} \email{leda.wang@yale.edu} \email{gang.wen@yale.edu}

\begin{abstract}
We study a class of diffusion processes on $\R^n$ interacting through a
symmetric matrix $\X \in \R^{n \times n}$. When eigenvectors of
$\X$ are random and uniformly distributed on the orthogonal group, we derive a
dynamical mean-field limit for the empirical law of sample paths,
extending a classical characterization due to Sompolinsky and Zippelius for
$\X \sim \GOE$. This limit takes the form of a generalized Langevin equation
with correlated Gaussian noise and memory, whose correlation and response
kernels relate to those of the original diffusion process through a system of
convolution equations depending on $\X$ via the free cumulants of its eigenvalue
distribution.

Specializing to the overdamped Langevin diffusion $\{\btheta^t\}_{t \geq 0}$
for a Gibbs measure
\[\mu(\btheta) \propto \exp\Big(\frac{1}{2}\btheta^\top \X\btheta\Big)
\prod_{i=1}^n \nu(\d\theta_i),\]
we analyze this mean-field limit under an assumption that the dynamics are rapidly
mixing. In this regime, the correlation/response kernels admit
time-translation-invariant approximants which satisfy a
fluctuation-dissipation relation. The generalized Langevin equation admits a
Markovian approximation with coupling to an auxiliary multivariate OU-process,
and converges in time to a replica-symmetric prediction for the
empirical law of coordinates under $\mu$. The correlation structure of
the auxiliary process may be understood through the infinitesimal generator of a
Markov semigroup for the lifted process of path histories
$\bxi^t=\{\btheta^s\}_{s:s \leq t}$.

Consequently, we show that the free energy of the Gibbs measure
converges to a replica-symmetric limit under an explicit
high-temperature condition, which for an Ising model is
\[\|\X\|_\op<1/2.\]
By recent dynamical universality results, this implies that the same characterization of the free energy holds in fully deterministic models without random disorder, as long as
$\X$ satisfies a set of deterministic delocalization conditions.
\end{abstract}

\maketitle

\tableofcontents

\section{Introduction}

Let $\X \in \R^{n \times n}$ be a (deterministic or random) symmetric matrix. 
Consider a diffusion process or gradient flow in $\R^n$ interacting through $\X$,
\begin{equation}\label{eq:dynamicsintro}
\d\btheta^t=[f(\btheta^t)+\X\btheta^t]\d t+\sqrt{2\gamma}\,\d\b^t
\end{equation}
where $f$ is a scalar function applied coordinate-wise. Under suitable mean-field
assumptions for $\X$ to be specified, the first goal of our work is to obtain
an asymptotic characterization of the empirical distribution of coordinates of
\eqref{eq:dynamicsintro} in the limit $n \to \infty$. Applying this
characterization to study Langevin dynamics for the Gibbs measure
\begin{equation}\label{eq:gibbsmeasureintro}
\mu(\d\btheta)=\frac{1}{\cZ}\exp\Big(\frac{1}{2}\btheta^\top \X\btheta\Big)
\prod_{i=1}^n \nu(\d\theta_i),
\end{equation}
the second goal of our work is to obtain an asymptotic characterization of the
free energy and replica overlaps of \eqref{eq:gibbsmeasureintro}
in a regime where these dynamics are rapidly mixing.

For models where $\X$ has
independent Gaussian entries up to transposition symmetry, such
questions are richly studied in the physics and mathematics literatures on spin
glasses and disordered systems. A dynamical mean-field limit for a prototypical
example of \eqref{eq:dynamicsintro} was first derived by Sompolinsky and
Zippelius in \cite{sompolinsky1982relaxational},
and is given by the law of a scalar generalized
Langevin equation
\begin{equation}\label{eq:sompolinskydmft}
\d\theta^t=\left[f(\theta^t)+\int_0^t R_\theta(t,s)\theta^s \d s+g^t\right]\d t
+\sqrt{2\gamma}\,\d b^t,
\qquad \{g^t\} \sim \GP(0,C_\theta),
\end{equation}
with correlated Gaussian noise $\{g^t\}$ and self-consistently defined
correlation and response kernels $(C_\theta,R_\theta)$.
Convergence in empirical law of the sample paths of \eqref{eq:dynamicsintro} to
\eqref{eq:sompolinskydmft}, and an associated annealed large deviations
principle, was first proved mathematically by Ben Arous and Guionnet
in \cite{arous1997symmetric,guionnet1997averaged}. The Ising case of the Gibbs measure \eqref{eq:gibbsmeasureintro}
with $\nu(\d\theta)$ supported on $\{\pm 1\}$ corresponds to the classical
Sherrington-Kirkpatrick (S-K) model \cite{sherrington1975solvable}, for which the form of the asymptotic
free energy was famously conjectured by Parisi in \cite{parisi1979infinite}
and proved by
Guerra and Talagrand \cite{guerra2003broken,talagrand2006parisi}; we refer to
\cite{talagrand2010mean,talagrand2011mean,panchenko2013sherrington} for textbook treatments
of this model and its rich history. Universality over classes of random 
matrices $\X$ having independent non-Gaussian entries (up to symmetry)
has been shown for the free energy of the S-K model in \cite{carmona2006universality} and for
dynamics similar to \eqref{eq:dynamicsintro} in
\cite{dembo2021diffusions,dembo2021universality}.

Arguably, this universality class of disorder matrices with independent entries remains quite limited, raising a question of whether analogous results hold under more general mean-field structure. In this work, we aim to extend some of these developments to a class of disorder
matrices $\X \in \R^{n \times n}$ that are orthogonally-invariant in law ---
i.e.\ whose matrix of eigenvectors is uniformly distributed on the orthogonal
group --- and to an associated universality class that encompasses examples of
deterministic matrices $\X$. Our contributions are threefold:
\begin{enumerate}[1.]
\item For the dynamics \eqref{eq:dynamicsintro} with 
orthogonally-invariant $\X \in \R^{n \times n}$ having a compactly supported
asymptotic spectral law $\Lambda$, we develop an extension of the
mean-field limit \eqref{eq:sompolinskydmft}.

This extension has the same structural form as \eqref{eq:sompolinskydmft},
\begin{equation}\label{eq:dmftintro}
\d\theta^t=\left[f(\theta^t)+\int_0^t R_g(t,s)\theta^s \d s+g^t\right]\d t
+\sqrt{2\gamma}\,\d b^t,
\qquad \{g^t\} \sim \GP(0,C_g),
\end{equation}
with correlation and response kernels $(C_g,R_g)$ now defined from the kernels $(C_\theta,R_\theta)$ which characterize the correlation and response of $\{\btheta^t\}_{t \geq 0}$ via convolution equations and the free cumulants of
$\Lambda$ (cf.\ Definition \ref{def:DMFT}). We establish almost-sure
convergence of the empirical law of sample paths of \eqref{eq:dynamicsintro} to this limit, when $f$  has bounded derivatives up to $3^\text{rd}$ order.  \\

\item We carry out an analysis of this mean-field limit \eqref{eq:dmftintro}
for the overdamped Langevin dynamics of the Gibbs measure
\eqref{eq:gibbsmeasureintro} when $\nu(\d\theta)=e^{-U(\theta)}\d\theta$
has a smooth potential,
under an assumption that these dynamics converge at an
exponential dimension-free rate from both an equilibrium and an independent
random initialization (cf.\ Assumption \ref{assump:convergence}).

In this dynamical regime, we prove that the preceding kernels $(C_g,R_g)$ have
time-translation-invariant limits $(c_g,r_g)$ as $t \to \infty$,
which satisfy a fluctuation-dissipation relation
\begin{equation}\label{eq:rgfdtintro}
r_g(t)={-}c_g'(t).
\end{equation}
In contrast to the mean-field limit of Sompolinsky-Zippelius in
\eqref{eq:sompolinskydmft}, the correlation kernel $c_g(\cdot)$ may not have a
mixture-of-exponentials form. We instead establish a sequence of approximations
$c_g(t) \approx u_m^\top e^{-tA_m}u_m$ with non-diagonal, asymmetric matrices
$A_m \in \R^{m \times m}$, leading to a sequence of Markovian
approximations of \eqref{eq:dmftintro},
\begin{equation}\label{eq:markovapproxintro}
\begin{aligned}
\d\theta^t&=[f(\theta^t)+u_m^\top x^t+z]\d t+\sqrt{2}\,\d b^t\\
\d x^t&={-}A_m[x^t-u_m\theta^t]\d t+\Sigma_m\,\d b_x^t.
\end{aligned}
\end{equation} 
Via these approximations, we show that
$\{\theta^t\}_{t \geq 0}$ in \eqref{eq:dmftintro} converges to an equilibrium
law that coincides with a replica-symmetric prediction for the empirical distribution of coordinates under the Gibbs measure
\eqref{eq:gibbsmeasureintro}.\\

\item A sufficient condition ensuring Assumption \ref{assump:convergence}
is a log-Sobolev inequality for the Gibbs measure, which
may be verified under an explicit high-temperature condition
\cite{bauerschmidt2019very}. From this, we prove replica-symmetric
limits for the overlaps and free energy of \eqref{eq:gibbsmeasureintro}.

Concretely, for the Ising model with $\nu(\d\theta) \propto e^h
\delta_{\theta=1}+e^{-h}\delta_{\theta=-1}$ where $h \in \R$ is an external field,
we establish this result under the high-temperature assumption  
\begin{equation}\label{eq:hightempintro}
\|\X\|_\op<1/2.
\end{equation}
(In the setting of the classical S-K model where $\X=\beta\J$ and $\J \sim \GOE(n)$, this corresponds to the assumption $\beta<1/4$, which is a subset of the full replica-symmetric regime \cite{lopatto2026replica}.)
Leveraging the dynamical universality results of
\cite{dudeja2023universality,wang2024universality}, we deduce
that the same conclusion holds when $h=0$ for a general class of
purely deterministic matrices $\X \in \R^{n \times n}$ satisfying delocalization
conditions (cf.\ Assumption \ref{assump:X-universal}), and for arbitrary $h \in
\R$ when $\X$ is a ``semi-random'' matrix conjugated by a diagonal matrix of
random $\{\pm 1\}$ signs.
\end{enumerate}
These results are presented in Section \ref{sec:results}.

\subsection{Existing literature} \emph{Dynamics.}
The form of the mean-field limit \eqref{eq:dmftintro} for the
diffusion equation \eqref{eq:dynamicsintro} with orthogonally-invariant $\X$
may be anticipated from the analysis of Opper,
\c{C}akmak, and Winther \cite{opper2016theory}, who computed the dynamical mean-field
equations for an analogous class of discrete-time dynamics.
The mean-field limit \eqref{eq:dmftintro} and convolutional structure of its
correlation and response kernels may be understood as a passage of this
result to continuous time.

A particular class of discrete-time dynamics --- analogous
to Approximate Message Passing (AMP) algorithms \cite{kabashima2003cdma,donoho2009message} and TAP iterations
\cite{bolthausen2014iterative} in models with Gaussian disorder $\X$ --- was highlighted in
\cite{opper2016theory} as having purely Gaussian limiting dynamics. For this AMP class,
convergence to the mean-field limit was proved mathematically in
\cite{fan2022approximate},
extending an approach of iterative conditioning of Haar-orthogonal matrices
previously developed in \cite{ma2017orthogonal,takeuchi2019rigorous,rangan2019vector} to analyze a simpler family of divergence-free
``vector-AMP'' algorithms. Alternative proofs and extensions of
the result of \cite{fan2022approximate} have since been obtained in \cite{liu2024unifying,gorini2026universality}, and universality
over semi-random and deterministic disorder matrices $\X$ has been shown
in increasingly general contexts in
\cite{dudeja2023universality,wang2024universality,gorini2026universality}.

It was shown by Celentano, Cheng, and Montanari \cite{celentano2021high}, in an analogous
statistical model with i.i.d.\ Gaussian disorder, that the
mean-field limit of  general discrete-time iterative algorithms may be
deduced from that of AMP, and also that the mean-field limit of
continuous-time gradient flow dynamics may be proved using an approach different
from \cite{arous1997symmetric,guionnet1997averaged}. This approach relies on a reduction to AMP dynamics via
time-discretization, and a passage to the continuous-time limit through
verifying contractivity of the dynamical
fixed-point equations in a suitably defined Banach metric.
The approach has been extended to derive mean-field limits for
broad families of discrete-time iterative algorithms in
\cite{gerbelot2024rigorous,han2025entrywise,celentano2025state,dandi2025sequential}
and for continuous-time Langevin diffusions and stochastic gradient dynamics 
in \cite{fan2025dynamicalI,fan2026high,nishiyama2026high}. We apply a similar proof strategy in our work, to pass the
characterization of AMP in \cite{fan2022approximate} to continuous time.

Mathematical analysis of aging in the dynamical mean-field limit of a
spherical 2-spin model was carried out in \cite{arous2001aging},
of convergence in time of the Cugliandolo-Kurchan/Crisanti-Horner-Sommers
equations of the spherical p-spin model to an FDT equilibrium in \cite{dembo2007limiting},
and of convergence in time of these equations under disorder-dependent initial
conditions in \cite{dembo2019dynamics,dembo2025dynamics}. An analysis of the
dynamical mean-field limit for a non-spherical model with Gaussian disorder
was carried out in \cite{fan2025dynamicalII}
under a convergence assumption similar to our work; we build upon several ideas
of \cite{fan2025dynamicalII}, discussed in Section \ref{sec:proof}.\\

\emph{Free energy.} The Ising case of the model \eqref{eq:gibbsmeasureintro}
was introduced by Marinari, Parisi, and Ritort \cite{marinari1994replica} as a model which may
exhibit similar mean-field phenomena as models with deterministic disorder.
Replica-symmetric and 1-RSB predictions for the asymptotic
free energy of this model were computed in
\cite{marinari1994replica,cherrier2003role}.
TAP equations for the magnetization were derived in \cite{parisi1995mean} using a
high-temperature expansion and in \cite{opper2001adaptive} using an alternative
cavity method. \cite{maillard2019high} extended the former analysis to a broad class of
statistical models with orthogonally-invariant random disorder, and discussed its
connection to dynamical algorithms.

Mathematical proofs of such predictions remain scarce in the
literature, even in high temperature. For a
high-temperature regime of the Ising model without external field, where
quenched and annealed free energies coincide, a replica-symmetric limit for the free
energy was proved in \cite{bhattacharya2016high} using the second-moment method. This was extended to an Ising
model with external field $h \in \R$ in \cite{fan2024replica} via a conditional
second-moment method analogous to that of \cite{bolthausen2018morita} for the S-K model and
\cite{ding2019capacity} for the Ising perceptron, conditioning on the filtration of a
TAP iteration for computing the magnetization. This result of
\cite{fan2024replica}
showed, for $\X=\beta\J$ where the asymptotic spectral law of $\J$ converges to
$\bar \Lambda$, that the free energy converges to its replica-symmetric prediction
under a high-temperature condition of the form
$\beta \in (0,\beta_0)$ for some
\[\beta_0 \equiv \beta_0(\Law(\bar\Lambda),h).\]
As the analysis of the second-moment variational problem in \cite{fan2024replica} was
perturbative around $\beta=0$, it is difficult to glean a more explicit bound for
$\beta_0$, and we believe that this bound would be substantially smaller than
our explicit guarantee in \eqref{eq:hightempintro}. For an analogous statistical
model where
the disorder matrix $\X$ is alternatively representable using Gaussian rather than
Haar-orthogonal randomness, the asymptotic free energy was proved in
\cite{barbier2018mutual} via a Gaussian interpolation approach. A
replica-symmetric limit for the ground state energy of a Gibbs measure with
orthogonally-invariant $\X$ and convex
potential was also proved in \cite{gerbelot2022asymptotic}
via analysis of a vector-AMP algorithm for convex optimization.

One perspective on the challenge of using a dynamical approach to prove
properties of the Gibbs measure is in certifying the optimality of the dynamics.
For example, in the
context of \eqref{eq:gibbsmeasureintro} and related statistical models,
AMP algorithms/TAP iterations that conjecturally converge to the magnetization
are known \cite{opper2016theory,rangan2019vector,ccakmak2019memory}, but this convergence has thus far only been
certified in high-temperature regimes analogous to that of
\cite{fan2024replica}, relying on a-priori understanding of the asymptotic free
energy and structure of the Gibbs measure \cite{fan2022tap,li2023random}.
Likewise, while mean-field characterizations
of the ground state energy in convex landscapes may be established
by certifying optimality of optimization algorithms through a
first-order condition \cite{bayati2011lasso,donoho2016high,gerbelot2022asymptotic}, certifying global optimality of
such algorithms in non-convex landscapes remains challenging. In this work, we
take an approach of analyzing the standard overdamped Langevin dynamics, which
has a more complex dynamical mean-field limit than AMP-type algorithms, but
whose convergence to the Gibbs measure may be certified through
entropy-decay arguments external to the mean-field theory.

As a benefit of our dynamical approach to analyzing the free energy, our results inherit the aforementioned universality over semi-random and deterministic matrices $\X$ \cite{dudeja2023universality,wang2024universality,gorini2026universality}, which has been shown for algorithm dynamics but not yet for static properties of the Gibbs measure. Our results confirm that spin-glass/disordered-systems characterizations of the Gibbs measure can hold for systems without random disorder, as originally anticipated by \cite{marinari1994replica}.

\subsection{Proof ideas}\label{sec:proof}
We describe briefly the main ideas of our analyses.

As discussed in the preceding section, \eqref{eq:dmftintro} is obtained by
passing a characterization of a discrete-time AMP algorithm in
\cite{opper2016theory,fan2022approximate} to
continuous time. An important distinction between these discrete-time
and continuous-time characterizations is that the correlation/response kernels
of the former are defined iteratively/inductively over time. The first contribution of our work is to formalize a sense in which the continuous-time kernels
$C_\theta,R_\theta,C_g,R_g$ may instead be understood as the fixed points of a
contractive map.

An obstacle to this formalization is that the convolutional
equations mapping
\begin{equation}\label{eq:convolution}
(C_\theta,R_\theta) \mapsto (C_g,R_g)
\end{equation}
do not, in general, ensure that $C_g$ remains a positive-semidefinite (p.s.d.)
kernel, which is needed to define the process \eqref{eq:dmftintro}; thus, it
is a bit unclear on what domain such a contractive map should be defined.
A related issue is that even when $C_g$ is p.s.d., it seems challenging to
analyze contractivity of \eqref{eq:convolution} under a Wasserstein-coupling
metric for $C_\theta,C_g$ as used in the arguments of
\cite{celentano2021high,fan2025dynamicalI,fan2026high}. We resolve these issues by
formalizing the fixed-point interpretation of the dynamical mean-field limit
via a sequence of mappings
\[(C_\theta,R_\theta) \mapsto (C_g,R_g) \mapsto \cP(C_g,R_g) \mapsto (C_\theta,R_\theta),\]
where $\cP$ projects $C_g$ under an alternative weighted $L^2$ metric onto a subset of the p.s.d.\ cone. We then
develop different arguments to
establish contractivity in this metric, and leverage results of \cite{fan2022approximate} to show that at the fixed point of these mappings, $C_g$ is in fact p.s.d., and thus is also a fixed point without projection by $\cP$.

Turning to the analysis of the overdamped
Langevin equation, from the
assumed convergence condition in Assumption \ref{assump:convergence} and an
identification of $(C_\theta,R_\theta)$ as the limits of
coordinate-wise correlation and response functions of
$\{\btheta^t\}_{t \geq 0}$, one may deduce the existence of
time-translation-invariant limits $c_\theta,r_\theta$ for these kernels at
large times, together with their fluctuation-dissipation relation
\begin{equation}\label{eq:crthetaFDTintro}
r_\theta(t)={-}c_\theta'(t)
\end{equation}
(Lemma \ref{lemma:tti}). 
These arguments are similar to \cite{fan2025dynamicalII} which analyzed a statistical model with Gaussian disorder.
Interestingly, we are able to verify that \eqref{eq:crthetaFDTintro}
implies also the fluctuation-dissipation relation
$r_g(t)={-}c_g'(t)$ corresponding to the kernels
$C_g,R_g$ appearing in the generalized Langevin
equation \eqref{eq:dmftintro}. This verification is a short
calculation in the Fourier domain (Lemma \ref{lemma:ttig}).

We show that $c_g(\cdot)$ may be interpreted as
the large-$n$ limit of the correlation function of coordinates of
\[\g^t=\X \btheta^t - \int_{-\infty}^t r_g(t-s)\btheta^s\d s\]
at equilibrium, which are observables of the full path history
$\bxi^t:=\{\btheta^s\}_{s:s \leq t}$.
As this ``lifted'' process $\{\bxi^t\}_{t \in \R}$ remains Markovian over this
space of path histories, writing $\nu$ for its stationary measure
and $\{P_t\}_{t \geq 0}$ for its
$L^2(\nu)$-semigroup, this implies a representation
\[c_g(t)=\lim_{n \to \infty} \frac{1}{n}\,\E_\nu[f^\top P_t f]\]
where $f$ is the function for which $\g^t=f(\bxi^t)$.
An important distinction with the setting of Gaussian disorder is that here
$\{P_t\}_{t \geq 0}$ is irreversible, and its infinitesimal generator is not
self-adjoint on $L^2(\nu)$. Thus, we do not obtain from this representation a
mixture-of-exponentials form for $c_g(\cdot)$, and in general, we do not expect
such a form to hold true. Instead, we establish an approximation
\[c_g(t) \approx u_m^\top e^{-tA_m}u_m\]
where ${-}A_m \in \R^{m \times m}$ is a (non-diagonal, asymmetric)
matrix that constitutes a finite-dimensional 
approximation for the generator of $\{P_t\}_{t \geq 0}$.
Together with the fluctuation-dissipation
relation \eqref{eq:rgfdtintro}, this leads to the previously stated Markov
approximation \eqref{eq:markovapproxintro} for \eqref{eq:dmftintro}.

Finally, we show convergence of \eqref{eq:dmftintro} to an equilibrium law
characterized by replica-symmetric fixed point equations, by analyzing 
convergence of the Markovian system \eqref{eq:markovapproxintro} together with
its approximation error for \eqref{eq:dmftintro}. The primary technical
challenge is that the generator ${-}A$ for the above semigroup
$\{P_t\}_{t \geq 0}$ has non-trivial (in fact, dense)
null space in $L^2(\nu)$, and thus the spectral gap of its approximant ${-}A_m$ must vanish with the
approximation error. Thus, it seems challenging to prove convergence to
equilibrium of the Markovian
system \eqref{eq:markovapproxintro} over a fixed time horizon that is
independent of this error and the dimension $m$; on the other hand,
the accumulated approximation error over a $m$-dependent time horizon may not be
small under a naive coupling and Gr\"onwall argument.
We circumvent this issue by using a careful
construction of ${-}A_m$ that instead approximates a slightly damped version of
${-}A$, and that allows for a stable coupling of
\eqref{eq:dmftintro} and \eqref{eq:markovapproxintro} over an arbitrarily long
time horizon on which \eqref{eq:markovapproxintro} is guaranteed to converge
(Corollary \ref{cor:semigroupapprox}, Lemma \ref{lemma:thetacoupling}). 
This implies the replica-symmetric characterization of the Gibbs measure
\eqref{eq:gibbsmeasureintro} under Assumption \ref{assump:convergence}, for
laws $\nu$ having a smooth potential. The concrete implication for the Ising
model is then deduced by verifying Assumption \ref{assump:convergence} using the
log-Sobolev inequality of \cite{bauerschmidt2019very} for a
sequence of double-well potentials that converge to the Ising law.

\section{Main results}\label{sec:results}

\subsection{Dynamical mean-field limit}

Let $\{\btheta^t\}_{t \geq 0}$ be a diffusion process on $\R^n$ taking the form
\begin{equation}\label{eq:dynamics}
\d\btheta^t=[f(\btheta^t)+\X\btheta^t]\d t+\sqrt{2\gamma}\,\d \b^t
\end{equation}
where $\X \in \R^{n \times n}$ is a symmetric matrix, $f:\R \to \R$ is a
Lipschitz-continuous function applied coordinate-wise, and $\{\b^t\}_{t \geq 0}$
is a standard Brownian motion on $\R^n$. The case $\gamma=0$ corresponds to gradient flow.  

Our first main result establishes a deterministic limit, as $n \to \infty$,
of the empirical distribution of coordinate sample paths
\[\{(\theta_i^t)_{t \in [0,T]}\}_{i=1}^n\]
for any fixed time horizon $T>0$ independent of $n$. For expositional clarity, let us first state
this result for the following class of random disorder matrices $\X$ that 
are orthogonally invariant in law; we discuss universality over the disorder in
Section \ref{subsec:universality}.

\begin{assumption}\label{assump:X}
$\X=\O\bLambda\O^\top$, where $\O \sim \Haar(\OO(n))$ is a Haar-uniform
random matrix on the orthogonal group, and
$\bLambda=\diag(\lambda_1,\ldots,\lambda_n)$ is a (deterministic or random)
diagonal matrix independent of $\O$. There exists a constant $\Lambda_{\max}>0$
such that almost surely for all large $n$,
\[\|\X\|_\op \leq \Lambda_{\max}.\]
Furthermore, there exists a random variable $\Lambda$ (supported on
$[-\Lambda_{\max},\Lambda_{\max}]$) with $\E[\Lambda]=0$ and $\Var[\Lambda]>0$
such that almost surely as $n \to \infty$,
\[\frac{1}{n}\sum_{i=1}^n \delta_{\lambda_i} \to \Law(\Lambda)
\text{ weakly.}\]
Throughout, we denote by
$\{\kappa_p\}_{p \geq 2}$ the free cumulants of this random variable $\Lambda$.
\end{assumption}

We impose, in addition, the following assumptions on the
dynamics \eqref{eq:dynamics}.

\begin{assumption}\label{assump:dynamics}
\begin{enumerate}[(a)]
\item $f:\R \to \R$ is fixed for all $n$, and three-times continuously-differentiable
with uniformly bounded first, second, and third derivatives.  
\item $\gamma \geq 0$ is a fixed scalar parameter, and
the Brownian motion $\{\b^t\}_{t \geq 0}$ is independent of $\X$.
\item The initial state
$\btheta^0=(\theta_1^0,\ldots,\theta_n^0)$ is independent of $\X$ and
$\{\b^t\}_{t \geq 0}$. There exists a random variable $\theta^0$ with
$\Var[\theta^0]>0$ and finite moments of all orders,
such that for each fixed $p \geq 1$, almost surely as $n \to \infty$,
\[\frac{1}{n}\sum_{i=1}^n \delta_{\theta_i^0} \to \Law(\theta^0) \text{ weakly
and in Wasserstein-$p$.}\]
\end{enumerate}
\end{assumption}

Under these assumptions, we will show that the limit as $n \to \infty$ of
the dynamics \eqref{eq:dynamics} admits the following mean-field
characterization:

\begin{definition}\label{def:DMFT}
Let $\theta^0$, $\{b^t\}_{t \geq 0}$, and $\{g^t\}_{t \geq 0}$
be mutually independent scalar variables/processes on a filtered
probability space $(\Omega,\cF,\{\cF_t\}_{t \geq 0},\P)$, where
$\Law(\theta^0)$ is given by Assumption \ref{assump:dynamics}(c),
$\{b^t\}_{t \geq 0}$ is a standard $\cF_t$-adapted Brownian motion, and
\[\{g^t\}_{t \geq 0} \sim \GP(0,C_g)\]
is a continuous, mean-zero, $\cF_t$-adapted Gaussian process with a symmetric
positive-semidefinite covariance kernel $C_g:[0,\infty) \times [0,\infty)
\to \R$. Let $\{\theta^t\}_{t \geq 0}$ be the solution of the SDE
\begin{equation}\label{eq:dmft-theta}
\d\theta^t=\left[f(\theta^t)+\int_0^t R_g(t,s)\theta^s ds+g^t\right]
\d t+\sqrt{2 \gamma}\,\d b^t, \qquad \theta^t|_{t=0}=\theta^0,
\end{equation}
where $R_g:[0,\infty) \times [0,\infty) \to \R$
is a response kernel satisfying $R_g(t,s)=0$ for $s>t$. Let
$\{\frac{\d\theta^t}{\d g^s}\}_{t \geq s \geq 0}$ be the solution of the
response equation
\begin{equation}\label{eq:dmft-response}
\frac{\d}{\d t} \frac{\d\theta^t}{\d g^s}=\left[f'(\theta^t)\frac{\d\theta^t}{\d g^s}
+\int_s^t R_g(t,r)\frac{\d \theta^r}{\d g^s}\d r\right],
\qquad \frac{\d\theta^t}{\d g^s}\bigg|_{t=s}=1.
\end{equation}
These kernels $C_g,R_g$, in conjunction with a pair of auxiliary kernels
$C_\theta,R_\theta:[0,\infty) \times [0,\infty) \to \R$,
are self-consistently defined from the laws of the above processes by the
fixed point conditions
\begin{align}
C_\theta(t,s)&=\E[\theta^t \theta^s]\label{eq:dmft-Ctheta}\\
R_\theta(t,s)&=\E\left[\frac{\d\theta^t}{\d g^s}\right]
\text{ for } s \in [0,t],
\quad R_\theta(t,s)=0 \text{ for } s>t,
\label{eq:dmft-Rtheta}\\
C_g(t,s)&=\sum_{p,q \geq 0} \kappa_{p+q+2}\,
R_\theta^{\ast p} * C_\theta * \bar R_\theta^{\ast q}(t,s),
\quad
\text{ where } \bar R_\theta(t,s)=R_\theta(s,t) \text{ for all } s,t \geq 0,\label{eq:dmft-Cg}\\
R_g(t,s)&=\sum_{p \geq 1} \kappa_{p+1}\,R_\theta^{\ast p}(t,s).\label{eq:dmft-Rg}
\end{align}
Here, $A*B$ denotes the standard convolution for kernels defined over
$[0,\infty)$,
\[[A \ast B](t,s)=\int_0^\infty A(t,r)B(r,s)\d r,
\quad A^{\ast p}=\underbrace{A \ast \ldots \ast A}_{p \text{
times}}, \quad A^{\ast 0}*B=B*A^{\ast 0}=B.\]
\end{definition}

\begin{theorem}\label{thm:dmft-approx}
Suppose Assumptions \ref{assump:X} and \ref{assump:dynamics} hold. Then:
\begin{enumerate}[(a)]
\item There exist domains $\cS_\theta^+$ for $(C_\theta,R_\theta)$ and 
$\cS_g^+$ for $(C_g,R_g)$ for which the kernels and processes of
Definition \ref{def:DMFT} are well-defined and unique.
That is, (\ref{eq:dmft-theta}--\ref{eq:dmft-response}) have unique
$\cF_t$-adapted solutions for any $(C_g,R_g) \in \cS_g^+$,
the series (\ref{eq:dmft-Cg}--\ref{eq:dmft-Rg}) are absolutely convergent for
any $(C_\theta,R_\theta) \in \cS_\theta^+$, and there exists a
unique fixed point $(C_\theta,R_\theta,C_g,R_g) \in \cS_\theta^+
\times \cS_g^+$ satisfying (\ref{eq:dmft-Ctheta}--\ref{eq:dmft-Rg}).
\item Fix any $T>0$ not depending on $n$. Define
\[\g^t=\X\btheta^t-\int_0^t R_g(t,s)\btheta^s\d s.\]
Let $\frac{1}{n}\sum_{i=1}^n \delta_{\{(\theta_i^t,g_i^t,b_i^t)\}_{t \in
[0,T]}}$ be the empirical measure of sample paths
$(\theta_i^t,g_i^t,b_i^t)_{t \in [0,T]}$ and let
$\Law(\{(\theta^t,g^t,b^t)\}_{t \in [0,T]})$ be the joint law of these 
processes in Definition \ref{def:DMFT}, as probability measures on
$C([0,T],\R^3)$. Then almost surely
as $n \to \infty$,
\[\frac{1}{n}\sum_{i=1}^n \delta_{\{(\theta_i^t,g_i^t,b_i^t)\}_{t \in [0,T]}}
\to \Law(\{(\theta^t,g^t,b^t)\}_{t \in [0,T]}) \text{ weakly and in
Wasserstein-2.}\]
\item Write $\langle \cdot \rangle$ for the expectation over $\{\b^t\}_{t \geq
0}$, conditional on $\X$ and $\btheta^0$. Then for any fixed $s,t \geq 0$
with $s \leq t$, almost surely as $n \to \infty$,
\[\frac{1}{n}\sum_{i=1}^n \langle \theta_i^t\theta_i^s \rangle
\to C_\theta(t,s), \quad
\frac{1}{n}\sum_{i=1}^n \langle g_i^t g_i^s \rangle
\to C_g(t,s), \quad
\frac{1}{n}\sum_{i=1}^n \langle \theta_i^tb_i^s
\rangle \to \sqrt{2\gamma}\int_0^s R_\theta(t,\tau) \d \tau.\]
\end{enumerate}
\end{theorem}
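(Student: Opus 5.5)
The plan follows the strategy outlined in Section \ref{sec:proof}. First, realize Definition \ref{def:DMFT} as the fixed point of a composite map
\[(C_\theta,R_\theta)\mapsto (C_g,R_g)\mapsto \cP(C_g,R_g)\mapsto (C_\theta,R_\theta),\]
and prove this map is contractive. Second, identify the fixed point with the $n\to\infty$ limit of \eqref{eq:dynamics} by time-discretizing the dynamics into an orthogonally-invariant AMP iteration, for which \cite{fan2022approximate} supplies the state evolution.

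\textbf{Domains.} I would take $\cS_\theta^+$ to be the set of pairs with $C_\theta$ p.s.d. and $|C_\theta(t,s)|\le \Phi(t)$, $|R_\theta(t,s)|\le \Phi_R(t-s)$, for growth envelopes such as $\Phi(t)=C_0e^{\alpha t}$ and $\Phi_R(t-s)=C_1e^{\alpha(t-s)}$.

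\textbf{Well-posedness of the series.} The Volterra structure of $R_\theta$ gives $|R_\theta^{*p}(t,s)|\le C_1^p e^{\alpha(t-s)}(t-s)^{p-1}/(p-1)!$. Since $\Lambda$ is compactly supported, the free cumulants obey $|\kappa_p|\le (C\Lambda_{\max})^p$. Together these make \eqref{eq:dmft-Cg}--\eqref{eq:dmft-Rg} absolutely convergent on every $[0,T]$.

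\textbf{The map $(C_g,R_g)\mapsto(C_\theta,R_\theta)$.} For $C_g$ p.s.d. and $R_g$ Volterra-bounded, $f$ is Lipschitz and the memory term is a bounded linear Volterra operator. Hence \eqref{eq:dmft-theta} and \eqref{eq:dmft-response} have unique adapted solutions by Picard iteration, and they satisfy moment bounds of the same exponential form.

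\textbf{The projection $\cP$.} Because $C_g$ produced by the series need not be p.s.d., insert $\cP$, the projection onto the p.s.d. cone intersected with the envelope set. This projection is taken in a weighted $L^2$ norm $\|K\|^2=\int\!\!\int e^{-2\lambda(t+s)}K(t,s)^2\,\d s\,\d t$ and is $1$-Lipschitz there.

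\textbf{Contraction.} Use the analogous weighted sup/$L^2$ norm for $R$. Couple two $\theta$-processes synchronously, with the same $b$ and $\theta^0$, and with Gaussian noises $g_1,g_2$ coupled so that $\E|g_1^t-g_2^t|^2$ is controlled by $\|C_{g,1}-C_{g,2}\|$. Making this coupling work for the weighted $L^2$ distance of kernels, rather than a Wasserstein distance, is delicate. I would try the square-root factorization $g=\sqrt{C_g}\,W$ on $L^2([0,T],e^{-2\lambda t}\d t)$ and use the inequality $\|\sqrt{A}-\sqrt{B}\|_{HS}^2\le \|A-B\|_{\mathrm{tr}}$, replacing the trace norm by a smoothed version if needed.

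Grönwall's inequality then bounds the $\theta$-differences. Every step contributes Volterra integrals, which the weight $e^{-\lambda t}$ makes small, so for $\lambda$ large the composite map is a contraction. The Banach fixed point theorem gives existence and uniqueness in $\cS_\theta^+\times\cS_g^+$, except for one point: the fixed point is a priori a fixed point of the projected map, and $\cP$ still has to be removed.

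\textbf{Discretization and the AMP limit.} Fix a step size $\eta$ and write the Euler scheme $\btheta^{k+1}=\btheta^k+\eta[f(\btheta^k)+\X\btheta^k]+\sqrt{2\gamma}(\b^{(k+1)\eta}-\b^{k\eta})$. Recast it as an AMP iteration in the sense of \cite{opper2016theory,fan2022approximate}. The multiplications by $\X$ are corrected by the memory term built from discrete Onsager coefficients, which are the discrete analogues of $\kappa$-weighted powers of $R_\theta$. The main result of \cite{fan2022approximate} then gives almost-sure Wasserstein-2 convergence of the empirical law to a discrete DMFT.

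The discrete covariance arises as a genuine limit covariance, so it is p.s.d. Passing $\eta\to0$ via the contraction estimates, which hold uniformly in $\eta$, the discrete kernels converge to the continuous fixed point. The limiting $C_g$ is therefore p.s.d., $\cP$ acts trivially at the fixed point, and part (a) follows.

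\textbf{Parts (b) and (c).} For (b), bound the discretization error: $\frac1n\sum_i\sup_{t\le T}|\theta_i^t-\theta_{i}^{\eta,\lfloor t/\eta\rfloor}|^2\lesssim \eta$ almost surely for large $n$, using $\|\X\|_\op\le\Lambda_{\max}$ and Lipschitz $f$. Compare $\g^t$ with its AMP counterpart in the same way, then let $n\to\infty$ and afterwards $\eta\to0$. Tightness in $C([0,T],\R^3)$ comes from Kolmogorov-type moment bounds.

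For (c), the first two limits follow from (b) together with concentration over $\b$. The cross term $\langle\theta_i^t b_i^s\rangle$ is handled by Gaussian integration by parts (Malliavin calculus) in $b$. This gives $\sqrt{2\gamma}\int_0^s \frac1n\sum_i\langle \partial\theta_i^t/\partial g_i^\tau\rangle\,\d\tau$, and the averaged response is identified with $R_\theta$ by differentiating the discrete AMP state evolution.

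\textbf{Main obstacle.} I expect the hardest step to be the contraction in the projected weighted $L^2$ metric, specifically controlling the Gaussian noise coupling by the $L^2$ distance between covariance kernels, uniformly in the discretization.
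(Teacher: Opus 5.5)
Your architecture matches the paper's: a projected fixed point in a weighted $L^2$ metric, the Euler scheme recast as an orthogonally-invariant AMP iteration handled by \cite{fan2022approximate}, and positive-semidefiniteness of the discrete $\C_g$ used to remove $\cP$ as $\eta\to 0$. Synchronous coupling plus Gr\"onwall is also fine for perturbations of $R_g$.

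\textbf{The gap.} The step you flag as the main obstacle is contractivity of $C_g\mapsto(C_\theta,R_\theta)$ in the $L^2$ metric. This is a genuine gap, and the square-root factorization cannot close it. Any argument that couples $g_1\sim\GP(0,C_{g,1})$ with $g_2\sim\GP(0,C_{g,2})$, and then bounds $|C_{\theta,1}-C_{\theta,2}|$ through $\E(\theta_1^t-\theta_2^t)^2$, loses a square root. Every coupling has
\[\E(g_1^t-g_2^t)^2\ge\bigl(\sqrt{C_{g,1}(t,t)}-\sqrt{C_{g,2}(t,t)}\bigr)^2,\]
and the right side equals $|C_{g,1}(t,t)-C_{g,2}(t,t)|$ when one variance vanishes. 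The Powers--St{\o}rmer inequality, $\|\sqrt{A}-\sqrt{B}\|_{\mathrm{HS}}\le\|A-B\|_{\mathrm{tr}}^{1/2}$, records exactly this loss. It is moreover stated in the trace norm, which the Hilbert--Schmidt (weighted $L^2$ kernel) norm you project in does not control. At best this route gives $d(C_{\theta,1},C_{\theta,2})\lesssim d(C_{g,1},C_{g,2})^{1/2}$. A H\"older-$\tfrac12$ bound is not a contraction near the diagonal, however small the prefactor from $e^{-\lambda t}$. So Banach's theorem gives neither existence nor uniqueness, and smoothing the trace norm does not remove the loss.

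\textbf{The missing idea: Gaussian interpolation instead of coupling.} The paper sets $C_u=(1-u)C_{g,1}+uC_{g,2}$. This is p.s.d.\ for all $u\in[0,1]$ precisely because both endpoints lie in the projected set. It truncates the noise to finitely many basis directions and differentiates in $u$:
\[\frac{\d}{\d u}\E[\theta_{g_u}^t\theta_{g_u}^s]=\frac12\int_0^T\!\!\int_0^T(C_{g,2}-C_{g,1})(r,r')\,\E[H_{g_u}(t,s;r,r')]\,\d r\,\d r'.\]
Here $H$ is built from first- and second-order responses of $\theta$ to perturbations of the noise path. The same computation for $R_\theta(t,s)=\E[\frac{\d\theta^t}{\d g^s}]$ needs second derivatives of the response process, hence bounded $f'''$. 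This is why Assumption~\ref{assump:dynamics}(a) requires $f\in C^3$, a hypothesis your plan never uses. The outcome is the \emph{linear} bound
\[|\Delta C_\theta(t,s)|+|\Delta R_\theta(t,s)|\le C\int_0^{t\vee s}\!\int_0^{t\vee s}|\Delta C_g(r,r')|\,\d r\,\d r',\]
after which Cauchy--Schwarz gives the contraction factor $C/\sqrt{\lambda}$.

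\textbf{The weight.} That last step also requires the weight $e^{-2\lambda(t\vee s)}$, not your $e^{-2\lambda(t+s)}$. For nonlinear $f$, a perturbation of $C_g$ near $(t_0,t_0)$ has weight $e^{-4\lambda t_0}$ in your norm. But it changes $C_\theta(t,0)$ for $t$ just above $t_0$, where the weight is about $e^{-2\lambda t_0}$. So with your weight, large $\lambda$ hurts rather than helps.

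\textbf{Part (c).} Do the integration by parts in the scalar limit process after invoking (b). At finite $n$ there is no $\partial\theta_i^t/\partial g_i^\tau$; Malliavin calculus would instead give $\frac1n\Tr$ of the Jacobian flow, and the AMP state evolution does not identify that with $R_\theta$.
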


We refer to the proof of Theorem \ref{thm:dmft-approx} in Section \ref{sec:dmft-approx} for an explicit definition of the domains $\cS_\theta^+,\cS_g^+$. The third claim of Theorem \ref{thm:dmft-approx}(c) will lead to an interpretation of $R_\theta$ as an averaged coordinate-wise response function for the dynamics $\{\btheta^t\}_{t \geq 0}$ (cf.\ \eqref{eq:response} and \eqref{eq:Rthetarepr} in Section \ref{sec:corrresponse}).

\subsection{Langevin dynamics and replica-symmetric free energy}

Now let $U:\R \to \R$ be a scalar confining potential, and consider the Gibbs
measure having Lebesgue density on $\R^n$ given by
\begin{equation}\label{eq:gibbsmeasure}
\mu_\Gibbs(\btheta)=\frac{1}{\cZ}\exp\left(\frac{1}{2}\btheta^\top \X\btheta
-\sum_{i=1}^n U(\theta_i)\right)
\end{equation}
where $\cZ$ is the partition function
\begin{equation}\label{eq:partition}
\cZ=\int_{\R^n} \exp\left(\frac{1}{2}\btheta^\top \X\btheta
-\sum_{i=1}^n U(\theta_i)\right)\d\btheta.
\end{equation}
We will assume
\begin{equation}\label{eq:Uconvexity}
\liminf_{|x| \to \infty} U''(x) \geq \alpha
\end{equation}
for a sufficiently large constant $\alpha>0$ so that $\cZ<\infty$.
%We will assume also (without loss of generality, by absorbing an additive
%perturbation $c\,\Id$ of $\X$ into a quadratic term of $U(\cdot)$), that
%\[\kappa_1=\E[\Lambda]=0.\]
We write
\[\langle f(\btheta) \rangle=\E_{\btheta \sim \mu_\Gibbs}
[f(\btheta) \mid \X], \qquad
\langle f(\btheta,\btheta') \rangle=\E_{\btheta,\btheta' \overset{iid}{\sim}
\mu_\Gibbs}[f(\btheta,\btheta') \mid \X]\]
for the average over independent replicas from this Gibbs measure.

We will analyze the preceding dynamical mean-field limit for the overdamped
Langevin diffusion
\begin{equation}\label{eq:langevin}
\d\btheta^t=\big[\X\btheta^t-U'(\btheta^t)\big]\d t+\sqrt{2}\,\d\b^t
\end{equation}
having $\mu_\Gibbs$ as stationary law. We carry out this analysis under
a condition that these dynamics
\eqref{eq:langevin} exhibit dimension-free convergence in the following sense,
both from the given initial state $\btheta^0$ (independent of $\X$) satisfying
Assumption \ref{assump:dynamics} as well as from an equilibrium initialization.

\begin{assumption}\label{assump:convergence}
Let $\{\btheta^t\}_{t \geq 0}$ be the solution of \eqref{eq:langevin}
with the given initial state $\btheta^0$. Let $\{\tilde\btheta^t\}_{t \in \R}$
denote a stationary process also satisfying \eqref{eq:langevin} with
equilibrium initial state
\[\tilde\btheta^0 \sim \mu_\Gibbs.\]
Let
$\Law(\btheta^t \mid \btheta^s)$ denote the law of $\btheta^t$
conditional on $(\X,\btheta^s)$, and define similarly $\Law(\tilde \btheta^t \mid \tilde \btheta^s)$.
Then for some constants $C,c>0$, almost surely (over $(\X,\btheta^0)$)
for all large $n$,
\[\begin{aligned}
\E\left[\frac{1}{n}W_2(\Law(\btheta^t \mid \btheta^s),\,\mu_\Gibbs)^2 \;\bigg|\; \X,\btheta^0\right] &\leq
Ce^{-c(t-s)}
 \text{ for all } t \geq s \geq 0,\\
\E\left[\frac{1}{n}W_2(\Law(\tilde \btheta^\tau \mid \tilde \btheta^0),\,\mu_\Gibbs)^2 \;\bigg|\; \X\right]
&\leq Ce^{-c\tau}
\text{ for all } \tau \geq 0.
\end{aligned}\]
\end{assumption}

Under this condition, assuming that the potential $U(\cdot)$ is
suitably confining, our second main result is the following theorem establishing
deterministic limits for the free energy and
replica overlaps of the above Gibbs measure.

\begin{theorem}\label{thm:replica}
Suppose there exists a (deterministic or random) initial state $\btheta^0$ for
which Assumptions \ref{assump:X} and \ref{assump:dynamics} hold with $f=-U'$.
Suppose that the bound $\alpha>0$ in
\eqref{eq:Uconvexity} satisfies
\begin{equation}\label{eq:alphacondition}
\alpha>\Lambda_{\max},
\qquad \alpha>\sum_{p \geq 1}|\kappa_{p+1}|
\left(\frac{1}{n}\langle \|\btheta\|_2^2 \rangle
-\frac{1}{n}\langle \btheta^\top \btheta' \rangle+\eps\right)^p
\end{equation}
almost surely for all large $n$ and some constant $\eps>0$.
Suppose that Assumption \ref{assump:convergence} holds for this initial state
$\btheta^0$.

For any $v \geq q \geq 0$ such that the R-transform series
\begin{equation}\label{eq:Rtransform}
\cR_\Lambda(x)=\sum_{p \geq 1} \kappa_{p+1}x^p
\end{equation}
is absolutely convergent in a neighborhood of $v-q$ and $q\cR_\Lambda'(v-q) \geq 0$, define
the probability density on $\R$
\begin{equation}\label{eq:replicalaw}
\mu(\theta \mid v,q,\xi)
\propto
\exp\left(-U(\theta)+\frac{1}{2}\cR_\Lambda(v-q)\theta^2+\sqrt{q\cR_\Lambda'(v-q)}\,\xi\theta\right).
\end{equation}
Then:
\begin{enumerate}[(a)]
\item There exist deterministic almost-sure limits
\begin{equation}\label{eq:vstardef}
v_*=\lim_{n \to \infty}\frac{1}{n}\langle \|\btheta\|_2^2 \rangle,
\qquad q_*=\lim_{n \to \infty}\frac{1}{n}\langle \btheta^\top \btheta' \rangle
\end{equation}
satisfying the above conditions.
These solve the fixed-point equations
\begin{equation}\label{eq:replicafixedpoint}
v_*=\E_{\xi \sim \cN(0,1)}
\langle \theta^2 \rangle_{v_*,q_*,\xi},
\qquad q_*=\E_{\xi \sim \cN(0,1)} \langle \theta \rangle_{v_*,q_*,\xi}^2
\end{equation}
where $\langle \cdot \rangle_{v,q,\xi}$ denotes the expectation under
$\mu(\cdot \mid v,q,\xi)$. Furthermore, almost surely
\begin{equation}\label{eq:thetaXthetalimit}
\lim_{n \to \infty} \frac{1}{n}\langle \btheta^\top \X\btheta\rangle
=v_*\cR_\Lambda(v_*-q_*)+q_*(v_*-q_*)\cR_\Lambda'(v_*-q_*).
\end{equation}
For any function $f:\R \to \R$ satisfying
$|f(x)-f(y)| \leq C(1+|x|+|y|)|x-y|$ for some $C>0$, almost surely
\begin{equation}\label{eq:pseudoliptest}
\lim_{n \to \infty} \frac{1}{n}\sum_{i=1}^n \langle f(\theta_i) \rangle
=\E_{\xi \sim \cN(0,1)} \langle f(\theta) \rangle_{v_*,q_*,\xi}.
\end{equation}
\item Suppose, in addition, that almost surely over $(\X,\btheta^0)$ for all
large $n$, for every $\beta \in [0,1]$,
the condition \eqref{eq:alphacondition} and Assumption
\ref{assump:convergence} hold for the Gibbs measure, dynamics, and free
cumulants associated to $\beta \X$, with some constants $C,c,\eps>0$
that are uniform over $\beta \in [0,1]$.

Let $\cZ$ be
the partition function \eqref{eq:partition} (for $\beta=1$). Then almost surely
\begin{align}
\lim_{n \to \infty} \frac{1}{n}\log \cZ
&=\E_{\xi \sim \cN(0,1)} \log \int \exp\Big({-}U(\theta)
+\frac{1}{2}\cR_\Lambda(v_*-q_*)\theta^2+\sqrt{q_*\cR_\Lambda'(v_*-q_*)}\,\xi\theta\Big)\notag\\
&\hspace{0.2in}+\frac{1}{2}\int_0^{v_*-q_*} \cR_\Lambda(s)\d s
-\frac{1}{2}(v_*-q_*)\cR_\Lambda(v_*-q_*)-\frac{1}{2}q_*(v_*-q_*)\cR_\Lambda'(v_*-q_*).\label{eq:freeenergylimit}
\end{align}
\end{enumerate}
\end{theorem}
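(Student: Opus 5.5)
The plan is to go through the dynamical mean-field limit of Theorem~\ref{thm:dmft-approx} applied to the Langevin dynamics \eqref{eq:langevin}, i.e.\ with $f=-U'$ and $\gamma=1$, and to combine it with Assumption~\ref{assump:convergence}. That assumption transfers statements about $\btheta^t$ at large $t$ to statements about the Gibbs measure.

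\textbf{Step 1: equilibrium limits of the DMFT kernels.} For $t\ge s$ large, write $C_\theta(t,s)=\lim_n \frac1n\sum_i\langle\theta_i^t\theta_i^s\rangle$. By Assumption~\ref{assump:convergence} and a coupling of $\btheta^t$ with the stationary process $\tilde\btheta^t$, this is approximated uniformly by the stationary correlation. Conditioning on $\btheta^s$ gives
\[\tfrac1n\langle\btheta^t\cdot\btheta^s\rangle\approx \tfrac1n\langle \btheta\rangle\cdot\btheta^s \approx \tfrac1n\langle\btheta^\top\btheta'\rangle \quad\text{once } t-s\gg1.\]
The two pieces of this approximation are as follows.
\begin{itemize}
\item The diagonal value $C_\theta(t,t)$ tends to $v_*$, and the long-time plateau $\lim_{\tau\to\infty}c_\theta(\tau)$ is $q_*$.
\item Lemma~\ref{lemma:tti} supplies the time-translation-invariant limits $c_\theta(\tau)$ and $r_\theta(\tau)=-c_\theta'(\tau)$.
\end{itemize}
Since the DMFT limit is deterministic, the Gibbs quantities $\frac1n\langle\|\btheta\|^2\rangle$ and $\frac1n\langle\btheta^\top\btheta'\rangle$ converge almost surely to these values, which defines $v_*,q_*$.

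By Lemma~\ref{lemma:ttig} we have $r_g=-c_g'$. Integrating gives $\int_0^\infty r_\theta=v_*-q_*$, and summing the series \eqref{eq:dmft-Rg} in the Fourier/Laplace domain at frequency zero yields $\int_0^\infty r_g=\cR_\Lambda(v_*-q_*)$. Likewise \eqref{eq:dmft-Cg} gives the plateau
\[\lim_{\tau\to\infty}c_g(\tau)=q_*\cR_\Lambda'(v_*-q_*),\]
because the $p,q$ sum evaluated on the constant part $q_*$ of $c_\theta$, with both resolvent factors taken at zero frequency, is $\sum_{p,q}\kappa_{p+q+2}(v_*-q_*)^{p+q}q_*$. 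The condition \eqref{eq:alphacondition} ensures that these series converge absolutely, and it also gives $q_*\cR_\Lambda'\ge0$, since $q_*$ is the plateau of a p.s.d.\ kernel.

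\textbf{Step 2: convergence of the scalar generalized Langevin equation.} This is the main obstacle. I will use the Markovian approximations \eqref{eq:markovapproxintro} built from $c_g\approx u_m^\top e^{-tA_m}u_m$ with a damped generator, together with the coupling of Corollary~\ref{cor:semigroupapprox} and Lemma~\ref{lemma:thetacoupling}. Split $g^t=z+\tilde g^t$, where $z\sim\cN(0,q_*\cR_\Lambda'(v_*-q_*))$ is the plateau component and $\tilde g$ is decaying. Conditional on $z$, the joint system $(\theta,x)$ is a Markov diffusion. Its invariant law has $\theta$-marginal
\[\propto \exp\Big(-U(\theta)+\tfrac12\Big(\textstyle\int r_g\Big)\theta^2+z\theta\Big).\]
To verify this I will check that the FDT-consistent choice of $\Sigma_m$ makes the stationary density of $(\theta,x)$ Gaussian in $x-u\theta$ times the displayed factor, and then integrate out $x$. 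Ergodicity of this finite-dimensional system then gives $\Law(\theta^t\mid z)\to\mu(\cdot\mid v_*,q_*,\xi)$ with $z=\sqrt{q_*\cR'}\,\xi$. Letting $m\to\infty$ with the stable coupling transfers this to \eqref{eq:dmft-theta}. Combining with Step~1 and Theorem~\ref{thm:dmft-approx}(b) proves \eqref{eq:pseudoliptest}. Specializing to $f=x^2$ and to a product of two independent replicas conditional on $\xi$ yields the fixed-point equations \eqref{eq:replicafixedpoint}.

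\textbf{Step 3: the energy identity.} The quantity $\frac1n\langle\btheta^\top\X\btheta\rangle$ is the equilibrium limit of $\frac1n\langle\btheta^t\cdot(\g^t+\int r_g(t-s)\btheta^s\d s)\rangle$. Since $\g^t$ is mean zero and driven by a Gaussian process, I will apply Gaussian integration by parts (Novikov) in the DMFT, using the response $\E\,\d\theta^t/\d g^s$. This gives
\[\E[\theta^tg^t]=\int C_g(t,s)R_\theta(t,s)\d s+\text{plateau term}.\]
Evaluating in equilibrium with both FDTs yields $\int_0^\infty c_g r_\theta$ and $\int_0^\infty r_g c_\theta$. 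These integrals reduce, after using $c_\theta=(c_\theta-q_*)+q_*$ and the series structure, to $v_*\cR_\Lambda(v_*-q_*)+q_*(v_*-q_*)\cR_\Lambda'(v_*-q_*)$, which is \eqref{eq:thetaXthetalimit}.

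\textbf{Part (b).} I will interpolate in $\beta$: $\frac{\d}{\d\beta}\frac1n\log\cZ_\beta=\frac1{2n}\langle\btheta^\top\X\btheta\rangle_\beta$. Part (a), applied uniformly in $\beta$ with the cumulants $\beta^p\kappa_p$, gives the pointwise limit of this derivative. The uniform moment bounds from \eqref{eq:alphacondition} give dominated convergence for $\frac1n\log\cZ_\beta-\frac1n\log\cZ_0$. It then remains to check that the right side of \eqref{eq:freeenergylimit} has $\beta$-derivative equal to half of \eqref{eq:thetaXthetalimit} at $(v_*,q_*)(\beta)$, and agrees at $\beta=0$. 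This is an envelope-theorem computation: the replica-symmetric functional is stationary in $(v,q)$ at the fixed point \eqref{eq:replicafixedpoint}, so only the explicit $\beta$-dependence through $\cR_{\beta\Lambda}(x)=\beta\cR_\Lambda(\beta x)$ contributes. Completing the argument requires showing that $(v_*,q_*)(\beta)$ is regular enough in $\beta$, for instance Lipschitz, which I would derive from the uniform-in-$\beta$ contraction.
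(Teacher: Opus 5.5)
Steps 1--2 follow the paper's route: Lemmas \ref{lemma:tti} and \ref{lemma:ttig}, the damped semigroup approximation, the stable coupling of Lemma \ref{lemma:thetacoupling}, and the Markovian lift whose stationary $\theta$-marginal has quadratic coefficient $\|u_m\|_2^2=\int_0^\infty\tilde r_g^m$.

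Your Step 3 is a genuinely different and valid route to \eqref{eq:thetaXthetalimit}. Gaussian integration by parts gives $\E[\theta^tg^t]=\int_0^tC_g(t,s)R_\theta(t,s)\d s$. No separate plateau term is needed, since $C_g$ already contains it. The limit is therefore $\int_0^\infty c_gr_\theta+\int_0^\infty r_gc_\theta$. By the two FDTs, $\int_0^\infty(c_g^0r_\theta+r_gc_\theta^0)=c_g^0(0)c_\theta^0(0)=(v_*-q_*)\cR_\Lambda(v_*-q_*)$. Adding the plateau contributions $q_*(v_*-q_*)\cR_\Lambda'(v_*-q_*)$ and $q_*\cR_\Lambda(v_*-q_*)$ gives the claim. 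This needs only the kernels, not convergence to the replica law, but you must justify infinite-dimensional Gaussian integration by parts for the path functional $\theta^t$. The paper avoids that by applying It\^o's formula to $\E(\theta^t)^2$, averaging over $t\in[T,2T]$, and evaluating $\E\langle\theta U'(\theta)\rangle-1$ by static integration by parts.

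The genuine gap is in part (b). Your envelope argument needs regularity of $\beta\mapsto(v_*(\beta),q_*(\beta))$, and the hypotheses do not give the Lipschitz bound you propose. Assumption \ref{assump:convergence} (uniform in $\beta$) gives exponential convergence to equilibrium with an additive constant, not a contraction between two laws. Synchronously coupling the $\beta$ and $\beta'$ dynamics only gives $\frac1n\langle\|\btheta^t_\beta-\btheta^t_{\beta'}\|_2^2\rangle\le Ce^{Ct}|\beta-\beta'|$. Balancing this against the $Ce^{-ct}$ convergence yields only H\"older continuity, with exponent $\rho=c/(c+C)$, which may be $\le 1/2$. (A dimension-free Poincar\'e inequality would bound $\frac1n\Cov_\beta(\|\btheta\|_2^2,\btheta^\top\X\btheta)$ and give Lipschitz, but that is not assumed.)

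With only H\"older continuity, the second-order remainder $f(\beta_0,v_*(\beta),q_*(\beta))-f(\beta_0,v_*(\beta_0),q_*(\beta_0))=O(|\beta-\beta_0|^{2\rho})$ need not be $o(|\beta-\beta_0|)$. The missing idea is to use stationarity at both points. Since $\nabla_{v,q}f(\beta,v_*(\beta),q_*(\beta))=0$, the gradient $\nabla_{v,q}f(\beta_0,v_*(\beta),q_*(\beta))$ is $O(|\beta-\beta_0|)$. Expand $f(\beta_0,\cdot)$ to an order $N$ with $(N+1)\rho>1$, and apply the Bochnak--{\L}ojasiewicz inequality $|P_N(x)|\le C|x|\,|\nabla P_N(x)|$. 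This gives $O(|\beta-\beta_0|^{1+\rho}+|\beta-\beta_0|^{(N+1)\rho})=o(|\beta-\beta_0|)$.

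That expansion also requires $f$ to be $C^{N+1}$ on an open set containing points where $q\cR_{\beta\Lambda}'(v-q)=0$ (e.g.\ $q_*=0$ under sign symmetry), where the $\sqrt{\cdot}$ is not smooth. The paper handles this by writing the first term as a function of $s=q\cR_{\beta\Lambda}'(v-q)$ that is smooth on $s\ge 0$, and extending it polynomially to $s<0$.
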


It may be checked that the free energy limit \eqref{eq:freeenergylimit} coincides with the formula predicted by the replica method under an ansatz of replica symmetry, and that \eqref{eq:replicafixedpoint} are the fixed-point equations arising from this replica calculation. We note that the first condition of \eqref{eq:alphacondition} ensures that the Gibbs measure \eqref{eq:gibbsmeasure} is well-defined, and the second condition of \eqref{eq:alphacondition} ensures that $U''(x) \geq \cR_\Lambda(v_*-q_*)$ for all large $|x|$, hence the scalar law \eqref{eq:replicalaw} is well-defined at $(v_*,q_*)$.

All conditions of Theorem \ref{thm:replica} may be checked in a
``high-temperature'' regime of sufficiently small $\Lambda_{\max}$ depending on
$U(\cdot)$, where Assumption \ref{assump:convergence} (in a stronger form,
uniformly over all deterministic initial states $\btheta^0 \in \R^n$) is
implied by a log-Sobolev inequality for $\mu_\Gibbs$
\cite{bauerschmidt2019very}. The following corollary gives a concrete
statement of such a result with explicit bounds for $\Lambda_{\max}$,
restricting to a class of potentials $U(x)$ where $e^{-U(x)}$
is a convolution of a Gaussian law with a compactly supported probability
measure.

\begin{corollary}\label{cor:hightemp}
Suppose $U:\R \to \R$ is any potential taking the form
\[e^{-U(x)}=\int_{-M}^M \sqrt{\frac{\alpha}{2\pi}}
e^{-\frac{\alpha}{2}(x-y)^2} \d\nu(y)\]
for some $\alpha,M>0$ and probability measure $\nu$ supported
on $[-M,M]$. Suppose $\X$ satisfies Assumption \ref{assump:X} where,
%$\kappa_1=\E[\Lambda]=0$ and,
for some $\eps>0$,
\begin{equation}\label{eq:sufficientalphacondition}
\begin{gathered}
\Lambda_{\max}<\alpha, \qquad
2\Lambda_{\max}\left(M^2+\frac{1}{\alpha+\Lambda_{\max}}\right)<1,\\
\sum_{p \geq 1} |\kappa_{p+1}|
\left[\frac{1}{\alpha-\Lambda_{\max}}
+\frac{\alpha^2M^2}{(\alpha-\Lambda_{\max})^2}+\eps\right]^p<\alpha.
\end{gathered}
\end{equation}
Then all conclusions of Theorem \ref{thm:replica} hold.
\end{corollary}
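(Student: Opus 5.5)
Since Corollary~\ref{cor:hightemp} is a specialization of Theorem~\ref{thm:replica}, we verify its hypotheses for every $\beta\in[0,1]$, with constants uniform in $\beta$. Replacing $\X$ by $\beta\X$ replaces $\Lambda_{\max}$ by $\beta\Lambda_{\max}$ and $\kappa_p$ by $\beta^p\kappa_p$. Every condition in \eqref{eq:sufficientalphacondition} is monotone in these quantities, so it suffices to treat $\beta=1$ with bounds depending only on $(\alpha,M,\Lambda_{\max},\eps)$.

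\textbf{Step 1: choice of initial state and Assumption~\ref{assump:dynamics}.} Take $\btheta^0$ with i.i.d.\ coordinates from a fixed law with all moments, for example $\cN(0,1)$, independent of $\X$. Then Assumption~\ref{assump:dynamics}(b,c) holds by the strong law of large numbers. For (a) we need $f=-U'$ to have bounded first three derivatives. Writing $e^{-U}=\nu*\cN(0,1/\alpha)$ gives
\[-U'(x)=-\alpha x+\alpha\,\E[y\mid x],\qquad -U''(x)=-\alpha+\alpha^2\Var(y\mid x),\]
where $y$ is drawn from the tilted law $\propto e^{\alpha xy-\alpha y^2/2}\nu(\d y)$. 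Higher derivatives are $\alpha^k$ times conditional cumulants of $y$, which are bounded by powers of $M$. This also gives $U''\ge \alpha-\alpha^2M^2$ globally and $U''(x)\to\alpha$ as $|x|\to\infty$ (the conditional law concentrates), so \eqref{eq:Uconvexity} holds with constant $\alpha$.

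\textbf{Step 2: log-Sobolev inequality and Assumption~\ref{assump:convergence}.} Write $\frac12\btheta^\top\X\btheta=\frac12\btheta^\top(\X+\Lambda_{\max}I)\btheta-\frac{\Lambda_{\max}}{2}\|\btheta\|^2$, so the interaction $\X+\Lambda_{\max}I$ is p.s.d.\ with norm at most $2\Lambda_{\max}$. The single-site measure then becomes $e^{-U(x)-\Lambda_{\max}x^2/2}$, which is again a Gaussian (precision $\alpha+\Lambda_{\max}$) convolved with a measure supported on $[-M',M']$, with $M'\le M$ after rescaling. The criterion of \cite{bauerschmidt2019very} (Polchinski renormalization) gives a dimension-free log-Sobolev constant whenever $\|\X+\Lambda_{\max}I\|_\op$ times the maximal variance of the tilted single-site laws is less than~$1$. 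That variance is bounded by $M^2+1/(\alpha+\Lambda_{\max})$, which is exactly the second condition of \eqref{eq:sufficientalphacondition}.

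The log-Sobolev inequality implies Talagrand's inequality and exponential entropy decay. Hence $W_2(\Law(\btheta^t\mid\btheta^s),\mu_\Gibbs)^2\le C e^{-c(t-s)}\DKL(\Law(\btheta^{s+1}\mid \btheta^s)\|\mu_\Gibbs)$, after a unit-time regularization. It remains to bound this KL divergence by $Cn(1+\|\btheta^s\|^2/n)$ and to control $\E\|\btheta^s\|^2/n$ uniformly in $s$. The latter follows from strong convexity at infinity: $U''\to\alpha>\Lambda_{\max}$ gives a Lyapunov drift bound. The same argument from the stationary start gives the second line of Assumption~\ref{assump:convergence}.

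\textbf{Step 3: condition \eqref{eq:alphacondition}.} The condition $\alpha>\Lambda_{\max}$ is given. For the second condition, bound
\[\tfrac1n\langle\|\btheta\|^2\rangle-\tfrac1n\langle\btheta^\top\btheta'\rangle=\tfrac1n\sum_i\langle(\theta_i-\langle\theta_i\rangle)^2\rangle\]
by the sum of conditional variances. Decompose $\theta_i=y_i+z_i$, with $z_i$ Gaussian of precision $\alpha$ conditioned on $y$. Integrating out $z$ is a Gaussian computation whose covariance is at most $(\alpha-\Lambda_{\max})^{-1}$. The $y$-part contributes the mean shift $(\alpha I-\X)^{-1}\alpha\, y$, with norm at most $\alpha M/(\alpha-\Lambda_{\max})$ per coordinate. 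So the variance per coordinate is at most $\frac{1}{\alpha-\Lambda_{\max}}+\frac{\alpha^2M^2}{(\alpha-\Lambda_{\max})^2}$, and monotonicity of $x\mapsto\sum|\kappa_{p+1}|x^p$ together with the third condition of \eqref{eq:sufficientalphacondition} yields \eqref{eq:alphacondition}.

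\textbf{Main obstacle.} The hardest part is Step 2: matching the precise form of the criterion of \cite{bauerschmidt2019very} to the constant $2\Lambda_{\max}(M^2+1/(\alpha+\Lambda_{\max}))$, and turning the log-Sobolev inequality into the averaged $W_2$ bounds of Assumption~\ref{assump:convergence} from a non-equilibrium initial state, with constants uniform in $n$ and $\beta$.
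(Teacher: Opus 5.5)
Your proposal is correct and follows essentially the paper's route. It bounds the derivatives of $U$ through the tilted single-site measure, uses the Gaussian-mixture representation $\btheta=\y+\z$ to bound $\frac{1}{n}\langle\|\btheta\|_2^2\rangle-\frac{1}{n}\langle\btheta^\top\btheta'\rangle$, applies a Bauerschmidt--Bodineau mixture-of-product-measures argument giving exactly the criterion $2\Lambda_{\max}(M^2+\frac{1}{\alpha+\Lambda_{\max}})<1$, and uses monotonicity in $\beta$. What you leave implicit the paper handles by shifting with $\omega=\Lambda_{\max}+\eps$ (keeping the Gaussian decomposition nondegenerate), by checking the uniform-in-field single-site LSI via Holley--Stroock, and by taking your ``unit-time regularization'' to be the log-Harnack/entropy-cost inequality of \cite{rockner2010log}, which applies because $\diag(U''(\btheta))-\X\succeq -C\,\Id$ uniformly in $n$.
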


Specializing to $\nu$ supported on $\{\pm 1\}$, applying a growth estimate
$\limsup_{p \to \infty} |\kappa_p|^{1/p} \leq 2\Lambda_{\max}$ for the free
cumulants (Proposition \ref{prop:freecumulants}),
and taking the limit $\alpha \to \infty$,
we obtain the following consequence for the classical Ising model
of \cite{marinari1994replica} with an external field.

\begin{corollary}\label{cor:ising}
Fix a constant $h \in \R$, and consider the Gibbs measure
\begin{equation}\label{eq:ising}
\mu_\Ising=\frac{1}{\cZ}\exp\left(\frac{1}{2}\btheta^\top\X\btheta
+h\sum_{i=1}^n \theta_i\right) \text{ for } \btheta \in \{\pm 1\}^n,
\end{equation}
with partition function
\[\cZ_\Ising=\sum_{\btheta \in \{\pm 1\}^n}
\exp\left(\frac{1}{2}\btheta^\top\X\btheta
+h\sum_{i=1}^n \theta_i\right)\]
and associated Gibbs average $\langle \cdot \rangle_{\Ising}$.
Suppose $\X$ satisfies Assumption \ref{assump:X} with 
%$\kappa_1=\E[\Lambda]=0$ and
$\Lambda_{\max}<1/2$. Then there exists a deterministic almost sure limit
\[q_*=\lim_{n \to \infty} \frac{1}{n}\langle \btheta^\top \btheta'
\rangle_\Ising,\]
which satisfies the fixed point equation
\[q_*=\E_{\xi \sim \cN(0,1)}
\tanh\left(h+\sqrt{q_*\cR_\Lambda'(1-q_*)}\,\xi\right)^2.\]
Furthermore,
\begin{align}
\lim_{n \to \infty} \frac{1}{n}\log \cZ_\Ising
&=\E_{\xi \sim \cN(0,1)} \log 2\cosh\Big(
h+\sqrt{q_*\cR_\Lambda'(1-q_*)}\,\xi\Big)\notag\\
&\hspace{0.2in}+\frac{1}{2}\int_0^{1-q_*} \cR_\Lambda(s)\d s
+\frac{1}{2}q_*\cR_\Lambda(1-q_*)-\frac{1}{2}q_*(1-q_*)\cR_\Lambda'(1-q_*).\label{eq:freeenergylimitising}
\end{align}
\end{corollary}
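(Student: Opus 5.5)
We prove Corollary~\ref{cor:ising} by approximating the Ising law with the smooth double-well potentials of Corollary~\ref{cor:hightemp} and sending $\alpha \to \infty$. Take $\nu_h=\frac{e^h\delta_1+e^{-h}\delta_{-1}}{2\cosh h}$, $M=1$, and define $U_\alpha$ by $e^{-U_\alpha}=\nu_h*\cN(0,1/\alpha)$. Fix $\Lambda_{\max}<1/2$.

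\textbf{Checking the hypotheses of Corollary~\ref{cor:hightemp} for large $\alpha$.} The first two conditions in \eqref{eq:sufficientalphacondition} become $\Lambda_{\max}<\alpha$ and $2\Lambda_{\max}(1+\frac{1}{\alpha+\Lambda_{\max}})<1$; both hold for all large $\alpha$, since $2\Lambda_{\max}<1$. For the third condition, the bracket equals $1+O(1/\alpha)+\eps$. Proposition~\ref{prop:freecumulants} gives $\limsup_p|\kappa_p|^{1/p}\le 2\Lambda_{\max}<1$, so after choosing $\eps$ small the series $\sum_p|\kappa_{p+1}|(1+O(1/\alpha)+\eps)^p$ converges. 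It is bounded uniformly in $\alpha$, and is therefore $<\alpha$ once $\alpha$ is large. Hence for every large $\alpha$ all conclusions of Theorem~\ref{thm:replica} hold for $U_\alpha$, producing limits $v_*^\alpha,q_*^\alpha$ and a free energy $F_\alpha$.

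\textbf{Relating $\cZ_\alpha$ to $\cZ_\Ising$.} Write $\btheta=\bsigma+\alpha^{-1/2}\z$ with $\bsigma\sim\nu_h^{\otimes n}$ and $\z$ standard Gaussian. Then
\[\cZ_\alpha\,\Big(\prod(2\cosh h)\Big)=\sum_{\bsigma}e^{h\sum\sigma_i}\,\E_\z\exp\Big(\tfrac12(\bsigma+\alpha^{-1/2}\z)^\top\X(\bsigma+\alpha^{-1/2}\z)\Big).\]
Complete the square in $\z$. The Gaussian integral equals $\det(\Id-\X/\alpha)^{-1/2}\exp(\frac{1}{2\alpha}\bsigma^\top\X(\Id-\X/\alpha)^{-1}\X\bsigma)$. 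The log of the determinant factor is $O(n\Lambda_{\max}/\alpha)$, and the exponent is $O(n\Lambda_{\max}^2/\alpha)$ uniformly in $\bsigma$. Therefore $|\frac1n\log\cZ_\alpha-\frac1n\log\cZ_\Ising-c_h|\le C/\alpha$ uniformly in $n$, where $c_h=-\log(2\cosh h)$. It remains to show $F_\alpha+c_h$ converges to the right side of \eqref{eq:freeenergylimitising}.

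\textbf{Convergence of $(v_*^\alpha,q_*^\alpha)$ and the limiting formula.} This is the main obstacle, and the argument has to run through the overlaps. The same Gaussian decomposition shows that the $\bsigma$-marginal of $\mu_{\Gibbs,\alpha}$ is an Ising measure tilted by a factor $e^{O(n/\alpha)}$. Taking a subsequence, $v_*^\alpha\to1$ (since $\frac1n\langle\|\btheta\|^2\rangle=1+O(1/\alpha)$) and $q_*^\alpha\to q_\infty\in[0,1]$. The scalar law \eqref{eq:replicalaw} for $U_\alpha$ converges as $\alpha\to\infty$ to the law on $\{\pm1\}$ proportional to $\exp(h\theta+\frac12\cR(v-q)+\sqrt{q\cR'(v-q)}\xi\theta)$; its $\theta^2$ term is constant on $\pm1$. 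So \eqref{eq:replicafixedpoint} passes to $q_\infty=\E\tanh^2(h+\sqrt{q_\infty\cR'_\Lambda(1-q_\infty)}\xi)$. Passing \eqref{eq:freeenergylimit} to the limit, the log-integral contributes $\E\log2\cosh(\cdot)+\frac12\cR_\Lambda(1-q_\infty)-\log(2\cosh h)$. Combined with $-\frac12(1-q)\cR_\Lambda(1-q)$, this produces the term $+\frac12 q\cR_\Lambda(1-q)$ in \eqref{eq:freeenergylimitising}. Since $\frac1n\log\cZ_\Ising$ is sandwiched within $C/\alpha$ of $F_\alpha$, its almost-sure limit exists and equals this value.

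For the overlap $q_*=\lim\frac1n\langle\bsigma^\top\bsigma'\rangle_\Ising$, compare replicas using the tilted-marginal relation. The tilt changes log-densities by $O(n/\alpha)$ uniformly. A standard concentration argument then gives $|\frac1n\langle\bsigma^\top\bsigma'\rangle_\Ising-\frac1n\langle\btheta^\top\btheta'\rangle_\alpha|\to0$ as $\alpha\to\infty$, uniformly in large $n$. If that argument fails, a fallback is to differentiate the free energy in an auxiliary coupling, using convexity, so the overlap limit is identified as a derivative of the limiting free energy. Either way $q_*=q_\infty$, and the limit is independent of the subsequence.
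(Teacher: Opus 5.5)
Your route is the paper's. You approximate $\nu_h$ by the Gaussian-convolved potentials $U_\alpha$, check \eqref{eq:sufficientalphacondition} for large $\alpha$ via Proposition~\ref{prop:freecumulants}, integrate out the Gaussian part to get $|\frac1n\log\cZ_\alpha-\frac1n\log\cZ_\Ising+\log(2\cosh h)|\le C/\alpha$ uniformly in $n$, and pass \eqref{eq:replicafixedpoint} and \eqref{eq:freeenergylimit} to the limit. All of that is sound, including the bookkeeping that turns $-\frac12(1-q)\cR_\Lambda(1-q)$ into $+\frac12 q\cR_\Lambda(1-q)$.

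The gap is the step you flag yourself: identifying $\lim_n\frac1n\langle\boldsymbol{\sigma}^\top\boldsymbol{\sigma}'\rangle_\Ising$ with $\lim_\alpha q_*^\alpha$. Knowing only that the $\boldsymbol{\sigma}$-marginal $\tilde\mu_\alpha$ of $\mu_\Gibbs$ (for $U_\alpha$) has density $e^{O(n/\alpha)}$ relative to $\mu_\Ising$ gives no control of normalized expectations. A Hamiltonian perturbation of size $\eps n$ allows relative entropy of order $\eps n$, and in general can move $\frac1n\langle\boldsymbol{\sigma}^\top\boldsymbol{\sigma}'\rangle$ by $O(1)$. To absorb an $e^{O(n/\alpha)}$ change of measure, the overlap must concentrate under $\mu_\Ising^{\otimes 2}$ with sub-Gaussian tails $e^{-cn\delta^2}$. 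Chebyshev-type bounds, or even sub-exponential bounds $e^{-c\delta\sqrt n}$, are not enough, since $e^{Cn/\alpha}e^{-c\delta\sqrt n}\to\infty$.

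That input is a high-temperature functional inequality for the discrete Ising measure itself. It is a second use of $\Lambda_{\max}<1/2$ that your proposal never makes. By \cite{bauerschmidt2019very}, $\|\X\|_\op<1/2$ gives a dimension-free log-Sobolev inequality for $\mu_\Ising$, and with it your change-of-measure argument does go through, with error $O(\alpha^{-1/2})$.

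The paper needs only Poincar\'e-level bounds, because it interpolates instead of changing measure. It writes $\tilde\mu_\alpha$ as the Ising measure $[\cdot]_\omega$ with Hamiltonian $\frac12\y^\top\X(\Id-\omega\X)^{-1}\y+h\sum_i y_i$, where $\omega=1/\alpha$. It then bounds
$\|\frac{\d}{\d\omega}[\y]_\omega\|_2=\sup_{\|\mathbf{u}\|_2=1}\Cov_{[\cdot]_\omega}\big(\mathbf{u}^\top\y,\frac12\y^\top\X^2(\Id-\omega\X)^{-2}\y\big)\le C\sqrt n$
using the Poincar\'e inequality of \cite{bauerschmidt2019very}. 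This holds uniformly along the path because $\|\X(\Id-\omega\X)^{-1}\|_\op<1/2$ for small $\omega$.

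The paper also accounts for $\langle\btheta\rangle_\alpha=\langle(\Id-\X/\alpha)^{-1}\boldsymbol{\sigma}\rangle_{\tilde\mu_\alpha}$ rather than $\langle\boldsymbol{\sigma}\rangle_{\tilde\mu_\alpha}$. The conditional Gaussian part has nonzero mean, which costs an $O(1/\alpha)$ correction that you skip.

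Your fallback does not repair the gap. A two-replica system coupled through $\lambda\boldsymbol{\sigma}^\top\boldsymbol{\sigma}'$ has a $2n\times 2n$ interaction matrix that is not Haar-invariant on $\OO(2n)$, so Theorem~\ref{thm:replica} says nothing about its free energy. You would also need differentiability of that limit at $\lambda=0$.

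Without the overlap comparison, you only get the free-energy formula at some subsequential limit of $q_*^\alpha$. You do not get existence of the Ising overlap limit or its fixed-point characterization, both of which the corollary asserts.
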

This limit \eqref{eq:freeenergylimitising} coincides with the replica formula first computed in \cite{marinari1994replica}, and proved previously under a less explicit high temperature assumption in \cite{fan2024replica}.

\subsection{Universality over the disorder}\label{subsec:universality}

Orthogonal invariance of the disorder matrix $\X$ in Assumption
\ref{assump:X} underlies a mean-field characterization of a discrete-time
AMP algorithm in
\cite{fan2022approximate}, from which the dynamical mean-field limit in
Theorem \ref{thm:dmft-approx} and results for the Gibbs measure in Theorem
\ref{thm:replica} and Corollaries \ref{cor:hightemp} and \ref{cor:ising}
are derived.

It has been shown, in increasing generality in
\cite{dudeja2023universality,wang2024universality,gorini2026universality}, that 
this characterization holds more broadly over universality classes of
(possibly deterministic) matrices $\X \in \R^{n \times n}$.
This implies, as a consequence, that our preceding results carry over to such
universality classes.

We provide here a concrete statement of universality for one such class of
deterministic disorder matrices $\X$ studied in
\cite{dudeja2023universality,wang2024universality}.

\begin{assumption}\label{assump:X-universal}
Let $\X \in \R^{n \times n}$ be a deterministic symmetric matrix such that
$\|\X\|_\op \leq \Lambda_{\max}$
for a constant $\Lambda_{\max}>0$, and
there exists a random variable $\Lambda$ with $\E[\Lambda]=0$ and $\Var[\Lambda]>0$
such that as $n \to \infty$, the empirical eigenvalue distribution of $\X$
satisfies
\[\frac{1}{n}\sum_{i=1}^n \delta_{\lambda_i(\X)}\to \Law(\Lambda)
\text{ weakly}.\]
Furthermore, for each fixed integer $k \geq 1$, any
constant $\eps>0$, and all large $n \geq n_0(\eps,k)$,
\[\max_{i=1}^n \left|(\X^k)_{ii}-\frac{1}{n}\Tr \X^k\right|<n^{-1/2+\eps},
\qquad \max_{i \neq j} \left|(\X^k)_{ij}\right|<n^{-1/2+\eps}.\]
\end{assumption}

For example, one may take $\X=\beta\M$ where $\beta \in \R$ and $\M$
is a Hadamard matrix (with $n$ a power of 2) or the sine transform
\[\M=\left(\sqrt{\frac{2}{n+1}}\sin\frac{\pi ij}{n+1}\right)_{i,j=1}^n.\]
In either case, $\M$ is orthogonal, and Assumption \ref{assump:X-universal}
holds with $\Lambda_{\max}=|\beta|$ and
$\Lambda=\frac{1}{2}\delta_{\beta}+\frac{1}{2}\delta_{-\beta}$.

\begin{corollary}\label{cor:universality}
Suppose $\X$ satisfies Assumption \ref{assump:X-universal}.
% with $\kappa_1=\E[\Lambda]=0$.
\begin{enumerate}[(a)]
\item If Assumption \ref{assump:dynamics} holds,
$f:\R \to \R$ is odd, and $\theta^0$ is
sign invariant in law (i.e.\ $f(x)={-}f(-x)$ for all $x \in \R$, and
$\Law(\theta^0)=\Law({-}\theta^0)$),
then Theorem \ref{thm:dmft-approx} holds for the dynamics
$\{\btheta^t\}_{t \geq 0}$.
\item If $U(\cdot)$ and $\Lambda$ satisfy all conditions of
Corollary \ref{cor:hightemp}, and $U:\R \to \R$ is even
(i.e.\ $U(x)=U(-x)$ for all $x \in \R$),
then all statements of Theorem \ref{thm:replica} hold for $\mu_\Gibbs$.
\item If $h=0$ in the Ising model \eqref{eq:ising} and $\Lambda_{\max}<1/2$,
then all statements of Corollary \ref{cor:ising} hold.
\item If $f(\cdot)$ is not odd, $\theta^0$ is not sign-invariant in law,
$U(\cdot)$ is not even, or $h \neq 0$, then the
same conclusions hold for the dynamics and Gibbs measures defined with
$\S\X\S$ in place of $\X$, where $\S$ is a
diagonal matrix of i.i.d.\ Rademacher $\pm 1$ signs
independent of $\btheta^0$ and $\{\b^t\}_{t \geq 0}$.
\end{enumerate}
\end{corollary}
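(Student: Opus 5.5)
The plan is to reduce each part to the corresponding result for Haar-orthogonal disorder. The only place where orthogonal invariance enters the proofs of Theorems \ref{thm:dmft-approx} and \ref{thm:replica} is the mean-field characterization of discrete-time AMP from \cite{fan2022approximate}. The universality results of \cite{dudeja2023universality,wang2024universality} show that this characterization continues to hold for deterministic $\X$ satisfying Assumption \ref{assump:X-universal}. That conclusion requires either sign-symmetric (odd) nonlinearities with sign-invariant initialization, or the semi-random conjugation $\S\X\S$ for general nonlinearities.

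\textbf{Part (a).} I would trace the proof of Theorem \ref{thm:dmft-approx} in Section \ref{sec:dmft-approx}.
\begin{enumerate}[1.]
\item Time-discretize the diffusion with step $\delta$.
\item Embed the discretized dynamics into an AMP iteration whose nonlinearities are built from $f$, Euler steps, and the Brownian increments. Because $f$ is odd, $\theta^0$ is sign-invariant, and $\b^t$ is symmetric, these nonlinearities are odd in the full state, including the Brownian and initial-condition side information.
\item Invoke \cite{wang2024universality} in place of \cite{fan2022approximate} to obtain the same state evolution.
\item Pass $\delta\to0$ exactly as in the orthogonally-invariant proof. That step uses only the state evolution, the contractivity of the fixed-point map on $\cS_\theta^+\times\cS_g^+$, and the bound $\|\X\|_\op\le\Lambda_{\max}$.
\end{enumerate}
Part (c) of the theorem, the convergence of averaged correlations, follows the same way.

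\textbf{Parts (b) and (c).} The proof of Theorem \ref{thm:replica} combines three ingredients: Theorem \ref{thm:dmft-approx} applied with $f=-U'$ for a suitable initialization; Assumption \ref{assump:convergence}; and deterministic analysis of the DMFT equations. For the universal setting:
\begin{enumerate}[1.]
\item Take $U$ even and $\theta^0$ sign-symmetric, so that $f=-U'$ is odd and part (a) applies.
\item Verify Assumption \ref{assump:convergence} via the log-Sobolev inequality of \cite{bauerschmidt2019very}. That argument uses only $\|\X\|_\op$ and the convolution form of $e^{-U}$, as in Corollary \ref{cor:hightemp}, so it is disorder-agnostic. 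The same holds for \eqref{eq:alphacondition}, whose bound on $\langle\|\btheta\|^2\rangle-\langle\btheta^\top\btheta'\rangle$ in Corollary \ref{cor:hightemp} comes from deterministic moment bounds.
\item For the free energy, interpolate over $\beta\X$, $\beta\in[0,1]$. Note that $\beta\X$ still satisfies Assumption \ref{assump:X-universal}.
\end{enumerate}
For the Ising case with $h=0$, approximate by double-well potentials $e^{-U_\alpha}=\mathcal N(0,1/\alpha)*\frac12(\delta_1+\delta_{-1})$. These are even, so part (b) applies at each $\alpha$. Then take $\alpha\to\infty$ exactly as in the proof of Corollary \ref{cor:ising}. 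That limiting argument uses only $\|\X\|_\op<1/2$ and the free-cumulant growth bound from Proposition \ref{prop:freecumulants}, applied to the limiting law $\Lambda$.

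\textbf{Part (d).} Replace $\X$ by $\S\X\S$. The universality theorem for sign-conjugated matrices in \cite{dudeja2023universality,wang2024universality} gives the AMP state evolution with arbitrary Lipschitz nonlinearities. The arguments above then go through unchanged. Both the log-Sobolev verification and the operator-norm bounds are invariant under conjugation by $\S$.

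\textbf{Main obstacle.} The main difficulty is checking that the AMP iterations used in Section \ref{sec:dmft-approx} fit the hypotheses of the cited universality theorems. Those theorems require polynomial or Lipschitz nonlinearities with particular dependence on side information, and an oddness condition that must be preserved by every auxiliary Onsager and memory term. I would first check whether the Onsager corrections are linear combinations of past iterates, since such terms preserve oddness. If the theorems require polynomial nonlinearities, I would then approximate $f$ by odd polynomials on compacts, truncating using moment bounds, and pass to the limit in Wasserstein-2.
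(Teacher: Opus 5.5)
Your overall architecture is the paper's: the AMP state evolution \eqref{eq:AMPconvergence} and $\|\X\|_\op\le\Lambda_{\max}$ are the only disorder-dependent inputs to Sections~\ref{sec:dmft-approx}--\ref{sec:replica}, and the log-Sobolev, moment and $\alpha\to\infty$ arguments are disorder-agnostic.

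\textbf{The gap is step 3 of part (a).} This is the one step specific to this corollary. You treat oddness of $f$ and sign-invariance of $\theta^0$ as hypotheses of a universality theorem for the deterministic $\X$ itself. The input the paper actually uses, \cite[Proposition 2.7, Theorem 2.8]{wang2024universality}, concerns the randomized sign-and-permutation-invariant matrix $\W=\bPi\S\X\S\bPi^\top$, with $\btheta^0,\b$ independent of $(\S,\bPi)$. It does not concern $\X$.

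A deterministic statement of the kind you want also cannot hold for arbitrary initializations satisfying Assumption~\ref{assump:dynamics}(c). Take $f(x)=-x$, $\gamma=0$, $\beta\in(0,1)$, and $\X=\beta\M$ with $\M$ the symmetric orthogonal Hadamard matrix of Section~\ref{subsec:universality}. Let $\btheta^0$ be a delocalized vector in the $+1$ eigenspace of $\M$ whose empirical law tends to $\cN(0,1)$ (e.g.\ $\sqrt2$ times the projection of a fixed typical Gaussian vector). Then $\btheta^t=e^{(\beta-1)t}\btheta^0$, so $\frac1n\|\btheta^t\|_2^2\to e^{2(\beta-1)t}$. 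Theorem~\ref{thm:dmft-approx}(c) instead predicts $C_\theta(t,t)=\frac12\big(e^{2(\beta-1)t}+e^{-2(\beta+1)t}\big)$, even though $f$ is odd and the limit law of $\theta^0$ is symmetric. So randomness must be put in and then removed by hand, and the conclusion is only reachable for initializations that are generic relative to $\X$.

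\textbf{How the paper removes the randomness.} All results are first transferred to $\W$. One then sets $\tilde\btheta^t=\S\bPi^\top\btheta^t$, $\tilde\g^t=\S\bPi^\top\g^t$ and $\tilde\b^t=\S\bPi^\top\b^t$.
\begin{enumerate}
\item Since $f$ is odd and acts entrywise, $\S\bPi^\top f(\btheta^t)=f(\tilde\btheta^t)$. Also $\S\bPi^\top\W=\X\S\bPi^\top$. Hence $\tilde\btheta^t$ solves the $\X$-dynamics driven by $\tilde\b^t$, started from $\S\bPi^\top\btheta^0$.
\item Since $f$ is odd and $\theta^0$ is symmetric, the law of $(\theta^t,g^t,b^t)$ in Definition~\ref{def:DMFT} is invariant under a global sign flip. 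So the limit $\Law(s\theta^t,sg^t,sb^t)$ of the transformed empirical measure equals the original limit.
\end{enumerate}
This is where your observation that the AMP nonlinearities are jointly odd belongs: it is an equivariance of the dynamics, not a hypothesis to verify. It also makes your ``main obstacle'' moot. The AMP \eqref{eq:discreteAMP} with $\W$ and Lipschitz nonlinearities is used as is, with no polynomial approximation and no oddness check on Onsager terms.

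The resulting statement is for the initialization $\S\bPi^\top\btheta^0$, for instance i.i.d.\ symmetric entries. That suffices for (b)--(c), where the initialization may be chosen as in the proof of Corollary~\ref{cor:hightemp}. For (b)--(c) the paper transfers at the level of Gibbs measures instead: with $U$ even or $h=0$, the Gibbs measure for $\W$ is the push-forward of that for $\X$ under $\btheta\mapsto\bPi\S\btheta$. Your alternative of rerunning the proof of Theorem~\ref{thm:replica} from (a) also works once (a) is obtained this way.

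For (d), you also need $\bPi$, not just $\S$, to be in the setting of \cite{wang2024universality}. It is then removed by permutation-equivariance of the dynamics.
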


\begin{remark}
Let $\X=(\Id-\frac{1}{n}\1\1^\top)\H(\Id-\frac{1}{n}\1\1^\top)$ be a 
``puncturing'' of a deterministic matrix $\H$ satisfying the conditions of
Assumption \ref{assump:X-universal}, and let
$\tilde \X=(\Id-\frac{1}{n}\1\1^\top)\tilde \H(\Id-\frac{1}{n}\1\1^\top)$
be a similar puncturing of an orthogonally-invariant random matrix $\tilde\H$
satisfying Assumption \ref{assump:X} with the same spectral limit $\Law(\Lambda)$.
One may also verify using \cite[Proposition 2.7, Lemma
B.1]{wang2024universality} and the recent result of
\cite[Theorem 5.3]{gorini2026universality} that, even if
$f(\cdot)$ is not odd / $U(\cdot)$ is not even,
the mean-field limits for the dynamics
$\{\btheta^t\}_{t \geq 0}$ and Gibbs measure $\mu_\Gibbs$ coincide for
$\X$ and $\tilde\X$.

However, without further assumptions for $f(\cdot)$ / $U(\cdot)$
such as the sign symmetry conditions in Corollary \ref{cor:universality} above,
these limits would, in general, be different from
those stated in Theorems \ref{thm:dmft-approx} and \ref{thm:replica}
for the unpunctured orthogonally-invariant matrix $\tilde \H$. It is possible to derive analogues of
our results for this puncturing of $\tilde \H$, but for brevity we will not do
so in the current work.
\end{remark}

The remainder of this paper proves the preceding results. Section \ref{sec:dmft-approx} proves Theorem \ref{thm:dmft-approx} on the dynamical mean-field limit, Section \ref{sec:corrresponse} analyzes the kernels $C_\theta,R_\theta,C_g,R_g$ for the overdamped Langevin diffusion at equilibrium, Section \ref{sec:replica} proves Theorem \ref{thm:replica} and Corollaries \ref{cor:hightemp} and \ref{cor:ising} on the Gibbs measure, and Section \ref{sec:universality} proves Corollary \ref{cor:universality} on universality over the disorder.

\section{Dynamical mean-field limit}\label{sec:dmft-approx}

In this section, we prove Theorem \ref{thm:dmft-approx}. We will omit arguments that are similar to ones in \cite{fan2025dynamicalI} for brevity, and focus on the differences in our setting of a non-Gaussian disorder matrix $\X$.

\subsection{Kernel spaces and kernel mappings}\label{subsec:spaces}

Fixing two constants $K_0,K_1>0$, we define ``envelope'' functions 
$\Phi_{R_\theta},\Phi_{R_g},\Phi_{C_\theta},\Phi_{C_g}$ on $[0,\infty)$ via
\begin{align}
\frac{\d}{\d t}\Phi_{R_\theta}(t) &= K_0\left(\Phi_{R_\theta}(t) + \int_0^t
\Phi_{R_g}(t-s)\Phi_{R_\theta}(s)\d s\right), \quad
\Phi_{R_\theta}(0)=K_0,\label{eq:envelope-R-theta}\\
\Phi_{R_g}(t) &= \sum_{p \geq 1} K_1^{p+1} \Phi_{R_\theta}^{\ast p}(t), \label{eq:envelope-R-g}\\
\frac{\d}{\d t}\Phi_{C_\theta}(t) &= K_0\left(\Phi_{C_\theta}(t)
+\int_0^t \Phi_{R_g}(t-s)[\Phi_{C_\theta}(s)+\Phi_{C_\theta}(t)]\d s
+ \Phi_{C_g}(t)+1\right), \quad
\Phi_{C_\theta}(0)=K_0,\label{eq:envelope-C-theta}\\
\sqrt{\Phi_{C_g}(t)} &= \sum_{p \geq 0} K_1^{p+1} \Phi_{R_\theta}^{\ast p} \ast
\sqrt{\Phi_{C_\theta}}(t),\label{eq:envelope-C-g}
\end{align}
where, for $a,b:[0,\infty) \to \R$, $[a*b](t)=\int_0^t a(t-s)b(s)\d s$.
It may be verified through a Laplace transform argument that 
this system has a unique solution, satisfying 
$\Phi_{R_\theta},\Phi_{R_g},\Phi_{C_\theta},\Phi_{C_g} \in S(\lambda)$ 
for all sufficiently large $\lambda>0$ where
\[
  S(\lambda)=
  \left\{
    \Phi:[0,\infty)\to \R,\;\Phi(t),\frac{\d}{\d t}\Phi(t) \geq 0 \text{ for all } t \geq 0,\;
    \int_0^\infty e^{-\lambda t}\Phi(t)\d t<\infty
  \right\}.
\]
We omit details of this verification, and refer to
\cite[Lemma 3.1]{fan2025dynamicalI} for an analogous argument.

We define from these envelope functions the following kernel spaces.

\begin{definition}\label{def:S}
Let $T>0$, let $D=\{d_1,\ldots,d_m\} \subset [0,T]$ be a finite subset,
and call \[[0,d_1),[d_1,d_2),\ldots,[d_{m-1},d_m),[d_m,T]\] 
the maximal intervals of $[0,T] \setminus D$. 

Fixing $T$-dependent
constants $K_0,K_1,K_2,K_3>0$ where $K_0,K_1$ define the above envelopes,
let $\mathcal{S}_\theta^+(T,D)$ be the space of all
pairs $(C_\theta,R_\theta)$ such that:
    \begin{enumerate}
    \item $C_\theta:[0,T] \times [0,T] \to \R$ is a positive-semidefinite
covariance kernel satisfying
\[C_\theta(t,t) \leq \Phi_{C_\theta}(t) \text{ for all } 0\leq t\leq T,\]
and for any maximal interval $I$ of $[0,T]\setminus D$,
        \begin{equation}\label{eq:Ctheta-cont}
            |C_\theta(t,t)+C_\theta(s,s)-2C_\theta(t,s)| \leq  K_2|t-s|
\text{ for all } t,s \in I.
        \end{equation}
        \item $R_\theta:[0,T] \times [0,T] \to \R$ is a function satisfying
\[|R_\theta(t,s)| \leq \Phi_{R_\theta}(t-s) \text{ for all } 0 \leq s \leq t
\leq T, \qquad R_\theta(t,s)=0 \text{ for all } s>t,\]
and for any two maximal intervals $I,I'$ of $[0,T]\setminus D$,
\begin{equation}\label{eq:Rtheta-cont}
|R_\theta(t,s)-R_\theta(t',s')| \leq K_2(|t-t'|+|s-s'|) \text{ for all } t,t'\in
I,~s,s'\in I' \text{ with } s\le t \text{ and } s'\le t'.
\end{equation}
\end{enumerate}

Let $\mathcal{S}_g(T,D)$ be the space of all pairs
$(C_g,R_g)$ such that:
\begin{enumerate}
    \item $C_g:[0,T] \times [0,T] \to \R$ is a symmetric function (not necessarily
positive-semidefinite) satisfying
\[C_g(t,t) \leq \Phi_{C_g}(t) \text{ for all } 0\leq t\leq T,\]
and for any two maximal intervals $I,I'$ of $[0,T]\setminus D$,
        \begin{equation}\label{eq:Cg-cont1}
|C_g(t,t)+C_g(s,s)-2C_g(t,s)| \leq K_2K_3|t-s|
\text{ for all } t,s \in I,
\end{equation}
\begin{equation}\label{eq:Cg-cont2}
            |C_g(t,s)-C_g(t',s')| \leq K_2K_3(\sqrt{|t-t'|}+\sqrt{|s-s'|})
\text{ for all } t,t' \in I,~s,s' \in I'.
        \end{equation}
      \item $R_g:[0,T] \times [0,T] \to \R$ is a function satisfying
\[|R_g(t,s)| \leq \Phi_{R_g}(t-s) \text{ for all } 0 \leq s \leq t \leq T,
\qquad R_g(t,s)=0 \text{ for all } s>t,\]
and for any two maximal intervals $I,I'$ of $[0,T]\setminus D$,
\begin{equation}\label{eq:Rg-cont}
|R_g(t,s)-R_g(t',s')| \leq K_2K_3(|t-t'|+|s-s'|) \text{ for all } t,t'\in
I,~s,s'\in I' \text{ with } s\le t \text{ and } s'\le t'.
\end{equation}
\end{enumerate}
We denote by $\cS_g^+(T,D) \subset \cS_g(T,D)$
the subset of pairs $(C_g,R_g)$ where $C_g$ is positive-semidefinite.
\end{definition}

These spaces depend on the constants $K_0,K_1,K_2,K_3>0$, although we make this
dependence implicit in the notation. The final spaces $\cS_\theta^+,\cS_g^+$ in
Theorem \ref{thm:dmft-approx}(a) will be the spaces of kernels
$C_\theta,R_\theta,C_g,R_g$ on $[0,\infty) \times [0,\infty)$ whose restrictions
to $[0,T] \times [0,T]$
belong to $\cS_\theta^+(T,\emptyset)$ and $\cS_g^+(T,\emptyset)$ with
$D=\emptyset$ for every $T>0$.

We define a mapping
\[\cT_{\theta \to g}:\cS_\theta^+(T,D) \to \cS_g(T,D),
\qquad (C_\theta,R_\theta) \mapsto (C_g,R_g)\]
by (\ref{eq:dmft-Cg}--\ref{eq:dmft-Rg}):
\[C_g=\sum_{p,q \geq 0} \kappa_{p+q+2}\,
R_\theta^{\ast p} * C_\theta * \bar R_\theta^{\ast q},
\qquad
R_g=\sum_{p \geq 1} \kappa_{p+1}\,R_\theta^{\ast p}.\]
Note that $C_g$ is not necessarily positive-semidefinite.
Restricting to positive-semidefinite $C_g$, we define a reverse mapping
\[\cT_{g \to \theta}:\cS_g^+(T,D) \to \cS_\theta^+(T,\emptyset),
\qquad (C_g,R_g) \mapsto (C_\theta,R_\theta)\]
by (\ref{eq:dmft-Ctheta}--\ref{eq:dmft-Rtheta}):
\[C_\theta(t,s)=\E[\theta^t \theta^s],
\qquad R_\theta(t,s)=\E\left[\frac{\d\theta^t}{\d g^s}\right]\]
where $\{\theta^t\}_{t \geq 0}$ and $\{\frac{\partial \theta^t}{\partial
g^s}\}_{t \geq s \geq 0}$ solve (\ref{eq:dmft-theta}--\ref{eq:dmft-response})
with $R_g$ and $\{g^t\}_{t \geq 0} \sim \GP(0,C_g)$. The following lemma
verifies that these mappings are well-defined.

\begin{lemma}\label{lemma:well_defined}
Fix any $T>0$. Then for all sufficiently large ($T$-dependent)
constants $K_0,K_1,K_2,K_3>0$ defining the spaces of Definition \ref{def:S},
the following holds:

For any finite subset $D \subset [0,T]$, given any $(C_g,R_g) \in
\cS_g^+(T,D)$, the system (\ref{eq:dmft-theta}--\ref{eq:dmft-response}) has a
unique solution, and $\cT_{g\to \theta}(C_g,R_g) \in \cS_\theta^+(T,\emptyset)$.
Given any $(C_\theta,R_\theta) \in \cS_\theta^+(T,D)$, 
the series (\ref{eq:dmft-Cg}--\ref{eq:dmft-Rg}) are pointwise convergent, and
$\cT_{\theta\to g}(C_\theta,R_\theta) \in \cS_g(T,D)$.
\end{lemma}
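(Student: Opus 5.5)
The proof splits into the two mappings, and in each we bound objects by their envelopes using the integral equations \eqref{eq:envelope-R-theta}--\eqref{eq:envelope-C-g}. The constants are chosen in order: first $K_0,K_1$ (depending on $\Lambda_{\max}$, $\|f'\|_\infty$, $\gamma$, $\E[(\theta^0)^2]$), then $K_2$, then $K_3$, each large depending on the previous ones and $T$.

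\emph{Mapping $\cT_{g\to\theta}$.} Given $(C_g,R_g)\in\cS_g^+(T,D)$, the process $g\sim\GP(0,C_g)$ exists since $C_g$ is p.s.d. We use \eqref{eq:Cg-cont1} and Kolmogorov's criterion on each maximal interval to get a version of $g$ that is continuous (with possible jumps only at $D$, which is harmless for an integral drift). Then \eqref{eq:dmft-theta} is a linear-memory SDE with Lipschitz drift $f$ and bounded kernel $R_g$, so existence and uniqueness of an adapted solution follow by Picard iteration on $[0,T]$. The response equation \eqref{eq:dmft-response} is a linear Volterra ODE, solved pathwise.

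For the bounds, set $|R_\theta(t,s)|\le\E|\frac{\d\theta^t}{\d g^s}|$. With $\|f'\|_\infty\le K_0$ and $|R_g|\le\Phi_{R_g}$, a comparison (Gr\"onwall) argument gives $|\frac{\d\theta^t}{\d g^s}|\le\Phi_{R_\theta}(t-s)$, since $\Phi_{R_\theta}$ solves the majorizing equation \eqref{eq:envelope-R-theta} with $\Phi_{R_\theta}(0)=K_0\ge 1$. Similarly, Itô's formula for $(\theta^t)^2$ gives
\[\frac{\d}{\d t}\E(\theta^t)^2\le K_0\Big(\E(\theta^t)^2+\int_0^t\Phi_{R_g}(t-s)[\E(\theta^s)^2+\E(\theta^t)^2]\d s+C_g(t,t)+1\Big),\]
using $2xy\le x^2+y^2$, $|f(x)|\le C(1+|x|)$ and the $\gamma$ term. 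Comparison with \eqref{eq:envelope-C-theta} then yields $C_\theta(t,t)\le\Phi_{C_\theta}(t)$.

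For regularity, the bound $\E|\theta^t-\theta^s|^2\le K_2|t-s|$ holds on all of $[0,T]$: the drift is bounded in $L^2$ by the envelopes and the noise contributes $2\gamma|t-s|$. This gives \eqref{eq:Ctheta-cont} with $D=\emptyset$, because $\theta$ itself is continuous even when $g$ jumps. Lipschitz continuity of $R_\theta$ in $(t,s)$ follows by differentiating the response equation in $t$ and by comparing the solutions started at $s$ and $s'$. Both variations are controlled by $\Phi_{R_\theta},\Phi_{R_g}$ together with the $K_2K_3$-Lipschitz continuity of $R_g$ in $s$. The only subtlety is that this last Lipschitz constant involves $K_3$. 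To keep $R_\theta$'s constant at $K_2$ independent of $K_3$, I would instead write the $s$-derivative of $\frac{\d\theta^t}{\d g^s}$ as $-\frac{\d\theta^t}{\d g^s}$ composed with the kernel at the diagonal, which is bounded by the envelopes alone. This requires no regularity of $R_g$ in $s$.

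\emph{Mapping $\cT_{\theta\to g}$.} By Assumption \ref{assump:X} and the free cumulant growth bound (Proposition \ref{prop:freecumulants}, $|\kappa_p|\le (C\Lambda_{\max})^p$), we may take $K_1\ge C\Lambda_{\max}$, so that $|\kappa_{p+1}|\le K_1^{p+1}$. Since $|R_\theta^{*p}(t,s)|\le\Phi_{R_\theta}^{*p}(t-s)$, the series \eqref{eq:dmft-Rg} converges absolutely with $|R_g|\le\Phi_{R_g}$. For $C_g$, Cauchy--Schwarz gives $|C_\theta(r,r')|\le\sqrt{\Phi_{C_\theta}(r)\Phi_{C_\theta}(r')}$, so
\[|C_g(t,t)|\le\sum_{p,q}K_1^{p+q+2}[\Phi_{R_\theta}^{*p}*\sqrt{\Phi_{C_\theta}}](t)\,[\Phi_{R_\theta}^{*q}*\sqrt{\Phi_{C_\theta}}](t)=\Phi_{C_g}(t)\]
by \eqref{eq:envelope-C-g}. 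Absolute convergence on $[0,T]$ holds because $\Phi^{*p}_{R_\theta}(t)\le (\sup_{[0,T]}\Phi_{R_\theta})^p t^{p-1}/(p-1)!$, which decays factorially. The conditions \eqref{eq:Rg-cont}, \eqref{eq:Cg-cont1} and \eqref{eq:Cg-cont2} follow termwise. Convolution only integrates over the middle variable, so a difference in an outer variable $t$ falls on the leftmost factor $R_\theta(t,\cdot)$, which is $K_2$-Lipschitz within an interval. A jump of the integrand at points of $D$ in the integration variable is harmless. The $\sqrt{\cdot}$ modulus in \eqref{eq:Cg-cont2} comes from the $p=0$ terms, where $C_\theta(t,\cdot)$ has only a $\sqrt{K_2|t-t'|}$ modulus by Cauchy--Schwarz and \eqref{eq:Ctheta-cont}. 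Summing these with the envelope bounds gives a constant of the form $K_2\cdot F(K_0,K_1,T)$, so choosing $K_3\ge F$ closes the argument.

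\emph{Main obstacle.} The delicate point is the ordering of constants. The regularity of $\cT_{g\to\theta}$'s output must be bounded by $K_2$ without depending on $K_3$, even though the input regularity is $K_2K_3$. I expect to handle this as above, by using only envelope bounds on $R_g,C_g$ and never their moduli, except through the $L^2$ increments of $\int g$, which are bounded by $\Phi_{C_g}$ alone.
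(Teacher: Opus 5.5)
Your overall structure matches the paper's: envelope comparison for $\cT_{g\to\theta}$, $|\kappa_p|\le K_1^p$ with factorial decay of $\Phi_{R_\theta}^{*p}$ for $\cT_{\theta\to g}$, and constants ordered so that the output modulus of $\cT_{g\to\theta}$ depends only on $K_0,K_1,T$. The gap is the linear modulus \eqref{eq:Cg-cont1}, which does not follow ``termwise, with the difference falling on the leftmost factor.''

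For summands with $p=0<q$ (or $q=0<p$), the outer factor is $C_\theta$ itself. For a general kernel in $\cS_\theta^+(T,D)$, its off-diagonal modulus is genuinely only $\sqrt{\cdot}$, as you note yourself. Concretely, take the rank-one kernel $C_\theta(t,s)=\sqrt{|t-a|\,|s-a|}$, which satisfies \eqref{eq:Ctheta-cont}, and take $R_\theta(t,s)=\mathbf{1}_{s\le t}$. Put $X=C_\theta*\bar R_\theta$, $s=a>0$ and $t=a+h$. Then the contribution $X(t,t)+X(s,s)-2X(t,s)$ of the $(p,q)=(0,1)$ term equals $-\tfrac23a^{3/2}\sqrt h+O(h^2)$, and only the $(1,0)$ term cancels it.

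The missing idea is to use the symmetry of $C_g$ to write $2C_g(t,s)=C_g(t,s)+C_g(s,t)$. Then every $(p,q)$ contribution becomes the polarized second difference
\[\int\!\!\int\big(A_p(t,r)-A_p(s,r)\big)\,C_\theta(r,u)\,\big(A_q(t,u)-A_q(s,u)\big)\,\d r\,\d u,\qquad A_0(t,\cdot)=\delta_t,\quad A_p=R_\theta^{*p}\ (p\ge1).\]
For $p=q=0$ this is exactly the left side of \eqref{eq:Ctheta-cont}. Whenever $p\ge1$ or $q\ge1$ it is $O(|t-s|)$, because $|C_\theta|\le\Phi_{C_\theta}(T)$ and $\int|A_q(t,u)-A_q(s,u)|\,\d u=O(|t-s|)$. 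This is the paper's expansion \eqref{eq:Cgdiffexpr}. In that $L^1$ bound, you must also account for the strip $u\in(s,t]$. There $R_\theta(s,u)=0$ and \eqref{eq:Rtheta-cont} does not apply, and the strip contributes an extra $\Phi_{R_\theta}(T)\,|t-s|$.

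A smaller point: your worry about the $s$-Lipschitz bound for $R_\theta$ is unfounded. Subtract the forward response equations started at $s<s'$. The only source terms are $\bigl|\frac{\d\theta^{s'}}{\d g^s}-1\bigr|$ and $\int_{s'}^t\int_s^{s'}|R_g(u,r)|\,\bigl|\frac{\d\theta^r}{\d g^s}\bigr|\,\d r\,\d u$. Both are $O(|s'-s|)$ using only $|R_g|\le\Phi_{R_g}(T)$, and Gr\"onwall closes the argument. This is what the paper does, and no modulus of $R_g$ enters.

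Your backward-equation alternative also works, but it is misdescribed. For $J(t,s)=\frac{\d\theta^t}{\d g^s}$ the adjoint equation is $\partial_sJ(t,s)=-J(t,s)f'(\theta^s)-\int_s^tJ(t,r)R_g(r,s)\,\d r$, so it contains a memory term, not just a diagonal term. It is still bounded by the envelopes alone, but it needs its own justification, so the direct subtraction is simpler.
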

\begin{proof}
Given $(C_g,R_g) \in \cS_g^+(T,D)$, existence and uniqueness of $\{\theta^t\}_{t
\in [0,T]}$ and $\{\frac{\partial \theta^t}{\partial g^s}\}_{0 \leq s \leq t
\leq T}$ solving (\ref{eq:dmft-theta}--\ref{eq:dmft-response}) follows from the
same argument as \cite[Lemma~3.3]{fan2025dynamicalI}, which we omit here for brevity.
Therefore, $\cT_{g\to \theta}$ is well-defined.

Let $(C_\theta,R_\theta)=\cT_{g \to \theta}(C_g,R_g)$.
We check that $(C_\theta,R_\theta) \in \cS_\theta^+(T,\emptyset)$:
First, according to \eqref{eq:dmft-response},
\begin{align*}
 & \left| \frac{\d }{\d t} \frac{\partial \theta^t}{\partial g^s}\right|
 \leq \|f\|_\Lip \left| \frac{\partial \theta^t}{\partial g^s}\right| + \int_s^t
\Phi_{R_g}(t-r) \left|\frac{\partial\theta^r}{\partial g^s}\right|\d r
\end{align*} 
where $\|f\|_\Lip<\infty$ is the Lipschitz constant of $f$.
Noting that $\frac{\partial \theta^s}{\partial g^s}=1$ and
comparing with \eqref{eq:envelope-R-theta}, this implies for $K_0>0$ large enough
that
\begin{align}\label{eq:bound-response-process} 
\left|\frac{\partial \theta^t}{\partial g^s}\right| \leq \Phi_{R_\theta}(t-s).
\end{align}
Then by definition of $R_\theta$ in \eqref{eq:dmft-Rtheta},
$|R_\theta(t,s)| \le \Phi_{R_\theta}(t-s)$ for all $0\le s\le t\le T$.
Similarly, for any $0 \le s,t,t' \le T$ with $s \leq t' \leq t$, we have
\begin{align}
    |R_\theta(t,s) - R_\theta(t',s)| &\le \E\left|\frac{\partial
\theta^t}{\partial g^s} - \frac{\partial \theta^{t'}}{\partial g^s}\right| \le
\E \int_{t'}^t \left| \frac{\d }{\d r} \frac{\partial \theta^r}{\partial g^s}
\right|\d r\notag\\
    &\le \E \int_{t'}^t \left(\|f\|_\Lip \left| \frac{\partial
\theta^r}{\partial g^s}\right| +  \int_s^r \Phi_{R_g}(r-r')
\left|\frac{\partial\theta^{r'}}{\partial g^s}\right|\d r'\right)\d r
    \le K_2|t-t'|,\label{eq:Rthetacontt}
\end{align}
the last inequality holding by \eqref{eq:bound-response-process} for some
large enough ($T$-dependent) $K_2>0$.
For continuity in \(s\), by the response equation \eqref{eq:dmft-response}, for
\(t \ge s\),
\[
\frac{\partial \theta^t}{\partial g^s}
=
1+
\int_s^t
\left[
f'(\theta^u)\frac{\partial \theta^u}{\partial g^s}
+
\int_s^u R_g(u,r)\frac{\partial \theta^r}{\partial g^s}\,\d r
\right]\d u.
\]
Consider any $t \geq s' \geq s$. Then
\[
\begin{aligned}
\left|
\frac{\partial \theta^{s'}}{\partial g^s}-1
\right|
&\le
\int_s^{s'}
\|f\|_\Lip \left|\frac{\partial \theta^u}{\partial g^s}\right|\,\d u
+
\int_s^{s'}\int_s^u |R_g(u,r)|
\left|\frac{\partial \theta^r}{\partial g^s}\right|
\,\d r\,\d u  \le C |s'-s|,
\end{aligned}
\]
the last holding by
\eqref{eq:bound-response-process} and $|R_g(u,r)| \le \Phi_{R_g}(T)$
for some large enough ($T$-dependent) constant $C>0$.
Then subtracting the two response equations for
\(\frac{\partial \theta^t}{\partial g^s}\) and
\(\frac{\partial \theta^t}{\partial g^{s'}}\) gives
\begin{align*}
&\left|
\frac{\partial \theta^t}{\partial g^s}
-\frac{\partial \theta^t}{\partial g^{s'}}
\right|
\le
\left|
\frac{\partial \theta^{s'}}{\partial g^s}
-1 \right| +
\int_{s'}^t
\|f\|_\Lip
\left|
\frac{\partial \theta^u}{\partial g^s}
-
\frac{\partial \theta^u}{\partial g^{s'}}
\right|
\,\d u \\
&\quad+
\int_{s'}^t\int_{s'}^u
|R_g(u,r)|
\left|
\frac{\partial \theta^r}{\partial g^s}
-
\frac{\partial \theta^r}{\partial g^{s'}}
\right|
\,\d r\,\d u +
\int_{s'}^t\int_s^{s'}
|R_g(u,r)|
\left|
\frac{\partial \theta^r}{\partial g^s}
\right|
\,\d r\,\d u.
\end{align*}
The first and last terms are bounded by \(C|s'-s|\) for some $C>0$, and the
second and third terms by $C \sup_{u \in [s',t]} |
\frac{\partial \theta^u}{\partial g^s}
-\frac{\partial \theta^u}{\partial g^{s'}}|$.
Then by Gr\"onwall's inequality,
\[
\left|
\frac{\partial \theta^t}{\partial g^s}
-
\frac{\partial \theta^t}{\partial g^{s'}}
\right|
\le
K_2|s'-s|
\]
for some sufficiently large $K_2>0$. Taking expectations gives
\begin{equation}\label{eq:Rthetaconts}
\left|R_\theta(t,s)-R_\theta(t,s')\right| \le K_2|s'-s|.
\end{equation}
Combining \eqref{eq:Rthetacontt} and \eqref{eq:Rthetaconts} shows the continuity property \eqref{eq:Rtheta-cont}.
This verifies all properties for $R_\theta$ in Definition \ref{def:S}.

For the covariance kernel $C_\theta$, applying It\^o's formula to 
the equation \eqref{eq:dmft-theta} gives
  \begin{align*}
      \frac{\d}{\d t} \E (\theta^t)^2 &= 2\E[\theta^t f(\theta^t)] + 2\int_0^t
R_g(t,s)\E[\theta^t\theta^s]\d s + 2\E[\theta^t g^t] + 2\gamma\\
      &\le K_0\left(\E(\theta^t)^2+\int_0^t \Phi_{R_g}(t-s) [\E (\theta^s)^2
+ \E (\theta^t)^2] \d s + C_g(t,t)+1\right),
  \end{align*}
the second inequality holding for large enough $K_0>0$
since $\{g^t\}_{t \geq 0} \sim \GP(0,C_g)$, $f$ is Lipschitz, and $|R_g(s,r)|\le
\Phi_{R_g}(s-r)$.
Comparing with \eqref{eq:envelope-C-theta}, this implies 
$C_\theta(t,t)=\E(\theta^t)^2 \le \Phi_{C_\theta}(t)$.
Similarly, for all $s,s',t,t' \in [0,T]$, one can show
\[
|C_\theta(t,t)+C_\theta(s,s)-2C_\theta(t,s)| = \E (\theta^t - \theta^s)^2
\le K_2|t-s|
\]
for large enough $K_2>0$. This verifies all properties for $C_\theta$ in
Definition \ref{def:S} with $D=\emptyset$, so $(C_\theta,R_\theta) \in \cS_\theta^+(T,\emptyset)$.

For the map $\cT_{\theta\to g}$, given $(C_\theta,R_\theta)\in
\cS_\theta^+(T,D)$, note that Proposition \ref{prop:freecumulants}
implies $|\kappa_p| \leq K_1^p$ for any sufficiently large $K_1>0$ and all
$p \geq 1$, and $|R_\theta(t,s)| \leq \Phi_{R_\theta}(t-s)$. Then
\[\left|\sum_{p \geq 1} \kappa_{p+1} R_\theta^{*p}(t,s)\right|
\leq \sum_{p \geq 1} K_1^{p+1} \Phi_{R_\theta}^{*p}(t-s)
\leq \sum_{p \geq 1} K_1^{p+1}\Phi_{R_\theta}(T)^p
\frac{(t-s)^{p-1}}{(p-1)!}<\infty,\]
so the series \eqref{eq:dmft-Rg} is pointwise convergent. Similarly
\eqref{eq:dmft-Cg} is pointwise convergent, so
$\cT_{\theta\to g}$ is well-defined.

Let $(C_g,R_g)=\cT_{\theta\to g}(C_\theta,R_\theta)$. 
Applying $|\kappa_p| \leq K_1^p$, $|R_\theta(t,s)| \leq \Phi_{R_\theta}(t-s)$,
and
\[
|C_\theta(t,s)| \le \sqrt{C_\theta(t,t)}\sqrt{C_\theta(s,s)} \le \sqrt{\Phi_{C_\theta}(t)}\sqrt{\Phi_{C_\theta}(s)},
\]
we can verify that
\begin{align*}
    |R_g(t,s)| &\le \sum_{p \geq 1} K_1^{p+1}\,\Phi_{R_\theta}^{\ast p}(t-s) =
\Phi_{R_g}(t-s),\\
    |C_g(t,t)| &\le \sum_{p,q \geq 0} K_1^{p+q+2}\,
(\Phi_{R_\theta}^{\ast p} \ast \sqrt{\Phi_{C_\theta}})(t) (\Phi_{R_\theta}^{\ast
q} \ast \sqrt{\Phi_{C_\theta}})(t)\\
&=\left(\sum_{p\ge 0} K_1^{p+1}\,
(\Phi_{R_\theta}^{\ast p} \ast \sqrt{\Phi_{C_\theta}})(t)\right)^2 = \Phi_{C_g}(t).
\end{align*}

For any maximal interval $I$ of $[0,T] \setminus D$ and any $s,t \in I$
with $t \geq s$, note that
\begin{align}
&|C_g(t,t)+C_g(s,s)-2C_g(t,s)|\notag\\
&=\Bigg|\kappa_2\Big(C_\theta(t,t)+C_\theta(s,s)-2C_\theta(t,s)\Big)
+\sum_{p\ge 1}
  \kappa_{p+2}
  \int_0^t (R_\theta^{\ast p}(t,r)-R_\theta^{\ast p}(s,r))(C_\theta(r,t)
-C_\theta(r,s))\d r \notag\\
&\hspace{0.5in}+ \sum_{q\ge 1}
  \kappa_{q+2}
  \int_0^t \d u\,
  (C_\theta(t,u)-C_\theta(s,u))
(R_\theta^{\ast q}(t,u)-R_\theta^{\ast q}(s,u))\notag\\
&\hspace{0.5in}+\sum_{p,q\ge 1}
  \kappa_{p+q+2}
  \int_0^t \d r \int_0^t \d u\,
    (R_\theta^{\ast p}(t,r)-R_\theta^{\ast p}(s,r))C_\theta(r,u)
    (R_\theta^{\ast q}(t,u)-R_\theta^{\ast q}(s,u))\Bigg|.\label{eq:Cgdiffexpr}
\end{align}
For any \(p\ge 1\) and $t \ge s\ge r$,
\[
R_\theta^{\ast p}(t,r)-R_\theta^{\ast p}(s,r)
=
\int_r^s
\bigl(R_\theta(t,u)-R_\theta(s,u)\bigr)
R_\theta^{\ast(p-1)}(u,r)\,\d u+
\int_s^t
R_\theta(t,u)R_\theta^{\ast(p-1)}(u,r)\,\d u.
\]
Therefore, combined with 
\[
|R_\theta(t,u) - R_\theta(s,u)| \leq K_2 |t-s|, \qquad |R_\theta^{\ast p}(t,s)| \le \Phi_{R_\theta}^{\ast p}(t-s) \le \frac{\Phi_{R_\theta}(T)^p (t-s)^{p-1}}{(p-1)!},
\]
we have that
\begin{align*}
|R_\theta^{\ast p}(t,r)-R_\theta^{\ast p}(s,r)|
&\le \int_r^s
K_2|t-s|
\Phi_{R_\theta}^{\ast (p-1)}(u-r)\,\d u 
+
\int_s^t \Phi_{R_\theta}(t-u)\Phi_{R_\theta}^{\ast (p-1)}(u-r)
 \d u\\
 &\le (K_2T+\Phi_{R_\theta}(T)) \frac{\Phi_{R_\theta}(T)^{p-1}T^{p-1}}{(p-1)!} |t-s|. 
\end{align*}
This bound holds also for $r \in [s,t]$, noting in this case that
\[R_\theta^{\ast p}(t,r)-R_\theta^{\ast p}(s,r)
=R_\theta^{\ast p}(t,r)=\int_r^t R_\theta(t,u)R_\theta^{*(p-1)}(u,r)\,\d u\]
and applying the same argument. Applying this bound for summands of
\eqref{eq:Cgdiffexpr} where either $p \geq 1$ or $q \geq 1$, and applying
$|C_\theta(t,t)+C_\theta(s,s)-2C_\theta(t,s)| \leq K_2|t-s|$
for $p=q=0$, we obtain
\[|C_g(t,t)+C_g(s,s)-2C_g(t,s)| \leq K_2K_3|t-s|\]
for all large enough $K_2,K_3>0$, which is the continuity property
\eqref{eq:Cg-cont1}.

Since $C_\theta$ is positive-semidefinite, writing $u \sim \GP(0,C_\theta)$,
for any two maximal intervals $I,I'$ of $[0,T] \setminus D$ and any
$t,t' \in I$ and $s,s' \in I'$, we have also
\[|C_\theta(t,s)-C_\theta(t',s')|
\leq |\E[(u^t-u^{t'})u^s]|+|\E[u^{t'}(u^s-u^{s'})]|
\leq \sqrt{\Phi_{C_\theta}(T)} \cdot K_2(\sqrt{|t-t'|}+\sqrt{|s-s'|}),\]
by Cauchy-Schwarz. Then applying a similar argument as above shows
\[|C_g(t,s)-C_g(t',s')| \leq K_2K_3(\sqrt{|t-t'|}+\sqrt{|s-s'|})\]
for all large enough $K_2,K_3>0$,
which is the continuity property \eqref{eq:Cg-cont2}.
Finally, for any maximal intervals $I,I'$ of
$[0,T] \setminus D$ and $s,s' \in I$, $t,t' \in I'$ with $s \leq t$ and $s' \leq
t'$, a similar argument as above using $|R_\theta(t,s)-R_\theta(t',s')| \leq
K_2(|t-t'|+|s-s'|)$ checks that
\[|R_g(t,s)-R_g(t',s')| \leq K_2K_3(|t-t'|+|s-s'|)\]
for large enough $K_2,K_3>0$, which is \eqref{eq:Rg-cont}.
Thus $(C_g,R_g)\in \cS_g(T,D)$.
\end{proof}

Henceforth, given $T>0$, we will fix $T$-dependent constants
$K_0,K_1,K_2,K_3>0$ large enough such that Lemma \ref{lemma:well_defined}
holds.

\subsection{Contractivity and uniqueness of fixed point with projection}

Next, we define a metric on $\cS_\theta^+$ and $\cS_g$. Fix $\lambda>0$.
For a pair of response kernels $(R_1,R_2) \equiv (R_{\theta,1},R_{\theta,2})$ 
or $(R_1,R_2) \equiv (R_{g,1},R_{g,2})$ satisfying the conditions of
Definition \ref{def:S}, we define
\begin{align}\label{eq:metric-response}
  d_\lambda(R_1, R_2) = \sup_{0 \leq s \leq t \leq T} e^{-\lambda
t}|R_1(t,s)-R_2(t,s)|.
\end{align} 
For a pair of (not necessarily positive-semidefinite) correlation kernels
$(C_1,C_2) \equiv (C_{\theta,1},C_{\theta,2})$ 
or $(C_1,C_2) \equiv (C_{g,1},C_{g,2})$ satisfying the conditions of
$\cS_\theta^+$/$\cS_g$ in Definition \ref{def:S}, we define
\begin{align}\label{eq:hilbertinnerproduct}
\langle C_1, C_2 \rangle_{\lambda}=\int_0^T \int_0^T e^{-2\lambda
(t\vee s)}C_1(t,s) C_2(t,s)\,\d t\,\d s,
\qquad \|C\|_\lambda=\langle C,C \rangle_{\lambda}^{1/2}
 \end{align} 
and the metric
\begin{align}
\label{eq:metric-covariance}
  d_\lambda(C_1, C_2)=\|C_1-C_2\|_\lambda= \sqrt{\int_0^T \int_0^T e^{-2\lambda (t\vee s)}(C_1(t,s) - C_2(t,s))^2 \,\d t \,\d s}.
\end{align}
Finally, for either
$(C,R) \equiv (C_\theta,R_\theta)$ or $(C,R) \equiv (C_g,R_g)$, we set
\[d_\lambda((C_1,R_1),(C_2,R_2))
=d_\lambda(R_1,R_2)+d_\lambda(C_1,C_2).\]
By the uniform continuity properties of the kernels on $[0,T] \setminus D$
in Definition \ref{def:S},
this defines a valid metric $d_\lambda$ on both $\cS_\theta^+$ and
$\cS_g$.

\begin{lemma}[Contractivity of $\cT_{g \to \theta}$]\label{lem:contraction-Rtheta}
Let $X_i=(C_{g,i},R_{g,i})\in \mathcal{S}_{g}^+(T,D)$ for $i=1,2$, and let
$Y_i=(C_{\theta,i},R_{\theta,i})=\cT_{g \to \theta}(C_{g,i},R_{g,i})$.
Then for any fixed $0<\alpha<1$ and all sufficiently large $\lambda>0$
(depending on $\alpha,T$),
\begin{align}\label{equation:contraction_Rfix}
d_\lambda(Y_1,Y_2) \leq \alpha\cdot d_\lambda(X_1,X_2).
\end{align}
\end{lemma}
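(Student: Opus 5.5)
The plan is to interpolate between the two inputs instead of coupling them directly. The metric on correlation kernels is a weighted $L^2$ norm of $C_{g,1}-C_{g,2}$, which does not control a Wasserstein coupling of $\GP(0,C_{g,1})$ and $\GP(0,C_{g,2})$. Accordingly, I would separate the effect of changing $R_g$ from the effect of changing $C_g$, using the triangle inequality through the intermediate pair $(C_{g,1},R_{g,2})$. Since $\cS_g^+(T,D)$ is a product of a $C$-constraint and an $R$-constraint, this intermediate pair is still admissible.

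\emph{Step 1: changing $R_g$ with $C_g$ fixed.} I would couple the two solutions of \eqref{eq:dmft-theta} with the same $\theta^0$, $\{b^t\}$ and $\{g^t\}$, so the noise cancels. Then $\Delta^t=\theta_1^t-\theta_2^t$ satisfies
\[
|\Delta^t|\le \int_0^t\Big(\|f\|_\Lip|\Delta^r|+\int_0^r\Phi_{R_g}(r-u)|\Delta^u|\,\d u+\int_0^r|R_{g,1}-R_{g,2}|(r,u)\,|\theta_2^u|\,\d u\Big)\d r.
\]
The last integrand is at most $e^{\lambda r}d_\lambda(R_{g,1},R_{g,2})\,|\theta_2^u|$. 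Multiplying by $e^{-\lambda t}$ and applying the usual weighted Grönwall argument (each time integral against $e^{-\lambda(t-r)}$ contributes a factor $1/\lambda$) gives $\sup_t e^{-\lambda t}\,\E[(\Delta^t)^2]^{1/2}\le (C_T/\lambda)\,d_\lambda(R_{g,1},R_{g,2})$.

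The same argument applies to the response equation \eqref{eq:dmft-response}. There the difference $f'(\theta_1^t)-f'(\theta_2^t)$ is controlled by $\|f''\|_\infty|\Delta^t|$, which is where bounded second derivatives enter. Together with $\Phi_{R_\theta}$, this gives the same bound for $d_\lambda$ of the $R_\theta$ components. For the $C_\theta$ components, I would write $|C_{\theta,1}-C_{\theta,2}|(t,s)\le \E[|\Delta^t||\theta^s|]+\E[|\theta^t||\Delta^s|]$. The resulting pointwise bound is $e^{\lambda(t\vee s)}(C/\lambda)\,d_\lambda(R_{g,1},R_{g,2})$, and integrating its square against $e^{-2\lambda(t\vee s)}$ over $[0,T]^2$ gives a contribution of order $(T/\lambda)\,d_\lambda(R_{g,1},R_{g,2})$.

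\emph{Step 2: changing $C_g$ with $R_g$ fixed.} I would set $C_g(a)=(1-a)C_{g,1}+aC_{g,2}$ for $a\in[0,1]$; this is p.s.d. and satisfies the envelope and continuity bounds, since $\cS_g^+$ is convex. Viewing $\theta^t$ as a functional of the Gaussian path $g$, Gaussian interpolation (Price's formula) gives
\[
\frac{\d}{\d a}\E[\theta^t\theta^s]=\frac12\int_0^T\!\!\int_0^T (C_{g,2}-C_{g,1})(u,v)\,\E\Big[\frac{\partial^2(\theta^t\theta^s)}{\partial g^u\partial g^v}\Big]\d u\,\d v .
\]
A similar identity holds for $\E[\frac{\d\theta^t}{\d g^s}]$, with second derivatives of the response process in $g$. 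Differentiating \eqref{eq:dmft-theta} twice, and \eqref{eq:dmft-response} twice, in $g$ involves $f''$ and $f'''$; this is where Assumption \ref{assump:dynamics}(a) is used. Gronwall estimates as in Lemma \ref{lemma:well_defined} then show that these second derivatives vanish unless $u,v\le t\vee s$ and are bounded by a constant $K_T$.

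Hence $|\Delta C_\theta(t,s)|\le K_T\int\!\!\int_{[0,t\vee s]^2}|\Delta C_g(u,v)|\,\d u\,\d v$. By Cauchy--Schwarz with the weight $e^{\pm\lambda(u\vee v)}$, and writing $m=t\vee s$,
\[
e^{-\lambda m}|\Delta C_\theta(t,s)|\le K_T\,\|\Delta C_g\|_\lambda\Big(\int\!\!\int_{[0,m]^2}e^{-2\lambda(m-u\vee v)}\d u\,\d v\Big)^{1/2}\le K_T\sqrt{T/\lambda}\,\|\Delta C_g\|_\lambda .
\]
Squaring and integrating over $[0,T]^2$ gives $d_\lambda(C_{\theta,1},C_{\theta,2})\le K_T'\lambda^{-1/2}d_\lambda(C_{g,1},C_{g,2})$. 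The analogous sup-bound for $R_\theta$, with the restriction $u,v\le t$, gives $d_\lambda(R_{\theta,1},R_{\theta,2})\le K_T'\lambda^{-1/2}d_\lambda(C_{g,1},C_{g,2})$.

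\emph{Conclusion and main obstacle.} Combining the two steps gives $d_\lambda(Y_1,Y_2)\le C_T(\lambda^{-1}+\lambda^{-1/2})\,d_\lambda(X_1,X_2)$, which is at most $\alpha\, d_\lambda(X_1,X_2)$ for $\lambda$ large. I expect the main obstacle to be Step 2. One must justify the interpolation formula through Malliavin differentiability of $\theta^t$ and of $\frac{\d\theta^t}{\d g^s}$ in the path $g$, where the Gaussian process is only continuous. One must also obtain uniform bounds on the second derivatives, including third derivatives of $f$ for the response term. I would handle this by first discretizing $g$ on a finite grid, where the formula is finite-dimensional Gaussian integration by parts, and then passing to the limit using the continuity bounds of $\cS_g$.
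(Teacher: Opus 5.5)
Your proposal is correct and follows the paper's proof essentially step for step. The shared skeleton is: the triangle inequality through the intermediate pair $(C_{g,1},R_{g,2})$; a synchronous-coupling weighted Gr\"onwall bound when only $R_g$ changes; and, when only $C_g$ changes, a Gaussian interpolation formula whose second-order response kernels are uniformly bounded and vanish unless $u,v\le t\vee s$, followed by the same Cauchy--Schwarz estimate yielding $\lambda^{-1/2}$. The only differences are minor: the paper justifies the interpolation formula by projecting $g$ onto finitely many orthonormal basis elements of $L^2([0,T])$ rather than using a time grid, and your $\lambda^{-1}$ rate for the $C_\theta$ component in Step 1 is slightly sharper than the paper's $\lambda^{-1/2}$, though either rate suffices.
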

\begin{proof}
We first consider $C_{g,1}=C_{g,2} \equiv C_g$ and different $R_{g,1},R_{g,2}$.
Coupling the processes $\{\theta_i^t\}_{t \geq 0}$ 
and $\{\frac{\partial \theta_i^t}{\partial g^s}\}_{t \geq s \geq 0}$ 
solving (\ref{eq:dmft-theta}--\ref{eq:dmft-response}) with $R_{g,i}$
by the same realizations of $\{g^t\}_{t \in [0,T]} \sim \GP(0,C_g)$ and
$\{b^t\}_{t \in [0,T]}$, a Gr\"onwall
argument (c.f.\ \cite[Lemma 3.7]{fan2025dynamicalI}) shows,
for all sufficiently large $\lambda>0$ and a constant $C>0$ not depending on
$\lambda$,
\begin{align*}
\sup_{0 \leq t \leq T} e^{-2\lambda t}\E(\theta_1^t-\theta_2^t)^2 \leq
\frac{C}{\lambda}\,d_\lambda(R_{g,1},R_{g,2})^2,\\
\sup_{0 \leq s \leq t \leq T}
e^{-\lambda t}
\E \left|\frac{\partial \theta_1^t}{\partial g^s} - \frac{\partial
\theta_2^t}{\partial g^s}\right| \leq \frac{C}{\lambda}
\,d_\lambda(R_{g,1},R_{g,2}).
\end{align*}
The second inequality implies
$d_\lambda(R_{\theta,1},R_{\theta,2}) \leq \frac{C}{\lambda}
d_\lambda (R_{g,1},R_{g,2})$. By Cauchy-Schwarz,
\begin{align*}
|C_{\theta,1}(t,s)-C_{\theta,2}(t,s)|^2 &\leq 2\E(\theta_1^t-\theta_2^t)^2\E(\theta_1^s)^2 + 2\E(\theta_2^t)^2\E(\theta_1^s-\theta_2^s)^2\\
&\leq 2\Phi_{C_\theta}(T)\left( \E(\theta_1^t-\theta_2^t)^2 + \E(\theta_1^s-\theta_2^s)^2 \right). \end{align*}
Integrating both sides, we have
\[d_{\lambda}(C_{\theta,1}, C_{\theta,2})^2
=\int_0^T\int_0^T e^{-2\lambda(t \vee s)}
|C_{\theta,1}(t,s)-C_{\theta,2}(t,s)|^2\,\d t\, \d s
\leq 4\Phi_{C_\theta}(T)T^2
\sup_{0 \leq t \leq T} e^{-2\lambda t}\E(\theta_1^t-\theta_2^t)^2,\]
so the first inequality implies 
$d_{\lambda}(C_{\theta,1},C_{\theta,2}) \leq \frac{C'}{\sqrt{\lambda}}
d_\lambda (R_{g, 1}, R_{g, 2})$. Thus, for any $\alpha \in (0,1)$ and all large
enough $\lambda>0$,
\begin{equation}\label{eq:contractionsameCg}
d_\lambda(Y_1,Y_2) \leq \alpha \cdot d_\lambda(R_{g,1},R_{g,2}).
\end{equation}

We next consider $R_{g,1}=R_{g,2} \equiv R_g$ and different $C_{g,1},C_{g,2}$.
For $u \in [0,1]$, define the interpolation $C_u=(1-u)C_{g,1}+uC_{g,2}$,
and let $g_u \sim \GP(0,C_u)$.
Note that $C_u$ induces a positive-semidefinite, trace-class covariance operator
$Q_u$ on $\mathcal{H}:=L^2([0,T])$, given by
\[
    (Q_u\varphi)(r)
    :=
    \int_0^T C_u(r,r')\varphi(r')\,\d r',
    \qquad
    \varphi\in \mathcal{H}.
\]
Let $\{e_k\}_{k\ge 1}$ be an orthonormal basis of
$\mathcal{H}$, and let $\{\xi_k\}_{k\ge 1}$ be i.i.d.~standard Gaussian random
variables. Then we may realize $g_u \sim \GP(0,C_u)$ as
$g_u=\sum_{k\ge 1}Q_u^{1/2}e_k \cdot \xi_k$.

Fixing an integer $N \geq 1$, define $\Pi_N : {\mathcal H}\to {\sp(e_1,\ldots, e_N)}$ as the orthogonal projection onto the subspace spanned by $e_1,\ldots,e_N$. Consider the truncation
$g_{u,N}:=\Pi_N g_u$, 
and define $\theta_{g_{u,N}}$ by
\[
    \d \theta_{g_{u,N}}^t
    =
        \left[f(\theta_{g_{u,N}}^t)
        +
        \int_0^t R_g(t,s)\theta_{g_{u,N}}^s \d s
        +
        g_{u,N}^t \right]
    \d t + \sqrt{2\gamma}\,\d b^t, \qquad \theta_{g_{u,N}}^0=\theta^0.
\] 
Let us compute
$\frac{\d}{\d u} \E[\theta_{g_{u,N}}^t\theta_{g_{u,N}}^s]$:
For any $h\in \mathcal{H}$, define $\{\theta_h^t\}_{t \geq 0}$ by
\[
    \theta_{h}^t
    =
    \theta^0
    +
    \int_0^t
    \left[
        f(\theta_{h}^s)
        +
        \int_0^s R_g(s,r)\theta_{h}^r \d r
        +
        {h}^s
    \right]
    \d s + \sqrt{2\gamma}\,b^t.
\]
By \cite[Theorem 2.5]{wang2017stochastic}, for any $h,k,l \in \cH$,
the above equation has a unique solution $\{\theta_h^t\}_{t \geq 0}$, and
there exist directional derivative processes
$D_k\theta_h$ and $D_{kl}^2 \theta_h$ on $[0,T]$
for which
\[\lim_{\eps \to 0}
\E_{\{b^t\}}\left(\frac{\theta_{h+\eps k}^t-\theta_h^t}{\eps}
-D_k\theta_h^t\right)^2=0,
\qquad \lim_{\eps \to 0}
\E_{\{b^t\}}\left(\frac{D_k\theta_{h+\eps l}^t-D_k\theta_h^t}{\eps}
-D_{kl}^2\theta_h^t\right)^2=0\]
where $\E_{\{b^t\}}$ is the expectation over the Brownian motion
$\{b^t\}_{t \geq 0}$. Then 
$\E_{\{b^t\}}[\theta_{h+\eps k+\eps'l}^t
\theta_{h+\eps k+\eps'l}^s]$ is twice continuously-differentiable in
$(\eps,\eps')$, and 
\begin{equation}\label{eq:thetatssecondder}
\partial_\eps\partial_{\eps'}
\E_{\{b^t\}}[\theta_{h+\eps k+\eps'l}^t
\theta_{h+\eps k+\eps'l}^s]\big|_{\eps=\eps'=0}
=\E_{\{b^t\}}\Big[(D_{kl}^2\theta_h^t)\theta_h^s
+\theta_h^t (D_{kl}^2\theta_h^s)+(D_k\theta_h^t)(D_l\theta_h^s)
+(D_l\theta_h^t)(D_k\theta_h^s)\Big].
\end{equation}
Furthermore by \cite[Theorem 2.5]{wang2017stochastic}, $D_k\theta_h$ and
$D_{kl}^2\theta_h$ are the unique solutions of
\begin{align*}
    D_k\theta_h^t
    &=
    \int_0^t
    \left[
        f'(\theta_h^s)D_k\theta_h^s
        +
        \int_0^s R_g(s,r) D_k\theta_h^r \d r
        +
        k^s
    \right]
    \d s,\\
    D_{kl}^2\theta^t
    &=
    \int_0^t
    \left[
        f''(\theta_h^s)
        (D_k\theta_h^s)(D_l\theta_h^s)
        +f'(\theta_h^s)D_{kl}^2 \theta_h^s
        + \int_0^s R_g(s,r) D_{kl}^2\theta_h^r \d r
    \right]
    \d s.
\end{align*}
Equivalently, define first-order and second-order response kernels $J_h,K_h$ by
\begin{align*}
    J_h(t,r)
    &=
    1
    +
    \int_r^t
    \left[
        f'(\theta_h^s)J_h(s,r)
        +
        \int_r^s R_g(s,r')J_h(r',r)\d r
    \right]
    \d s\\
    K_h(t,r,r')
    &=
    \int_{r \vee r'}^t
    \bigg[
        f''(\theta_h^s)J_h(s,r)J_h(s,r')
        +
        f'(\theta_h^s)K_h(s,r,r')
        +
        \int_{r \vee r'}^s R_g(s,s') K_h(s',r,r') \d s' 
    \bigg]
    \d s
\end{align*}
for $t \geq \max(r,r')$, and $J_h(t,r)=0$ and $K_h(t,r,r')=0$ if $t<r$ and/or
$t<r'$. Then it may be checked that the above solutions are given by
\begin{equation}\label{eq:second-deriv-representation}
D_k\theta_h^t=\int_0^T J_h(t,r) k^r\,\d r,
\qquad
D_{kl}^2\theta_h^t=\int_0^T \int_0^T K_h(t,r,r')k^r l^{r'}\,\d r\,\d r'.
\end{equation}
Hence \eqref{eq:thetatssecondder} takes the form
\begin{align*}
\partial_\eps\partial_{\eps'}
\E_{\{b^t\}}[\theta_{h+\eps k+\eps'l}^t
\theta_{h+\eps k+\eps'l}^s]\big|_{\eps=\eps'=0}
&=\int_0^T \int_0^T H_h(t,s;r,r')k^r l^{r'}\,\d r\,\d r',
\end{align*}
where
\begin{align*}
    H_h(t,s;r,r')
    &:=
    K_h(t,r,r')\theta_h^s
    +
    K_h(s,r,r')\theta_h^t
    +
    J_h(t,r)J_h(s,r')
    +
    J_h(t,r')J_h(s,r).
\end{align*}
Then, by the finite-dimensional Gaussian interpolation formula
(see e.g.\ \cite[Lemma 7.2.7]{vershynin2018high}),
\begin{align}
    \frac{\d}{\d u}\E[\theta_{g_{u,N}}^t\theta_{g_{u,N}}^s]
&=\frac{\d}{\d u}\E[\E_{\{b^t\}}[\theta_{g_{u,N}}^t\theta_{g_{u,N}}^s]]\notag\\
    &=
    \frac12
    \sum_{i,j=1}^N
    \bigl\langle (Q_1-Q_0)e_i,e_j\bigr\rangle_{\mathcal{H}}
    \,
    \E
\left[\partial_\eps\partial_{\eps'}
\E_{\{b^t\}}[\theta_{g_{u,N}+\eps e_i+\eps'e_j}^t
\theta_{g_{u,N}+\eps e_i+\eps'e_j}^s]\big|_{\eps=\eps'=0}\right]
    \notag\\
    &=
    \frac12
    \sum_{i,j=1}^N
    \bigl\langle (Q_1-Q_0)e_i,e_j\bigr\rangle_{\mathcal{H}}
    \,
    \E\left[
        \int_0^T\int_0^T
        H_{g_{u,N}}(t,s;r,r')
        e_i^re_j^{r'}
        \,\d r\,\d r'
    \right].\label{eq:duCg}
\end{align}

By the Parseval identity, as $N \to \infty$,
\[\sum_{i,j=1}^N
    \bigl\langle (Q_1-Q_0)e_i,e_j\bigr\rangle_{\mathcal{H}}
    e_i^re_j^{r'} \to (C_{g,2}-C_{g,1})(r,r')
\]
in $L^2([0,T] \times [0,T])$.
Also as $N \to \infty$, $g_{u,N} \to g_u$ in $L^2([0,T])$. Then
applying a Gr\"onwall argument to the definitions of the above processes
shows that $\theta_{g_{u,N}} \to \theta_{g_u}$ and
$J_{g_{u,N}}(t,\cdot) \to J_{g_u}(t,\cdot)$
in $L^2([0,T])$, and $K_{g_{u,N}}(t,\cdot,\cdot) \to
K_{g_u}(t,\cdot,\cdot)$ and
$H_{g_{u,N}}(t,s;\cdot,\cdot) \to H_{g_u}(t,s;\cdot,\cdot)$
in $L^2([0,T] \times [0,T])$. Then,
integrating \eqref{eq:duCg} over $u \in [0,1]$ and taking the limit $N \to
\infty$,
\[(C_{\theta,2}-C_{\theta,1})(t,s)
=\E[\theta_{g_1}^t\theta_{g_1}^s]
-\E[\theta_{g_0}^t\theta_{g_0}^s]
    =\int_0^1 \d u\left(
    \frac12
    \int_0^T\int_0^T
    (C_{g,2}-C_{g,1})(r,r')
    \E\left[
        H_{g_u}(t,s;r,r')
    \right]
    \,\d r\,\d r'\right).
\]
As $f',f''$ are bounded and $R_g$ is bounded on $[0,T]$,
a Gr\"onwall argument checks that $J_h(t,r)$ and $K_h(t,r,r')$ are both uniformly
bounded over $t,r,r' \in [0,T]$ and $h =g_u$ for all $u \in [0,1]$. Then
$\E[H_{g_u}(t,s;r,r')]$ is also uniformly bounded over $t,s,r,r' \in [0,T]$
and $u \in [0,1]$.  Furthermore, $\E[H_{g_u}(t,s;r,r')]=0$ unless $r,r' \leq
\max(t,s)$. Thus, there exists a $T$-dependent constant $C>0$ for which
\begin{align*}
|(C_{\theta,2}-C_{\theta,1})(t,s)|
&\leq C\int_0^{t \vee s}\int_0^{t \vee s}
    |(C_{g,2}-C_{g,1})(r,r')|\,\d r\,\d r'
\end{align*}
Then by Cauchy-Schwarz, for $T$-dependent constants $C',C''>0$ (not depending on
$\lambda$),
\begin{align*}
&d_\lambda(C_{\theta,1},C_{\theta,2})^2
=\int_0^T \int_0^T e^{-2\lambda(t \vee s)}
(C_{\theta,2}(t,s)-C_{\theta,1}(t,s))^2\d t\,\d s\\
&\leq C'\int_0^T\d t\int_0^T \d s\,
e^{-2\lambda(t \vee s)}
\int_0^{t \vee s}\d r\int_0^{t \vee s} \d r'
    (C_{g,2}(r,r')-C_{g,1}(r,r'))^2\\
&=C'\int_0^T \d r\int_0^T \d r'
e^{-2\lambda(r \vee r')}
(C_{g,2}(r,r')-C_{g,1}(r,r'))^2
\mathop{\int_0^T \d t\int_0^T\d s} \1_{t \vee s \geq r \vee r'}
e^{-2\lambda(t \vee s)+2\lambda(r \vee r')}\\
&\leq \frac{C''}{\lambda}
\int_0^T \d r\int_0^T \d r'
e^{-2\lambda(r \vee r')}
(C_{g,2}(r,r')-C_{g,1}(r,r'))^2
=\frac{C''}{\lambda}d_\lambda(C_{g,1},C_{g,2})^2.
\end{align*}
Thus for any $\alpha \in (0,1)$ and all sufficiently large $\lambda$,
$d_\lambda(C_{\theta,1},C_{\theta,2}) \leq
\alpha \cdot d_\lambda(C_{g,1},C_{g,2})$.

The analysis of the response $R_\theta$ is similar: Fixing $s \in [0,T]$,
there exist directional derivative processes
$D_kJ_h(\cdot,s)$
and $D_{kl}^2J_h(\cdot,s)$ over $t \in [s,T]$ for which
\begin{align*}
\lim_{\eps \to 0}
\E_{\{b^t\}}\left(\frac{J_{h+\eps k}(t,s)-J_h(t,s)}{\eps}
-D_kJ_h(t,s)\right)^2&=0,\\
\qquad \lim_{\eps \to 0}
\E_{\{b^t\}}\left(\frac{D_kJ_{h+\eps l}(t,s)-D_kJ_h(t,s)}{\eps}
-D_{kl}^2J_h(t,s)\right)^2&=0.
\end{align*}
These are the unique solutions of the equations
\begin{align*}
D_kJ_h(t,s)
    &=
    \int_s^t
    \left[
        f''(\theta_h^a)(D_k\theta_h^a)J_h(a,s)
        +
        f'(\theta_h^a)D_kJ_h(a,s)
        +
        \int_s^a R_g(a,b)D_kJ_h(b,s)\d b
    \right]\d a,\\
    D_{kl}^2J_h(t,s)
    &=
    \int_s^t
    \bigg[
        \Bigl(
            f'''(\theta_h^a)(D_k\theta_h^a)(D_l\theta_h^a)
            +
            f''(\theta_h^a)D_{kl}^2\theta_h^a
        \Bigr)J_h(a,s)
        +
        f''(\theta_h^a)(D_k\theta_h^a)D_lJ_h(a,s)
        \\
        &\qquad
        +
        f''(\theta_h^a)(D_l\theta_h^a)D_kJ_h(a,s)
                +
        f'(\theta_h^a)D_{kl}^2J_h(a,s)
        +
        \int_s^a R_g(a,b) D_{kl}^2J_h(b,s) \d b
    \bigg]\d a .
\end{align*}
Analogously to \eqref{eq:second-deriv-representation},
we may represent these solutions as
\[
    D_kJ_h(t,s)
    =
    \int_0^T M_h(t,s;r)k^r\,\d r,\qquad
    D_{kl}^2J_h(t,s)
    =
    \int_0^T\int_0^T
    L_h(t,s;r,r')k^rl^{r'}\,\d r\,\d r',
\]
where $M_h,L_h$ are the response processes
\begin{align*}
M_h(t,s;r)
    &=
    \int_s^t
    \left[
        f''(\theta_h^a)J_h(a,r)J_h(a,s)
        +
        f'(\theta_h^a)M_h(a,s;r)
        +
        \int_s^a R_g(a,b)M_h(b,s;r)\d b
    \right]\d a,\\
    L_h(t,s;r,r')
    &=
    \int_s^t
    \bigg[
        \Bigl(
            f'''(\theta_h^a)J_h(a,r)J_h(a,r')
            +
            f''(\theta_h^a)K_h(a,r,r')
        \Bigr)J_h(a,s)
        +
        f''(\theta_h^a)J_h(a,r)M_h(a,s;r')
        \\
        &\qquad
        +
        f''(\theta_h^a)J_h(a,r')M_h(a,s;r)
        +
        f'(\theta_h^a)L_h(a,s;r,r')
        +
        \int_s^a R_g(a,b)L_h(b,s;r,r')\d b
    \bigg]\d a.
\end{align*}
Applying the finite-dimensional Gaussian interpolation formula, we have
\begin{align*}
    \frac{\d}{\d u}\E J_{g_{u,N}}(t,s)
    &=
    \frac12
    \sum_{i,j=1}^N
    \bigl\langle (Q_1-Q_0)e_i,e_j\bigr\rangle_{\cH}
    \,
    \E\left[
        \partial_\eps \partial_{\eps'}J_{g_{u,N}+\eps e_i+\eps'
e_j}(t,s)\big|_{\eps=\eps'=0}
    \right]
    \\
    &=
    \frac12
    \sum_{i,j=1}^N
    \bigl\langle (Q_1-Q_0)e_i,e_j\bigr\rangle_{\cH}
    \,
    \E\left[
        \int_0^T\int_0^T
        L_{g_{u,N}}(t,s;r,r')
        e_i^re_j^{r'}
        \,\d r\,\d r'
    \right].
\end{align*}
Integrating over $u \in [0,1]$, taking the limit $N \to \infty$, and applying
that $L_{g_u}(t,s;r,r')$ is uniformly bounded and is equal to 0 unless
$r,r' \leq t$, we get
\[|(R_{\theta,2}-R_{\theta,1})(t,s)|
=\left|\lim_{N \to \infty} \E J_{g_{1,N}}(t,s)-\E J_{g_{0,N}}(t,s)\right|
\leq C \int_0^t\int_0^t
|(C_{g,2}-C_{g,1})(r,r')|\d r\,\d r'.
\]
Then, for constants $C,C'>0$ depending on $T$ but not $\lambda$,
\begin{align*}
d_\lambda(R_{\theta,1},R_{\theta,2})
&=\sup_{0 \leq s \leq t \leq T}
e^{-\lambda t}|R_{\theta,1}(t,s)-R_{\theta,2}(t,s)|\\
&\leq C\sup_{t \in [0,T]}
\int_0^t\int_0^t
e^{-\lambda t+\lambda(r \vee r')}
e^{-\lambda (r \vee r')}|C_{g,2}(r,r')-C_{g,1}(r,r')|\d r\,\d r'\\
&\leq C\sup_{t \in [0,T]}
\left(\int_0^t \int_0^t
e^{-2\lambda t+2\lambda(r \vee r')} \d r\,\d r'\right)^{1/2}
d_\lambda(C_{g,1},C_{g,2}) \leq
\frac{C'}{\sqrt{\lambda}}d_\lambda(C_{g,1},C_{g,2}).
\end{align*}
Combining these bounds for $C_\theta$ and $R_\theta$,
for any $\alpha \in (0,1)$ and all large enough $\lambda>0$,
\begin{equation}\label{eq:contractionsameRg}
d_\lambda(Y_1,Y_2) \leq \alpha \cdot d_\lambda(C_{g,1},C_{g,2})
\end{equation}

The lemma then follows from applying \eqref{eq:contractionsameCg} to bound
$d_\lambda(\cT_{g \to \theta}(C_{g,1},R_{g,1}),
\cT_{g \to \theta}(C_{g,1},R_{g,2}))$, and then
\eqref{eq:contractionsameRg} to bound
$d_\lambda(\cT_{g \to \theta}(C_{g,1},R_{g,2}),
\cT_{g \to \theta}(C_{g,2},R_{g,2}))$.
\end{proof}

\begin{lemma}[Lipschitz continuity of $\cT_{\theta \to g}$]
\label{lem:contraction-Lipschitz}
    Let $Y_i=(C_{\theta,i},R_{\theta,i}) \in \cS_\theta^+(T,D)$ for $i=1,2$, and
let $X_i=(C_{g,i},R_{g,i})=\cT_{\theta\to g}(Y_i)$. Then there exists a constant
$C>0$ not depending on $\lambda$ such that for all sufficiently large
$\lambda>0$, 
    \[
    d_\lambda(X_1,X_2) \leq C\cdot d_\lambda(Y_1,Y_2).
    \]
\end{lemma}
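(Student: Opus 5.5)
The plan is to treat $R_g$ and $C_g$ separately and expand each series difference by telescoping. Throughout I use $|\kappa_p|\le K_1^p$ (Proposition \ref{prop:freecumulants}), the envelope bounds $|R_{\theta,i}(t,s)|\le \Phi_{R_\theta}(t-s)\le \Phi_{R_\theta}(T)$, and $|C_{\theta,i}(t,s)|\le \Phi_{C_\theta}(T)$ on $[0,T]$. The aim is a constant independent of $\lambda$, so every place where $e^{-\lambda t}$ weights appear must be handled so the Volterra structure absorbs them rather than producing $\lambda$-dependence.

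\emph{Response part.} Write
\[R_{\theta,1}^{*p}-R_{\theta,2}^{*p}=\sum_{j=0}^{p-1}R_{\theta,1}^{*j}*(R_{\theta,1}-R_{\theta,2})*R_{\theta,2}^{*(p-1-j)}.\]
All kernels are causal (they vanish for $s>t$), so each term evaluated at $(t,s)$ integrates over $s\le r_1\le\cdots\le t$. I bound $|R_{\theta,1}-R_{\theta,2}|(a,b)\le e^{\lambda a}d_\lambda(R_{\theta,1},R_{\theta,2})\le e^{\lambda t}d_\lambda$ for $a\le t$. The remaining factors contribute at most $\Phi_{R_\theta}(T)^{p-1}T^{p-1}/(p-1)!$ (simplex volume). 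Summing over $j$ and $p$ with weights $K_1^{p+1}$ gives
\[e^{-\lambda t}|R_{g,1}-R_{g,2}|(t,s)\le C\,d_\lambda(R_{\theta,1},R_{\theta,2}),\]
where $C=\sum_p pK_1^{p+1}\Phi(T)^{p-1}T^{p-1}/(p-1)!<\infty$. The factor $e^{\lambda t}$ is a uniform bound over the causal range, so $C$ does not depend on $\lambda$.

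\emph{Correlation part.} Telescope each summand of \eqref{eq:dmft-Cg} into two kinds of terms. The first kind is $R_{\theta,1}^{*p}*(C_{\theta,1}-C_{\theta,2})*\bar R_{\theta,1}^{*q}$. The second kind contains one factor $(R_{\theta,1}-R_{\theta,2})$ inside $R^{*p}$ or $\bar R^{*q}$, with one $C_{\theta,i}$ in the middle.

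For the second kind, the entry at $(t,s)$ involves $C_\theta(r,u)$ with $r\le t$, $u\le s$. The $R$-difference is evaluated at times $\le t\vee s$, so the same simplex argument bounds the term pointwise by $C_{p,q}\,e^{\lambda(t\vee s)}d_\lambda(R_{\theta,1},R_{\theta,2})$. Then
\[\|\cdot\|_\lambda\le C_{p,q}\Big(\int\!\!\int 1\Big)^{1/2}d_\lambda(R)=C_{p,q}T\,d_\lambda(R).\]

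For the first kind, I write the entry as $\int\!\!\int A_p(t,r)\Delta(r,u)A_q(s,u)\,\d r\,\d u$ with $\Delta=C_{\theta,1}-C_{\theta,2}$. Here $A_p=R_{\theta,1}^{*p}$ is causal and bounded by $a_p:=\Phi(T)^pT^{p-1}/(p-1)!$, while $A_0=\delta$ gives just $\Delta$. Since $r\le t$ and $u\le s$, we have $r\vee u\le t\vee s$ and hence $e^{-\lambda(t\vee s)}\le e^{-\lambda(r\vee u)}$. Cauchy--Schwarz then gives
\[e^{-\lambda(t\vee s)}|\cdot|\le a_pa_q\,T\,\|\Delta\|_\lambda.\]
Integrating over $(t,s)$ yields a $\|\cdot\|_\lambda$-norm of at most $a_pa_qT^2\|\Delta\|_\lambda$. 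The cases $p=0$ or $q=0$ are handled the same way, with Cauchy--Schwarz in only one variable. Summing over $p,q$ with weights $K_1^{p+q+2}$ converges thanks to the factorials. Combining both parts gives $d_\lambda(X_1,X_2)\le C\,d_\lambda(Y_1,Y_2)$ with $C$ depending only on $T,K_0,K_1$.

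\emph{Main obstacle.} The delicate point is the first kind of correlation term when $p,q\ge1$. One must check that monotonicity of the weight $e^{-2\lambda(t\vee s)}$ under the causal convolution is enough to transfer the weighted $L^2$ norm without losing a $\lambda$-dependent factor. I expect the inequality $r\vee u\le t\vee s$ from causality to resolve this. No smallness in $\lambda$ is needed, only uniformity in $\lambda$.
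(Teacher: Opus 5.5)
Your proposal is correct and follows essentially the same route as the paper: the same split of $d_\lambda(C_{g,1},C_{g,2})$ into a $C_\theta$-difference part and an $R_\theta$-difference part. The $R_\theta$-difference terms get pointwise $e^{\lambda(t\vee s)}d_\lambda(R_{\theta,1},R_{\theta,2})$ bounds with factorially decaying simplex volumes, and the $C_\theta$-difference terms use causality ($r\vee u\le t\vee s$) and Cauchy--Schwarz. The only minor differences are that you bound $|R_{\theta,1}^{*p}|$ by its supremum before applying Cauchy--Schwarz, where the paper applies Cauchy--Schwarz under the integral and then Fubini, and that you write out the $R_g$ estimate the paper omits.
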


\begin{proof}
We have
\[
\begin{aligned}
  C_{g,1}(t,s)-C_{g,2}(t,s)
  &=
  \sum_{p,q\ge 0}
  \kappa_{p+q+2}
  \bigl(
    R_{\theta,1}^{\ast p}\ast (C_{\theta,1}-C_{\theta,2})
    \ast \bar R_{\theta,1}^{\ast q}
  \bigr)(t,s) \\
  &\hspace{0.5in}
  +
  \sum_{p,q\ge 0}
  \kappa_{p+q+2}
  \bigl[
    \bigl(R_{\theta,1}^{\ast p}\ast C_{\theta,2}\ast \bar R_{\theta,1}^{\ast q}\bigr)(t,s) -
    \bigl(R_{\theta,2}^{\ast p}\ast C_{\theta,2}\ast \bar R_{\theta,2}^{\ast q}\bigr)(t,s)
  \bigr].
\end{aligned}
\]
Consequently, 
\(
  d_\lambda(C_{g,1},C_{g,2}) \leq I_C+I_R,
\)
where
\begin{align*}
  I_C
  &:=\sum_{p,q\ge 0}
  |\kappa_{p+q+2}| \cdot
  \big\|
    R_{\theta,1}^{\ast p}\ast (C_{\theta,1}-C_{\theta,2})
    \ast \bar R_{\theta,1}^{\ast q}\big\|_\lambda,\\
  I_R
  &:=\sum_{p,q\ge 0}
  |\kappa_{p+q+2}| \cdot
  \big\|\bigl(R_{\theta,1}^{\ast p}\ast C_{\theta,2}\ast \bar R_{\theta,1}^{\ast
q}\bigr)(t,s)-\bigl(R_{\theta,2}^{\ast p}\ast C_{\theta,2}\ast \bar
R_{\theta,2}^{\ast q}\bigr)\big\|_\lambda.
\end{align*}
We first bound \(I_C\). Note that
$|R_{\theta,1}(t,s)|\le \Phi_{R_\theta}(t-s)$ for all $0\le s\le t\le T$, and
\[\|\Phi_{R_\theta}^{\ast p}\|_{L^1([0,T])}
\le \frac{(\Phi_{R_\theta}(T) T)^{p}}{p!}.\]
For $p,q \geq 1$,
\begin{align*}
&\big\|
    R_{\theta,1}^{\ast p}\ast (C_{\theta,1}-C_{\theta,2})
    \ast \bar R_{\theta,1}^{\ast q}\big\|_\lambda^2\\
    &=\int_0^T \int_0^T e^{-2\lambda(t \vee s)}
  \left(\int_0^t \d t'\int_0^s \d s'
  R_{\theta,1}^{*p}(t,t')
  (C_{\theta,1}-C_{\theta,2})(t',s')
  R_{\theta,1}^{*q}(s,s')\right)^2\d t\, \d s\\
  &\leq C \int_0^T \int_0^T e^{-2\lambda(t \vee s)}
  \int_0^t \d t'\int_0^s \d s'
  R_{\theta,1}^{*p}(t,t')^2
  (C_{\theta,1}-C_{\theta,2})(t',s')^2
  R_{\theta,1}^{*q}(s,s')^2\d t\, \d s\\
  &\leq C\int_0^T \d t'\int_0^T \d s'
  e^{-2\lambda(t' \vee s')}
  (C_{\theta,1}-C_{\theta,2})(t',s')^2
  \int_{t'}^T  \d t \int_{s'}^T \d s
  R_{\theta,1}^{*p}(t,t')^2
  R_{\theta,1}^{*q}(s,s')^2\\
  &\leq C'\|C_{\theta,1}-C_{\theta,2}\|_\lambda^2 \|\Phi_{R_\theta}^{*p}\|_{L^1([0,T])}^2
  \|\Phi_{R_\theta}^{*q}\|_{L^1([0,T])}^2,
\end{align*}
where $C,C'>0$ are $T$-dependent constants independent of $p,q$. The same bound holds for $p=0$ and/or $q=0$. Thus
\begin{align*}
  I_C 
  &\le C\sum_{p,q\ge 0} |\kappa_{p+q+2}| \frac{(\Phi_{R_\theta}(T) T)^p}{p!} \frac{(\Phi_{R_\theta}(T) T)^q}{q!} \|C_{\theta,1}-C_{\theta,2}\|_\lambda
\leq C'd_\lambda(C_{\theta,1},C_{\theta,2})
\end{align*}
for constant $C,C'>0$ not depending on $\lambda$.

We now bound \(I_R\). First,
we have $|C_{\theta,2}(t,s)|
  \le C_{\theta,2}(t,t)^{1/2} C_{\theta,2}(s,s)^{1/2} \le 
  \Phi_{C_\theta}(T)$.
Next, for the product of response kernels, writing $t_0=t$ and $s_0=s$,
\begin{align*}
&\left|
  \prod_{a=0}^{p-1}R_{\theta,1}(t_a,t_{a+1})
  \prod_{b=0}^{q-1}R_{\theta,1}(s_b,s_{b+1})
  -
  \prod_{a=0}^{p-1}R_{\theta,2}(t_a,t_{a+1})
  \prod_{b=0}^{q-1}R_{\theta,2}(s_b,s_{b+1})
\right|\\
&\le e^{\lambda(t\vee s)}
\left(
    \sup_{0\le r\le q\le T}
    e^{-\lambda q}|R_{\theta,1}(q,r)-R_{\theta,2}(q,r)|
  \right)
\sum_{\ell=1}^{p+q}
\prod_{m\ne \ell} \Phi_{R_\theta}(\xi_m),
\end{align*}
where \(\{\xi_m\}_{m=1}^{p+q}\) denotes the collection of increments
  $t_0-t_1,\ldots,t_{p-1}-t_p,
  s_0-s_1,\ldots,s_{q-1}-s_q$.
(The empty product is interpreted as \(1\).) 
Hence, integrating over $t_1,\ldots,t_p$ and $s_1,\ldots,s_q$,
\[
  I_R
  \le
  Cd_\lambda(R_{\theta,1},R_{\theta,2})
  \int_0^T\int_0^T
    \sum_{p,q\ge 0}
    |\kappa_{p+q+2}|(p+q)\Phi_{R_\theta}(T)^{p+q-1}
    \frac{t^p}{p!}\frac{s^q}{q!}\,
  \d t \d s.
\]
This shows
$I_R \le C d_\lambda(R_{\theta,1}, R_{\theta,2})$ for a constant $C>0$ not
depending on $\lambda$. Therefore,
\begin{align*}
   d_{\lambda}(C_{g,1},C_{g,2}) \le C d_\lambda(Y_1,Y_2).
  \end{align*}
  By \eqref{eq:dmft-Rg} and a similar argument omitted for brevity,
  $d_{\lambda}(R_{g,1}, R_{g,2}) \le Cd_{\lambda}(R_{\theta,1}, R_{\theta,2})$.
Thus
    $d_\lambda(X_1,X_2) \leq Cd_\lambda(Y_1,Y_2)$.
\end{proof}

Fixing $T>0$, define the projection
\[\cP_D:\cS_g(T,D) \to \cS_g^+(T,D),
\qquad (C_g,R_g) \mapsto (C_g^+,R_g)\]
that preserves $R_g$ and projects $C_g$ onto the subset $\cS_g^+(T,D)$ of the positive-semidefinite cone
\begin{equation}\label{eq:hilbertprojection}
C_g^+=\operatorname{arg\,min}_{C \in \cS_g^+(T,D)} \|C-C_g\|_\lambda
\end{equation}
in the weighted $L^2$-metric $d_\lambda$. We note that if $(C_g^{(m)},R_g^{(m)}) \in \cS_g^+(T,D)$
for $m=1,2,\ldots$ is a Cauchy sequence under $d_\lambda$,
then by completeness of the weighted $L^\infty$-metric
$d_\lambda(R_{g,1},R_{g,2})$, there exists $R_g:[0,T] \times [0,T] \to \R$ for
which
\begin{equation}\label{eq:Rgcauchy}
\lim_{m \to \infty} \sup_{s,t \in [0,T]} |R_g^{(m)}(t,s)-R_g(t,s)|=0.
\end{equation}
By completeness of the weighted $L^2$-metric $d_\lambda(C_{g,1},C_{g,2})$,
there also exists $C_g:[0,T] \times [0,T] \to \R$ for which
$\|C_g\|_\lambda<\infty$
and $\lim_{m \to \infty} d_\lambda(C_g^{(m)},C_g)=0$. 
Then the continuity condition \eqref{eq:Cg-cont2} for each kernel $C_g^{(m)}$
implies there exists
a modification $\tilde C_g:[0,T] \times [0,T] \to \R$ of $C_g$ (equal to $C_g$
Lebesgue-a.e.) for which
\begin{equation}\label{eq:Cgcauchy}
\lim_{m \to \infty} \sup_{s,t \in [0,T]} |C_g^{(m)}(t,s)-\tilde C_g(t,s)|=0.
\end{equation}
The statements \eqref{eq:Rgcauchy} and \eqref{eq:Cgcauchy} imply that $R_g$ and $\tilde C_g$ satisfy all conditions of
$\cS_g^+(T,D)$, so $(\tilde C_g,R_g) \in \cS_g^+(T,D)$,
and $d_\lambda((C_g^{(m)},R_g^{(m)}),(\tilde C_g,R_g)) \to 0$.
Then $\cS_g^+(T,D)$ is a
closed, convex subset of $\cS_g(T,D)$ under $d_\lambda$, so the
projection \eqref{eq:hilbertprojection} is well-defined. Furthermore, by the
non-expansive property of projections in a Hilbert space,
\begin{equation}\label{eq:projectioncontractive}
d_\lambda(C_{g,1}^+,C_{g,2}^+) \leq d_\lambda(C_{g,1},C_{g,2}).
\end{equation}

Lemma \ref{lemma:well_defined} then implies that
\[\cT_{g \to g}^D:=\cP_D \circ \cT_{\theta \to g} \circ \cT_{g \to \theta}\]
is a well-defined mapping from $\cS_g^+(T,D)$ to itself. The following corollary
collects the preceding results to show that this mapping is contractive in
$d_\lambda$ and has a unique fixed point in $\cS_g^+(T,D)$.
 
\begin{corollary}\label{cor:continuous-dmft-existunique-with-projection}
Fix any $T>0$ and finite subset $D \subset [0,T]$. Then
$\cT_{g \to g}^D:=\cP_D \circ \cT_{\theta \to g} \circ \cT_{g \to \theta}$
has a unique fixed point in $\cS_g^+(T,D)$.
\end{corollary}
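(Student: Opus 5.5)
The plan is to apply the Banach fixed point theorem to $\cT_{g \to g}^D$ on the metric space $(\cS_g^+(T,D),d_\lambda)$ for a suitably large $\lambda$. Two ingredients are needed: completeness of this space, and a strict contraction estimate.

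\emph{Completeness.} This was essentially established in the discussion preceding \eqref{eq:hilbertprojection}. There it was shown that a $d_\lambda$-Cauchy sequence $(C_g^{(m)},R_g^{(m)})$ in $\cS_g^+(T,D)$ converges to some $(\tilde C_g,R_g) \in \cS_g^+(T,D)$. The argument uses uniform convergence of $R_g^{(m)}$ from \eqref{eq:Rgcauchy}, and uniform convergence of $C_g^{(m)}$ to a modification $\tilde C_g$ from \eqref{eq:Cgcauchy}, the latter relying on the equicontinuity \eqref{eq:Cg-cont2}. Pointwise limits preserve the envelope bounds, the continuity conditions \eqref{eq:Cg-cont1}--\eqref{eq:Rg-cont}, and positive-semidefiniteness. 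One small point: $d_\lambda$ is a genuine metric rather than a pseudometric on these spaces, because of the continuity within maximal intervals, as remarked after \eqref{eq:metric-covariance}. I will take all of this as given.

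\emph{Contraction.} Let $C_{\mathrm{Lip}}$ be the $\lambda$-independent Lipschitz constant from Lemma \ref{lem:contraction-Lipschitz}. Choose $\alpha = 1/(2C_{\mathrm{Lip}})$ in Lemma \ref{lem:contraction-Rtheta}, and take $\lambda$ large enough that both lemmas apply. For $X_1,X_2 \in \cS_g^+(T,D)$, Lemma \ref{lemma:well_defined} gives $\cT_{g\to\theta}(X_i) \in \cS_\theta^+(T,\emptyset) \subset \cS_\theta^+(T,D)$. So Lemma \ref{lem:contraction-Lipschitz} applies with the set $D$; the inclusion holds because the continuity requirements for $D=\emptyset$ are stronger. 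Then
\[
d_\lambda\big(\cT_{\theta\to g}\cT_{g\to\theta}X_1,\ \cT_{\theta\to g}\cT_{g\to\theta}X_2\big) \le C_{\mathrm{Lip}}\,\alpha\, d_\lambda(X_1,X_2) = \tfrac12 d_\lambda(X_1,X_2).
\]
The projection $\cP_D$ leaves $R_g$ unchanged, and it is non-expansive on the $C_g$ component by \eqref{eq:projectioncontractive}, because $d_\lambda$ on $C_g$ is a Hilbert norm and $\cS_g^+(T,D)$ is closed and convex. Hence $\cT_{g\to g}^D$ is a $\tfrac12$-contraction.

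\emph{Subtleties.} The step needing the most care is checking that the projection argument is legitimate. Convexity of $\cS_g^+(T,D)$ in the $C_g$ variable holds because the p.s.d.\ cone, the diagonal bounds $C_g(t,t)\le\Phi_{C_g}(t)$, and the Lipschitz/H\"older constraints \eqref{eq:Cg-cont1}--\eqref{eq:Cg-cont2} are each convex. Closedness in the weighted $L^2$ norm follows from the completeness discussion above. The projection \eqref{eq:hilbertprojection} should be taken with $R_g$ held fixed; the $R_g$ constraints do not interact with $C_g$, so the feasible set is a product and the projection acts componentwise. Finally, $\lambda$ depends only on $T$ (and $\alpha$). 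Since the fixed point of a contraction is unique regardless of $\lambda$, the Banach fixed point theorem yields existence and uniqueness of the fixed point in $\cS_g^+(T,D)$.
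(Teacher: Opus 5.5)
Your proposal is correct and is essentially the paper's own proof. The paper also takes completeness of $(\cS_g^+(T,D),d_\lambda)$ from the Cauchy-sequence argument preceding \eqref{eq:hilbertprojection}, obtains a contraction by composing Lemmas \ref{lem:contraction-Rtheta} and \ref{lem:contraction-Lipschitz} with the non-expansiveness \eqref{eq:projectioncontractive} of $\cP_D$, and concludes by the Banach fixed point theorem. Your extra checks are details the paper leaves implicit: the inclusion $\cS_\theta^+(T,\emptyset)\subset\cS_\theta^+(T,D)$, choosing $\alpha$ against the $\lambda$-independent Lipschitz constant, and the product/convex structure that justifies the projection.
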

\begin{proof}
The preceding argument verifies that 
$\cS_g^+(T,D)$ is complete under the metric $d_\lambda$. Lemmas
\ref{lem:contraction-Rtheta} and \ref{lem:contraction-Lipschitz}, together with
\eqref{eq:projectioncontractive}, show that
$\cT_{g\to g}^D$ is a contraction on $\cS_g^+(T,D)$ with respect to $d_\lambda$,
for any sufficiently large $\lambda>0$. Thus it has a unique fixed point by the
Banach fixed point theorem.
\end{proof} 

\subsection{Discrete-time dynamical mean-field limit}

We next prove a dynamical mean-field limit for a discretized version of the
dynamics, and identify continuous-time embeddings of its correlation and response
kernels as fixed points of a discretized version of the preceding mappings.

Fix a discretization step size $\eta>0$. Given $\X$, $\btheta^0$, and the
Brownian motion $\{\b^t\}_{t \geq 0}$ defining \eqref{eq:dynamics},
set $\b_\eta^k=\b^{k\eta}$ and define the discretized dynamics 
\begin{equation}\label{eq:disc-Langevin}
\btheta^{k+1}_\eta-\btheta^{k}_\eta=\eta[f(\btheta^{k}_\eta)+\X\btheta^{k}_\eta]
+ \sqrt{2\gamma} (\b^{k+1}_\eta - \b^{k}_\eta),
\qquad \btheta_\eta^0=\btheta^0.
\end{equation}
For its mean-field limit, let $\theta^0$ and $\{b^t\}_{t \geq 0}$ be as in
Definition \ref{def:DMFT}, and set
\[\theta_\eta^0=\theta^0, \qquad b_\eta^k=b^{k\eta}.\]
Given the processes $\{\theta_\eta^i\}_{0 \leq i \leq k}$
and $\{\frac{\partial \theta_\eta^i}{\partial g_\eta^j}\}_{0 \leq j<i \leq k}$
up to step $k$, define correlation and response matrices 
$\C_\theta,\bR_\theta,\C_g,\bR_g \in \R^{(k+1) \times (k+1)}$ by
\begin{equation}\label{eq:discreteCR}
\begin{gathered}
\C_\theta=\Big(\E[\theta^{i}_\eta\theta^{j}_\eta]\Big)_{0 \leq i,j \leq k},
\qquad \bR_\theta=\Big(\1_{i>j}\E\Big[\frac{\partial
\theta^i_\eta}{\partial g^j_\eta}\Big]\Big)_{0 \leq i,j \leq k},\\
\C_g=\sum_{p,q\ge 0} \kappa_{p+q+2}
\bR_\theta^p\C_\theta\bR_\theta^{q\top},
\qquad \bR_g=\sum_{p \geq 1} \kappa_{p+1}\bR_\theta^p.
\end{gathered}
\end{equation}
Here, the matrices $\bR_\theta,\bR_g$ are strictly lower-triangular.
We will verify in Theorem \ref{thm:discreteDMFT} below that
$\C_g$ is positive-semidefinite. Then, letting
\begin{equation}\label{eq:discreteg}
(g^0_\eta,\ldots,g^k_\eta) \sim \cN(0,\C_g)
\end{equation}
be independent of $\theta^0$ and $\{b^t\}_{t \geq 0}$, and writing
$\bR_g=(r_{ij})_{0 \leq i,j \leq k}$ for the entries of $\bR_g$, define
the processes up to step $k+1$ by
\begin{align}
\theta^{k+1}_\eta
&=\theta^{k}_\eta + \eta\Big[f(\theta^{k}_\eta)+\sum_{q=0}^k r_{kq}\theta^{q}_\eta+g^{k}_\eta\Big] + \sqrt{2\gamma} (b^{k+1}_\eta - b^{k}_\eta),\label{eq:DMFTdiscrete2}\\ 
\frac{\partial \theta^{k+1}_\eta}{\partial g^{j}_\eta}  
&=\frac{\partial \theta^{k}_\eta}{\partial g^{j}_\eta}+\eta\Big[f'(\theta^{k}_\eta)\frac{\partial \theta^{k}_\eta}{\partial g^{j}_\eta}
+ \sum_{q=j+1}^k r_{kq}\frac{\partial \theta^{q}_\eta}{\partial
g^{j}_\eta}\Big] \text{ for } 0 \leq j < k,
\qquad \frac{\partial \theta^{k+1}_\eta}{\partial
g^{k}_\eta}=\eta.\label{eq:DMFTdiscreteresponse}
\end{align}
These definitions (\ref{eq:discreteCR}--\ref{eq:DMFTdiscreteresponse})
are understood iteratively in time for $k=0,1,2,\ldots$, and
\eqref{eq:DMFTdiscreteresponse} is the usual derivative
of \eqref{eq:DMFTdiscrete2} in $g_\eta^j$ (by the chain rule). 
Note that $\C_\theta,\bR_\theta,\C_g,\bR_g$ up to each step $k \geq 0$
are the upper-left corners of those up to step $k+1$; thus we may write
unambiguously $(\C_\theta)_{ij},(\bR_\theta)_{ij}$ etc.\ for their entries.

We denote also
\begin{equation}\label{eq:discretegk}
\g^{k}_\eta = \X \btheta^{k}_\eta - \sum_{q=0}^k r_{kq} \btheta^{q}_\eta.
\end{equation}

\begin{theorem}\label{thm:discreteDMFT}
The matrix $\C_g$ is positive-semidefinite. 
For any fixed integer $K \ge 0$, almost surely as $n \to \infty$,
\begin{align}\label{eq:AMP-SE}
&\frac{1}{n}\sum_{i=1}^n
\delta_{(\theta^{0}_{\eta,i},\ldots,\theta^{K}_{\eta,i},g^{0}_{\eta,i},\ldots,g^{K}_{\eta,i},b^{0}_{\eta,i},\ldots,b^{K}_{\eta,i})}
\to \Law(\theta^{0}_\eta,\ldots,\theta^{K}_\eta, g^{0}_\eta,\ldots,g^{K}_\eta,
b^{0}_\eta,\ldots,b^{K}_\eta)
\end{align}
weakly and in Wasserstein-2, where the right side denotes the joint law under
(\ref{eq:discreteg}--\ref{eq:DMFTdiscrete2}). Furthermore,
for any fixed $0 \leq j \leq k$, almost surely
\begin{equation}\label{eq:discretekernelconvergence}
\frac{1}{n}\langle \btheta_\eta^{k\top}\btheta_\eta^j \rangle
\to (\C_\theta)_{k,j},
\qquad \frac{1}{n}\langle \g_\eta^{k\top}\g_\eta^j \rangle
\to (\C_g)_{k,j},
\qquad
\frac{1}{n}\langle \btheta_\eta^{k\top}
\b_\eta^j \rangle \to \sqrt{2\gamma}\sum_{q=0}^{j-1}(\bR_\theta)_{k,q}.
\end{equation}
\end{theorem}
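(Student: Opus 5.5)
The plan is to cast the discretized dynamics \eqref{eq:disc-Langevin} as an AMP-type algorithm with orthogonally-invariant $\X$, and then invoke the general state evolution of \cite{fan2022approximate}. That result covers iterations of the form $\z^k=\X\u^k-\sum_{j\le k}b_{kj}\u^j$, $\u^{k+1}=F_{k+1}(\z^0,\ldots,\z^k,\btheta^0,\b^0_\eta,\ldots,\b^{k+1}_\eta)$, with Onsager coefficients determined by free cumulants.

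\textbf{Step 1: rewrite as AMP.} Set $\u^k=\btheta^k_\eta$ and $\z^k=\g^k_\eta$ as in \eqref{eq:discretegk}. Then \eqref{eq:disc-Langevin} reads
\[
\btheta^{k+1}_\eta=\btheta^k_\eta+\eta\Big[f(\btheta^k_\eta)+\sum_{q\le k}r_{kq}\btheta^q_\eta+\g^k_\eta\Big]+\sqrt{2\gamma}(\b^{k+1}_\eta-\b^k_\eta).
\]
This is a Lipschitz function applied row-wise to $(\g^0_\eta,\ldots,\g^k_\eta)$, $\btheta^0$, and the Brownian increments; the $\btheta^q_\eta$ are themselves earlier functions of these. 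The side information $(\btheta^0,\b^0_\eta,\ldots,\b^K_\eta)$ is independent of $\O$, and its empirical law converges in Wasserstein-$p$ by Assumption \ref{assump:dynamics}(c) and the law of large numbers. In the framework of \cite{fan2022approximate}, for orthogonally-invariant $\X$, the debiasing matrix is $\sum_{p\ge1}\kappa_{p+1}\bPhi^p$ and the limit covariance of $\z$ is $\sum_{p,q}\kappa_{p+q+2}\bPhi^p\bDelta\bPhi^{q\top}$. Here $\bDelta$ is the covariance of the $\u$'s and $\bPhi$ is the strictly lower-triangular matrix of expected partial derivatives $\E[\partial u^i/\partial z^j]$.

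\textbf{Step 2: identify the matrices.} I will check by induction on $k$ that, with $\bDelta=\C_\theta$ and $\bPhi=\bR_\theta$ as in \eqref{eq:discreteCR}, the state-evolution recursion coincides with (\ref{eq:discreteCR}--\ref{eq:DMFTdiscreteresponse}). This requires three points. (i) The AMP derivatives $\E[\partial\theta^i_\eta/\partial g^j_\eta]$, computed via the chain rule through intermediate $\theta^q_\eta$, satisfy exactly the recursion \eqref{eq:DMFTdiscreteresponse}. (ii) The Onsager coefficients $r_{kq}$ equal the entries of $\bR_g$. (iii) The Gaussian limit $(g^0_\eta,\ldots,g^k_\eta)$ has covariance $\C_g$. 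Positive-semidefiniteness of $\C_g$ then follows for free, because it is the limiting covariance of the empirical law of real vectors $\g_\eta^k$, i.e. a limit of Gram matrices $\frac1n(\g^{i\top}_\eta\g^j_\eta)$. Equivalently, \cite{fan2022approximate} proves this form is p.s.d. One subtlety is that $\X$ may have $\E\Lambda=0$ while $\kappa_1$ is absent, which matches the $p\ge1$ indexing. Another is possible degeneracy of $\C_g$; I would handle this by a small generic perturbation or by the degenerate-case version of the theorem in \cite{fan2022approximate}. The convergence \eqref{eq:AMP-SE} then follows from their state evolution, in Wasserstein-2, since the functions are Lipschitz.

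\textbf{Step 3: the kernel limits \eqref{eq:discretekernelconvergence}.} The statements involve $\langle\cdot\rangle$, the expectation over $\b$ given $(\X,\btheta^0)$, rather than the empirical averages themselves. The first two limits follow by combining Step 2 with concentration over $\b$. Conditional on $\X,\btheta^0$, the maps $\b\mapsto\frac1n\btheta^{k\top}_\eta\btheta^j_\eta$ are Lipschitz with constant $O(1/\sqrt n)$ on bounded sets, by $\|\X\|_\op\le\Lambda_{\max}$ and Lipschitz $f$. Gaussian concentration plus Borel--Cantelli then transfers the almost-sure empirical limits to conditional expectations; uniform integrability comes from moment bounds. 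For the third limit, the empirical state evolution gives $\frac1n\btheta^{k\top}_\eta\b^j_\eta\to\E[\theta^k_\eta b^{j\eta}]$. I then compute this by Gaussian integration by parts (Stein's lemma) in the Brownian increments $b^{(q+1)\eta}-b^{q\eta}$ for $q<j$, each of variance $\eta$. Since $\partial\theta^k_\eta/\partial(b^{q+1}_\eta-b^q_\eta)=\sqrt{2\gamma}\,\eta^{-1}\partial\theta^k_\eta/\partial g^q_\eta$ (the increment enters exactly like $\eta g^q_\eta$), the sum becomes $\sqrt{2\gamma}\sum_{q<j}(\bR_\theta)_{kq}$.

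I expect the main obstacle to be Step 2: precisely matching the indexing and diagonal conventions of \cite{fan2022approximate}, including strict lower-triangularity and the $\partial\theta^{k+1}/\partial g^k=\eta$ term, against the present definitions. A second obstacle is checking their nondegeneracy hypotheses, which I would address by perturbation.
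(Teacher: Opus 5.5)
Your proposal is correct and essentially matches the paper's proof. You cast the discretized dynamics as an orthogonally-invariant AMP (the Onsager coefficients cancel in the dynamics, so they can be chosen as the state-evolution ones) and invoke \cite[Theorem 4.3]{fan2022approximate}, which also gives positive-semidefiniteness of the limiting covariance. You then obtain the $\btheta$--$\b$ correlation by Gaussian integration by parts in the Brownian increments. The only cosmetic difference is the choice of AMP iterates: the paper runs the AMP on the increments $\v^k=\btheta^k_\eta-\btheta^{k-1}_\eta$ and converts back through $\bE$ (via $\bR_\theta^\top=\bE^{-1}\bPhi\bE$ and $\C_g=\bE^\top\bSigma\bE$). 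You use $\btheta^k_\eta$ and $\g^k_\eta$ directly, which is an equivalent invertible lower-triangular reparametrization and makes the identification $\bPhi=\bR_\theta$, $\bDelta=\C_\theta$, $\bSigma=\C_g$ immediate.
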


\begin{proof}
Set $\v^0=\btheta^0$ and $\v^k=\btheta^{k}_\eta-\btheta^{k-1}_\eta$, so that
$\btheta^{k}_\eta=\v^0+\ldots+\v^k$. 
It is readily checked that for any deterministic scalar
values $\{a_{jk}\}_{0 \leq j \leq k}$,
the dynamics \eqref{eq:disc-Langevin} may be equivalently written as
\begin{equation}\label{eq:discreteAMP}
\begin{aligned}
\z^k&=\X\v^k-\sum_{j=0}^k a_{jk}\v^j,\\
\v^{k+1}&=\underbrace{\eta\left[f\Big(\sum_{j=0}^k \v^j\Big)+\sum_{j=0}^k
\Big(\sum_{i=0}^j a_{ij}\v^i+\z^j\Big)\right] +
\sqrt{2\gamma}(\b_\eta^{k+1}-\b_\eta^k)}_{:=v_{k+1}(\z^0,\ldots,\z^k,\v^0,\b_\eta^1,\ldots,\b_\eta^{k+1})},
\end{aligned}
\end{equation}
where $v_{k+1}:\R^{2k+3} \to \R$ is a scalar function defined recursively for
$k=0,1,2,\ldots$ so that
$\v^{k+1}=v_{k+1}(\z^0,\ldots,\z^k,\v^0,\b_\eta^1,\ldots,\b_\eta^{k+1})$ holds.
Since $f:\R \to \R$ is Lipschitz and continuously-differentiable,
each function $v_{k+1}$ is also Lipschitz and continuously-differentiable
in all arguments.

We define iteratively in time a corresponding AMP state evolution:
Let $v^0=\theta^0$. Given the law of Gaussian variables $(z^0,\ldots,z^{k-1})$
independent of $\theta^0$ and $\{b^t\}_{t \geq 0}$, define
\begin{align*}
\bPhi
&=\Big(\1_{i<j}\E\Big[
\frac{\partial}{\partial z^i}
v_j(z^0,\ldots,z^{j-1},v^0,b_\eta^1,\ldots,b_\eta^j)
\Big]
\Big)_{0 \leq i,j \leq k},\\
\bDelta
&=\Big(
\E[
v_i(z^0,\ldots,z^{i-1},v^0,b_\eta^1,\ldots,b_\eta^i)
v_j(z^0,\ldots,z^{j-1},v^0,b_\eta^1,\ldots,b_\eta^j)]
\Big)_{0 \leq i,j \leq k}
\end{align*}
where $v_i(z^0,\ldots,z^{i-1},v^0,b_\eta^1,\ldots,b_\eta^i) \equiv v^0$ for
$i=0$. Then define the AMP Onsager matrix and state evolution covariance by
\[
\A=\sum_{p \geq 1} \kappa_{p+1}\bPhi^p,
\qquad
\bSigma=\sum_{p,q \geq 0} \kappa_{p+q+2}
\bPhi^{p\top}\bDelta\bPhi^q,
\]
let $(z^0,\ldots,z^k) \sim \cN(0,\bSigma)$ (independent of $\theta^0$ and
$\{b^t\}_{t \geq 0}$), and let
\begin{equation}\label{eq:vkdef}
v^k=v_k(z^0,\ldots,z^{k-1},v^0,b_\eta^1,\ldots,b_\eta^k).
\end{equation}
Note that $\A$ is strictly upper-triangular, and the matrices
$\bPhi,\bDelta,\A,\bSigma$ up to step $k$ are the upper-left corners of those up
to step $k+1$.
Writing $\A=(a_{ij})_{0 \leq i,j \leq k}$ and choosing the
coefficients $a_{ij}$ in \eqref{eq:discreteAMP} to be the entries of $\A$,
\cite[Theorem 4.3]{fan2022approximate} then implies that $\bSigma$
above is positive-semidefinite (hence the above law of $(z^0,\ldots,z^k)$
is well-defined), and for any fixed $K$, almost surely in Wasserstein-2,
\begin{align}\label{eq:AMPconvergence}
\lim_{n \to \infty}
\frac{1}{n}\sum_{i=1}^n
\delta_{(v_i^0,\ldots,v_i^K,z_i^0,\ldots,z_i^K,b_{\eta,i}^0,\ldots,b_{\eta,i}^K)}
\to 
\Law\left(v^0,\ldots,v^K,z^0,\ldots,z^K,b_\eta^0,\ldots,b_\eta^K\right).
\end{align}

To check that this implies \eqref{eq:AMP-SE},
recall $\btheta_\eta^k=\v^0+\ldots+\v^k$, define
$\g_\eta^k=\z^0+\ldots+\z^k$, and define also
from the above AMP state evolution variables
\begin{align*}
\theta^{j}_\eta:=v^0+\ldots+v^j,
\qquad g^{j}_\eta:=z^0+\cdots+z^j.
\end{align*}
Denote the upper-triangular
matrix $\bE=(\1_{i\leq j})_{0\leq i,j\leq k}$. Comparing the above definitions
of $\bPhi,\bDelta,\A,\bSigma$ with the
definitions (\ref{eq:discreteCR}), we then have
\[\bR_\theta^\top=\bE^{-1}\bPhi\bE,
\qquad \C_\theta=\bE^\top\bDelta\bE,
\qquad \bR_g^\top=\bE^{-1}\A\bE,
\qquad \C_g=\bE^\top\bSigma\bE\]
and $(g_\eta^0,\ldots,g_\eta^k) \sim \cN(0,\C_g)$. In particular, $\C_g$ is
positive-semidefinite as claimed. Noting that $\A\bE=\bE\bR_g^\top$
and writing $\bR_g=(r_{ij})_{0 \leq i,j \leq k}$,
the recursions for $\z^k$ and $\v^{k+1}$ in \eqref{eq:discreteAMP} then imply
\begin{align*}
\g_\eta^k&=\X(\v^0+\ldots+\v^k)-\sum_{j=0}^k r_{kj}(\v^0+\ldots+\v^j)\\
&=\X\btheta_\eta^k-\sum_{j=0}^k r_{kj}\btheta_\eta^j,\\
\btheta_\eta^{k+1}-\btheta_\eta^k
=\v^{k+1}&=\eta\Big[
f(\v^0+\cdots+\v^k)
+\sum_{j=0}^k r_{kj}(\v^0+\cdots+\v^j)
+\g_\eta^k \Big] + \sqrt{2\gamma}(\b_\eta^{k+1}-\b_\eta^k)\\
&=\eta\Big[f(\btheta_\eta^k)+\sum_{j=0}^k r_{kj}\btheta_\eta^j+\g_\eta^k
\Big] + \sqrt{2\gamma}(\b_\eta^{k+1}-\b_\eta^k).
\end{align*}
Thus $\g_\eta^k$ coincides with the definition \eqref{eq:discretegk}, and we
have analogously from the definition of $v^{k+1}$ in \eqref{eq:vkdef} that
\begin{align*}
\theta_\eta^{k+1}-\theta_\eta^k
=v^{k+1}=\eta\Bigg[f(\theta_\eta^k)+\sum_{j=0}^k r_{kj}\theta_\eta^j
+g_\eta^k\Bigg] + \sqrt{2\gamma}(b_\eta^{k+1}-b_\eta^k),
\end{align*}
coinciding with \eqref{eq:DMFTdiscrete2}. Thus
$\{\theta_\eta^k,g_\eta^k,b_\eta^k\}_{k \geq 0}$ have the same joint law as defined in
(\ref{eq:discreteg}--\ref{eq:DMFTdiscrete2}), and the Wasserstein-2
convergence \eqref{eq:AMPconvergence} implies \eqref{eq:AMP-SE} as desired.

The first two statements of \eqref{eq:discretekernelconvergence} follow
immediately from \eqref{eq:AMP-SE} and dominated convergence to take
the expectation over $\{\b^t\}_{t \geq 0}$. Also from \eqref{eq:AMP-SE} and
dominated convergence, for any $j \leq k$,
\[
\frac{1}{n}\langle \btheta_\eta^{k\top}\b_\eta^j \rangle
\to \E [\theta^k_\eta b^j_\eta].\]
Denoting $w^q=b_\eta^{q+1}-b_\eta^q \sim \cN(0,\eta)$,
Gaussian integration-by-parts gives
\[
\E [\theta^k_\eta (b^{q+1}_\eta-b^q_\eta)] = \E [\theta^k_\eta w^q] = \eta\,\E
\Big[\frac{\partial \theta^k_\eta}{\partial w^q}\Big]
=\sqrt{2\gamma}\,\E\Big[\frac{\partial \theta^k_\eta}{\partial g^q_\eta}\Big]\]
where the last step holds by the form of the equation \eqref{eq:DMFTdiscrete2}.
Noting that $\E[\theta_\eta^k b_\eta^0]=0$ and summing over $q=0,\ldots,j-1$
shows the last statement of \eqref{eq:discretekernelconvergence}.
\end{proof}

%An important fact that deserves attention 
%from the above proof is that \begin{align}
%  \C_k &=\bE_k^\top\bSigma_k\bE_k
%=\sum_{p,q\ge 0} \kappa_{p+q+2}
%[(\bE_k^{-1}\bPhi_k\bE_k)^p]^\top(\bE_k^\top\bDelta_k\bE_k)
%(\bE_k^{-1}\bPhi_k\bE_k)^{q} \succeq 0,
%\end{align}
%as $\bSigma_k$ is positive semidefinite from the AMP state evolution, which is
%not a priori clear from the definition of $\C_k$ in \eqref{eq:RkCkupdate2}.
 
We next identify piecewise-constant continuous-time embeddings of the
discrete-time mean-field limit as fixed points of a discretized system of
mappings. Still fixing the discretization step size $\eta>0$,
for $t\geq 0$, define
\begin{align*}
  \lfloor t\rfloor
  = \max\{k\eta: k\in \mathbb Z,\ k\eta\leq t\},
\qquad \lceil t\rceil
  = \lfloor t\rfloor+\eta,
\qquad
  [t]
  = \lfloor t\rfloor/\eta \in \mathbb{Z}.
\end{align*}
We use the convention that $\int_s^t f(r)\d r=0$ if $t<s$.
Fixing $T>0$, define the discontinuity set
\[D_\eta=[0,T] \cap \eta\mathbb{Z}.\]
Let $\cS_\theta^{\eta,+} \subset \cS_\theta^+(T,D_\eta)$ and $\cS_g^\eta \subset
\cS_g(T,D_\eta)$ be the subsets of the spaces
$\cS_\theta^+(T,D_\eta),\cS_g(T,D_\eta)$ defined in Section \ref{subsec:spaces},
for which the kernels have the piecewise-constant structure
\begin{equation}\label{eq:piecewiseconstant}
C(t,s)=C(\lfloor t \rfloor,\lfloor s \rfloor),
\qquad R(t,s)=\1_{t \geq s}\,R(\lfloor t \rfloor,\lfloor s \rfloor)
\end{equation}
for $(C,R)=(C_\theta,R_\theta)$ or $(C,R)=(C_g,R_g)$, respectively. We define a map
\begin{align}\label{eq:Tthetageta}
\cT_{\theta \to g}^\eta:\cS_\theta^{\eta,+} \to \cS_g^\eta,
\qquad (C_\theta,R_\theta) \mapsto (C_g,R_g)
\end{align}
by
\begin{align*}
R_g(t,s)&=\kappa_2R_\theta(t,s)
+\sum_{p \geq 2}\kappa_{p+1}\int_{\lceil s \rceil}^{\lfloor t \rfloor} \d t_1
\int_{\lceil s \rceil}^{\lfloor t_1 \rfloor} \d t_2
\ldots \int_{\lceil s \rceil}^{\lfloor t_{p-2} \rfloor} \d t_{p-1}\\
&\hspace{3in}R_\theta(t,t_1)R_\theta(t_1,t_2)\ldots R_\theta(t_{p-1},s),\\
C_g(t,s)&=\sum_{p,q \geq 0} \kappa_{p+q+2}\int_0^{\lfloor t \rfloor} \d t_1
\ldots \int_0^{\lfloor t_{p-1} \rfloor} \d t_p
\int_0^{\lfloor s \rfloor} \d s_1
\ldots \int_0^{\lfloor s_{q-1} \rfloor} \d s_q\\
&\hspace{1in}R_\theta(t,t_1)\ldots R_\theta(t_{p-1},t_p)
R_\theta(s,s_1)\ldots R_\theta(s_{q-1},s_q) C_\theta(t_p,s_q),
\end{align*}
where $t_0=t$ and $s_0=s$. Let
\[\cS_g^{\eta,+}=\big\{\cP_{D_\eta}(C_g,R_g):(C_g,R_g) \in
\cS_g^\eta\big\} \subset \cS_g^+(T,D_\eta)\]
be the image of $\cS_g^\eta$ under the preceding projection $\cP_{D_\eta}$.
We denote
\[\cT_{\theta \to g}^{\eta,+}:\cS_\theta^{\eta,+} \to \cS_g^{\eta,+},
\qquad \cT_{\theta \to g}^{\eta,+}=\cP_{D_\eta} \circ \cT_{\theta \to g}^\eta.\]

Next, we define a map
\begin{align}\label{eq:Tgthetaeta}
  \cT_{g \to \theta}^\eta:\cS_g^{\eta,+} \to \cS_\theta^{\eta,+},
\qquad (C_g,R_g) \mapsto (C_\theta,R_\theta)
\end{align}
by
\[C_\theta(t,s)=\E[\bar\theta^t\bar\theta^s],
\qquad R_\theta(t,s)=\E\left[\frac{\partial \bar\theta^t}{\partial g^s}\right]\]
where $\{\bar\theta^t\}_{t \geq 0}$ and $\{\frac{\partial \bar\theta^t}{\partial
g^s}\}_{t \geq s \geq 0}$ solve
\begin{align}
\bar\theta^t&=\bar\theta^0+\int_0^{\lfloor t\rfloor}
  \bigg[f(\bar\theta^r)
    +\int_0^{\lfloor r\rfloor} R_g(r,q)
    \bar\theta^q\,\d q+g^{\lfloor r \rfloor}\bigg]\d r+\sqrt{2\gamma}\,b^{\lfloor
t\rfloor}, \qquad \bar\theta^0=\theta^0,
\label{eq:piecewise_discrete_dmft_theta}\\
\frac{\partial \bar\theta^t}{\partial g^s}
  &=1+\int_{\lceil s\rceil}^{\lfloor t\rfloor}
  \bigg[
    f'(\bar\theta^r)
    \frac{\partial \bar\theta^r}{\partial g^s}
    +\int_{\lceil s\rceil}^{\lfloor r\rfloor}
    R_g(r,q)\frac{\partial \bar\theta^q}{\partial g^s}\,\d q
  \bigg]\d r \text{ for } t \geq s,
\quad \frac{\partial \bar\theta^t}{\partial g^s}=0 \text{ for } t<s,\label{eq:piecewise_discrete_dmft_response}
\end{align}
and $\{g^t\}_{t \in [0,T]} \sim \GP(0,C_g)$ in
\eqref{eq:piecewise_discrete_dmft_theta} which is independent of $\theta^0$ and
$\{b^t\}_{t \geq 0}$.

\begin{lemma}\label{lemma:discretemappings}
For any fixed $T>0$ and all sufficiently small $\eta>0$, the following hold:
\begin{enumerate}[(a)]
\item For any $(C_g,R_g) \in \cS_g^{\eta,+}$, we have $\cT_{g \to
\theta}^\eta(C_g,R_g) \in \cS_\theta^{\eta,+}$.
\item For any $(C_\theta,R_\theta) \in \cS_\theta^{\eta,+}$, we have $\cT_{\theta
\to g}^\eta(C_\theta,R_\theta) \in \cS_g^\eta$
and $\cT_{\theta \to g}^{\eta,+}(C_\theta,R_\theta) \in \cS_g^{\eta,+}$.
\item Let $\C_\theta,\bR_\theta,\C_g,\bR_g$ be the matrices
\eqref{eq:discreteCR} defined up to a step
$k>T/\eta$, and define their continuous-time embeddings
\[C_\theta^\eta(t,s)=(\C_\theta)_{[t],[s]},
\qquad C_g^\eta(t,s)=(\C_g)_{[t],[s]},\]
\[R_\theta^\eta(t,s)=\begin{cases}
0 & \text{ if } s>t\\
1 & \text{ if } s \leq t,\,[s]=[t] \\
\frac{1}{\eta}\,(\bR_\theta)_{[t],[s]} & \text{ if } [s]<[t]
\end{cases},
\qquad R_g^\eta(t,s)=\begin{cases}
0 & \text{ if } s>t\\
\kappa_2 & \text{ if } s \leq t,\,[s]=[t] \\
\frac{1}{\eta}\,(\bR_g)_{[t],[s]} & \text{ if } [s]<[t].
\end{cases}
\]
Then $(C_g^\eta,R_g^\eta) \in \cS_g^{\eta,+} \cap \cS_g^\eta$ and
$(C_\theta^\eta,R_\theta^\eta) \in \cS_\theta^{\eta,+}$. Furthermore,
$\cT_{g \to \theta}^\eta(C_g^\eta,R_g^\eta)=(C_\theta^\eta,R_\theta^\eta)$
and $\cT_{\theta \to g}^\eta(C_\theta^\eta,R_\theta^\eta)=
\cT_{\theta \to g}^{\eta,+}(C_\theta^\eta,R_\theta^\eta)=(C_g^\eta,R_g^\eta)$.
\end{enumerate}
\end{lemma}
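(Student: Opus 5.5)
The proof splits into the three parts, and the main work is the explicit identification in (c); parts (a) and (b) are bound-checking in the style of Lemma \ref{lemma:well_defined}.

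\emph{Part (a).} Given $(C_g,R_g)\in\cS_g^{\eta,+}$, the kernel $C_g$ is piecewise constant and p.s.d. Hence $g^{\lfloor r\rfloor}$ is a finite Gaussian vector and the recursion \eqref{eq:piecewise_discrete_dmft_theta} is well-defined and piecewise constant in $t$. Its increment over $[k\eta,(k+1)\eta)$ is $\eta$ times the bracket evaluated at $k\eta$, so $C_\theta$ and $R_\theta$ have the structure \eqref{eq:piecewiseconstant}. For the envelope bounds, I would repeat the Gr\"onwall and It\^o arguments of Lemma \ref{lemma:well_defined} with integrals truncated at $\lfloor\cdot\rfloor$. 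These give $|\partial\bar\theta^t/\partial g^s|\le\Phi_{R_\theta}(t-s)$ and $\E(\bar\theta^t)^2\le\Phi_{C_\theta}(t)$, provided $K_0$ is large and $\eta$ is small so that discretization errors are absorbed. The continuity bounds \eqref{eq:Ctheta-cont} and \eqref{eq:Rtheta-cont} are trivial within a maximal interval of $[0,T]\setminus D_\eta$, because the kernels are constant there.

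\emph{Part (b).} The argument mirrors Lemma \ref{lemma:well_defined}. The truncated iterated integrals are dominated by the untruncated convolutions of envelopes, which gives the $\Phi_{R_g}$ and $\Phi_{C_g}$ bounds. Piecewise constancy holds because every integrand depends only on $\lfloor t\rfloor,\lfloor s\rfloor$ and the limits are lattice points. For the projection, I need $\cP_{D_\eta}$ to preserve piecewise constancy. Averaging a kernel over the $\eta\times\eta$ cells is an orthogonal projection in $\langle\cdot,\cdot\rangle_\lambda$, since the weight $e^{-2\lambda(t\vee s)}$ is not constant on diagonal cells. On off-diagonal cells the weight is a product, so the averaging operator is a weighted conditional expectation; it preserves p.s.d.-ness and the envelope and continuity constraints, and it commutes with the constraint set. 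I would therefore argue by uniqueness of the Hilbert projection that $\cP_{D_\eta}$ of a piecewise-constant kernel is again piecewise constant, using a cell-wise weighted average adapted to the measure $e^{-2\lambda(t\vee s)}\d t\,\d s$. This is the step I expect to be most delicate. If it fails, I would simply define $\cS_g^{\eta,+}$ as the image, which the statement already does, so only membership in $\cS_g^+(T,D_\eta)$ is then needed.

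\emph{Part (c).} I would substitute the embeddings directly. With $R_\theta^\eta$ equal to $1$ on diagonal cells and $\eta^{-1}(\bR_\theta)_{ij}$ off the diagonal, the iterated integrals of $\cT^\eta_{\theta\to g}$ become matrix products. The constraint $t_1\in[\lceil s\rceil,\lfloor t\rfloor)$ forces strictly lower-triangular steps, each cell contributes a factor $\eta$, and this cancels the $\eta^{-1}$. The result is $\eta^{-1}(\bR_\theta^p)_{[t][s]}$, which matches $\eta^{-1}(\bR_g)_{[t][s]}$, while the $p=1$ term reproduces the $\kappa_2$ diagonal entry. Likewise, $C_g$ becomes $\sum\kappa_{p+q+2}(\bR_\theta^p\C_\theta\bR_\theta^{q\top})$, matching \eqref{eq:discreteCR}. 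Because $\C_g$ is p.s.d.\ by Theorem \ref{thm:discreteDMFT}, the result lies in $\cS_g^+$, so the projection fixes it and $\cT^{\eta,+}_{\theta\to g}=\cT^\eta_{\theta\to g}$ at this point. Conversely, plugging $(C_g^\eta,R_g^\eta)$ into \eqref{eq:piecewise_discrete_dmft_theta} reproduces exactly \eqref{eq:DMFTdiscrete2}, with $\sum_q r_{kq}\theta^q$ coming from $\int R_g\bar\theta$. Its derivative reproduces \eqref{eq:DMFTdiscreteresponse}, and the factor $\eta$ in $\partial\theta^{k+1}/\partial g^k=\eta$ is matched by normalizing the response to $1$ at the same cell. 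Hence $\cT^\eta_{g\to\theta}(C_g^\eta,R_g^\eta)=(C_\theta^\eta,R_\theta^\eta)$. Membership of the embeddings in the spaces then follows from parts (a) and (b).
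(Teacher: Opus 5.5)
Your bound-checking in (a)--(b) and the identification computations in (c) are fine. With the floor/ceiling truncations, every factor of $R_\theta^\eta$ sits on a strictly sub-diagonal cell, the powers of $\eta$ cancel, and you recover $\bR_g$, $\C_g$, \eqref{eq:DMFTdiscrete2} and \eqref{eq:DMFTdiscreteresponse}.

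\textbf{The gap is your final sentence.} Membership of the embeddings does not follow from (a) and (b), because that reasoning is circular. Part (a) applies only to inputs already known to lie in $\cS_g^{\eta,+}$, and part (b) only to inputs already known to lie in $\cS_\theta^{\eta,+}$. So deducing $(C_\theta^\eta,R_\theta^\eta)=\cT^\eta_{g\to\theta}(C_g^\eta,R_g^\eta)\in\cS_\theta^{\eta,+}$ from (a) presupposes $(C_g^\eta,R_g^\eta)\in\cS_g^{\eta,+}$. Deducing that from (b) in turn presupposes $(C_\theta^\eta,R_\theta^\eta)\in\cS_\theta^{\eta,+}$.

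The recursion \eqref{eq:discreteCR}--\eqref{eq:DMFTdiscreteresponse} does not give you the envelope bounds for free: $|R^\eta_\theta|\le\Phi_{R_\theta}$, $C^\eta_\theta(t,t)\le\Phi_{C_\theta}(t)$, $|R_g^\eta|\le\Phi_{R_g}$ and $C_g^\eta(t,t)\le\Phi_{C_g}(t)$ all have to be proved. For the same reason, your claim that $\cP_{D_\eta}$ fixes $C_g^\eta$ is premature. Positive-semidefiniteness of $\C_g$ alone does not give $(C_g^\eta,R_g^\eta)\in\cS_g^+(T,D_\eta)$; that also needs the envelope bounds.

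The paper breaks the circle with a separate argument. As in Lemmas \ref{lem:contraction-Rtheta} and \ref{lem:contraction-Lipschitz}, it shows that $\cT^\eta_{g\to\theta}\circ\cT^{\eta,+}_{\theta\to g}$ is a $d_\lambda$-contraction on the closed set $\cS_\theta^{\eta,+}$. It takes the unique fixed point in that set and uses induction in time to identify it with $(C^\eta_\theta,R^\eta_\theta)$. This places the embeddings inside the spaces.

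Alternatively, you can repair your own route by running your Gr\"onwall/It\^o comparisons inductively over cells:
\begin{itemize}
\item the bounds for $(C_\theta,R_\theta)$ on cell $k$ use $(C_g,R_g)$ only on cells $<k$;
\item the bounds for $(C_g,R_g)$ on cell $k$ use $(C_\theta,R_\theta)$ only on cells $\le k$.
\end{itemize}
So the envelope inequalities propagate step by step for the embeddings themselves. No projection is involved, since $C_g^\eta$ is positive-semidefinite by Theorem \ref{thm:discreteDMFT}. Either way, one of these two arguments must be supplied.

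Separately, your projection detour in (b) is unnecessary, and its key claim is doubtful. The $\lambda$-weighted cell averaging puts the weight $e^{-2\lambda(\cdot)}$ on whichever cell of a pair is later, and the weight is not a product on diagonal cells. So it is not a congruence $C\mapsto\big(\int\!\!\int\phi_i(t)\phi_j(s)C(t,s)\,\d t\,\d s\big)_{ij}$ with a fixed family $\{\phi_i\}$, and it need not preserve positive-semidefiniteness.

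You do not need it. The inclusion $\cT^{\eta,+}_{\theta\to g}(C_\theta,R_\theta)\in\cS_g^{\eta,+}$ holds by the definition of $\cS_g^{\eta,+}$ as an image. In (a) you should not assume $C_g$ is piecewise constant. It suffices that only the lattice values $g^{\lfloor r\rfloor}$ enter \eqref{eq:piecewise_discrete_dmft_theta}--\eqref{eq:piecewise_discrete_dmft_response}. That already forces $\bar\theta^t=\bar\theta^{\lfloor t\rfloor}$ and a piecewise-constant response.
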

\begin{proof}
The proof is similar to \cite[Lemma 4.3]{fan2025dynamicalI}, so we will omit
some details.

Similar arguments as in Lemma \ref{lemma:well_defined}
show that $(C_g,R_g) \in \cS_g^{\eta,+}$
implies $\cT_{g \to \theta}^\eta(C_g,R_g) \in \cS_\theta^\eta(T,D_\eta)$,
and $(C_\theta,R_\theta) \in \cS_\theta^{\eta,+}$ implies
$\cT_{\theta \to g}^\eta(C_\theta,R_\theta) \in \cS_g(T,D_\eta)$. 
An induction in time checks that the processes solving
\eqref{eq:piecewise_discrete_dmft_theta}
and \eqref{eq:piecewise_discrete_dmft_response} must satisfy
\begin{equation}\label{eq:thetapiecewiseconstant}
\theta^t=\theta^{\lfloor t \rfloor},
\qquad \frac{\partial \theta^t}{\partial g^s}
=\1_{t \geq s}\frac{\partial \theta^{\lfloor t \rfloor}}{\partial g^{\lfloor s
\rfloor}}.
\end{equation}
Then $\cT_{g \to \theta}^\eta(C_g,R_g)$ satisfies \eqref{eq:piecewiseconstant},
so $\cT_{g \to \theta}^\eta(C_g,R_g)
\in \cS_\theta^{\eta,+}$. If $(C_\theta,R_\theta) \in \cS_\theta^{\eta,+}$
satisfies \eqref{eq:piecewiseconstant}, then the definition of
\eqref{eq:Tthetageta} also implies that
$\cT_{\theta \to g}^\eta(C_\theta,R_\theta)$ satisfies
\eqref{eq:piecewiseconstant}, so
$\cT_{\theta \to g}^\eta(C_\theta,R_\theta) \in \cS_g^\eta$
and hence (by definition) $\cT_{\theta \to g}^{\eta,+}(C_\theta,R_\theta) \in \cS_g^{\eta,+}$.
Thus parts (a) and (b) hold.

Letting $(C_\theta^\eta,R_\theta^\eta,C_g^\eta,R_g^\eta)$ be the embeddings of part (c),
an induction in time using the piecewise-constant properties
\eqref{eq:piecewiseconstant} and
\eqref{eq:thetapiecewiseconstant} also checks that
$\cT_{g \to \theta}^\eta(C_g^\eta,R_g^\eta)=(C_\theta^\eta,R_\theta^\eta)$
and $\cT_{\theta \to g}^\eta(C_\theta^\eta,R_\theta^\eta)=(C_g^\eta,R_g^\eta)$.
Theorem \ref{thm:discreteDMFT} shows that $\C_g$ is
positive-semidefinite; hence $C_g^\eta$ is also a
positive-semidefinite kernel on $[0,T] \times [0,T]$, so
$\cP_{D_\eta}(C_g^\eta)=C_g^\eta$. Then also
$\cT_{\theta \to g}^{\eta,+}(C_\theta^\eta,R_\theta^\eta)=(C_g^\eta,R_g^\eta)$.

Finally, similar arguments as in Lemmas \ref{lem:contraction-Rtheta}
and \ref{lem:contraction-Lipschitz} show that
$\cT_{g \to \theta}^\eta \circ \cT_{\theta \to g}^{\eta,+}:\cS_\theta^{\eta,+} \to \cS_\theta^{\eta,+}$ is contractive in
the metric $d_\lambda$ for all sufficiently large $\lambda$. Then, since
$\cS_\theta^{\eta,+}$ is also closed under $d_\lambda$,
this map has a unique fixed point $(\bar C_\theta^\eta,\bar R_\theta^\eta) \in
\cS_\theta^{\eta,+}$. Letting $(\bar C_g^\eta,\bar R_g^\eta)=
\cT_{\theta \to g}^{\eta,+}(\bar C_\theta^\eta,\bar R_\theta^\eta)
\in \cS_g^{\eta,+}$, an induction in time using \eqref{eq:Tthetageta} and
\eqref{eq:Tgthetaeta} shows that we must have
$(\bar C_\theta^\eta,\bar R_\theta^\eta)=(C_\theta^\eta,R_\theta^\eta)$
and $(\bar C_g^\eta,\bar R_g^\eta)=(C_g^\eta,R_g^\eta)$. Thus 
$(C_\theta^\eta,R_\theta^\eta) \in \cS_\theta^{\eta,+}$ and
$(C_g^\eta,R_g^\eta)\in \cS_g^{\eta,+} \cap \cS_g^{\eta}$,
showing part (c).
\end{proof}

\subsection{Proof of Theorem \ref{thm:dmft-approx}}

We proceed to show Theorem \ref{thm:dmft-approx}. The following lemma bounding
the difference between the continuous-time and discretized mappings will be
used to show part (a).

\begin{lemma}\label{lem:DMFT-mapping-discretization-distance}
There exists a constant $C>0$ depending on $T$ but not on $\lambda,\eta$
such that for all sufficiently large $\lambda>0$ and
sufficiently small $\eta>0$,
\begin{enumerate}[(a)]
\item For any $(C_g,R_g) \in \cS_g^{\eta,+}$,
\[d_\lambda\big(\cT_{g\to \theta}(C_g,R_g),\cT_{g\to \theta}^\eta(C_g,R_g)\big)
\le C\sqrt{\eta}.\]
\item For any $(C_\theta,R_\theta) \in \cS_\theta^{\eta,+}$,
\[d_\lambda\big(\cT_{\theta\to g}(C_\theta,R_\theta),\cT_{\theta\to
g}^{\eta}(C_\theta,R_\theta)\big) \le C\eta.\]
\end{enumerate}
\end{lemma}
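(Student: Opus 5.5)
For part (a), I would couple the two processes. Let $\theta^t$ solve the continuous-time system (\ref{eq:dmft-theta}--\ref{eq:dmft-response}), and let $\bar\theta^t$ solve the discretized system (\ref{eq:piecewise_discrete_dmft_theta}--\ref{eq:piecewise_discrete_dmft_response}). Both are driven by the same $(C_g,R_g)\in\cS_g^{\eta,+}$, and they share the same realizations of $\theta^0$, $\{b^t\}$ and $\{g^t\}\sim\GP(0,C_g)$.

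Since $C_g$ and $R_g$ are piecewise constant on the grid, $g^t=g^{\lfloor t\rfloor}$ almost surely and $R_g(r,q)=R_g(\lfloor r\rfloor,\lfloor q\rfloor)$. The discrepancies between the two equations are therefore:
\begin{enumerate}[(i)]
\item the truncation of the time integrals at $\lfloor t\rfloor$ or $\lfloor r\rfloor$ instead of $t$ or $r$;
\item the Brownian increment $b^t-b^{\lfloor t\rfloor}$;
\item the use of $\bar\theta^{\lfloor r\rfloor}$ versus $\theta^r$ inside $f$ and inside the memory term.
\end{enumerate}
Using the envelope bounds $\Phi_{C_\theta},\Phi_{C_g},\Phi_{R_g}$ on $[0,T]$, the second moment of the missing integral over an interval of length at most $\eta$ is $O(\eta^2)$. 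The Brownian piece has second moment $2\gamma\eta$, and the increment satisfies $\E(\theta^r-\theta^{\lfloor r\rfloor})^2\le K_2\eta$. A Gr\"onwall argument then gives
\[\sup_{t\le T}\E(\theta^t-\bar\theta^t)^2\le C\eta.\]
By Cauchy--Schwarz, as in the proof of Lemma \ref{lem:contraction-Rtheta}, this yields $|C_\theta(t,s)-\bar C_\theta(t,s)|\le C\sqrt\eta$ pointwise, and hence $d_\lambda(C_\theta,\bar C_\theta)\le C\sqrt\eta$. The weight $e^{-2\lambda(t\vee s)}\le1$ ensures the constant does not depend on $\lambda$.

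For the response, I compare (\ref{eq:dmft-response}) with (\ref{eq:piecewise_discrete_dmft_response}) pathwise. The discrepancies are the integration ranges, which differ by intervals of length at most $\eta$ near $s$ and $t$ and contribute $O(\eta)$ by boundedness of the integrands, and the evaluation of $f'$ at $\theta^r$ versus $\bar\theta^r$. Using $\|f''\|_\infty<\infty$ together with the bound $|\partial\theta/\partial g|\le\Phi_{R_\theta}(T)$, the latter contributes
\[\E|f'(\theta^r)-f'(\bar\theta^r)|\le C\sqrt{\E(\theta^r-\bar\theta^r)^2}\le C\sqrt\eta.\]
Gr\"onwall in $t$ then gives $\sup_{s\le t}|R_\theta(t,s)-\bar R_\theta(t,s)|\le C\sqrt\eta$. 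This $\sqrt\eta$ rate for $C_\theta$ and $R_\theta$, which comes from the Brownian and H\"older-$1/2$ regularity of $\theta$, is the source of the $\sqrt\eta$ in part (a).

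For part (b), the argument is deterministic. Expanding $R_\theta^{*p}$ and $R_\theta^{*p}*C_\theta*\bar R_\theta^{*q}$ as iterated integrals, the discretized map $\cT^\eta_{\theta\to g}$ differs from $\cT_{\theta\to g}$ only in its integration limits. There are two such differences:
\begin{enumerate}[(i)]
\item each inner variable ranges over $[0,\lfloor t_{a-1}\rfloor]$ or $[\lceil s\rceil,\lfloor t_{a-1}\rfloor]$ instead of $[0,t_{a-1}]$ or $[s,t_{a-1}]$;
\item the $p=1$ term and the diagonal blocks are treated specially.
\end{enumerate}
Each term with $p+q$ factors therefore differs by a sum of at most $p+q$ integrals, each of which has one variable restricted to an interval of length at most $\eta$. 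Bounding the integrands by $\Phi_{R_\theta}(T)^{p+q}\Phi_{C_\theta}(T)$, and the remaining iterated simplex volumes by $T^{p+q-1}/(p+q-1)!$ in the style of Lemma \ref{lem:contraction-Lipschitz}, gives a pointwise bound
\[C\eta\sum_{p,q}|\kappa_{p+q+2}|(p+q)\frac{\Phi_{R_\theta}(T)^{p+q}T^{p+q-1}}{(p+q-1)!}\le C\eta,\]
where I use $|\kappa_p|\le K_1^p$ from Proposition \ref{prop:freecumulants}. The same bound holds for $R_g$. A uniform pointwise bound dominates both $d_\lambda$ components, uniformly in $\lambda$.

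The main obstacle is bookkeeping rather than substance. One has to check carefully that the special conventions of the discrete kernels at $[s]=[t]$ (the values $1$ and $\kappa_2$) match the continuous convolutions up to $O(\eta)$. One also has to check that $\theta$ and $\bar\theta$ can genuinely be coupled with the same $g$, which holds because the piecewise-constant $C_g$ makes the process $g$ itself piecewise constant.
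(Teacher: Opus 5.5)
Your proposal is correct and follows essentially the paper's route: in (a) you couple the continuous and discretized processes and use Gr\"onwall to get $O(\eta)$ mean-square error for $\theta$ and $O(\sqrt{\eta})$ error for the response, and in (b) you bound the volume of the region removed by the floor/ceiling integration limits, which is the paper's union bound over gaps for $D_p(t,s)\setminus D_{p,\eta}(t,s)$. One small slip: for the $C_g$ terms the leftover region is a product of two simplices, so its volume per gap is of order $\eta\,\frac{T^{p-1}}{(p-1)!}\frac{T^{q}}{q!}$ rather than $\eta\,T^{p+q-1}/(p+q-1)!$; since $|\kappa_k|\le K_1^k$, the double series still sums to $C\eta$, so the conclusion is unaffected.
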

\begin{proof}
Given $(C_g,R_g) \in \cS_g^{\eta,+}$, let $(C_\theta,R_\theta)=\cT_{g \to
\theta}(C_g,R_g)$ and $(\bar C_\theta,\bar R_\theta)=\cT_{g \to
\theta}^\eta(C_g,R_g)$.
By arguments similar to \cite[Lemma 4.4]{fan2025dynamicalI}, there exists a
coupling of $(\{\theta^t\}_{t \geq 0},\{\frac{\partial \theta^t}{\partial
g^s}\}_{t \geq s \geq 0})$ solving (\ref{eq:dmft-theta}--\ref{eq:dmft-response})
and $(\{\bar\theta^t\}_{t \geq 0},\{\frac{\partial \bar\theta^t}{\partial
g^s}\}_{t \geq s \geq 0})$ solving
(\ref{eq:piecewise_discrete_dmft_theta}--\ref{eq:piecewise_discrete_dmft_response})
for which, for all large $\lambda>0$ and small $\eta>0$,
\[\sup_{t \in [0,T]} e^{-\lambda t} \sqrt{\E(\theta^t-\bar \theta^t)^2}
\leq C\sqrt{\eta},
\qquad \sup_{0 \leq s \leq t \leq T}
e^{-\lambda t}\E\left|\frac{\partial \theta^t}{\partial g^s}
-\frac{\partial \bar\theta^t}{\partial g^s}\right| \leq C\sqrt{\eta}.\]
The second inequality implies $d_\lambda(R_\theta,\bar R_\theta) \leq
C\sqrt{\eta}$, while the first inequality and $\E(\theta^t)^2 \leq \Phi_{C_\theta}(T)$
imply
\begin{align*}
\sup_{s,t \in [0,T]} e^{-\lambda (s \vee t)}|C_\theta(s,t)-\bar C_\theta(s,t)|
&=\sup_{s,t \in [0,T]} e^{-\lambda (s \vee
t)}|\E(\theta^s\theta^t)-\E(\bar\theta^s\bar \theta^t)|\\
&\leq \sup_{s,t \in [0,T]} e^{-\lambda s}\sqrt{\E(\theta^s-\bar \theta^s)^2}
\sqrt{\E(\theta^t)^2}
+e^{-\lambda t}\sqrt{\E(\theta^t-\bar \theta^t)^2}
\sqrt{\E(\bar\theta^s)^2}\\
& \leq C'\sqrt{\eta}.
\end{align*}
Thus part (a) follows.

For part (b), given $(C_\theta,R_\theta) \in \cS_\theta^{\eta,+}$, let
$(C_g,R_g)=\cT_{\theta\to g}(C_\theta,R_\theta)$
and $(\bar C_g,\bar R_g)=\cT_{\theta\to g}^\eta(C_\theta,R_\theta)$. According
to \eqref{eq:dmft-Rg} and \eqref{eq:Tthetageta}, we know that
\[R_g-\bar R_g=\sum_{p \geq 2} \kappa_{p+1}\Big[R_\theta^{\ast p}-
R_\theta^{\lfloor\ast\rfloor p}\Big]\]
where, denoting $t=s_0$ and $s=s_p$, we have
\begin{align*}
R^{\ast p}(t,s)&= \int_{D_p(t,s)} \prod_{a=0}^{p-1}R(s_a,s_{a+1})\, \d
s_1\cdots\d s_{p-1},\quad D_p(t,s)=\left\{(s_1,\ldots,s_{p-1}):s \leq s_{p-1}\leq\cdots\leq s_1\leq t
\right\},\\
R^{\lfloor\ast\rfloor p}(t,s)&=\int_{D_{p,\eta}(t,s)} \prod_{a=0}^{p-1}
R(s_a,s_{a+1})\, \d s_1\cdots\d s_{p-1},\\
&\hspace{1in}D_{p,\eta}(t,s)=
\left\{(s_1,\ldots,s_{p-1}):s_{a+1} \leq \lfloor s_a\rfloor
\text{ for } a=0,\ldots,p-1 \right\}.
\end{align*}
Since $D_{p,\eta}(t,s)\subseteq D_p(t,s)$ and
$|R_\theta(t,s)|\le \Phi_{R_\theta}(T)$, we have the entrywise bound
\[
\left|R_\theta^{\ast p}(t,s) - R_\theta^{\lfloor \ast \rfloor p}(t,s) \right|
\leq \Phi_{R_\theta}(T)^p\, \left|D_p(t,s)\setminus D_{p,\eta}(t,s)\right|.
\]
If a point belongs to $D_p(t,s)\setminus D_{p,\eta}(t,s)$, then for at least one $a\in\{0,\ldots,p-1\}$,
$s_{a+1}>\lfloor s_a\rfloor$,
and therefore
$0\leq s_a-s_{a+1}<\eta$.
Using a union bound over the $p$ possible gaps gives
\[
\left|D_p(t,s)\setminus D_{p,\eta}(t,s)\right| \leq \frac{p(t-s)^{p-2}\eta}{(p-2)!}.
\]
Therefore, 
\begin{align*}
d_\lambda(R_g, \bar R_g)
\le \sup_{0 \leq s \leq t \leq T} |R_g(t,s)-\bar R_g(t,s)|
\leq
%\sum_{p \geq 0} |\kappa_{p+1}|\, d_\lambda((\bar R_\theta^\eta)^{\ast p}, (\bar R_\theta^\eta)^{\lfloor\ast\rfloor p}) \le |\kappa_{2}|\Phi_{R_\theta}(T)\sqrt{T\eta} +
\sum_{p \geq 2} |\kappa_{p+1}|\frac{p\Phi_{R_\theta}(T)^pT^{p-2}}{(p-2)!}\eta
\le C\eta.
\end{align*}
A similar argument shows $d_\lambda(C_g, \bar C_g) \le C\eta$.
\end{proof}

\begin{proof}[Proof of Theorem \ref{thm:dmft-approx}(a)]
Fix any $\eta>0$ small enough.
Let $X^\eta \in \cS_g^+(T,D_\eta)$ be the unique fixed
point of the projected continuous-time mapping
\[\cT_{g \to g}^\eta:=\cP_{D_\eta} \circ
\cT_{\theta \to g} \circ \cT_{g \to \theta},\]
as guaranteed by Corollary
\ref{cor:continuous-dmft-existunique-with-projection}.
This definition depends on $\eta$
through $\cP_{D_\eta}$.

Let $\bar X^\eta$ be the fixed point of the projected
discrete-time mapping
\[\bar \cT_{g \to g}^\eta:=
\cT_{\theta \to g}^{\eta,+} \circ \cT_{g \to \theta}^\eta
=\cP_{D_\eta} \circ \cT_{\theta \to g}^\eta \circ
\cT_{g \to \theta}^\eta\]
that is constructed in Lemma \ref{lemma:discretemappings}.
By the triangle inequality, we have
\begin{align*}
    d_{\lambda}(X^\eta, \bar X^\eta) &= d_{\lambda}(\cT_{g\to g}^\eta(X^\eta),
\bar\cT_{g\to g}^\eta(\bar X^\eta)) \leq d_{\lambda}(\cT_{g\to g}^\eta(X^\eta),
\cT_{g\to g}^\eta(\bar X^\eta)) + d_{\lambda}(\cT_{g\to g}^\eta(\bar X^\eta),
\bar\cT^{\eta}_{g\to g}(\bar X^\eta)).
  \end{align*}
Lemmas
\ref{lem:contraction-Rtheta} and
\ref{lem:contraction-Lipschitz}
ensure for the first term that
$d_{\lambda}(\cT_{g\to g}^\eta(X^\eta),\cT_{g\to g}^\eta(\bar X^\eta))
<\frac{1}{2}d_\lambda(X^\eta,\bar X^\eta)$, for sufficiently large $\lambda>0$.
Applying contractivity of $\cP_{D_\eta}$ with respect to $d_\lambda(\cdot)$,
the triangle inequality, and Lemmas
\ref{lem:contraction-Lipschitz} and
\ref{lem:DMFT-mapping-discretization-distance}, the second term is bounded as
\begin{align}
&d_{\lambda}(\cT^{\eta}_{g\to g}(\bar X^\eta),\bar \cT_{g\to g}^\eta(\bar
X^\eta))\notag\\
&\leq d_{\lambda}(\cT_{\theta \to g} \circ \cT_{g \to \theta}(\bar X^\eta),
\cT_{\theta \to g}^\eta \circ \cT_{g \to \theta}^\eta(\bar X^\eta))\notag\\
&\leq d_{\lambda}(\cT_{\theta \to g} \circ \cT_{g \to \theta}(\bar X^\eta),
\cT_{\theta \to g} \circ \cT_{g \to \theta}^\eta(\bar X^\eta))
+d_{\lambda}(\cT_{\theta \to g} \circ \cT_{g \to \theta}^\eta(\bar X^\eta),
\cT_{\theta \to g}^\eta \circ \cT_{g \to \theta}^\eta(\bar X^\eta))\notag\\
&\leq C\sqrt{\eta}+C\eta \leq C'\sqrt{\eta}.\label{eq:TTetabound}
\end{align}
Thus, rearranging the above shows, for some $C>0$ not depending on $\eta$,
\begin{equation}\label{eq:XXetabound}
d_{\lambda}(X^\eta, \bar X^\eta) \leq C\sqrt{\eta}.
\end{equation}

By \eqref{eq:XXetabound} and Lemmas \ref{lem:contraction-Rtheta} and \ref{lem:contraction-Lipschitz},
\begin{align*}
    &d_{\lambda}(X^\eta,\cT_{\theta \to g} \circ \cT_{g \to \theta}(X^\eta))\\
    &\leq d_{\lambda}(X^\eta,\bar X^\eta) +
d_{\lambda}(\bar X^\eta,\cT_{\theta \to g} \circ \cT_{g \to \theta}(\bar X^\eta)) +
d_{\lambda}(\cT_{\theta \to g} \circ \cT_{g \to \theta}(\bar X^\eta), \cT_{\theta \to g} \circ \cT_{g \to
\theta}(X^\eta)) \\ 
    &\le C''\sqrt{\eta}+
d_{\lambda}(\bar X^\eta,\cT_{\theta \to g} \circ \cT_{g \to \theta}(\bar X^\eta))
  \end{align*}
Also by the identity $\bar X^\eta=\cT_{\theta \to g}^\eta \circ \cT_{g \to
\theta}^\eta(\bar X^\eta)$ (without projection) from 
Lemma \ref{lemma:discretemappings} and the bound for the second line of
\eqref{eq:TTetabound},
\[d_{\lambda}(\bar X^\eta,\cT_{\theta \to g} \circ \cT_{g \to \theta}(\bar
X^\eta))=d_{\lambda}(\cT_{\theta \to g}^\eta \circ \cT_{g \to \theta}^\eta(\bar
X^\eta),\cT_{\theta \to g} \circ \cT_{g \to \theta}(\bar X^\eta))
\leq C'\sqrt{\eta}.\]
So for some $C>0$ not depending on $\eta$,
\begin{equation}\label{eq:Xfixedpoint}
d_{\lambda}(X^\eta,\cT_{\theta \to g} \circ \cT_{g \to \theta}(X^\eta))
\leq C\sqrt{\eta}.
\end{equation}

 For any $\delta>0$, let $D_{\eta \cup \delta} = D_{\eta} \cup D_{\delta}$, and define $X^{\delta} \in \mathcal{S}^+(T,D_{\delta})$ analogously to $X^{\eta}$. Then $X^{\eta}, X^{\delta} \in \mathcal{S}_g^+(T,D_{\eta \cup \delta})$, and 
\begin{align}
   \nonumber d_{\lambda}(X^\eta,X^\delta) &\leq d_{\lambda}(X^\eta, \mathcal{T}_{\theta \to g} \circ \mathcal{T}_{g \to \theta}(X^\eta))+ d_{\lambda}(\mathcal{T}_{\theta \to g} \circ \mathcal{T}_{g \to \theta}(X^\delta),X^\delta)\\ \nonumber& \qquad \qquad \qquad+ d_{\lambda}(\mathcal{T}_{\theta \to g} \circ \mathcal{T}_{g \to \theta}(X^\eta),\mathcal{T}_{\theta \to g} \circ \mathcal{T}_{g \to \theta}(X^\delta))\\ \nonumber& \leq C(\sqrt{\eta}+\sqrt{\delta})+ \frac{1}{2}d_{\lambda}(X^\eta,X^{\delta}).
\end{align}
Consequently, $d_{\lambda}(X^{\eta},X^{\delta}) \leq C'(\sqrt{\eta}+\sqrt{\delta})$. This shows along any sequence of values $\eta>0$ converging to 0 that
$\{X^{\eta}\}$ is a Cauchy sequence under $d_\lambda$. Then by completeness of the ambient weighted $L^\infty$ and $L^2$ spaces over $[0,T] \times [0,T]$ as in \eqref{eq:Rgcauchy} and \eqref{eq:Cgcauchy}, there
exist $R_g,C_g:[0,T] \times [0,T] \to \R$ such that $X=(C_g,R_g)$ satisfies $d_{\lambda}(X,X^\eta) \leq C\sqrt{\eta}$. Together with \eqref{eq:Xfixedpoint}, we have $d_{\lambda}(X,\mathcal{T}_{\theta \to g}\circ \mathcal{T}_{g \to \theta}(X^\eta)) \leq C\sqrt{\eta}.$   By Lemma \ref{lemma:well_defined},
the image of $\cT_{\theta \to g} \circ \cT_{g \to \theta}$ is contained in
$\cS_g(T,\emptyset)$ with discontinuity set $D=\emptyset$. Then as in \eqref{eq:Cgcauchy}, there exists a continuous modification of $C_g$ so that $X=(C_g,R_g) \in \cS_g(T,\emptyset)$. Writing $X^\eta=(C_g^\eta,R_g^\eta)$, since also $\lim_{\eta \to 0} d_\lambda(X,X^\eta)=0$ and $C_g^\eta$ is positive-semidefinite, we have that $C_g$ is positive-semidefinite, so $X \in \mathcal{S}_g^+(T,\emptyset)$. 

Finally, \begin{align*}
   \nonumber &d_{\lambda}(X, \mathcal{T}_{\theta \to g}\circ \mathcal{T}_{g \to \theta}(X))
    \\&\leq d_{\lambda}(X, X^\eta)+ d_{\lambda}(X^{\eta}, \mathcal{T}_{\theta \to g}\circ \mathcal{T}_{g \to \theta}(X^\eta)) + d_{\lambda}(\mathcal{T}_{\theta \to g}\circ \mathcal{T}_{g \to \theta}(X^\eta), \mathcal{T}_{\theta \to g}\circ \mathcal{T}_{g \to \theta}(X)) 
 \\& \leq C\sqrt{\eta}.
\end{align*}
Since $\eta>0$ is arbitrary, this shows that $X \in \cS_g^+(T,\emptyset)$ is a fixed point of $\mathcal{T}_{\theta \to g}\circ \mathcal{T}_{g \to \theta}$. Uniqueness of such a fixed point in $\cS_g^+(T,\emptyset)$ is immediate as $\mathcal{T}_{\theta \to g}\circ \mathcal{T}_{g \to \theta}$ is contractive under $d_\lambda$.

This shows that the components of Theorem \ref{thm:dmft-approx}(a) are 
well-defined up to any fixed time $T>0$, where the fixed points
$(C_\theta,R_\theta)$ and $(C_g,R_g)$ are unique in
$\cS_\theta^+(T,\emptyset)$ and
$\cS_g^+(T,\emptyset)$. Then Theorem \ref{thm:dmft-approx}(a) holds upon setting
$\cS_\theta^+$ as the space of kernels
$(C_\theta,R_\theta)$ on $[0,\infty) \times [0,\infty)$ whose
restrictions to $[0,T] \times [0,T]$ belong to $\cS_\theta^+(T,\emptyset)$ for
each $T>0$, and similarly for $\cS_g^+$.
\end{proof}

In the remainder of the proof, we fix $T>0$,
$X=(C_g,R_g) \in \cS_g^+(T,\emptyset)$ as
the preceding unique fixed point of $\cT_{\theta \to g} \circ \cT_{g \to
\theta}$, and $\bar X^\eta=(C_g^\eta,R_g^\eta) \in \cS_g^+(T,D_\eta)$ as the
fixed point of $\cT_{\theta \to g}^\eta \circ \cT_{g \to
\theta}^\eta$. To show Theorem \ref{thm:dmft-approx}(b), we record the following
continuity properties of $C_g,C_g^\eta$.
 
\begin{lemma}\label{lem:discrete_dmft_approx_continue}
For all sufficiently small \(\eta>0\),
there is a constant $C>0$ depending on $T$ but not $\eta$, such that 
\begin{align}
  |C_g(t,s)-C_g(t',s')|
  &\le
  C\left(\sqrt{|t-t'|}+\sqrt{|s-s'|}\right),
  \label{eq:Cg-holder-cont}\\
  |C_g^\eta(t,s)-C_g^\eta(t',s')|
  &\le
  C\left(\sqrt{|t-t'|+\eta}+\sqrt{|s-s'|+\eta}\right)
  \label{eq:Cg-holder-discrete}
\end{align}
for all \(s,t,s',t'\in[0,T]\).
\end{lemma}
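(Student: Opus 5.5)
For \eqref{eq:Cg-holder-cont}, I will use that $X=(C_g,R_g)$ lies in $\cS_g^+(T,\emptyset)$. Its discontinuity set is empty, so $[0,T]$ is a single maximal interval. Condition \eqref{eq:Cg-cont2} of Definition \ref{def:S} then applies to all $t,t',s,s'\in[0,T]$ and gives the bound with $C=K_2K_3$. An alternative route avoids relying on the closure argument: since $C_g$ is positive-semidefinite, write $C_g(t,s)=\E[g^tg^s]$. By Cauchy--Schwarz,
\[|C_g(t,s)-C_g(t',s')|\le \sqrt{\E(g^t-g^{t'})^2}\sqrt{\Phi_{C_g}(T)}+\sqrt{\Phi_{C_g}(T)}\sqrt{\E(g^s-g^{s'})^2},\]
and $\E(g^t-g^{t'})^2=C_g(t,t)+C_g(t',t')-2C_g(t,t')\le K_2K_3|t-t'|$ by \eqref{eq:Cg-cont1} with $D=\emptyset$. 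This gives \eqref{eq:Cg-holder-cont}.

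For \eqref{eq:Cg-holder-discrete}, the kernel $C_g^\eta$ belongs to $\cS_g^+(T,D_\eta)$ and is piecewise constant as in \eqref{eq:piecewiseconstant}. So $C_g^\eta(t,s)=C_g^\eta(\lfloor t\rfloor,\lfloor s\rfloor)$, and it suffices to bound the increments between grid points. I will use the representation $C_g^\eta(t,s)=(\C_g)_{[t],[s]}=\E[g_\eta^{[t]}g_\eta^{[s]}]$ from Theorem \ref{thm:discreteDMFT}, where $\C_g$ is p.s.d. By the same Cauchy--Schwarz step, it then suffices to show
\[\E(g_\eta^k-g_\eta^j)^2=(\C_g)_{kk}+(\C_g)_{jj}-2(\C_g)_{kj}\le C(|k-j|\eta+\eta)\]
for grid indices $j,k$, together with the diagonal bound $(\C_g)_{kk}\le\Phi_{C_g}(T)$. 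The extra $\eta$ absorbs the rounding $|\lfloor t\rfloor-\lfloor t'\rfloor|\le|t-t'|+\eta$.

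The grid-increment bound is the main step, and I would prove it using the fixed-point identity $C_g^\eta=\cT^\eta_{\theta\to g}(C_\theta^\eta,R_\theta^\eta)$ of Lemma \ref{lemma:discretemappings}(c). First, I bound $C_\theta^\eta$. Within each grid interval it is constant, and across grid points $\E(\bar\theta^{\lfloor t\rfloor}-\bar\theta^{\lfloor s\rfloor})^2\le C(|t-s|+\eta)$ by the discrete dynamics \eqref{eq:piecewise_discrete_dmft_theta}. This holds because the drift is bounded in $L^2$ and the Brownian increment contributes $2\gamma(\lfloor t\rfloor-\lfloor s\rfloor)$. Second, I expand $C_g^\eta(t,t)+C_g^\eta(s,s)-2C_g^\eta(t,s)$ exactly as in \eqref{eq:Cgdiffexpr}, using the discretized convolutions. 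The $p=q=0$ term is $\kappa_2$ times the $C_\theta^\eta$ increment, which the first step controls. The remaining terms involve differences $R^{\lfloor *\rfloor p}_\theta(t,\cdot)-R^{\lfloor *\rfloor p}_\theta(s,\cdot)$. These are bounded by $C(|t-s|+\eta)\frac{(CT)^{p-1}}{(p-1)!}$, using the Lipschitz property \eqref{eq:Rtheta-cont} of $R_\theta^\eta$ within grid intervals, the bound $|R_\theta^\eta|\le\Phi_{R_\theta}(T)$, and an $O(\eta)$ boundary layer coming from the floors. Summing against $|\kappa_{p+q+2}|\le K_1^{p+q+2}$ then gives the increment bound.

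The hardest part is this bookkeeping of the floor-function boundary effects in the discretized convolutions. Since the target bound already allows an additive $\eta$ inside the square roots, a crude estimate should suffice: each region where a floor matters has measure at most $\eta$ times a factorially summable factor.
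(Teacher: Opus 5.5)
Your treatment of \eqref{eq:Cg-holder-cont} is the paper's. For \eqref{eq:Cg-holder-discrete} you take a legitimate variant route. The paper derives from the discretized dynamics the bounds $|C_\theta^\eta(t,s)-C_\theta^\eta(t',s')|\le C(\sqrt{|t-t'|+\eta}+\sqrt{|s-s'|+\eta})$ and $|R_\theta^\eta(t,s)-R_\theta^\eta(t',s)|\le C(|t-t'|+\eta)$. It then pushes these through the series for $C_g^\eta$ as in the proof of \eqref{eq:Cg-cont2}, so Cauchy--Schwarz is applied at the level of $\bar\theta$ and positive-semidefiniteness of $C_g^\eta$ is never used. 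You instead apply Cauchy--Schwarz at the level of $g_\eta$, using that $\C_g$ is p.s.d.\ (Theorem \ref{thm:discreteDMFT}), and reduce to the second-difference expansion \eqref{eq:Cgdiffexpr}. That reduction is fine, and so is your bound on the $p=q=0$ term via increments of $\bar\theta$.

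However, one step fails as justified: the bound on $R_\theta^{\lfloor*\rfloor p}(t,\cdot)-R_\theta^{\lfloor*\rfloor p}(s,\cdot)$.
\begin{itemize}
\item You invoke \eqref{eq:Rtheta-cont} ``within grid intervals''. With $D=D_\eta$ the maximal intervals are exactly the grid cells, on which $R_\theta^\eta$ is constant, so \eqref{eq:Rtheta-cont} is vacuous. It gives no information when $t,s$ lie in different cells, which is the only case where the difference is nonzero.
\item Boundedness of $R_\theta^\eta$ and the floor bookkeeping do not fill this. For $\lfloor s\rfloor<\lfloor t\rfloor$,
\[R_\theta^{\lfloor*\rfloor p}(t,r)-R_\theta^{\lfloor*\rfloor p}(s,r)=\int_0^{\lfloor s\rfloor}\big(R_\theta^\eta(t,t_1)-R_\theta^\eta(s,t_1)\big)[\cdots]\,\d t_1+\int_{\lfloor s\rfloor}^{\lfloor t\rfloor}R_\theta^\eta(t,t_1)[\cdots]\,\d t_1,\]
where $[\cdots]$ is the remaining $(p-1)$-fold discretized chain. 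The second piece is $O(|t-s|+\eta)$. The first is only $O(1)$ unless you control $R_\theta^\eta(t,t_1)-R_\theta^\eta(s,t_1)$ across cells.
\end{itemize}

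What you need is $|R_\theta^\eta(t,u)-R_\theta^\eta(t',u)|\le C(|t-t'|+\eta)$ for $u\le t,t'$. Derive it from the discrete response equation \eqref{eq:piecewise_discrete_dmft_response} exactly as in \eqref{eq:Rthetacontt}:
\begin{itemize}
\item By Gr\"onwall, the discrete response $\partial\bar\theta^t/\partial g^u$ is uniformly bounded, since $f'$ and $R_g^\eta$ are bounded.
\item Hence $\partial\bar\theta^t/\partial g^u-\partial\bar\theta^{t'}/\partial g^u$ is an integral of a bounded integrand over a subinterval of $[\lfloor t'\rfloor,\lfloor t\rfloor]$, whose length is at most $|t-t'|+\eta$.
\item Taking expectations gives the bound.
\end{itemize}
This is the response analogue of what you already did for $C_\theta^\eta$, and it is precisely the second estimate the paper records. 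With it supplied, the rest of your argument goes through.
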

\begin{proof}
Since $(C_g,R_g) \in \cS_g^+(T,\emptyset)$, we have from \eqref{eq:Cg-cont2} that
\[
  |C_g(t,s)-C_g(t',s')|
  \leq
  C\left(\sqrt{|t-t'|}+\sqrt{|s-s'|}\right).
\]
For \eqref{eq:Cg-holder-discrete}, let $\{\bar \theta^t,\frac{\partial
\bar \theta^t}{\partial g^s}\}$ be the processes solving the discretized
equations
(\ref{eq:piecewise_discrete_dmft_theta}--\ref{eq:piecewise_discrete_dmft_response})
for the kernels $(C_g^\eta,R_g^\eta)$. By arguments analogous to those of
Lemma \ref{lemma:well_defined},
for some $C>0$ depending on $T$ but not $\eta$, we have  
\begin{align*}
|C_\theta^\eta(t,s)-C_\theta^\eta(t',s')|
&\leq C(\sqrt{|t-t'|+\eta}+\sqrt{|s-s'|+\eta}) \text{ for all } s,t \in
[0,T],\\
|R_\theta^\eta(t,s)-R_\theta^\eta(t',s)| &\le C(|t-t'|+\eta)
\text{ for all } s \leq t,t' \leq T,
\end{align*}
and hence also  
\[|C_g^\eta(t,s)-C_g^\eta(t',s')|
  \leq
  C\left(\sqrt{|t-t'|+\eta}+\sqrt{|s-s'|+\eta}\right).\]
\end{proof}

\begin{proof}[Proof of Theorem \ref{thm:dmft-approx}(b)]
Fix $\eta>0$, and consider a piecewise-constant embedding 
of the discretized Langevin process
\eqref{eq:disc-Langevin} defined as $\bar\btheta^t = \btheta_\eta^{[t]}$.
An induction in time checks that this solves
\begin{equation}\label{eq:langevin_discret}
\bar\btheta^t=\btheta^0+\int_0^{\lfloor{t}\rfloor} [f(\bar\btheta^s)
+\X\bar\btheta^s]\d s +\sqrt{2\gamma}\,\b^{\lfloor{t}\rfloor}
\end{equation}
where $\{\b^t\}_{t \geq 0}$ is the same Brownian motion as defining the
continuous process $\{\btheta^t\}_{t \geq 0}$.
Define also a piecewise constant embedding $\bar \g^t=\g_\eta^{[t]}$ of
\eqref{eq:discretegk}, and $\bar \b^t=\b_\eta^{[t]}=\b^{[t]\eta}$.
By \cite[Lemma 4.7]{fan2025dynamicalI}, for a constant $C>0$ independent of
$\eta$, a.s.\ for all large $n$,
\begin{align*}
\sup_{t\in[0,T]} \frac{1}{\sqrt{n}}\|\b^t\|_2\leq C,
\qquad \sup_{t\in[0,T]}
\frac{1}{\sqrt{n}}\big(\|{\b^t - \b^{\lfloor{t}\rfloor}}\|_2 + \|{\b^t -
\b^{\lceil{t}\rceil}}\|_2\big) \leq C\sqrt{\eta\max(\log(1/\eta),1)}.
\end{align*}
Then by a Gr\"onwall argument (see \cite[Lemma 4.8]{fan2025dynamicalI}), a.s.\ for
all large $n$,
\begin{align}
\sup_{t\in[0,T]}
\frac{1}{\sqrt{n}}\|{\btheta^t}\|_2 \leq C,\qquad \sup_{t\in[0,T]}
\frac{1}{\sqrt{n}}\|{\btheta^t - \bar\btheta^{t}}\|_2 \leq
C\sqrt{\eta\max(\log(1/\eta),1)}\label{eq:theta-discret-norm}
\end{align}
For $\bar \g^t$, we have
\begin{align*}
    \|\g^t - \bar \g^t\|_2 &= \left\|\X(\btheta^t-\bar\btheta^t)-\left(\int_0^t
R_g(t,s)\btheta^s\d s - \int_0^{\lfloor t\rfloor} R_g(t,s)\bar\btheta^s \d
s\right)\right\|_2\\
    & \le \|\X\|_\op\|\btheta^t-\bar\btheta^t\|_2 + \Phi_{R_g}(T)\int_0^{\lfloor
t\rfloor} \|\btheta^s-\bar\btheta^s\|_2 \d s +  \Phi_{R_g}(T)\int_{\lfloor
t\rfloor}^t \|\btheta^s\|_2\d s,
\end{align*}
so also a.s.\ for all large $n$,
\begin{align*}
\sup_{t\in[0,T]}
\frac{1}{\sqrt{n}}\|{\g^t - \bar\g^{t}}\|_2 \leq C'\sqrt{\eta\max(\log(1/\eta),1)}.
\end{align*}
This implies that for any fixed $m \geq 1$ and $t_1,\ldots, t_m \in [0,T]$,
\begin{align}
&\limsup_{n\rightarrow\infty} W_2\Bigg( \frac{1}{n}\sum_{j=1}^n
\delta_{\left(\theta^{t_1}_j, \ldots, \theta^{t_m}_j,g^{t_1}_j, \ldots,
g^{t_m}_j,b^{t_1}_j, \ldots, b^{t_m}_j\right)},\notag\\
&\hspace{3.4cm} \frac{1}{n}\sum_{j=1}^n
\delta_{\left(\bar\theta^{t_1}_j, \ldots,
\bar\theta_j^{t_m},\bar g^{t_1}_j, \ldots,
\bar g_j^{t_m},\bar b^{t_1}_{j}, \ldots,
\bar b_{j}^{t_m}\right)}\Bigg)<C\sqrt{\eta\max(\log(1/\eta),1)}\text{ a.s.}\label{eq:discretelangevinW2}
\end{align}

Now let $\{\theta^t,g^t,b^t\}_{t \geq 0}$ be the components of the
continuous-time dynamical mean-field limit in Definition
\ref{def:DMFT}, defined from the fixed point $X=(C_g,R_g)$.
Let $\{\bar\theta^t\}_{t \geq 0}$ be the solution of
\eqref{eq:piecewise_discrete_dmft_theta} with the same Brownian motion
$\{b^t\}_{t \geq 0}$, defined from the discrete-time fixed
point $\bar X^\eta=(C_g^\eta,R_g^\eta)$ and $\{\bar g^t\}_{t \geq 0} \sim
\GP(0,C_g^\eta)$. Let $\bar b^t=b^{\lfloor t \rfloor}$. The bound
\eqref{eq:XXetabound} implies that for any sufficiently large $\lambda>0$,
\begin{align*}
\lim_{\eta \to 0} \sup_{0\leq s \leq t \leq T} |R_g(t,s)-R_g^\eta(t,s)|
&\leq \lim_{\eta \to 0} e^{\lambda T}d_\lambda(R_g,R_g^\eta)=0,\\
\lim_{\eta \to 0} \int_0^T \d t \int_0^T \d s\,|C_g(t,s)-C_g^\eta(t,s)|^2
&\leq \lim_{\eta \to 0} e^{2\lambda T}d_\lambda(C_g,C_g^\eta)=0.
\end{align*}
The continuity properties for $C_g,C_g^\eta$ in Lemma
\ref{lem:discrete_dmft_approx_continue} then imply that also
\[\lim_{\eta \to 0} \sup_{s,t \in [0,T]} |C_g(t,s)-C_g^\eta(t,s)|=0.\]
Then (c.f.\ \cite[Lemma~D.1]{fan2025dynamicalII}) there exists a coupling of
$\{g^t\}_{t \in [0,T]}$ and $\{\bar g^t\}_{t \in [0,T]}$ for which
\[\lim_{\eta \to 0} \sup_{t \in [0,T]} \E(g^t-\bar g^t)^2=0.\]
Under this coupling of $\{g^t\}_{t \in [0,T]}$ and $\{\bar g^t\}_{t \in [0,T]}$,
a Gr\"onwall argument (see \cite[Eq.\ (100)]{fan2025dynamicalI})
shows that the resulting coupling
of $\{\theta^t\}_{t \in [0,T]}$ and $\{\bar \theta^t\}_{t \in [0,T]}$ satisfies
\[\lim_{\eta \to 0} \sup_{t \in [0,T]} \E(\theta^t-\bar\theta^t)^2=0.\]
Hence, for any fixed $m \geq 1$ and $t_1,\ldots,t_m \in [0,T]$,
\begin{align}
&\lim_{\eta \to 0} W_2\Big(
\Law(\theta^{t_1},\ldots,
\theta^{t_m}, g^{t_1},\ldots,
g^{t_m}, b^{t_1},\ldots, b^{t_m}),\notag\\
&\hspace{1in}\Law(\bar\theta^{t_1},\ldots,
\bar\theta^{t_m},\bar g^{t_1},\ldots,
\bar g^{t_m}, \bar b^{t_1},\ldots,\bar b^{t_m})\Big)=0.\label{eq:discretedmftW2}
\end{align}
Combining \eqref{eq:discretelangevinW2}, \eqref{eq:discretedmftW2}, and Theorem
\ref{thm:discreteDMFT} which shows
\[\lim_{n \to \infty} W_2\Bigg(\frac{1}{n}\sum_{j=1}^n
\delta_{\left(\bar\theta^{t_1}_j, \ldots,
\bar\theta_j^{t_m},\bar g^{t_1}_j, \ldots,
\bar g_j^{t_m},\bar b^{t_1}_{j}, \ldots,
\bar b_{j}^{t_m}\right)},\,\Law(\bar\theta^{t_1},\ldots,
\bar\theta^{t_m},\bar g^{t_1},\ldots,
\bar g^{t_m}, \bar b^{t_1},\ldots,\bar b^{t_m})\Bigg)=0 \text{ a.s.},\]
and taking $\eta \to 0$, we have that for any fixed $m \geq 1$
and $t_1,\ldots,t_m \in [0,T]$,
\begin{align}\label{eq:pointwise-W2-convergence}
\lim_{n \to \infty}
W_2\left(\frac{1}{n}\sum_{j=1}^n \delta_{\left(\theta^{t_1}_j, \ldots,
\theta^{t_m}_j,g^{t_1}_j, \ldots, g^{t_m}_j,b^{t_1}_j, \ldots,
b^{t_m}_j\right)},\,\Law(\theta^{t_1},\ldots,
\theta^{t_m}, g^{t_1},\ldots,g^{t_m}, b^{t_1},\ldots,b^{t_m})\right)=0
\text{ a.s.}
\end{align}

We may strengthen this to Wasserstein-2 convergence over $C([0,T],\R^3)$ using
a similar tightness argument as in \cite[Theorem 2.5(b)]{fan2025dynamicalI}:
Let
\[\x=\{\x^t\}_{t \in [0,T]}
=\{(\btheta^t,\g^t,\b^t)\}_{t \in [0,T]},
\qquad
x=\{x^t\}_{t \in [0,T]}=\{(\theta^t,g^t,b^t)\}_{t \in [0,T]}.\]
Denote
\[\|x\|_\infty=\sup_{t \in [0,T]} |\theta^t|+
\sup_{t \in [0,T]} |g^t|+\sup_{t \in [0,T]} |b^t|,\]
and write the coordinates of $\x$ as $x_j=(\theta_j,g_j,b_j) \in C([0,T],\R^3)$ for $j=1,\ldots,n$.
Fix any test function $F:C([0,T],\R^3) \to \R$ satisfying
\begin{equation}\label{eq:Fpseudolipschitz}
\big|F(x)-F(x')\big|
\leq C\|x-x'\|_\infty(1+\|x\|_\infty+\|x'\|_\infty).
\end{equation}
Fix any $\eta>0$, and let $\{\tilde\x^t\}_{t \in [0,T]}$ and $\{\tilde x^t\}_{t
\in [0,T]}$ be the piecewise-linear
interpolations of $\{\tilde \x^t\}_{t \in [0,T] \cap \eta \mathbb{Z}}$
and $\{\tilde x^t\}_{t \in [0,T] \cap \eta \mathbb{Z}}$, respectively, with
knots at $[0,T] \cap \eta \mathbb{Z}$. Then
\eqref{eq:pointwise-W2-convergence} implies
\begin{align}\label{eq:pointwise-W2-convergence-joint}
\lim_{n \to \infty} \frac{1}{n}\sum_{j=1}^n F(\{\tilde x_j^t\}_{t \in [0,T]})
=\E F(\{\tilde x^t\}_{t \in [0,T]}) \text{ a.s.}
\end{align}

By the property \eqref{eq:Fpseudolipschitz} and Cauchy-Schwarz, we have
\[\left|\frac{1}{n}\sum_{j=1}^n F(\{x_j^t\}_{t \in [0,T]})
-F(\{\tilde x_j^t\}_{t \in [0,T]})\right|
\leq C\left(\frac{1}{n}\sum_{j=1}^n \|x_j-\tilde x_j\|_\infty^2\right)^{1/2}
\left(1+\frac{1}{n}\sum_{j=1}^n \|x_j\|_\infty^2\right)^{1/2}.\]
By a modulus-of-continuity bound for Brownian motion,
the bound \eqref{eq:theta-discret-norm}, and a Gr\"onwall argument
(see \cite[Theorem 2.5(b)]{fan2025dynamicalI}), for some $C>0$ and
$\alpha \in (0,1/2)$, a.s.\ for all large $n$,
\[\frac{1}{n}\sum_{j=1}^n \|b_j\|_\infty^2 \leq C,
\quad \frac{1}{n}\sum_{j=1}^n \|\theta_j\|_\infty^2 \leq C,
\quad \frac{1}{n}\sum_{j=1}^n \|g_j\|_\infty^2 \leq C,\]
\[\frac{1}{n}\sum_{j=1}^n \|b_j-\tilde b_j\|_\infty^2 \leq C\eta^{2\alpha},
\quad \frac{1}{n}\sum_{j=1}^n \|\theta_j-\tilde\theta_j\|_\infty^2 \leq
C\eta^{2\alpha},
\quad \frac{1}{n}\sum_{j=1}^n \|g_j-\tilde g_j\|_\infty^2 \leq
C\eta^{2\alpha}.\]
Applying this above and taking $\eta \to 0$,
\begin{equation}\label{eq:W2tightnesslangevin}
\lim_{\eta \to 0}
\left|\frac{1}{n}\sum_{j=1}^n F(\{x_j^t\}_{t \in [0,T]})
-F(\{\tilde x_j^t\}_{t \in [0,T]})\right|=0 \text{ a.s.}
\end{equation}
For the components of the dynamical mean-field limit,
by standard Gaussian process bounds and a Gr\"onwall argument, also
\[\E\|b\|_\infty^2 \leq C,
\quad \E\|g\|_\infty^2 \leq C,
\quad \E\|\theta\|_\infty^2 \leq C,\]
\[\E\|b-\tilde b\|_\infty^2 \leq C\eta^{2\alpha},
\quad \E\|g-\tilde g\|_\infty^2 \leq C\eta^{2\alpha},
\quad \E\|\theta-\tilde\theta\|_\infty^2 \leq C\eta^{2\alpha},\]
and hence
\begin{equation}\label{eq:W2tightnessdmft}
\lim_{\eta \to 0} \big|\E F(\{x^t\}_{t \in [0,T]})
-\E F(\{\tilde x^t\}_{t \in [0,T]})\big|=0.
\end{equation}
Combining \eqref{eq:pointwise-W2-convergence-joint}, \eqref{eq:W2tightnesslangevin},
and \eqref{eq:W2tightnessdmft} and taking $\eta \to 0$
shows
\begin{align}\label{eq:pointwise-W2-convergence-2}
\frac{1}{n}\sum_{j=1}^n F(\{x_j^t\}_{t \in [0,T]})
\rightarrow \E F(\{x^t\}_{t \in [0,T]}) \text{ a.s.}
\end{align}
This holds for any function $F:C([0,T],\R^3) \to \R$
satisfying \eqref{eq:Fpseudolipschitz}, implying the claimed Wasserstein-2
convergence in Theorem \ref{thm:dmft-approx}(b).
\end{proof}

\begin{proof}[Proof of Theorem \ref{thm:dmft-approx}(c)]
Theorem \ref{thm:dmft-approx}(b) implies, almost surely as $n \to \infty$,
\[\frac{1}{n}\sum_{i=1}^n \theta_i^t\theta_i^s \to
\E[\theta^t \theta^s]=C_\theta(t,s),
\quad \frac{1}{n}\sum_{i=1}^n g_i^tg_i^s \to
\E[g^tg^s]=C_g(t,s).\]
The first two claims of Theorem \ref{thm:dmft-approx}(c) follow from this and
dominated convergence to take an expectation over $\{\b^t\}_{t \in [0,T]}$. For
the third claim, \eqref{eq:discretelangevinW2} implies
\[\lim_{\eta \to 0}
\limsup_{n \to \infty} \left|\frac{1}{n}\sum_{i=1}^n \theta_i^t b_i^s
-\bar\theta_i^t \bar b_i^s\right|=0 \text{ a.s.}\]
Theorem \ref{thm:discreteDMFT} shows
\[\frac{1}{n}\sum_{i=1}^n \bar\theta_i^t \bar b_i^s
\to \sqrt{2\gamma}\int_0^{\lfloor s \rfloor} R_\theta^\eta(t,s) \d s \text{ a.s.}\]
The bound \eqref{eq:XXetabound} and $|R_\theta^\eta(t,s)| \leq
\Phi_{R_\theta}(T)$ imply
\[\lim_{\eta \to 0}\left|\int_0^s R_\theta(t,s) \d s 
-\int_0^{\lfloor s \rfloor} R_\theta^\eta(t,s) \d s\right|=0.\]
Then the third claim of Theorem \ref{thm:dmft-approx}(c) follows from combining
these three statements, taking $\eta \to 0$, and again applying dominated
convergence to take an expectation over $\{\b^t\}_{t \in [0,T]}$.
\end{proof}

\section{Analyses of correlation and response}\label{sec:corrresponse}

We proceed to prove Theorem \ref{thm:replica} on the replica-symmetric limit
for the overlaps and free energy of the Gibbs measure $\mu_\Gibbs$,
by specializing the preceding dynamical mean-field limit to the overdamped
Langevin equation \eqref{eq:langevin}. Our analysis relies on several
important structural
properties of the kernels $C_\theta,R_\theta,C_g,R_g$ at large times.
We collect these properties in the following two lemmas.

First, under Assumption \ref{assump:convergence}, we have the following
properties for $C_\theta,R_\theta$, including the existence of
time-translation-invariant limits $c_\theta,r_\theta:\R \to \R$ for these
kernels as $t \to \infty$, and a fluctuation-dissipation relation between
these limits.

\begin{lemma}\label{lemma:tti}
Suppose Assumptions \ref{assump:X} and \ref{assump:dynamics} hold with $f=-U'$,
the lower bound $\alpha>0$ in \eqref{eq:Uconvexity} satisfies
$\alpha>\Lambda_{\max}$, and Assumption \ref{assump:convergence} holds. Then:
\begin{enumerate}[(a)]
\item The Gibbs measure $\mu_\Gibbs$ is well-defined, and there exist deterministic almost sure limits
\begin{equation}\label{eq:vstardef-thm}
v_*=\lim_{n \to \infty}\frac{1}{n}\langle \|\btheta\|_2^2 \rangle,
\qquad q_*=\lim_{n \to \infty}\frac{1}{n}\langle \btheta^\top \btheta' \rangle.
\end{equation}
\item For each fixed $\tau \in \R$, there exists a limit
$c_\theta(\tau)=\lim_{s \to \infty} C_\theta(s+\tau,s)$. This defines a
symmetric positive-semidefinite function
$c_\theta:\R \to \R$ satisfying, for some $C,c>0$ and
all $s,t \geq 0$ and $\tau \in \R$,
\begin{equation}\label{eq:cthetabound}
\begin{gathered}
c_\theta(0)=v_*, \qquad \lim_{\tau \to \infty} c_\theta(\tau)=q_*,\\
|C_\theta(t,s)-c_\theta(t-s)|\leq Ce^{-c\min(s,t)},
\qquad |c_\theta(\tau)-q_*| \leq Ce^{-c|\tau|}.
\end{gathered}
\end{equation}
\item For each fixed $\tau \geq 0$, there exists a limit
$r_\theta(\tau)=\lim_{s \to \infty} R_\theta(s+\tau,s)$. This defines a
function $r_\theta:[0,\infty) \to \R$ satisfying, for some $C,c>0$
and all $t \geq s \geq 0$ and $\tau \geq 0$,
\begin{equation}\label{eq:rthetabound}
\begin{gathered}
|R_\theta(t,s)| \leq Ce^{-c(t-s)}, \qquad
|r_\theta(\tau)| \leq Ce^{-c\tau},\\
|R_\theta(t,s)-r_\theta(t-s)| \leq Ce^{-cs}, \qquad
\int_0^t |R_\theta(t,s)-r_\theta(t-s)| \d s \leq Ce^{-ct}.
\end{gathered}
\end{equation}
\item $c_\theta$ is continuous on $\R$, smooth on $(0,\infty)$,
and has a right derivative $c_\theta'(0^+)=\lim_{\tau \to 0^+} c_\theta'(\tau)$
at $\tau=0$. We have the fluctuation-dissipation relation
\begin{equation}\label{eq:crthetaFDT}
r_\theta(\tau)={-}c_\theta'(\tau) \text{ for } \tau>0,
\qquad r_\theta(0)={-}c_\theta'(0^+)=1.
\end{equation}
Furthermore, for all $\tau>0$, we have
\[c_\theta(\tau) \geq 0,
\qquad r_\theta(\tau) \geq 0, \qquad r_\theta'(\tau) \leq 0.\]
\end{enumerate}
\end{lemma}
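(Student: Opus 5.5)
The plan is to identify $C_\theta,R_\theta$ with finite-$n$ coordinate-averaged correlation and response functions of the Langevin diffusion \eqref{eq:langevin}, and then to transfer time-translation invariance and the fluctuation-dissipation theorem (FDT) from the reversible finite-$n$ dynamics to the limit. This follows the strategy of \cite{fan2025dynamicalII}.

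\textbf{Well-posedness and the limits $v_*,q_*$.} Since $\liminf U''\ge\alpha>\Lambda_{\max}\ge\|\X\|_\op$, the exponent in \eqref{eq:gibbsmeasure} is eventually strongly concave, so $\cZ<\infty$. For part (a), use Theorem \ref{thm:dmft-approx}(c) together with Assumption \ref{assump:convergence}. Coupling two independent Brownian motions driving \eqref{eq:langevin} from the same $\btheta^0$, the $W_2$ bound gives
\[\Big|\tfrac1n\langle\theta^{t\top}\theta^t\rangle-\tfrac1n\langle\|\btheta\|^2\rangle_{\Gibbs}\Big|\le Ce^{-ct},\qquad \Big|\tfrac1n\E[\btheta^{t\top}\tilde\btheta^{t}\mid\X,\btheta^0]-\tfrac1n\langle\btheta^\top\btheta'\rangle\Big|\le Ce^{-ct},\]
where $\tilde\btheta^t$ is a replica with independent noise. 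Uniform moment bounds come from strong convexity at infinity. Theorem \ref{thm:dmft-approx}(c) identifies the limits of the left sides as $C_\theta(t,t)$ and $\E[\E[\theta^t\mid g,\theta^0]^2]$, which are deterministic. Taking $n\to\infty$ and then $t\to\infty$ gives a Cauchy argument showing that $\frac1n\langle\|\btheta\|^2\rangle$ and $\frac1n\langle\btheta^\top\btheta'\rangle$ have deterministic a.s.\ limits, namely $v_*=\lim_t C_\theta(t,t)$ and $q_*$.

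\textbf{Part (b).} Writing $C_\theta(t,s)$ as the $n$-limit of $\frac1n\langle\btheta^{t\top}\btheta^s\rangle$, apply the Markov property at time $s$ and Assumption \ref{assump:convergence} to replace $\Law(\btheta^s)$ by $\mu_\Gibbs$, at cost $Ce^{-cs}$. At equilibrium, $\frac1n\E[\tilde\btheta^{\tau\top}\tilde\btheta^0]$ depends only on $\tau$, so $C_\theta(s+\tau,s)$ is Cauchy in $s$ and converges to $c_\theta(\tau)$ with the stated rate. The second part of Assumption \ref{assump:convergence} applied at lag $\tau$ gives $|c_\theta(\tau)-q_*|\le Ce^{-c|\tau|}$. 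Positive-semidefiniteness and symmetry are preserved by limits.

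\textbf{Parts (c) and (d).} By Theorem \ref{thm:dmft-approx}(c),
\[\int_0^s R_\theta(t,u)\,\d u=\tfrac{1}{\sqrt2}\lim_n \tfrac1n\langle\btheta^{t\top}\b^s\rangle.\]
A Girsanov/Malliavin argument shows this equals the averaged linear response $\frac1n\sum_i\partial\langle\theta_i^t\rangle/\partial h_i^u$ to a field $h$ added at time $u$; that is, $R_\theta$ is the limit of $\frac1n\Tr\E[\nabla_{\btheta^u}\btheta^t]$. Exponential decay $|R_\theta(t,s)|\le Ce^{-c(t-s)}$ follows because a perturbation at time $s$ is a change of initial condition. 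Its effect on $\frac1n\langle\btheta^t\rangle$-type observables decays by Assumption \ref{assump:convergence} via a coupling, giving an integrated bound that I upgrade to a pointwise one using the Lipschitz continuity of $R_\theta$ from Lemma \ref{lemma:well_defined}. The limits $r_\theta$ and the rates $|R_\theta(t,s)-r_\theta(t-s)|\le Ce^{-cs}$ follow as in (b), and integrating gives the $\int_0^t$ bound.

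For the FDT, at equilibrium the reversible diffusion satisfies the classical finite-$n$ identity
\[\tfrac1n\E\big[\nabla_{\tilde\btheta^0}\cdot\tilde\btheta^\tau\big]=-\tfrac{\d}{\d\tau}\tfrac1n\E[\tilde\btheta^{\tau\top}\tilde\btheta^0].\]
This comes from integrating by parts against $\mu_\Gibbs$ with the generator $L$, using $\E_\mu[\theta_i\,P_\tau\theta_i]$ and $\frac{\d}{\d\tau}P_\tau=LP_\tau$, together with the identity $\mu(x_i\,L f)=-\mu(\partial_i f)$ for $f=P_\tau\theta_i$. Passing $n\to\infty$ then requires uniform-in-$n$ control of the $\tau$-derivative, which I would get from the integrated form $c_\theta(0)-c_\theta(\tau)=\int_0^\tau r_\theta$. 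That form only needs convergence of integrals, and it yields continuity, the right derivative, and $r_\theta(0)=1$; the latter holds because $R_\theta(s,s)=1$ by \eqref{eq:dmft-response}. Smoothness on $(0,\infty)$ and the sign properties follow from the spectral representation $c_\theta(\tau)-q_*=\int e^{-\lambda\tau}\,\d\sigma(\lambda)$ with $\sigma\ge0$, which holds because $-L$ is self-adjoint and nonnegative on $L^2(\mu_\Gibbs)$. This representation passes to the limit as a limit of completely monotone functions, so $c_\theta$ is completely monotone on $(0,\infty)$, giving $c_\theta\ge0$ (as $q_*\ge0$), $r_\theta=-c_\theta'\ge0$, and $r_\theta'=c_\theta''\ge0$.

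There is a sign issue here: the claim in (d) is $r_\theta'\le0$, while complete monotonicity gives $c_\theta''\ge0$ and hence $r_\theta'=-c_\theta''\le0$. So the claim is consistent and does not need the bound $r_\theta'=c_\theta''\ge0$ written above. The main obstacle is the rigorous identification of $R_\theta$ with the finite-$n$ response and the interchange of the $n\to\infty$ limit with $\tau$-differentiation. I would handle the latter throughout in integrated form, using Lipschitz bounds from Lemma \ref{lemma:well_defined}.
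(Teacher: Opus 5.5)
Your parts (b) and (d) follow the paper's route: coupling to equilibrium via the Markov property, then the spectral representation of the reversible semigroup plus the finite-$n$ FDT. Passing to the limit in the integrated FDT is a fine substitute for the paper's Arzel\`a--Ascoli step. The genuine gap is in (c). You never obtain $|R_\theta(t,s)|\le Ce^{-c(t-s)}$ uniformly over $t\ge s\ge 0$, and the other bounds in (c) depend on it, because comparison with equilibrium only gives $|R_\theta(s+\tau,s)-r_\theta(\tau)|\le Ce^{C\tau}e^{-cs}$. That must be combined with decay in $\tau$ when $\tau\gtrsim s$.

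Assumption \ref{assump:convergence} says $P_{t-s}\id(\btheta^s)\approx\langle\btheta\rangle$ in $L^2$, and this controls only integrated responses. For example, Cauchy--Schwarz on $\frac1n\langle\btheta^{t\top}(\b^{s+h}-\b^s)\rangle$ gives $|\int_s^{s+h}R_\theta(t,u)\,\d u|\lesssim\sqrt h\,e^{-c(t-s-h)/2}$. Your upgrade to a pointwise bound via Lipschitz continuity fails for two reasons:
\begin{itemize}
\item The constant $K_2$ of Lemma \ref{lemma:well_defined} is $T$-dependent, since the envelopes grow exponentially.
\item The natural finite-$n$ modulus $\|\J^{t,u}-\J^{t,u'}\|_\op\le\|\J^{t,u'}\|_\op\|\J^{u',u}-\Id\|_\op\lesssim e^{C(t-u')}|u-u'|$ also grows exponentially in the lag.
\end{itemize}
So optimizing over $h$ yields decay only if the mixing rate beats the growth rate of the Jacobian flow, which is not assumed. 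The spectral-gap consequence of (d) would give decay of the equilibrium $r_\theta$, but not of $R_\theta(t,s)$ for small $s$ and large $t-s$.

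The missing idea is the Bismut--Elworthy--Li formula. Weight the noise on $[s,s+1]$ by $(\J^{u,s})^\top$ instead of the identity. Malliavin integration by parts then gives exactly $\frac1n\langle\btheta^{t\top}\int_s^{s+1}(\J^{u,s})^\top\d\b^u\rangle=\sqrt2\,\frac1n\langle\Tr\J^{t,s}\rangle$. Cauchy--Schwarz against $P_{t-s-1}\id(\btheta^{s+1})-\langle\btheta\rangle$ then gives the pointwise bound.

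There are two smaller omissions in (c):
\begin{itemize}
\item Comparing the response at $\btheta^s$ with that at a Gibbs sample ``as in (b)'' requires Lipschitz dependence on $\btheta$ of $\frac1n\sum_i\e_i^\top\nabla P_\tau e_i(\btheta)$. This is a second-order flow estimate using bounded $U'''$, not just the Lipschitz bound on $P_\tau\id$ used in (b).
\item Identifying $R_\theta(t,u)$ pointwise with $\lim_n\frac1n\langle\Tr\J^{t,u}\rangle$ from the integrated identity needs equicontinuity in $u$ of the finite-$n$ responses, not just continuity of the limit.
\end{itemize}

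In (a), your argument for $q_*$ is unsupported and in fact incorrect. Theorem \ref{thm:dmft-approx}(c) concerns a single trajectory and says nothing about $\frac1n\E[\btheta^{t\top}\tilde\btheta^t\mid\X,\btheta^0]$ for two copies with independent noise. The limit is also not $\E[\E[\theta^t\mid g,\theta^0]^2]$, because the two copies see different, only partially correlated fields. For instance, with $U(\theta)=a\theta^2/2$ and $a>\Lambda_{\max}$, the cross-overlap equals $\frac1n\btheta^{0\top}e^{2(\X-a\Id)t}\btheta^0\to0$. Meanwhile $\E[\E[\theta^t\mid g,\theta^0]^2]$ retains the noise-driven variance carried by $g$.

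The repair uses only single-trajectory quantities. Given $\btheta^s$, couple $\btheta^{s+\tau}$ with an independent Gibbs sample, and couple $\btheta^s$ with another. This gives $|\frac1n\langle\btheta^{s\top}\btheta^{s+\tau}\rangle-\frac1n\langle\btheta^\top\btheta'\rangle|\le Ce^{-c\tau}+Ce^{-cs}$, whose $n\to\infty$ limit yields $q_*$. The same bound is what you need in (b) to make $|C_\theta(t,s)-c_\theta(t-s)|\le Ce^{-c\min(s,t)}$ uniform in the lag. Your ``Cauchy in $s$'' comparison, with its $e^{C\tau}$ factor, does not give that on its own.
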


We may then define time-translation-invariant limits for $C_g,R_g$ as $t \to
\infty$ by
\begin{align*}
r_g(\tau)&=\sum_{p \geq 1} \kappa_{p+1}\,r_\theta^{\ast p}(\tau) \text{ for } \tau \geq 0,
\qquad \text{ where } r_\theta(\tau)=0 \text{ for } \tau<0,\\
c_g(\tau)&=\sum_{p,q \geq 0} \kappa_{p+q+2}\,
r_\theta^{\ast p} * c_\theta * \bar r_\theta^{\ast q},
\qquad \text{ where } \bar r_\theta(\tau)=r_\theta(-\tau) \text{ for all } \tau
\in \R.
\end{align*}
Here, for integrable functions $a,b:\R \to \R$, we use the convolution notation
\[[a*b](\tau)=\int_\R a(\tau-s)b(s)\d s,
\qquad a^{*p}=\underbrace{a*\ldots*a}_{p \text{ times}},
\qquad a^{*0}*b=b*a^{*0}=b.\]
These definitions of $r_g,c_g$ are alternatively expressed in the Fourier
domain (c.f.\ \eqref{eq:Fouriercalculations}) as
\begin{equation}\label{eq:Fourierrgcg}
\cF[r_g](\omega)=\cR_\Lambda(\cF[r_\theta](\omega)),
\qquad
\cF[c_g^0](\omega)=\cF[c_\theta^0](\omega)
\frac{\cR_\Lambda(\cF[r_\theta](\omega))
-\overline{\cR_\Lambda(\cF[r_\theta](\omega))}}
{\cF[r_\theta](\omega)-\overline{\cF[r_\theta](\omega)}},
\end{equation}
where $\cF[f](\omega)=\int_\R f(\tau)e^{-i\omega\tau}\d\tau$ is the Fourier
transform of $f$, $\cR_\Lambda$ is the R-transform \eqref{eq:Rtransform}, and $c_\theta^0(\tau)=c_\theta(\tau)-\lim_{s \to \infty} c_\theta(s)$ and $c_g^0(\tau)=c_g(\tau)-\lim_{s \to \infty} c_g(s)$.

When $v_*-q_*$ of Lemma \ref{lemma:tti}
is within the radius of convergence of \eqref{eq:Rtransform},
we have the following properties for $C_g,R_g$, including
importantly a fluctuation-dissipation relation between $c_g$ and $r_g$.

\begin{lemma}\label{lemma:ttig}
In addition to the conditions of Lemma \ref{lemma:tti}, suppose that
\begin{equation}\label{eq:Rdomain}
\sum_{p \geq 1} |\kappa_{p+1}|(v_*-q_*+\eps)^p<\alpha
\end{equation}
for some constant $\eps>0$ and the limits $v_*,q_*$ defined in Lemma
\ref{lemma:tti}. Then:
\begin{enumerate}[(a)]
\item $r_g:[0,\infty) \to \R$ is continuous on $[0,\infty)$ and
continuously-differentiable on $(0,\infty)$, with
\begin{equation}\label{eq:intrgbound}
\int_0^\infty |r_g(\tau)| \d \tau<\alpha
\end{equation}
and $\int_0^\infty |r_g'(\tau)|\d\tau<\infty$.
Furthermore, for some $C,c>0$ and all $t \geq s \geq 0$ and $\tau \geq 0$,
\begin{equation}\label{eq:rgbounds}
\begin{gathered}
|R_g(t,s)| \leq Ce^{-c(t-s)}, \qquad |r_g(\tau)| \leq Ce^{-c\tau},\\
|R_g(t,s)-r_g(t-s)| \leq Ce^{-cs}, \qquad
\int_0^t |R_g(t,s)-r_g(t-s)| \d s \leq Ce^{-ct}.
\end{gathered}
\end{equation}
\item $c_g:\R \to \R$ is continuous, symmetric, and positive-semidefinite on
$\R$, twice continuously-differentiable on $(0,\infty)$, and has
a right derivative $c_g'(0^+)=\lim_{\tau \to 0^+} c_g'(\tau)$ at $\tau=0$.
We have the fluctuation-dissipation relation
\begin{equation}\label{eq:cgrgFDT}
r_g(\tau)={-}c_g'(\tau) \text{ for all } \tau>0,
\qquad r_g(0)={-}c_g'(0^+).
\end{equation}
Furthermore $\cR_\Lambda(v_*-q_*) \geq 0$ and
$q_*\cR_\Lambda'(v_*-q_*) \geq 0$, and we have
\begin{equation}\label{eq:cgbounds}
\begin{gathered}
c_g(0)=\cR_\Lambda(v_*-q_*)+q_*\cR_\Lambda'(v_*-q_*),
\qquad
\lim_{\tau \to \infty} c_g(\tau)=q_*\cR_\Lambda'(v_*-q_*),\\
\qquad |C_g(t,s)-c_g(t-s)| \leq Ce^{-c\min(s,t)},
\qquad |c_g(\tau)-q_*\cR_\Lambda'(v_*-q_*)| \leq Ce^{-c\tau}.
\end{gathered}
\end{equation}
\end{enumerate}
\end{lemma}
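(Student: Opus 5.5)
The plan is to derive everything from Lemma \ref{lemma:tti} together with the series definitions of $r_g$ and $c_g$. Absolute summability will be controlled by the quantity $\int_0^\infty r_\theta = v_*-q_*$.

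\emph{Step 1 (mass of $r_\theta$).} By the FDT relation \eqref{eq:crthetaFDT}, $r_\theta\ge0$, and integrating,
\[\int_0^\infty r_\theta(\tau)\,\d\tau = c_\theta(0)-\lim_{\tau\to\infty}c_\theta(\tau)=v_*-q_*.\]
Hence $\int_0^\infty r_\theta^{*p}=(v_*-q_*)^p$, and
\[\int_0^\infty |r_g| \le \sum_{p\ge1}|\kappa_{p+1}|(v_*-q_*)^p<\alpha\]
by \eqref{eq:Rdomain}, which gives \eqref{eq:intrgbound}.

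\emph{Step 2 (exponential decay and time-translation approximation).} For exponential decay, use the weighted mass $\int_0^\infty e^{\delta\tau} r_\theta(\tau)\,\d\tau\le v_*-q_*+\eps$ for small $\delta>0$. This follows from $|r_\theta|\le Ce^{-c\tau}$ and dominated convergence. It yields $\int_0^\infty e^{\delta\tau}|r_g|<\infty$. Pointwise bounds on $|r_g|$ then follow from $r_\theta^{*p}=r_\theta*r_\theta^{*(p-1)}$ with $r_\theta$ bounded, and likewise for $R_g$ using the bound \eqref{eq:rthetabound} on $R_\theta$.

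For $|R_g(t,s)-r_g(t-s)|$, telescope
\[R_\theta^{*p}-r_\theta^{*p}=\sum_{j} R_\theta^{*j}*(R_\theta-r_\theta)*r_\theta^{*(p-1-j)}.\]
Combine the $e^{-cs}$ bounds in \eqref{eq:rthetabound} with the weighted masses; the factor $p$ from telescoping is absorbed by the $\eps$-room in \eqref{eq:Rdomain}. Continuity of $r_g$ and $\int_0^\infty |r_g'|<\infty$ come from differentiating $r_\theta^{*p}$ with $r_\theta'\le 0$ monotone, so that $\int_0^\infty|r_\theta'| = r_\theta(0)=1$. Explicitly,
\[(r_\theta^{*p})'=r_\theta(0)\,r_\theta^{*(p-1)}+r_\theta'*r_\theta^{*(p-1)},\]
whose $L^1$ norm is at most $2(v_*-q_*)^{p-1}$.

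\emph{Step 3 (the kernel $c_g$).} Write $c_\theta=q_*+c_\theta^0$ with $c_\theta^0$ integrable and exponentially decaying. The constant part contributes
\[\sum_{p,q}\kappa_{p+q+2}(v_*-q_*)^{p+q}q_*=q_*\cR_\Lambda'(v_*-q_*),\]
which is the limit of $c_g$ at infinity. Since $C_g$ is a positive-semidefinite limit of $C_g(s+\cdot,s)$, and the proof of $|C_g(t,s)-c_g(t-s)|\le Ce^{-c\min(s,t)}$ is the same telescoping as in Step 2, $c_g$ is positive-semidefinite. Consequently $q_*\cR_\Lambda'(v_*-q_*)=\lim_{\tau\to\infty} c_g(\tau)\ge0$.

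The core is the FDT identity \eqref{eq:cgrgFDT}, which I would verify in the Fourier domain via \eqref{eq:Fourierrgcg}. Set $\hat r=\cF[r_\theta]$. From $r_\theta=-c_\theta'$ on $(0,\infty)$ with $c_\theta^0$ even, one gets
\[\cF[c_\theta^0](\omega)=\frac{\hat r-\bar{\hat r}}{i\omega}.\]
Substituting into \eqref{eq:Fourierrgcg} gives
\[\cF[c_g^0]=\frac{\cR_\Lambda(\hat r)-\overline{\cR_\Lambda(\hat r)}}{i\omega}=\frac{\cF[r_g]-\overline{\cF[r_g]}}{i\omega},\]
which is exactly the Fourier form of $c_g^0(\tau)=\int_{|\tau|}^\infty r_g$. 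This identifies
\[c_g(\tau)=q_*\cR_\Lambda'(v_*-q_*)+\int_{|\tau|}^\infty r_g(u)\,\d u,\]
which gives the FDT, the regularity of $c_g$ (inherited from $r_g$), the value $c_g(0)=\cR_\Lambda(v_*-q_*)+q_*\cR_\Lambda'(v_*-q_*)$ using $\int_0^\infty r_g=\cR_\Lambda(v_*-q_*)$, and the exponential convergence $|c_g(\tau)-q_*\cR_\Lambda'(v_*-q_*)|\le Ce^{-c\tau}$.

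\emph{Main obstacle.} I expect the hard part to be justifying the Fourier manipulation. The identity \eqref{eq:Fourierrgcg} needs the series to converge in the relevant function spaces, which holds because $|\hat r(\omega)|\le v_*-q_*$ lies inside the disk of absolute convergence, and the division by $\hat r-\bar{\hat r}$ needs care at its zeros. I would handle this by writing the ratio as the power series $\sum_p\kappa_{p+1}\sum_{j}\hat r^{\,j}\bar{\hat r}^{\,p-1-j}$, which avoids division entirely.

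Finally, $\cR_\Lambda(v_*-q_*)\ge0$ follows from
\[c_g(0)-\lim_{\tau\to\infty}c_g(\tau)=\lim_{s\to\infty}\tfrac12\,\E(g^{s}-g^{s+\tau})^2\ge0\quad\text{as }\tau\to\infty,\]
or equivalently from positive-semidefiniteness of $c_g^0$, which is nonnegative at $0$.
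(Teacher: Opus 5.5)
Your treatment of the time-translation-invariant kernels is correct, and in two places it takes a genuinely different, somewhat cleaner route than the paper's.

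\emph{Decay of $r_g$.} You use the weighted mass $\int_0^\infty e^{\delta\tau}r_\theta(\tau)\,\d\tau\le v_*-q_*+\eps$, exploiting that the weight $e^{\delta\tau}$ passes through one-sided convolutions. The paper instead splits the series at $p\approx\delta\tau$ and uses $|\kappa_p|\le C^p$ from Proposition \ref{prop:freecumulants}.

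\emph{The FDT.} You identify $c_g^0(\tau)=\int_{|\tau|}^\infty r_g(u)\,\d u$ directly, by comparing Fourier transforms of two continuous $L^1$ functions; the tail integral is in $L^1$ because $r_g$ decays exponentially. This gives in one stroke the $C^2$ regularity on $(0,\infty)$, $c_g'(0^+)=-r_g(0)$, the value $c_g(0)$, and the decay of $c_g^0$. The paper instead differentiates the $c_g$ series termwise as in \eqref{eq:cgderiv} and proves the decay of $c_g^0$ separately. A minor point: with $\cF[f](\omega)=\int f(\tau)e^{-i\omega\tau}\,\d\tau$ one has $\cF[c_\theta^0]=(\bar{\hat r}-\hat r)/(i\omega)$. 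Your sign is flipped, but identically on both sides, so the conclusion is unaffected.

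The gap is in the non-stationary kernels, which you dispose of with ``likewise for $R_g$ using \eqref{eq:rthetabound}'' and ``the same telescoping''.
\begin{itemize}
\item From \eqref{eq:rthetabound}, the best uniform bound on $m(t):=\int_0^t|R_\theta(t,s)|\,\d s$ is $v_*-q_*+Ce^{-ct}$. So for $t$ in a bounded initial window $[0,s_0]$, the mass of $R_\theta(t,\cdot)$ may exceed $v_*-q_*+\eps$.
\item Iterating then only gives $\int_0^t|R_\theta^{*p}(t,s)|\,\d s\le(\sup_u m(u))^p$.
\item Since \eqref{eq:Rdomain} controls $|\kappa_{p+1}|$ only at radius $v_*-q_*+\eps$, uniform-in-$t$ convergence of $\sum_p|\kappa_{p+1}|\,|R_\theta^{*p}(t,s)|$ does not follow.
\end{itemize}
Everything non-stationary rests on this: $|R_g(t,s)|\le Ce^{-c(t-s)}$, $|R_g(t,s)-r_g(t-s)|\le Ce^{-cs}$ and its integrated version, and the telescoping for $|C_g(t,s)-c_g(t-s)|$. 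Through these it also carries $c_g(\tau)=\lim_{s\to\infty}C_g(s+\tau,s)$, the positive-semidefiniteness of $c_g$, and the signs of $q_*\cR_\Lambda'(v_*-q_*)$ and $\cR_\Lambda(v_*-q_*)$.

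The missing idea is that the excess mass lives only on $[0,s_0]$, where the time-ordered simplex has volume $s_0^k/k!$. There are two ways to use this.
\begin{itemize}
\item The paper stratifies $t\ge t_1\ge\cdots\ge t_p\ge0$ by the last index $k$ with $t_k>s_0$. This gives $\int_0^t|R_\theta^{*p}(t,s)|\,\d s\le\sum_k(v_*-q_*+\eps/4)^k(Cs_0)^{p-k}/(p-k)!\le C'(v_*-q_*+\eps/4)^p$, and likewise $|R_\theta^{*p}(t,s)|\le C''(v_*-q_*+\eps/4)^{p-1}$.
\item Alternatively, within your weighted-mass framework, conjugate by $w(t)=e^{-\lambda\min(t,s_0)}$, i.e.\ work with $e^{\delta(t-s)}R_\theta(t,s)w(t)/w(s)$. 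For $s_0,\lambda$ large and $\delta$ small, its row masses are at most $v_*-q_*+\eps/2$ uniformly in $t$. Undoing the conjugation costs only the constant $e^{\lambda s_0}$.
\end{itemize}
With either repair in place, the rest of your plan goes through.
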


The remainder of this section proves Lemmas \ref{lemma:tti}
and \ref{lemma:ttig}. In these proofs and in our subsequent proof of Theorem
\ref{thm:replica}, to ease the exposition of statements that hold almost
surely for all large $n$, let us assume the following additional condition.
\begin{assumption}\label{assump:as}
For some constants $C,c>0$, with probability 1 over $(\X,\btheta^0)$
(for every $n \geq 1$), we have
$\|\X\|_\op \leq \Lambda_{\max}$, $\frac{1}{n}\|\btheta^0\|_2^2 \leq C$,
and the statements of Assumption \ref{assump:convergence} hold.
\end{assumption}
This is without loss of generality: Indeed, letting $\cE_n$ be the
$(\X,\btheta^0)$-dependent event where the statements of Assumption
\ref{assump:as} hold, Assumptions \ref{assump:X}, \ref{assump:dynamics}, and
\ref{assump:convergence} ensure that $\cE_n$ holds a.s.\ for all large $n$.
We may define $(\tilde \X,\tilde \btheta^0)=(\X,\btheta^0)$ on $\cE_n$ and
define $(\tilde \X,\tilde \btheta^0)$ arbitrarily on $\cE_n^c$ (e.g., setting $\tilde \X=0$ and $\tilde\btheta^0=0$) to satisfy
Assumption \ref{assump:as}. Then Lemmas \ref{lemma:tti},
\ref{lemma:ttig}, and Theorem \ref{thm:replica} for $(\tilde \X,\tilde
\btheta^0)$ imply the corresponding results for $(\X,\btheta^0)$, since
$(\tilde \X,\tilde \btheta^0)=(\X,\btheta^0)$ a.s.\ for all large $n$.

\begin{proof}[Proof of Lemma \ref{lemma:tti}]
Throughout, $\{\tilde\btheta^t\}_{t \in \R}$ denotes a stationary
process satisfying \eqref{eq:langevin} with equilibrium initial state
$\tilde\btheta^0 \sim \mu_\Gibbs$.
To ease notation, we will use consistently the convention that
\[\langle f(\btheta^t) \rangle=\E[f(\btheta^t) \mid \X,\btheta^0]\]
denotes an expectation conditional on $\btheta^0$
(the given initial condition satisfying Assumptions \ref{assump:dynamics} and
\ref{assump:convergence}), while
\[\langle f(\tilde \btheta^t) \rangle=\E[f(\tilde \btheta^t) \mid \X]\]
denotes an expectation marginalizing over $\tilde\btheta^0 \sim \mu_\Gibbs$.
The notation
$\langle f(\btheta,\btheta') \rangle$ without time index continues to denote
``static'' expectations over $\btheta,\btheta' \overset{iid}{\sim} \mu_\Gibbs$.
(We will write these expectations more explicitly under couplings of these
components.) We assume without loss of generality that Assumption
\ref{assump:as} holds.
Constants $C,C',c,c'>0$ depend on $\Lambda_{\max}$ and the potential $U(\cdot)$
but do not depend on $n$, and may change from instance to instance.\\

{\bf Parts (a) and (b):} Since $f=-U'$ satisfies Assumption
\ref{assump:dynamics} and $U(\cdot)$ also satisfies \eqref{eq:Uconvexity} with
$\alpha>\Lambda_{\max}$, there are constants $C,\eps>0$ such that
\[Cx^2+C \geq U(x) \geq \frac{\Lambda_{\max}+\eps}{2}\,x^2-C \text{ for all } x \in \R.\]
Then
\begin{align*}
\cZ&=\int \exp\left(\frac{1}{2}\btheta^\top \X\btheta-\sum_{i=1}^n
U(\theta_i)\right)\d \btheta
\geq \int \exp\left({-}\Big(\frac{\Lambda_{\max}}{2}+C\Big)\|\btheta\|_2^2
-Cn\right)\d\btheta \geq e^{-C'n},\\
\cZ&\leq \int\exp\left({-}\Big(\frac{\Lambda_{\max}+\eps}{2}-\frac{\Lambda_{\max}}{2}\Big)\|\btheta\|_2^2
+Cn\right)\d\btheta \leq e^{C'n},
\end{align*}
so $\mu_\Gibbs$ is well-defined.
Also for any $M>0$, by a chi-squared tail bound,
\begin{align*}
\int_{\|\btheta\|_2^2 \geq Mn}
\|\btheta\|_2^2 \exp\left(\frac{1}{2}\btheta^\top \X\btheta-\sum_{i=1}^n
U(\theta_i)\right)\d \btheta
&\leq \int_{\|\btheta\|_2^2 \geq Mn}
\|\btheta\|_2^2
\exp\left({-}\Big(\frac{\eps}{2}\Big)\|\btheta\|_2^2
+Cn\right)\d\btheta\\
&\leq e^{C'n} \cdot e^{-cMn}
\end{align*}
Thus for $M>0$ large enough, we have
$\langle \1\{\|\btheta\|_2^2 \geq Mn\} \cdot \frac{1}{n}\|\btheta\|_2^2 \rangle
\leq 1$, implying for $C=M+1$ that
\begin{equation}\label{eq:thetanormbound}
\frac{1}{n}\langle \|\btheta\|_2^2 \rangle \leq C.
\end{equation}

Assumption \ref{assump:convergence} implies that for every $s \geq 0$, 
there exists a coupling of $(\btheta^0,\btheta^s)$
with $\tilde \btheta^0$ having law $\mu_\Gibbs$ conditional on $\btheta^0$
(i.e.\ $\tilde \btheta^0$ is independent of $\btheta^0$), such that
\begin{equation}\label{eq:thetatthetacouple}
\frac{1}{n}\,\E[\|\btheta^s-\tilde \btheta^0\|_2^2 \mid \X,\btheta^0]
\leq Ce^{-cs}.
\end{equation}
Combined with \eqref{eq:thetanormbound} to bound $\frac{1}{n}\langle
\|\tilde \btheta^0\|_2^2 \rangle$, this implies also
\begin{equation}\label{eq:thetatnormbound}
\frac{1}{n}\langle \|\btheta^s\|_2^2 \rangle \leq C \text{ for all } s \geq 0.
\end{equation}
Denote
\[P_t f(\btheta)=\E[f(\btheta^t) \mid \X,\,\btheta^0=\btheta]\]
for any $f:\R^n \to \R^m$ and $m \geq 1$ for which this expectation exists.
Let $\id:\R^n \to \R^n$ be the identity map. Then for any fixed
$s,\tau \geq 0$,
\[\frac{1}{n} \sum_{i=1}^n \langle \theta_i^{s+\tau}\theta_i^s \rangle
=\frac{1}{n}\E[\btheta^{s\top} \E[\btheta^{s+\tau} \mid \X,\btheta^s] \mid
\X,\btheta^0]=\frac{1}{n}\langle \btheta^{s\top} P_\tau\id(\btheta^s) \rangle,\]
so Theorem \ref{thm:dmft-approx}(c) implies that
\begin{equation}\label{eq:Cthetainterp}
C_\theta(s+\tau,s)=\lim_{n \to \infty}
\frac{1}{n}\langle \btheta^{s\top} P_\tau\id(\btheta^s) \rangle \text{ a.s.}
\end{equation}
Let us bound $P_t(\btheta)-P_t(\btheta')$ for any $\btheta,\btheta' \in \R^n$:
For two diffusions
$\{\btheta^t\}_{t \geq 0}$ and $\{{\btheta'}^t\}_{t \geq 0}$ 
satisfying \eqref{eq:langevin} with
initial conditions $\btheta$ and $\btheta'$, coupling these by
the same Brownian motion, we have
\[\frac{\d}{\d t} \|\btheta^t-{\btheta'}^t\|_2^2
=2(\btheta^t-{\btheta'}^t)^\top[\X\btheta^t-\X{\btheta'}^t-U'(\btheta^t)+U'({\btheta'}^t)] \leq C\|\btheta^t-{\btheta'}^t\|_2^2,\]
and thus for all $t \geq 0$,
\begin{equation}\label{eq:thetainitcoupling}
\|\btheta^t-{\btheta'}^t\|_2 \leq e^{C't}\|\btheta-{\btheta'}\|_2.
\end{equation}
Then also by Jensen's inequality, for all $t \geq 0$,
\begin{equation}\label{eq:Ptaudiff}
\|P_t\id(\btheta)-P_t\id({\btheta'})\|_2
\leq e^{C't} \|\btheta-{\btheta'}\|_2.
\end{equation}
Let $(\btheta^0,\btheta^s)$ and $\tilde\btheta^0$ be coupled as in \eqref{eq:thetatthetacouple}.
Then by this bound \eqref{eq:Ptaudiff}
applied to $\btheta=\btheta^s$ and $\btheta'=\tilde \btheta^0$ and
Cauchy-Schwarz,
\begin{equation}\label{eq:Cthetacompare}
\left|\frac{1}{n}\langle \btheta^{s\top} P_\tau\id(\btheta^s) \rangle
-\frac{1}{n}\langle \tilde \btheta^{0\top} P_\tau\id(\tilde \btheta^0)
\rangle\right| \leq Ce^{C\tau} \cdot e^{-cs}.
\end{equation}
Then by the existence a.s.\ of the limit \eqref{eq:Cthetainterp} for any fixed
$s,\tau \geq 0$,
\[\limsup_{n \to \infty}
\frac{1}{n}\langle \tilde \btheta^{0\top} P_\tau\id(\tilde \btheta^0)
\rangle-\liminf_{n \to \infty}
\frac{1}{n}\langle \tilde \btheta^{0\top} P_\tau\id(\tilde \btheta^0)
\rangle \leq 2Ce^{C\tau} \cdot e^{-cs} \text{ a.s.}\]
Since $s \geq 0$ is arbitrary, this implies that for each fixed $\tau \geq 0$,
there exists a (possibly random) almost sure limit
\begin{equation}\label{eq:cthetadef}
c_\theta(\tau):=\lim_{n \to \infty} 
\frac{1}{n}\langle \tilde \btheta^{0\top} \tilde \btheta^\tau
\rangle=\lim_{n \to \infty} 
\frac{1}{n}\langle \tilde \btheta^{0\top} P_\tau\id(\tilde
\btheta^0)\rangle \text{ a.s.}
\end{equation}
Taking $n \to \infty$ followed by $s \to \infty$
in \eqref{eq:Cthetacompare} shows that $c_\theta(\tau)$ is deterministic and
given by
\begin{equation}\label{eq:cthetaqualitative}
c_\theta(\tau)=\lim_{s \to \infty} C_\theta(s+\tau,s).
\end{equation}
Defining $c_\theta(\tau)=c_\theta(-\tau)$ for $\tau \leq 0$, and noting that
$\langle \tilde \btheta^{0\top} \tilde \btheta^\tau \rangle
=\langle \tilde \btheta^{0\top} \tilde \btheta^{-\tau} \rangle$ by
reversibility, \eqref{eq:cthetaqualitative} and
the first equality of \eqref{eq:cthetadef} then also hold for all $\tau \in \R$.
Since $\tau \mapsto \frac{1}{n}\langle \tilde \btheta^{0\top} \tilde \btheta^\tau \rangle$ is symmetric positive-semidefinite for each $n$,
this implies by \eqref{eq:cthetadef}
that $c_\theta(\tau)$ is also symmetric positive-semidefinite.

To show part (a), define $v_*=c_\theta(0)$.
Then by \eqref{eq:cthetadef},
\[v_*=c_\theta(0)=\lim_{n \to \infty}
\frac{1}{n} \langle \|\tilde \btheta^0\|_2^2 \rangle
=\lim_{n \to \infty} \frac{1}{n} \langle \|\btheta\|_2^2 \rangle \text{ a.s.}\]
For any fixed $\tau \geq 0$, Assumption \ref{assump:convergence} implies
there exists a coupling of
$(\tilde\btheta^0,\tilde \btheta^\tau)$ with $\btheta$ having law
$\mu_\Gibbs$ conditional on $\tilde \btheta^0$ (i.e.\ $\btheta$ is independent
of $\tilde \btheta^0$), such that
\begin{equation}\label{eq:thetaequilibriumcoupling}
\frac{1}{n}\,\E[\|\tilde \btheta^\tau-\btheta\|_2^2 \mid \X]
\leq Ce^{-c\tau}.
\end{equation}
Then by \eqref{eq:thetanormbound} and Cauchy-Schwarz,
\begin{equation}\label{eq:cthetacompare}
\left|\frac{1}{n}\langle \tilde \btheta^{0\top}\tilde \btheta^\tau
\rangle-\frac{1}{n} \langle \btheta^\top \btheta' \rangle\right|
=\left|\frac{1}{n}\E[\tilde \btheta^{0\top}\tilde \btheta^\tau\mid \X]
-\frac{1}{n}\E[\tilde \btheta^{0\top}\btheta \mid \X]\right|
\leq Ce^{-c\tau}.
\end{equation}
Thus, since the limit \eqref{eq:cthetadef} exists a.s.,
\[\limsup_{n \to \infty} \frac{1}{n} \langle \btheta^\top \btheta' \rangle
-\liminf_{n \to \infty} \frac{1}{n} \langle \btheta^\top \btheta' \rangle
\leq 2Ce^{-c\tau} \text{ a.s.}\]
Here $\tau \geq 0$ is arbitrary, so there exists an almost sure limit
\[q_*=\lim_{n \to \infty} \frac{1}{n} \langle \btheta^\top \btheta' \rangle
\text{ a.s.}\]
Taking the limit $n \to \infty$ followed by $\tau \to \infty$
in \eqref{eq:cthetacompare} shows that
$q_*$ is deterministic and given by $q_*=\lim_{\tau \to \infty} c_\theta(\tau)$.
This proves part (a).

To check the bounds \eqref{eq:cthetabound} in part (b), similarly by Assumption
\ref{assump:convergence}, there exists a coupling of
$(\btheta^s,\btheta^{s+\tau})$ with $\btheta$ having law $\mu_\Gibbs$
conditional on $\btheta^s$ (i.e.\ $\btheta$ is independent of $\btheta^s$)
such that
\begin{equation}\label{eq:thetascoupling}
\E\left[\frac{1}{n}\,\E[\|\btheta^{s+\tau}-\btheta\|_2^2 \mid \X,\btheta^s] \;\bigg|\;
\X,\btheta^0\right]
\leq Ce^{-c\tau}.
\end{equation}
By this, \eqref{eq:thetatnormbound},
and Cauchy-Schwarz,
\[\E\left[\left|\frac{1}{n}\E[\btheta^{s\top}\btheta^{s+\tau} \mid \X,\btheta^s]
-\frac{1}{n}\,\E[\btheta^{s\top} \btheta \mid \X,\btheta^s]\right| \;\bigg|\; \X,\btheta^0\right]
\leq Ce^{-c\tau}.\]
Then coupling $(\btheta^0,\btheta^s)$ with $\tilde \btheta^0$ via
\eqref{eq:thetatthetacouple} (where $\tilde\btheta^0$ is independent of
$\btheta^0$, and all $(\btheta^0,\btheta^s,\tilde \btheta^0)$ are
independent of the above $\btheta \sim \mu_\Gibbs$)
and applying Cauchy-Schwarz again,
\begin{equation}\label{eq:Cthetastauqstar}
\left|\frac{1}{n} \langle \btheta^{s\top}\btheta^{s+\tau} \rangle
-\frac{1}{n}\langle \btheta^\top \btheta' \rangle\right|
=\left|\frac{1}{n}\E[\btheta^{s\top}\btheta^{s+\tau} \mid \X,\btheta^0]
-\frac{1}{n}\E[\tilde \btheta^{0\top} \btheta \mid \X,\btheta^0]\right|
\leq Ce^{-c\tau}+Ce^{-cs}.
\end{equation}
Taking the limit $n \to \infty$ in \eqref{eq:Cthetacompare},
\eqref{eq:cthetacompare}, and \eqref{eq:Cthetastauqstar} shows
\[|C_\theta(s+\tau,s)-c_\theta(\tau)| \leq Ce^{C\tau} \cdot e^{-cs},
\quad |c_\theta(\tau)-q_*| \leq Ce^{-c\tau},
\quad |C_\theta(s+\tau,s)-q_*| \leq Ce^{-c\tau}+Ce^{-cs}.\]
Choosing a constant $\iota>0$ small enough, the first bound is at most
$C'e^{-c's}$ when $\tau \leq \iota s$. Applying this bound for $\tau
\leq \iota s$ and the latter two bounds for $\tau>\iota s$, also
\[|C_\theta(s+\tau,s)-c_\theta(\tau)| \leq C''e^{-c''\iota}.\]
This shows all statements of part (b).\\

{\bf Part (c):} In light of the last statement of Theorem
\ref{thm:dmft-approx}(c), we first compute a simple form for
$\frac{\d}{\d s}\frac{1}{n}\langle\btheta^{t\top} \b^s \rangle$
over $s \in (0,t)$. Since
$\{\btheta^t\}_{t \geq 0}$ solves \eqref{eq:langevin}, we have
\[\sqrt{2}\,\b^s=\btheta^s-\btheta^0-\int_0^s [\X\btheta^r-U'(\btheta^r)]\d r.\]
Then
\begin{equation}\label{eq:thetabderiv}
\sqrt{2}\,\frac{\d}{\d s}
\left[\frac{1}{n}\langle\btheta^{t\top} \b^s \rangle\right]
=\frac{\d}{\d s}\left[\frac{1}{n}\langle \btheta^{t\top} \btheta^s
\rangle\right]-\frac{1}{n}\langle \btheta^{t\top}[\X\btheta^s-U'(\btheta^s)]
\rangle.
\end{equation}
Define $e_i:\R^n \to \R$ by $e_i(\btheta)=\theta_i$. Then
\[\frac{1}{n}\langle \btheta^{t\top}\btheta^s \rangle
=\frac{1}{n}\sum_{i=1}^n P_s(e_i \cdot P_{t-s} e_i)(\btheta^0).\]
Letting
$-A f(\btheta)=\nabla f(\btheta)^\top [\X\btheta-U'(\btheta)]
+\Tr \nabla^2 f(\btheta)$
be the generator of the diffusion, we have
\[\partial_\tau P_\tau e_i={-}AP_\tau e_i={-}P_\tau Ae_i,
\qquad \partial_s P_s(e_i \cdot P_\tau e_i)
={-}P_sA(e_i \cdot P_\tau e_i)\]
as equalities pointwise on $\R^n$.
Then by the chain rule and dominated convergence,
\begin{equation}\label{eq:semigroupderiv}
\frac{\d}{\d s}P_s(e_i \cdot P_{t-s} e_i)
={-}P_s A(e_i \cdot P_{t-s} e_i)
+P_s(e_i \cdot P_{t-s} Ae_i).
\end{equation}
Here, using $\nabla e_i(\btheta)=\e_i$ (the $i^\text{th}$ standard basis
vector in $\R^n$) and $\nabla^2 e_i(\btheta)=0$, we have
\begin{align*}
{-}A(e_i \cdot P_{t-s}e_i)(\btheta)
&=[P_{t-s}e_i(\btheta) \cdot \nabla e_i(\btheta)
+e_i(\btheta) \cdot \nabla P_{t-s}e_i(\btheta)]^\top[\X\btheta-U'(\btheta)]\\
&\qquad+\Tr[\nabla P_{t-s}e_i(\btheta) \cdot \nabla e_i(\btheta)^\top
+\nabla e_i(\btheta) \cdot \nabla P_{t-s}e_i(\btheta)^\top
+e_i(\btheta) \cdot \nabla^2 P_{t-s}e_i(\btheta)]\\
&=P_{t-s}e_i(\btheta) \cdot \e_i^\top[\X\btheta-U'(\btheta)]
+2\e_i^\top \nabla P_{t-s}e_i(\btheta)-(e_i \cdot AP_{t-s}e_i)(\btheta)
\end{align*}
Applying $P_s$ to this function,
the last term cancels the second term of \eqref{eq:semigroupderiv}, and thus
\[\frac{\d}{\d s}P_s(e_i \cdot P_{t-s} e_i)(\btheta^0)
=\langle P_{t-s}e_i(\btheta^s) \cdot
\e_i^\top[\X\btheta^s-U'(\btheta^s)] \rangle
+2\langle \e_i^\top \nabla P_{t-s}e_i(\btheta^s) \rangle.\]
Then
\begin{align*}
\frac{\d}{\d s}\left[\frac{1}{n}\langle \btheta^{t\top}\btheta^s
\rangle\right]
&=\frac{1}{n}\sum_{i=1}^n \frac{\d}{\d s}P_s(e_i \cdot P_{t-s} e_i)(\btheta^0)\\
&=\frac{1}{n} \langle \btheta^{t\top}[\X\btheta^s-U'(\btheta^s)] \rangle
+\frac{2}{n}\sum_{i=1}^n \langle \e_i^\top \nabla P_{t-s}e_i(\btheta^s) \rangle.
\end{align*}
Applying this to \eqref{eq:thetabderiv}, for all $s \in (0,t)$ we have
\begin{equation}\label{eq:response}
\frac{1}{\sqrt{2}}\,\frac{\d}{\d s}
\left[\frac{1}{n}\langle\btheta^{t\top} \b^s \rangle\right]
=\frac{1}{n}\sum_{i=1}^n \langle \e_i^\top \nabla P_{t-s}e_i(\btheta^s)\rangle.
\end{equation}
We note that $\e_i^\top \nabla P_{t-s}e_i(\btheta^s)$ is a standard form of a
response function for the Markov diffusion, see e.g.\ \cite{dembo2010markovian}.

We now check that for any fixed $t \geq 0$, the right side of
\eqref{eq:response} as a function
of $s \in [0,t]$ is uniformly bounded and equicontinuous
for all $n$. Given $\{\btheta^t\}_{t \geq 0}$,
define a process $\{\J^{t,s}\}_{t \geq s \geq 0}$ by
\[\frac{\d}{\d t}\J^{t,s}=[\X-\diag(U''(\btheta^t))]\J^{t,s} \text{ for } t \geq
s, \qquad \J^{s,s}=\Id.\]
By Assumptions \ref{assump:as} and \ref{assump:dynamics} for $f=-U'$,
we have $\|\X-\diag(U''(\btheta))\|_\op \leq C$ for a constant $C>0$ and any
$\btheta \in \R^n$, and thus
\begin{equation}\label{eq:Jopbound}
\|\J^{t,s}\|_\op \leq e^{C(t-s)} \text{ for all } t \geq s \geq 0.
\end{equation}
For any $s \geq 0$, conditional on $\btheta^s$, there
exists a modification of $\{\btheta^t\}_{t \geq s}$ 
and $\{\J^{t,s}\}_{t:\,t \geq s}$ where $\btheta^t$ is
continuously-differentiable in $\btheta^s$ 
with Jacobian $\frac{\d \btheta^t}{\d \btheta^s}=\J^{t,s}$
\cite[Theorem II.3.1]{kunita2006stochastic}.
Then by dominated convergence,
\[\frac{1}{n}\sum_{i=1}^n \e_i^\top \nabla P_{t-s} e_i(\btheta^s)
=\frac{1}{n}\sum_{i=1}^n \frac{\partial}{\partial \theta_i^s}
\E[\theta_i^t \mid \X,\btheta^s]
=\frac{1}{n}\sum_{i=1}^n 
\E\left[\frac{\partial \theta_i^t}{\partial \theta_i^s}\,\bigg|\,
\X,\btheta^s\right]
=\E\left[\frac{1}{n}\Tr \J^{t,s}\,\bigg|\,\X,\btheta^s\right]\]
Thus, \eqref{eq:response} is equivalently 
$\frac{1}{n} \langle \Tr \J^{t,s} \rangle$, where $\langle \cdot \rangle$
here and below denotes the expectation over both
$\{\btheta^t\}_{t \geq 0}$ and $\{\J^{t,s}\}_{t \geq s \geq 0}$
conditional on $(\X,\btheta^0)$. For any $s,s' \in [0,t]$ with
$s \leq s'$, we have
\[\|\J^{s',s}-\J^{s',s'}\|_\op
=\|\J^{s',s}-\J^{s,s}\|_\op
\leq \int_s^{s'}
\left\|\frac{\d}{\d r}\J^{r,s}\right\|_\op \d r
\leq \int_s^{s'} Ce^{C(r-s)}\d r \leq C(t)|s'-s|\]
for some constant $C(t)>0$ not depending on $n$. Then by Gr\"onwall's
inequality, for a (different) constant $C(t)>0$, also
\[\left|\frac{1}{n}\Tr \J^{t,s}-\frac{1}{n}\Tr \J^{t,s'}\right|
\leq \|\J^{t,s}-\J^{t,s'}\|_\op \leq C(t)|s'-s|.\]
This and \eqref{eq:Jopbound} imply
the uniform boundedness and equicontinuity of \eqref{eq:response} in the form
$\frac{1}{n} \langle \Tr \J^{t,s} \rangle$ over
$s \in [0,t]$, as claimed. This uniform boundedness, Arzel\`a-Ascoli, and
Theorem \ref{thm:dmft-approx}(c) imply that
\[\lim_{n \to \infty}
\sup_{s \in [0,t]}
\left|\frac{1}{\sqrt{2}} \cdot \frac{1}{n}\langle \btheta^{t\top}\b^s \rangle
-\int_0^s R_\theta(t,\tau)\d \tau\right|=0 \text{ a.s.}\]
Then this uniform equicontinuity, the
continuity of $s \mapsto R_\theta(t,s)$ over $s \in [0,t]$ ensured by the domain $\cS_\theta^+$ of
Theorem \ref{thm:dmft-approx}, and
\eqref{eq:response} and Arzel\`a-Ascoli imply that for any $t \geq 0$,
\begin{equation}\label{eq:Rthetarepr}
\lim_{n \to \infty} \sup_{s \in [0,t]}
\left|R_\theta(t,s)-\frac{1}{n}\sum_{i=1}^n \langle \e_i^\top \nabla
P_{t-s}e_i(\btheta^s) \rangle\right|=0 \text{ a.s.}
\end{equation}

We now apply arguments analogous to part (b).
To define $r_\theta(\tau)$, let us establish the bound
\begin{align}
\left|\frac{1}{n}\sum_{i=1}^n \e_i^\top \nabla P_t e_i(\btheta)
-\frac{1}{n}\sum_{i=1}^n \e_i^\top \nabla P_t e_i({\btheta'})\right|
&\leq
\frac{Ce^{Ct}}{\sqrt{n}}\|\btheta-{\btheta'}\|_2\label{eq:gradRbound}
\end{align}
for any $\btheta,{\btheta'} \in \R^n$ and $t \geq 0$.
Let $\{\btheta^t\}_{t \geq 0}$ and
$\{{\btheta'}^t\}_{t \geq 0}$ be two diffusions with initial states
$\btheta,\btheta'$ coupled by the same Brownian motion, and define
correspondingly $\{{\J}^{t,0}\}_{t \geq 0}$ and $\{{\J'}^{t,0}\}_{t \geq 0}$
(restricting the above Jacobian processes to $s=0$).
The above arguments show the representations
$\frac{1}{n}\sum_{i=1}^n (\e_i^\top \nabla P_\tau e_i)(\btheta)
=\frac{1}{n}\E[\Tr \J^{\tau,0} \mid \X,\btheta^0=\btheta]$,
$\frac{1}{n}\sum_{i=1}^n (\e_i^\top \nabla P_\tau e_i)({\btheta'})
=\frac{1}{n}\E[\Tr {\J'}^{\tau,0} \mid \X,{\btheta^0}'=\btheta']$.
Under this coupling,
\[\frac{\d}{\d t}(\J^{t,0}-{\J'}^{t,0})
=\diag(U''(\btheta^t)-U''({\btheta'}^t))\J^{t,0}
+\diag(U''({\btheta'}^t))(\J^{t,0}-{\J'}^{t,0}).\]
Since $U''(\cdot)$ is bounded and Lipschitz under Assumption
\ref{assump:dynamics} for $f=-U'$,
the right side is bounded in Frobenius norm as
\begin{align*}
&\|\diag(U''(\btheta^t)-U''({\btheta'}^t))\J^{t,0}
+\diag(U''({\btheta'}^t))(\J^{t,0}-{\J'}^{t,0})\|_F\\
&\leq \|\diag(U''(\btheta^t)-U''({\btheta'}^t))\|_F \|\J^{t,0}\|_\op
+\|\diag(U''({\btheta'}^t))\|_\op\|\J^{t,0}-{\J'}^{t,0}\|_F\\
&\leq e^{Ct}\|\btheta^t-{\btheta'}^t\|_2+C\|\J^{t,0}-{\J'}^{t,0}\|_F.
\end{align*}
Then Gr\"onwall's inequality and \eqref{eq:thetainitcoupling} imply
\begin{align*}
&\left|\frac{1}{n}\E[\Tr \J^{t,0} \mid \X,\btheta^0=\btheta]-
\frac{1}{n}\E[\Tr {\J'}^{t,0} \mid \X,{\btheta^0}'=\btheta']\right|\\
&\leq \frac{1}{\sqrt{n}} \|\J^{t,0}-{\J'}^{t,0}\|_F
\leq \frac{Ce^{Ct}}{\sqrt{n}} \sup_{s \in [0,t]} \|\btheta^s-{\btheta'}^s\|_2
\leq \frac{Ce^{C't}}{\sqrt{n}}\|\btheta-{\btheta'}\|_2,
\end{align*}
which shows \eqref{eq:gradRbound}. Then, recalling the coupling of
$(\btheta^0,\btheta^s)$ and $\tilde\btheta^0$ which 
gives \eqref{eq:thetatthetacouple}, let us apply \eqref{eq:gradRbound}
with $\btheta=\btheta^s$ and $\btheta'=\tilde\btheta^0$. This shows
\begin{equation}\label{eq:Rrcompare}
\left|\frac{1}{n}\sum_{i=1}^n \langle \e_i^\top \nabla P_\tau
e_i(\btheta^s)\rangle-\frac{1}{n}\sum_{i=1}^n \langle \e_i^\top \nabla P_\tau
e_i(\tilde \btheta^0) \rangle\right|
\leq \frac{Ce^{C\tau}}{\sqrt{n}}\,
\E[\|\btheta^s-\tilde \btheta^0\|_2 \mid \X,\btheta^0]
\leq Ce^{C\tau} \cdot e^{-cs}.
\end{equation}
By the existence of the almost sure limit
$R_\theta(s+\tau,s)$ in \eqref{eq:Rthetarepr}, we then have
\[\limsup_{n \to \infty} \frac{1}{n}\sum_{i=1}^n \langle \e_i^\top \nabla P_\tau
e_i(\tilde \btheta^0) \rangle
-\liminf_{n \to \infty} \frac{1}{n}\sum_{i=1}^n \langle \e_i^\top \nabla P_\tau
e_i(\tilde \btheta^0) \rangle \leq 2Ce^{C\tau} \cdot e^{-cs} \text{ a.s.}\]
Since $s \geq 0$ is arbitrary, this shows that for each fixed $\tau \geq 0$,
there exists a limit
\begin{equation}\label{eq:rthetadef}
r_\theta(\tau)=\lim_{n \to \infty}
\frac{1}{n}\sum_{i=1}^n \langle \e_i^\top \nabla P_\tau
e_i(\tilde \btheta^0) \rangle \text{ a.s.}
\end{equation}
Taking the limit $n \to \infty$ followed by $s \to \infty$ in
\eqref{eq:Rrcompare} shows that $r_\theta(\tau)$ is deterministic, with
$r_\theta(\tau)=\lim_{s \to \infty} R_\theta(s+\tau,s)$.

To bound $R_\theta(s+\tau,s)$, $r_\theta(\tau)$, and their difference, let us
check that
\begin{align}
\left\langle \left|\frac{1}{n}\sum_{i=1}^n \e_i^\top \nabla P_\tau
e_i(\btheta^s)  \right|\right\rangle,
\left\langle\left|\frac{1}{n}\sum_{i=1}^n \e_i^\top \nabla P_\tau e_i(\tilde
\btheta^0)\right| \right \rangle
&\leq Ce^{-c\tau}.\label{eq:Rbound}
\end{align}
Define as above $\{\J^{t,s}\}_{t \geq s \geq 0}$ from
$\{\btheta^t\}_{t \geq 0}$. Recalling the representation
$\frac{1}{n}\sum_{i=1}^n \e_i^\top \nabla P_\tau e_i(\btheta^s)
=\frac{1}{n}\langle \Tr \J^{s+\tau,s} \rangle$,
if $\tau \leq 1$ then the first statement of
\eqref{eq:Rbound} follows immediately from
the operator norm bound \eqref{eq:Jopbound}, which implies
$|\frac{1}{n}\langle \Tr \J^{s+\tau,s} \rangle| \leq \|\J^{s+\tau,s}\|_\op
\leq C$. For $\tau \geq 1$, we apply
a Bismut-Elworthy-Li representation for $\nabla P_t$: 
For any $f:\R^n \to \R$ with uniformly bounded first and second derivatives,
\[\nabla P_t f(\btheta^0)
=\frac{1}{t \sqrt{2}}\left \langle f(\btheta^t)\int_0^t (\J^{r,0})^\top
\d \b^r \right \rangle\]
where $\{\b^t\}_{t \geq 0}$ is the Brownian motion defining
$\{\btheta^t,\J^{t,0}\}_{t \geq 0}$ \cite[Lemma 4.2]{fan2025dynamicalII}.
Applying this with $t=1$ and $f_i=P_{\tau-1}e_i$, with the needed regularity of $f_i$ ensured by  \cite[Proposition A.2(c)]{fan2025dynamicalI},  we get
\[\frac{1}{n}\sum_{i=1}^n \e_i^\top \nabla P_\tau e_i(\btheta^0)
=\frac{1}{n}\sum_{i=1}^n \e_i^\top \nabla P_1 f_i(\btheta^0)
=\frac{1}{\sqrt{2}}\,\frac{1}{n}\left \langle 
P_{\tau-1}\id(\btheta^1)^\top\int_0^1 (\J^{r,0})^\top
\d \b^r \right \rangle.\]
Thus, noting that $\langle \int_0^1 (\J^{r,0})^\top \d \b^r \rangle=0$ and
applying Cauchy-Schwarz, Jensen's inequality, and the It\^o isometry, we have
\begin{align}
\sqrt{2} 
\left|\frac{1}{n}\sum_{i=1}^n \e_i^\top \nabla P_\tau e_i(\btheta^0)\right|
&=\left|\frac{1}{n}\left \langle 
\Big(P_{\tau-1}\id(\btheta^1)-\langle \btheta \rangle\Big)^\top\int_0^1
(\J^{r,0})^\top \d \b^r \right \rangle\right|\notag\\
&\leq \left\langle \frac{1}{n}\|P_{\tau-1}\id(\btheta^1)-\langle \btheta
\rangle\|_2^2 \right\rangle^{1/2} \cdot
\left\langle \frac{1}{n}\left\|\int_0^1 (\J^{r,0})^\top \d \b^r\right\|_2^2
\right\rangle^{1/2}\notag\\
&\leq \left\langle \frac{1}{n}\|\btheta^\tau-\langle \btheta
\rangle\|_2^2 \right\rangle^{1/2} \cdot
\left\langle \frac{1}{n}\int_0^1 \|\J^{r,0}\|_F^2\d r \right\rangle^{1/2}\notag\\
&\leq C\left\langle \frac{1}{n}\|\btheta^\tau-\langle \btheta
\rangle\|_2^2 \right\rangle^{1/2}\label{eq:responsebound},
\end{align}
the last inequality applying \eqref{eq:Jopbound}.
Applying this bound with $\btheta^s$ in place of $\btheta^0$ shows
\[\left|\frac{1}{n}\sum_{i=1}^n \e_i^\top \nabla P_\tau e_i(\btheta^s)\right|
\leq C\,\E\left[\frac{1}{n}\|\btheta^{s+\tau}-\langle \btheta
\rangle\|_2^2 \,\bigg|\,\X,\btheta^s\right]^{1/2}.\]
Then applying the coupling bound \eqref{eq:thetascoupling}, 
we obtain the first statement of
\eqref{eq:Rbound}. The second statement of \eqref{eq:Rbound}
follows similarly, applying the above
bound \eqref{eq:responsebound}
with $\tilde\btheta^0$ in place of $\btheta^0$ and using instead
the coupling \eqref{eq:thetaequilibriumcoupling}.

Taking the $n \to \infty$ limit in \eqref{eq:Rrcompare} and 
\eqref{eq:Rbound}, we have
\begin{equation}\label{eq:Rrcompare2}
|R_\theta(s+\tau,s)-r_\theta(\tau)| \leq Ce^{C\tau} \cdot e^{-cs},
\qquad R_\theta(s+\tau,s) \leq Ce^{-c\tau},
\qquad r_\theta(\tau) \leq Ce^{-c\tau}.
\end{equation}
Then, choosing a small enough constant $\iota>0$ such that the first bound
satisfies
$Ce^{C\tau} \cdot e^{-cs} \leq C'e^{-c's}$ when 
$\tau \leq \iota s/(1-\iota)$ (i.e.\ $s \geq (1-\iota)(s+\tau)$), we may write
\[\int_0^t |R_\theta(t,s)-r_\theta(t-s)| \d s
=\int_{(1-\iota)t}^t |R_\theta(t,s)-r_\theta(t-s)| \d s
+\int_0^{(1-\iota)t} |R_\theta(t,s)-r_\theta(t-s)| \d s.\]
Applying the first bound of \eqref{eq:Rrcompare2} for the first integral
and the latter two bounds of \eqref{eq:Rrcompare2} for the second integral,
this shows
\[\int_0^t |R_\theta(t,s)-r_\theta(t-s)| \d s \leq Ce^{-ct}.\]
Applying the first bound of \eqref{eq:Rrcompare2} when $\tau \leq \iota
s/(1-\iota)$ and the latter two bounds of \eqref{eq:Rrcompare2}
when $\tau \geq \iota s/(1-\iota)$, we have also
\[|R_\theta(s+\tau,s)-r_\theta(\tau)| \leq C'e^{-c's}.\]
This shows all statements of part (c).\\

{\bf Part (d).} Let
$L^2(\mu_\Gibbs)$ be the space of functions $f:\R^n \to \R$ for which
$\langle f(\btheta)^2 \rangle<\infty$. Since
$\{\tilde\btheta^t\}_{t \in \R}$ is reversible, for each fixed $n$, there is a
family $\{E_a\}_{a \geq 0}$ of orthogonal projections onto increasing closed
linear subspaces of $L^2(\mu_\Gibbs)$ (corresponding to the spectral
decomposition of the generator) such that for all $f,g \in
L^2(\mu_\Gibbs)$ and $\tau \geq 0$,
\[\langle f(\tilde\btheta^0) P_\tau g(\tilde\btheta^0) \rangle
=\int_0^\infty e^{-a \tau} \d \langle f(\btheta)E_a g(\btheta)\rangle\]
\cite[Theorem A.4.2]{bakry2014analysis}. Applying this with the coordinate
functions $f=g=e_i$ and averaging over $i=1,\ldots,n$, there is
then a positive measure $\mu_n$ (depending on $n$ and $\X$) for which
\begin{equation}\label{eq:Ptaurepr}
\frac{1}{n}\langle {\tilde \btheta}^{0\top} P_\tau\id(\tilde
\btheta^0)\rangle=\int_0^\infty e^{-a\tau} \mu_n(\d a).
\end{equation}
The bound \eqref{eq:thetanormbound} implies
$\int_0^\infty \mu_n(\d a)=\frac{1}{n}\,\langle \|\tilde\btheta^0\|_2^2 \rangle \leq C$. For any $M,\tau>0$, we have also
\[\int_0^M \mu_n(\d a)+e^{-M\tau}\int_M^\infty \mu_n(\d a)
\geq \int_0^\infty e^{-a\tau} \mu_n(\d a)=
\frac{1}{n}\langle {\tilde \btheta}^{0\top} P_\tau \id(\tilde \btheta^0)
\rangle\]
so
\begin{equation}\label{eq:tightnessbound}
\int_M^\infty \mu_n(\d a)
\leq \frac{1}{1-e^{-M\tau}}\left(\frac{1}{n}\,\langle \|\tilde\btheta^0\|_2^2
\rangle-\frac{1}{n}\langle {\tilde \btheta}^{0\top} P_\tau
\id(\tilde \btheta^0) \rangle\right).
\end{equation}
By It\^o's lemma, Cauchy-Schwarz, and \eqref{eq:thetanormbound},
\begin{align*}
\frac{\d}{\d t}\,\frac{1}{n}\langle \|\tilde\btheta^t-\tilde\btheta^0\|_2^2 \rangle
&=\frac{2}{n}\langle (\tilde\btheta^t-\tilde\btheta^0)^\top[\X\tilde\btheta^t-U'(\tilde\btheta^t)]
\rangle+2
\leq C\left(\frac{1}{n}\langle \|\tilde\btheta^t-\tilde\btheta^0\|_2^2 \rangle+1\right),
\end{align*}
and hence Gr\"onwall's inequality shows
\begin{equation}\label{eq:thetatildeequicontinuous}
\frac{1}{n}\langle \|\tilde\btheta^t-\tilde\btheta^0\|_2^2 \rangle
\leq C(e^{Ct}-1).
\end{equation}
Then by Jensen's inequality, also
$\frac{1}{n}\langle \|\tilde\btheta^0-P_t \id(\tilde\btheta^0)\|_2^2 \rangle
\leq C(e^{Ct}-1)$. Applying this to \eqref{eq:tightnessbound}, for all $n$,
\[\int_M^\infty \mu_n(\d a) \leq \frac{C'(e^{C\tau}-1)^{1/2}}{1-e^{-M\tau}}.\]
Taking $\tau=1/M$ and $M \to \infty$ shows that the sequence of measures
$\{\mu_n\}_{n=1}^\infty$ is uniformly tight, and hence converges weakly along a
subsequence of $n$ to a positive finite measure $\mu$ on $[0,\infty)$. Then
\eqref{eq:cthetadef} and \eqref{eq:Ptaurepr}
show that $c_\theta(\tau)$ admits a representation
\begin{equation}\label{eq:cthetarepr}
c_\theta(\tau)=\int_0^\infty e^{-a|\tau|} \mu(\d a) \text{ for all } \tau \in
\R.
\end{equation}
From this representation and dominated convergence, we see that
$c_\theta:\R \to \R$ is continuous, and its restriction to $(0,\infty)$ is
smooth where derivatives may be computed under the integral. Since
$\mu$ is a positive measure, this implies also for all $\tau>0$ that
\[c_\theta(\tau) \geq 0,
\qquad c_\theta'(\tau) \leq 0, \qquad c_\theta''(\tau) \geq 0.\]

By the fluctuation-dissipation relation for the diffusion
$\{\tilde \btheta^t\}_{t \in \R}$ (c.f.\ \cite[Lemma
4.1]{fan2025dynamicalII} applied with the coordinate functions $A=B=e_i$),
for any $\tau>0$ we have
\begin{equation}\label{eq:diffusionFDT}
\frac{1}{n}\sum_{i=1}^n \langle \e_i^\top \nabla P_\tau e_i(\tilde \btheta^0)
\rangle={-}\frac{\d}{\d \tau}
\left[\frac{1}{n}\langle \tilde \btheta^{0\top}\tilde \btheta^\tau
\rangle\right].
\end{equation}
As a function of $\tau$, the left side is uniformly equicontinuous 
on any compact interval of $[0,\infty)$ for all $n$, by the same argument as
above that established the equicontinuity of \eqref{eq:response}. Then
by Arzel\`a-Ascoli and the identities \eqref{eq:cthetadef}
and \eqref{eq:rthetadef}, we have
\[r_\theta(\tau)=-c_\theta'(\tau) \text{ for } \tau>0.\]
Thus for all $\tau>0$,
$r_\theta(\tau) \geq 0$ and $r_\theta'(\tau)={-}c_\theta''(\tau) \leq 0$.
Since the equicontinuity of the left side of \eqref{eq:diffusionFDT}
includes $\tau=0$, the identity
\eqref{eq:rthetadef} implies that $r_\theta:[0,\infty) \to \R$ is continuous at
0, so there exists a right derivative $c_\theta'(0^+)={-}r_\theta(0)$. Since
$\nabla P_\tau e_i=\e_i$ at $\tau=0$, \eqref{eq:rthetadef} shows that
$r_\theta(0)=1$, and hence $c_\theta'(0^+)=-1$.
\end{proof}

We now turn to Lemma \ref{lemma:ttig} on the analysis of $R_g,C_g$.

\begin{proof}[Proof of Lemma \ref{lemma:ttig}]
We extend the definitions of $R_\theta,C_\theta$ to $\R^2$ by setting
$R_\theta(t,s)=0$ for all $t<0$ or $s \notin [0,t]$, and
$C_\theta(t,s)=0$ for all $s<0$ or $t<0$. We also extend $r_\theta,r_\theta'$ to
$\R$ by setting $r_\theta(\tau)=0$ for $\tau<0$
and $r_\theta'(\tau)=0$ for $\tau \leq 0$.

We will apply the following convolution bound: If
$A_1,\ldots,A_p:\R^2 \to \R^2$ satisfy
$|A_k(t,s)| \leq C_0e^{-c_0|t-s|}$ for each $k=1,\ldots,p$ and all $s,t \in \R$,
then
\[\int e^{(c_0/2)|t-s|}|A_k(t,s)| \d s \leq C_0\int e^{-(c_0/2)|t-s|}
\d r=4C_0/c_0,\]
and hence
\begin{align*}
\sup_{s,t}e^{(c_0/2)|t-s|} |A_1*\ldots*A_p(t,s)| 
&\leq \sup_{s,t}\int e^{(c_0/2)|t-r|}|A_1(t,r)|e^{(c_0/2)|r-s|} |A_2*\ldots*A_p(r,s)| \d r\\
&\leq (4C_0/c_0)\sup_{r,s} e^{(c_0/2)|r-s|}|A_2*\ldots*A_p(r,s)|\\
&\leq \ldots \leq (4C_0/c_0)^{p-1} \cdot C_0.
\end{align*}
Thus for all $s,t \in \R$,
\begin{equation}\label{eq:convolutionbound}
|A_1*\ldots*A_p(t,s)| \leq
\left(\frac{4C_0}{c_0}\right)^{p-1}C_0e^{-(c_0/2)|t-s|}.
\end{equation}
We will apply also the basic identity, if $a_1,\ldots,a_k:\R \to \R$ are
integrable, then
\begin{equation}\label{eq:integralconvolution}
\int a_1*\ldots*a_k(\tau)\d \tau
=\prod_{i=1}^k \int a_i(\tau)\d \tau.
\end{equation}\\

{\bf Part (a):} We have
\[r_g(\tau)=\sum_{p \geq 1} \kappa_{p+1}\,r_\theta^{*p}(\tau).\]
By the fluctuation-dissipation relation \eqref{eq:crthetaFDT} of
Lemma \ref{lemma:tti},
\[\int r_\theta(\tau)\d \tau
=\int_0^\infty r_\theta(\tau)\d \tau=c_\theta(0)
-\lim_{\tau \to \infty} c_\theta(\tau)=v_*-q_*.\]
Then by \eqref{eq:integralconvolution}, for each $p \geq 1$,
\begin{equation}\label{eq:rthetapintegral}
\int_0^\infty r_\theta^{*p}(\tau)\d \tau
=\int r_\theta^{*p}(\tau)\d \tau=(v_*-q_*)^p.
\end{equation}
Lemma \ref{lemma:tti} implies $0 \leq r_\theta(\tau) \leq
r_\theta(0)=1$ for all $\tau \in \R$. Then also for each $p \geq 1$,
\begin{equation}
0 \leq r_\theta^{*p}(\tau)
\leq \int r_\theta^{*(p-1)}(s) \d s
=(v_*-q_*)^{p-1}.\label{eq:rthetapbound1}
\end{equation}
The condition \eqref{eq:Rdomain} then implies that the above series defining
$r_g(\tau)$ is absolutely and uniformly convergent over $\tau \in [0,\infty)$.
Since each summand is continuous in $\tau$, this shows $r_g(\tau)$ is continuous
on $[0,\infty)$. The bound \eqref{eq:intrgbound} follows from
\eqref{eq:rthetapintegral} and \eqref{eq:Rdomain},
\[\int_0^\infty |r_g(\tau)| \d \tau
\leq \sum_{p \geq 1} |\kappa_{p+1}| \int_0^\infty r_\theta^{*p}(\tau) \d \tau
<\alpha.\]

For any integrable $a:\R \to \R$ that is continuous and supported on
$[0,\infty)$ and continuously-differentiable on $(0,\infty)$,
we have $[a*b]'(\tau)=a(0)b(\tau)+[a'*b](\tau)$. Thus
the derivative of $r_g$ exists on $(0,\infty)$ and is given by 
\begin{equation}\label{eq:rgderiv}
r_g'(\tau)=\kappa_2r_\theta'(\tau)+\sum_{p \geq 2} \kappa_{p+1}
\Big(r_\theta^{*(p-1)}(\tau)+[r_\theta'*r_\theta^{*(p-1)}](\tau)\Big)
\end{equation}
provided that this series converges absolutely and uniformly over
compact subsets of $(0,\infty)$. Note that by Lemma \ref{lemma:tti},
\[\int r_\theta'(\tau)\d \tau
=\int_0^\infty r_\theta'(\tau)\d \tau={-}r_\theta(0)=-1.\]
Then since $r_\theta'(\tau) \leq 0$ and $r_\theta(\tau) \geq 0$,
for any $p \geq 1$, \eqref{eq:integralconvolution} implies
\begin{align*}
0 \leq {-}[r_\theta'*r_\theta^{*p}](\tau)
\leq -\int r_\theta'*r_\theta^{*(p-1)}(\tau)\d\tau
=(v_*-q_*)^{p-1}.
\end{align*}
Then \eqref{eq:rgderiv} is absolutely
and uniformly convergent on $(0,\infty)$ as claimed.
Each summand is continuous in $\tau$, implying 
that $r_g'$ is continuous. Furthermore, from the
signs of $r_\theta$ and $r_\theta'$ and the above calculations, we have
\[\int_0^\infty |r_g'(\tau)|\,\d \tau
\leq \kappa_2+\sum_{p \geq 2} |\kappa_{p+1}|
\left(\int_0^\infty r_\theta^{*(p-1)}(\tau)\d\tau
-\int_0^\infty r_\theta'*r_\theta^{*(p-1)}(\tau)\d \tau\right)<\infty.\]

To establish exponential decay of $r_g(\tau)$, note that the convergence
condition of \eqref{eq:Rdomain} implies
$\limsup_{p \to \infty} |\kappa_{p+1}|^{1/p}(v_*-q_*+\eps) \leq 1$.
Thus, for some $p_0>0$, we have
\begin{equation}\label{eq:p0def}
|\kappa_{p+1}| \leq \left(\frac{1}{v_*-q_*+\eps/2}\right)^p \text{ for all }
p \geq p_0.
\end{equation}
Then, letting $\delta>0$ be a small enough constant to be determined,
the bound \eqref{eq:rthetapbound1} implies
\[\sum_{p \geq \max(\delta\tau,p_0)} |\kappa_{p+1}|
\cdot r_\theta^{*p}(\tau) \leq \sum_{p \geq \delta\tau}
\left(\frac{1}{v_*-q_*+\eps/2}\right)^p
\cdot (v_*-q_*)^{p-1} \leq Ce^{-c\delta\tau}\]
for some constants $C,c>0$ depending on $v_*,q_*,\eps$. For
$p<\max(\delta\tau,p_0)$, we have by Lemma \ref{lemma:tti} and
\eqref{eq:convolutionbound} applied to $A_k(t,s)=r_\theta(t-s)$
that $r_\theta^{*p}(\tau) \leq C^p e^{-c\tau}$
for some $C,c>0$, any $p \geq 1$, and all $\tau \geq 0$.
We also have $|\kappa_p| \leq C^p$ for a constant $C>0$ and all
$p \geq 1$, by Proposition \ref{prop:freecumulants}. Thus
\begin{align*}
\sum_{p=1}^{\max(\delta\tau,p_0)}
|\kappa_{p+1}| \cdot r_\theta^{*p}(\tau)
\leq \sum_{p=1}^{\max(\delta\tau,p_0)}
C^{p+1} \cdot C^pe^{-c\tau}
&\leq C'e^{-c\tau} \cdot e^{C'\max(\delta \tau,p_0)}
\end{align*}
for some constants $C',c>0$ not depending on $\delta$. Choosing $\delta$
small enough ensures this is at most $C''e^{-(c/2)\tau}$ for all $\tau \geq 0$
and a constant $C''>0$ (depending on $p_0$, in the case $\delta\tau<p_0$). Combining the above two statements, for some $C,c>0$,
\begin{equation}\label{eq:rgdecayproof}
|r_g(\tau)|\leq \sum_{p \geq 1} |\kappa_{p+1}| \cdot r_\theta^{*p}(\tau)
\leq Ce^{-c\tau}.
\end{equation}

To bound $R_g(t,s)$, recall that
$R_g(t,s)=\sum_{p \geq 1} \kappa_{p+1}R_\theta^{*p}(t,s)$ where
\[R_\theta^{*p}(t,s)
=\int_s^t \d t_1 \int_s^{t_1} \d t_2 \ldots \int_s^{t_{p-2}} \d t_{p-1}
R_\theta(t,t_1)R_\theta(t_1,t_2)\ldots R_\theta(t_{p-1},s).\]
We establish analogues of \eqref{eq:rthetapintegral} and
\eqref{eq:rthetapbound1} for $R_\theta$:
Letting $\eps>0$ be the constant in \eqref{eq:Rdomain}, Lemma
\ref{lemma:tti} ensures there exists $s_0>0$ such that if $t \geq
s_0$, then $\int_0^t |R_\theta(t,s)-r_\theta(t-s)|\d s<\eps/4$.
Hence
\[\int_0^t |R_\theta(t,s)|\d s \leq v_*-q_*+\eps/4 \text{ for } t \geq s_0.\]
Then
\begin{align*}
\int_0^t |R_\theta^{*p}(t,s)| \d s
&\leq \int_0^t \d t_1
\ldots \int_0^{t_{p-2}}\d t_{p-1}
\int_0^{t_{p-1}}\d s\,
|R_\theta(t,t_1)| \ldots |R_\theta(t_{p-1},s)|\\
&\leq \sum_{k=0}^{p-1} \int_{s_0}^t \d t_1
\ldots \int_{s_0}^{t_{k-1}} \d t_k
\int_0^{s_0} \d t_{k+1} \ldots \int_0^{t_{p-1}}\d t_p\,
|R_\theta(t,t_1)| \ldots |R_\theta(t_{p-1},t_p)|,
\end{align*}
where we set $t_p=s$ and
stratify the domain of integration $t \geq t_1 \geq \ldots \geq t_p \geq 0$ by
the largest index $k$ for which $t_k>s_0$, with $k=0$ if $t_1\leq s_0$.
Bounding $|R_\theta(t_j,t_{j+1})| \leq C$ for all $j \geq k$, this gives
\begin{align}
\int_0^t |R_\theta^{*p}(t,s)|\d s
&\leq \sum_{k=0}^{p-1} \int_{s_0}^t \d t_1
\ldots \int_{s_0}^{t_{k-1}} \d t_k\,|R_\theta(t,t_1)|\ldots
|R_\theta(t_{k-1},t_k)| \cdot \frac{(Cs_0)^{p-k}}{(p-k)!}\notag\\
&\leq \sum_{k=0}^{p-1} (v_*-q_*+\eps/4)^k \cdot \frac{(Cs_0)^{p-k}}{(p-k)!}\notag\\
&=\sum_{k=0}^{p-1} \frac{[Cs_0/(v_*-q_*+\eps/4)]^{p-k}}{(p-k)!}
(v_*-q_*+\eps/4)^p\notag\\
&\leq C'(v_*-q_*+\eps/4)^p\label{eq:Rthetapintegral}
\end{align}
for a constant $C'>0$ depending on $v_*,q_*,\eps,s_0$, which is an analogue of
\eqref{eq:rthetapintegral}.
Similarly, first bounding $|R_\theta(t_{p-1},s)| \leq C$ and applying the same
argument, we have
\begin{equation}\label{eq:Rthetapbound1}
|R_\theta^{*p}(t,s)| \leq C''(v_*-q_*+\eps/4)^{p-1}
\end{equation}
which is an analogue of \eqref{eq:rthetapbound1}.

Choosing a small enough constant $\delta>0$, and
applying \eqref{eq:Rthetapbound1} for $p \geq \max(\delta(t-s),p_0)$
and \eqref{eq:convolutionbound} for $p<\max(\delta(t-s),p_0)$,
the same argument as for $r_g(\tau)$ above shows
\begin{equation}\label{eq:Rgdecayproof}
|R_g(t,s)| \leq Ce^{-c(t-s)}.
\end{equation}
To bound $|R_g(t,s)-r_g(t-s)|$, again fix a constant $\delta>0$ small
enough. Observe that \eqref{eq:rthetapbound1} and \eqref{eq:Rthetapbound1} imply
\[\sum_{p \geq \max(\delta s,p_0)}
|\kappa_{p+1}| \cdot |R^{*p}_\theta(t,s)-r_\theta(t-s)^{*p}|
\leq Ce^{-c\delta s}.\]
For the summands $p<\max(\delta s,p_0)$, identify $r_\theta(t,s) \equiv
r_\theta(t-s)$ and write
\begin{align*}
R_\theta^{*p}(t,s)-r_\theta^{*p}(t-s)
&=\sum_{k=0}^{p-1} R_\theta^{*k} * (R_\theta-r_\theta)
* r_\theta^{*(p-1-k)}(t,s)\\
&=\sum_{k=0}^{p-1} \int_s^t \d t_1
\int_s^{t_1} \d t_2\,R_\theta^{*k}(t,t_1)
[R_\theta-r_\theta](t_1,t_2)
r_\theta^{*(p-1-k)}(t_2,s).
\end{align*}
By Lemma \ref{lemma:tti}, we have $|[R_\theta-r_\theta](t_1,t_2)|
\leq Ce^{-ct_2} \leq Ce^{-cs}$. Then,
together with \eqref{eq:rthetapintegral} and \eqref{eq:Rthetapintegral},
this gives
\begin{align*}
|R_\theta^{*p}(t,s)-r_\theta^{*p}(t-s)|
&\leq \sum_{k=0}^{p-1}
Ce^{-cs} \int_s^t \d t_1 \int_s^{t_1} \d t_2 \, |R_\theta^{*k}(t,t_1)|
\cdot r_\theta^{*(p-1-k)}(t_2-s)\\
&\leq C'pe^{-cs}(v_*-q_*+\eps/4)^{p-1}.
\end{align*}
Applying again $|\kappa_p| \leq C^p$, we then have
\[\sum_{p=1}^{\max(\delta s,p_0)}
|\kappa_{p+1}| \cdot |R_\theta(t,s)^{*p}-r_\theta(t-s)^{*p}|
\leq Ce^{-cs} \cdot e^{C\max(\delta s,p_0)}.\]
This is at most $C'e^{-(c/2)s}$ for $\delta>0$ small enough, and combining with
the above yields
\[|R_g(t,s)-r_g(t-s)| \leq Ce^{-cs}.\]
By this bound, \eqref{eq:rgdecayproof}, and \eqref{eq:Rgdecayproof},
also $\int_0^t |R_g(t,s)-r_g(t-s)|\d s \leq Ce^{-ct}$ by
the same argument as following \eqref{eq:Rrcompare2}, 
showing all statements of part (a).\\

{\bf Part (b):} Recall that
\[c_g(\tau)=\sum_{p,q \geq 0} \kappa_{p+q+2}\,
r_\theta^{*p}*c_\theta*\bar r_\theta^{*q}(\tau).\]
Bounding $0 \leq c_\theta(\tau) \leq C$ shows that
$0 \leq r_\theta^{*p}*c_\theta*\bar r_\theta^{*q}(\tau) \leq
C\int r_\theta^{*p}*\bar r_\theta^{*q}(s)\d s=C(v_*-q_*)^{p+q}$,
so by \eqref{eq:Rdomain}, the above series defining
$c_g(\tau)$ is absolutely and uniformly convergent over $\tau \in \R$.
Since each summand is continuous, $c_g:\R \to \R$ is continuous.

Write $c_\theta(\tau)=c_\theta^0(\tau)+q_*$. Note that
for the constant function $q_*(\tau) \equiv q_*$, we have
\begin{align*}
\sum_{p,q \geq 0} \kappa_{p+q+2}\,r_\theta^{*p}
*q_* *\bar r_\theta^{*q}(\tau)
&=\sum_{p,q \geq 0} \kappa_{p+q+2}
\cdot q_*\int r_\theta^{*p}*\bar r_\theta^{*q}(s)\d s\\
&=q_*\sum_{p,q \geq 0} \kappa_{p+q+2}(v_*-q_*)^{p+q}\\
&=q_*\sum_{k \geq 0} \kappa_{k+2} (k+1)(v_*-q_*)^k
=q_*\cR_\Lambda'(v_*-q_*),
\end{align*}
where the last identity holds since \eqref{eq:Rdomain} implies
$\cR_\Lambda(x)=\sum_{p \geq 0} \kappa_{p+1} x^p$ is absolutely convergent in a
neighborhood of $v_*-q_*$. Thus
\[c_g(\tau)=c_g^0(\tau)+q_*\cR_\Lambda'(v_*-q_*),
\qquad c_g^0(\tau):=\sum_{p,q \geq 0} \kappa_{p+q+2}\,r_\theta^{*p}
*c_\theta^0*\bar r_\theta^{*q}(\tau).\]

To show exponential decay of $c_g^0(\tau)$, fix a constant $\delta>0$,
and recall $p_0$ for which \eqref{eq:p0def} holds. Then, for summands where $p+q
\geq \max(\delta\tau,p_0)$,
bounding $|c_\theta^0(\tau)| \leq C$ for all $\tau \in \R$, we have as above
\[\sum_{p,q:\,p+q \geq \max(\delta\tau,p_0)}
|\kappa_{p+q+2}| \cdot |[r_\theta^{*p}*c_\theta^0
*\bar r_\theta^{*q}](\tau)|
\leq C\sum_{p,q:\,p+q \geq \max(\delta\tau,p_0)}
|\kappa_{p+q+2}| \cdot (v_*-q_*)^{p+q} \leq C'e^{-c\delta\tau}.\]
For $p+q \leq \max(\delta\tau,p_0)$, 
applying the bounds $|r_\theta(\tau)|,|c_\theta^0(\tau)| \leq Ce^{-c\tau}$ from
Lemma \ref{lemma:tti} together with \eqref{eq:convolutionbound},
\begin{align*}
&\sum_{p,q:\,p+q<\max(\delta\tau,p_0)}
|\kappa_{p+q+2}| \cdot |[r_\theta^{*p}*c_\theta^0 *\bar r_\theta^{*q}](\tau)|
\leq \sum_{p,q:\,p+q<\max(\delta\tau,p_0)}
C^{p+q+1}e^{-c\tau}
\leq C'e^{-c\tau} \cdot e^{C'\max(\delta\tau,p_0)}.
\end{align*}
Choosing $\delta>0$ small enough and combining with the above shows
$\lim_{\tau \to \infty} c_g(\tau)=q_*\cR_\Lambda'(v_*-q_*)$ and
\[|c_g(\tau)-q_*\cR_\Lambda'(v_*-q_*)|
=|c_g^0(\tau)| \leq Ce^{-c\tau}.\]

To bound $|C_g(t,s)-c_g(t-s)|$, identify $r_\theta(t,s) \equiv r_\theta(t-s)$
and $c_\theta(t,s) \equiv c_\theta(t-s)$, and note that
\[C_g(t,s)-c_g(t-s)=\sum_{p,q \geq 0}
\kappa_{p+q+2} \big(R_\theta^{*p} * C_\theta * \bar R_\theta^{*q}
-r_\theta^{*p} * c_\theta * \bar r_\theta^{*q}\big)(t,s).\]
Fixing a small constant $\delta>0$, for summands where $p+q \geq
\max(\delta\min(s,t),p_0)$, we may bound $|C_\theta(t,s)|,|c_\theta(t,s)| \leq
C$ and apply \eqref{eq:rthetapintegral} and \eqref{eq:Rthetapintegral} to get
\[\sum_{p,q:\,p+q \geq \max(\delta\min(s,t),p_0)}
|\kappa_{p+q+2}| \cdot \big|\big(R_\theta^{*p} * C_\theta * \bar R_\theta^{*q}
-r_\theta^{*p} * c_\theta * \bar r_\theta^{*q}\big)(t,s)\big|
\leq Ce^{-c\delta\min(s,t)}.\]
For the summands where $p+q<\max(\delta \min(s,t),p_0)$, write
\[\big(R_\theta^{*p} * C_\theta * \bar R_\theta^{*q}
-r_\theta^{*p} * c_\theta * \bar r_\theta^{*q}\big)(t,s)=\I+\II+\III\]
where
\begin{align*}
\I&=R_\theta^{*p}*(C_\theta-c_\theta)*\bar R_\theta^{*q}(t,s),\\
\II&=\sum_{k=0}^{p-1} R_\theta^{*k} * (R_\theta-r_\theta)
*r_\theta^{*(p-1-k)}*c_\theta*\bar R_\theta^{*q}(t,s),\\
\III&=\sum_{k=0}^{q-1} r_\theta^{*p}*c_\theta*\bar r_\theta^{*(q-1-k)}
*(\bar R_\theta-\bar r_\theta)*\bar R_\theta^{*k}(t,s).\\
\end{align*}
For $\I$, applying $|C_\theta(t',s')-c_\theta(t',s')| \leq Ce^{-c\min(s',t')}$
from Lemma \ref{lemma:tti}, and \eqref{eq:convolutionbound} to bound
$|R_\theta^{*p}|$ and $|R_\theta^{*q}|$, we have
\begin{align*}
|\I|& \leq \int_0^t \d t' \int_0^s \d s'\,
|R_\theta^{*p}(t,t')| \cdot |C_\theta(t',s')-c_\theta(t',s')|
\cdot |R_\theta^{*q}(s,s')|\\
&\leq \int_0^t\d t' \int_0^s\d s'
\,Ce^{-c\min(s',t')} \cdot C^{p+q}e^{-c(t-t')-c(s-s')}.
\end{align*}
Considering separately the regions of integration where $\min(s',t') \geq
\frac{1}{2}\min(s,t)$, where
$s' \leq t'$ and $s' \leq \frac{1}{2}\min(s,t)$ in which
case $s-s' \geq \frac{1}{2}\min(s,t)$,
and where $t' \leq s'$ and $t' \leq \frac{1}{2}\min(s,t)$ in which
case $t-t' \geq \frac{1}{2}\min(s,t)$, we get
\[|\I|\leq C^{p+q} \cdot C'e^{-(c/2)\min(s,t)}\]
For $\II$, let us apply from Lemma \ref{lemma:tti} that
$|(R_\theta-r_\theta)(t,s)| \leq Ce^{-cs}$ when $s \geq 0$,
and $|(R_\theta-r_\theta)(t,s)|=|r_\theta(t-s)| \leq C$ when $s<0$.
Let us also apply \eqref{eq:convolutionbound} to bound
$r_\theta^{*(p-1-k)}*c_\theta*\bar R_\theta^{*q}$.
Then each term of $\II$ is bounded similarly as
\begin{align*}
&|R_\theta^{*k} * (R_\theta-r_\theta)
*r_\theta^{*(p-1-k)}*c_\theta*\bar R_\theta^{*q}(t,s)|\\
&\leq \int_0^t \d t_1 \int_{-\infty}^{t_1} \d t_2\,
|R_\theta^{*k}(t,t_1)| \cdot |(R_\theta-r_\theta)(t_1,t_2)|
\cdot |r_\theta^{*(p-1-k)}*c_\theta*\bar R_\theta^{*q}(t_2,s)|\\
&\leq \int_0^t \d t_1 \int_{-\infty}^{t_1} \d t_2\,
C^k e^{-c(t-t_1)} \cdot Ce^{-c\max(t_2,0)}
\cdot C^{p+q-k}e^{-c|t_2-s|}\\
&\leq C^{p+q}\int_{-\infty}^\infty
C'e^{-c\max(t_2,0)-c|t_2-s|}\d t_2.
\end{align*}
Considering separately the regions where $t_2 \leq s/2$ and $t_2>s/2$, this is
at most $C^{p+q} \cdot C''e^{-(c/2)s}$. Thus
\[|\II| \leq C^{p+q} \cdot p \cdot C''e^{-(c/2)s}.\]
Similarly, we have $|\III| \leq C^{p+q} \cdot q \cdot C''e^{-(c/2)t}$. Combining
these bounds, for some constants $C,c>0$,
\[|\I+\II+\III| \leq C^{p+q+1} \cdot (p+q+1) \cdot e^{-c\min(s,t)}.\]
Thus
\begin{align*}
&\sum_{p,q:\,p+q<\max(\delta\min(s,t),p_0)}
|\kappa_{p+q+2}| \cdot \big|\big(R_\theta^{*p} * C_\theta * \bar R_\theta^{*q}
-r_\theta^{*p} * c_\theta * \bar r_\theta^{*q}\big)(t,s)\big|\\
&\hspace{2in}\leq C'e^{-c\min(s,t)} \cdot e^{C'\max(\delta\min(s,t),p_0))},
\end{align*}
which is at most $C''e^{-(c/2)\min(s,t)}$ for small enough $\delta>0$. This
establishes
\[|C_g(t,s)-c_g(t-s)| \leq Ce^{-c\min(s,t)},\]
showing both bounds of \eqref{eq:cgbounds}.
Theorem \ref{thm:dmft-approx} implies $C_g$ is a positive-semidefinite kernel.
Since $c_g(\tau)=\lim_{s \to \infty} C_g(s+\tau,s)$ for each fixed $\tau
\in \R$, this implies that $c_g$ is symmetric positive-semidefinite
on $\R$. Since $q_*\cR_\Lambda'(v_*-q_*)=\lim_{\tau \to \infty}
c_g(\tau)$, we then have
\[q_*\cR_\Lambda'(v_*-q_*) \geq 0.\]

Finally, let us check the differentiability of $c_g^0(\tau)=c_g(\tau)-q_*\cR_{\Lambda}'(v_*-q_*)$,
the fluctuation-dissipation relation \eqref{eq:cgrgFDT}, and the value of
$c_g(0)$. Analogously to \eqref{eq:rgderiv}, $c_g^0$ is continuously-differentiable
on $\R \setminus \{0\}$, with derivative
\begin{align}
{c_g^0}'(\tau)&=
\sum_{p,q \geq 0} \kappa_{p+q+2} (r_\theta^{*p}*c_\theta^0*\bar r_\theta^{*q})'(\tau)\notag\\
&=\sum_{p \geq 1,\,q \geq 0}
\kappa_{p+q+2} \left(r_\theta^{*(p-1)}*c_\theta^0*\bar r_\theta^{*q}(\tau)
+r_\theta'*r_\theta^{*(p-1)}*c_\theta^0*\bar r_\theta^{*q}(\tau)\right)\notag\\
&\hspace{0.5in}+\sum_{q \geq 1} \kappa_{q+2}
\left(c_\theta^0 * \bar r_\theta^{*(q-1)}(\tau)
+c_\theta^0*\bar r_\theta^{*(q-1)} * \bar r_\theta'(\tau)\right)
+\kappa_2 c_\theta'(\tau).\label{eq:cgderiv}
\end{align}
This holds since the bound $0 \leq c_\theta(\tau) \leq C$
and \eqref{eq:rthetapintegral}
imply that \eqref{eq:cgderiv} is absolutely and uniformly convergent over
$\tau \in \R \setminus \{0\}$.
From the existence of $c_\theta'(0^+)$ in Lemma \ref{lemma:tti},
this shows also the existence of $c_g'(0^+)={c_g^0}'(0^+)$. 
Setting $r_g(\tau)=0$ for $\tau<0$ and $\bar r_g(\tau)=r_g(-\tau)$,
and applying \eqref{eq:integralconvolution} and
\eqref{eq:rthetapintegral}, both the series \eqref{eq:cgderiv} and
\[r_g(\tau)=\sum_{p \geq 1} \kappa_{p+1} r_\theta^{*p}(\tau),
\qquad \bar r_g(\tau)=\sum_{p \geq 1} \kappa_{p+1} \bar r_\theta^{*p}(\tau)\]
are also absolutely convergent in $L^1(\R)$. Then, writing
$\cF[f](\omega)=\int f(t) e^{-i\omega t} \d t$ for the Fourier transform
and applying $\cF[f'](\omega)=i\omega \cF[f](\omega)$ on the Sobolev space $f
\in W^{1,1}$, we have 
\begin{equation}\label{eq:Fouriercalculations}
\begin{aligned}
\cF[c_g^0](\omega)
&=\sum_{p,q \geq 0} \kappa_{p+q+2}
\cF[(r_\theta^{*p}*c_\theta^0*\bar r_\theta^{*q})](\omega)\\
&=\sum_{p,q \geq 0} \kappa_{p+q+2}
\cF[r_\theta](\omega)^p\cF[c_\theta^0](\omega)\cF[\bar r_\theta](\omega)^q\\
&=\cF[c_\theta^0](\omega)
\sum_{k \geq 0} \kappa_{k+2} \frac{\cF[r_\theta](\omega)^{k+1}
-\cF[\bar r_\theta](\omega)^{k+1}}{\cF[r_\theta](\omega)-\cF[\bar r_\theta](\omega)},\\
\cF[{c_g^0}'](\omega)&=
i\omega \cF[c_\theta^0](\omega)
\sum_{k \geq 0} \kappa_{k+2} \frac{\cF[r_\theta](\omega)^{k+1}
-\cF[\bar r_\theta](\omega)^{k+1}}{\cF[r_\theta](\omega)-\cF[\bar r_\theta](\omega)},\\
\cF[r_g](\omega)&=\sum_{p \geq 1} \kappa_{p+1}\cF[r_\theta](\omega)^p,
\qquad \cF[\bar r_g](\omega)=\sum_{p \geq 1} \kappa_{p+1}\cF[\bar r_\theta](\omega)^p.
\end{aligned}
\end{equation}
Applying the fluctuation-dissipation relation
$c_\theta'(\tau)={c_\theta^0}'(\tau)={-}r_\theta(\tau)$ from \eqref{eq:crthetaFDT}
for $\tau>0$, we have
\[\cF[r_\theta](\omega)=\int_0^\infty r_\theta(\tau)e^{-i\omega \tau}
\d\tau=\int_0^\infty {-}{c_\theta^0}'(\tau)e^{-i\omega \tau}
\d\tau=c_\theta^0(0)-\int_0^\infty i\omega c_\theta^0(\tau)e^{-i\omega \tau}
\d \tau,\]
and likewise
\[\cF[\bar r_\theta](\omega)=\int_{-\infty}^0 r_\theta(-\tau)
e^{-i\omega\tau}\d \tau=\int_{-\infty}^0 {c_\theta^0}'(\tau)e^{-i\omega \tau}\d \tau
=c_\theta^0(0)+\int_{-\infty}^0 i\omega c_\theta^0(\tau)e^{-i\omega \tau}\d \tau.\]
Thus
\[\cF[r_\theta](\omega)-\cF[\bar r_\theta](\omega)
={-}i\omega \int c_\theta^0(\tau)e^{-i\omega \tau}\d \tau
={-}i\omega \cF[c_\theta^0](\omega).\]
Applying this above shows
\[\cF[{c_g^0}'](\omega)=-(\cF[r_g](\omega)-\cF[\bar r_g](\omega))
=-\int \sign(\tau)r_g(|\tau|) e^{-i\omega\tau} \d \tau.\]
Thus $c_g'(\tau)={c_g^0}'(\tau)={-}\sign(\tau)r_g(|\tau|)$ for a.e.\ $\tau \in \R$.
The continuity of both functions restricted to $(0,\infty)$, and also
from the right at $\tau=0$, implies ${-}c_g'(\tau)=r_g(\tau)$ for $\tau>0$ and
${-}c_g'(0^+)=r_g(0)$, verifying \eqref{eq:cgrgFDT}. Finally, 
recalling that $\lim_{\tau \to \infty} c_g(\tau)=q_*\cR_\Lambda'(v_*-q_*)$,
this implies that
\[c_g(0)-q_*\cR_\Lambda'(v_*-q_*)=\int_0^\infty r_g(\tau) \d \tau
=\sum_{p \geq 1} \kappa_{p+1} \int_0^\infty r_\theta^{*p}(\tau) \d \tau
=\sum_{p \geq 1} \kappa_{p+1} (v_*-q_*)^p=\cR_\Lambda(v_*-q_*).\]
Since $c_g:\R \to \R$ is positive-semidefinite, we must have
$\cR_\Lambda(v_*-q_*)=c_g(0)-\lim_{\tau \to \infty} c_g(\tau) \geq 0$.
This shows all statements of part (b).
\end{proof}

\section{Replica-symmetric overlaps and free energy}\label{sec:replica}

In this section, we now prove Theorem \ref{thm:replica} and
Corollaries \ref{cor:hightemp} and \ref{cor:ising}
on the asymptotic overlaps and free energy of the Gibbs measure \eqref{eq:gibbsmeasure}.

The argument is to establish a Markovian approximation of the generalized
Langevin
equation \eqref{eq:dmft-theta}, using the fluctuation-dissipation relation between $c_g,r_g$ in
Lemma \ref{lemma:ttig}, an approximation of $c_g^0(\tau)=c_g(\tau)-\lim_{s \to \infty}
c_g(s)$ by the correlation function
of observables of the lifted Markovian process
$\bxi^t=\{\btheta^s\}_{s \in [t-B,t]}$ over path histories, a further
approximation of this correlation function by a kernel $\tilde c_g$ defined
via a finite-dimensional approximation of its infinitesimal generator,
and a stable coupling between the dynamics driven by $c_g^0$ and $\tilde c_g$.

The approximation of $c_g^0$ (and its derivative ${-}r_g$) is carried out in the
following lemma.

\begin{lemma}\label{lemma:semigroupapprox}
Suppose the conditions of Theorem \ref{thm:replica} hold. Let
$v_*,q_*,c_g(\tau),r_g(\tau)$ be as defined in Lemmas \ref{lemma:tti} and
\ref{lemma:ttig}, and set
\[c_g^0(\tau)=c_g(\tau)-q_*\cR_\Lambda'(v_*-q_*).\]
Fix any $\eps>0$. Then there exists a constant $C>0$
independent of $\eps$, and some $T_0=T_0(\eps)>0$
and function $\ell_*:[0,T_0]^2 \to \R$, such that the following holds: 

For any $T>0$ and $\delta>0$, there exist
$m \geq 1$, $u_m \in \R^m$, and $A_m \in \R^{m \times m}$ with
$A_m+A_m^\top$ strictly positive definite,
all depending on $\delta,T,\eps,T_0$, such that:
\begin{enumerate}[(i)]
\item $|u_m^\top e^{-\tau A_m}u_m
-c_g^0(\tau)|<\delta$ for all $\tau \in [0,T]$.
\item $\int_0^\infty |u_m^\top A_me^{-\tau A_m}u_m- r_g(\tau)|\d \tau<\delta$.
\item $\|u_m\|_2<C$.
\item $\|e^{-tA_m}u_m\|_2<\eps$ for all $t \geq T_0$.
\item $\|(e^{-tA_m}-e^{-sA_m})u_m\|_2<C|t-s|^{1/2}+\delta$
and $\|A_m(e^{-tA_m}-e^{-sA_m})u_m\|_2<C|t-s|^{1/2}+\delta$
for all $s,t \in [0,T_0]$.
\item $|u_m^\top e^{-tA_m^\top}A_me^{-sA_m}u_m-\ell_*(t,s)|<\delta$
for all $s,t \in [0,T_0]$.
\end{enumerate}
\end{lemma}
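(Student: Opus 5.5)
We will realize $c_g^0$ as a correlation function of a (non-reversible) Markov semigroup on path histories, and then discretize its generator. Concretely, fix a burn-in window $B>0$ and consider the stationary process $\{\tilde\btheta^t\}_{t\in\R}$ of Lemma \ref{lemma:tti} and the lifted process $\bxi^t=\{\tilde\btheta^s\}_{s\in[t-B,t]}$, which is Markov and stationary with law $\nu_n$. Define the observable
\[\g^t=\X\tilde\btheta^t-\int_0^B r_g(s)\tilde\btheta^{t-s}\d s-\Big(\text{its }\mu_\Gibbs\text{-centering}\Big),\]
so that $\g^t=F(\bxi^t)$ for $F:\bxi\mapsto \R^n$. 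Combining Theorem \ref{thm:dmft-approx}(c), Lemma \ref{lemma:ttig}(b) (the bounds $|C_g(t,s)-c_g(t-s)|\le Ce^{-c\min(s,t)}$ and decay of $r_g$), and the couplings \eqref{eq:thetatthetacouple}, \eqref{eq:thetaequilibriumcoupling} exactly as in the proof of Lemma \ref{lemma:tti}(b), we will show $\frac1n\langle F(\bxi^0)^\top P_\tau F(\bxi^0)\rangle\to c_g^0(\tau)$ up to an error $Ce^{-cB}$, uniformly on compacts, where $P_\tau$ is the $L^2(\nu_n)$-semigroup. The subtraction of the constant $q_*\cR_\Lambda'(v_*-q_*)$ corresponds to subtracting the static mean $\langle\btheta\rangle$-part, which is what yields decay (iv).

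Next we pass to finite dimensions. For fixed $n$, $P_\tau=e^{-\tau\mathcal{A}}$ is a contraction semigroup on $L^2(\nu_n)$ (contractive since $\nu_n$ is invariant), so $\mathcal{A}+\mathcal{A}^*\succeq 0$. We replace $\mathcal{A}$ by a slightly damped $\mathcal{A}+\rho$ and take a Galerkin projection onto an $m$-dimensional subspace spanned by $\{P_{k h}F_i\}$ and their generator images (a Krylov-type space at mesh $h$), with $u_m$ the coordinates of $F/\sqrt n$ averaged over coordinates $i$ (using exchangeability across coordinates to reduce to a scalar kernel). Damping makes $A_m+A_m^\top\succeq 2\rho I\succ0$. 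Then (i) follows from Trotter--Kato type convergence of the Galerkin semigroups on $[0,T]$ plus $\rho\to0$; (iii) from $\|u_m\|^2\approx c_g^0(0)\le C$; (v) from the H\"older bounds on $C_g$ in \eqref{eq:Cg-cont2} transferred through $\|(P_t-P_s)F\|^2=2c(0)-2c(t-s)$ and the FDT relation $c_g'=-r_g$ for the $A_m$ version; (vi) by defining $\ell_*(t,s)$ as the limit of $\frac1n\langle (P_tF)^\top \mathcal{A}P_sF\rangle$ and arguing equicontinuity/compactness (extracting subsequential limits in $n$, as done for $\mu_n$ in Lemma \ref{lemma:tti}(d)). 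Property (ii) uses that $u_m^\top A_m e^{-\tau A_m}u_m=-\frac{\d}{\d\tau}u_m^\top e^{-\tau A_m}u_m$, which by Lemma \ref{lemma:ttig}(b) approximates $-c_g'=r_g$; the $L^1(0,\infty)$ control needs the tail bound from (iv) and exponential decay of $r_g$, together with the $\rho$-damping giving integrability beyond $T$.

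The main obstacle is (iv) combined with (ii) uniformly over infinite time: the true generator has a large null space, so decay of $e^{-t\mathcal{A}}F$ must come from mixing (Assumption \ref{assump:convergence} gives $\frac1n\langle\|P_tF\|^2\rangle\le Ce^{-ct}$ for the centered observable) rather than a spectral gap, and one must ensure the Galerkin approximation inherits this decay at a time $T_0(\eps)$ independent of $\delta,m$. I would first choose $T_0$ from the mixing bound so $\|P_{T_0}F\|<\eps/2$, then choose the subspace to contain $P_tF$ for $t\in[0,T_0]$ on a fine mesh, and use contractivity ($\|e^{-tA_m}\|\le1$) so that once $\|e^{-T_0A_m}u_m\|<\eps$ it stays so for all $t\ge T_0$; the tail of (ii) is then handled by the damping $\rho$ chosen small relative to $\delta$ but with $e^{-\rho t}$ killing contributions beyond a $\delta$-dependent horizon.
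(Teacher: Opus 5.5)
Your architecture matches the paper's: the path-history lift, the observable $F=\X\bxi_0-\int r_g(-s)\bxi_s\,\d s$ centered under $\nu$, $T_0$ chosen from the mixing decay of $\|P_tF\|_{L^2}$, damping by $\rho$, Galerkin compression with Trotter--Kato, contractivity for (iv), and a subsequential limit in $n$ for $\ell_*$. However, two steps do not go through as written.

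\textbf{First gap: you never control $\mathcal AF$.} Several items need $F\in D(\mathcal A)$ with $\|\mathcal AF\|_{L^2}\le C$ and $\|(P_t-P_s)\mathcal AF\|_{L^2}\le C|t-s|^{1/2}$, uniformly in $n$ (and $B$). These are: the second half of (v), the derivative approximation in (ii), the tail bound $e^{-\rho\tau}\|u_m\|_2\|A_mu_m\|_2$ in (ii) (with $T(\rho)$ chosen before $n$), and (vi).
\begin{itemize}
\item The FDT cannot supply these bounds. It only concerns the pairing $\langle F,P_\tau\mathcal AF\rangle={-}\frac{\d}{\d\tau}\langle F,P_\tau F\rangle$, identifying its limit as $r_g$. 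Because $\mathcal A$ is not self-adjoint, the drift correlation $\langle\mathcal AF,P_\tau\mathcal AF\rangle$ is not $c''(\tau)$ (that identity needs self-adjointness) and is not controlled by $c$.
\item Similarly, $\|(P_t-P_s)F\|^2=2c(0)-2c(t-s)$ holds only as ``$\le$'', by Jensen. That inequality does suffice for the first half of (v) when combined with $c_g^0(0)-c_g^0(\tau)=\int_0^\tau r_g\le C\tau$. By contrast, \eqref{eq:Cg-cont2} has horizon-dependent constants and would only give exponent $1/4$.
\item Uniform closeness of $u_m^\top e^{-\tau A_m}u_m$ to $c_g^0$ in (i) says nothing about derivatives, so it cannot by itself give (ii).
\end{itemize}
The missing ingredient is the explicit generator. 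With a smooth cutoff $r_g^B$ vanishing at $B$, one has along the stationary path
\[{-}\mathcal AF(\bxi^t)=\X\big(\X\tilde\btheta^t-U'(\tilde\btheta^t)\big)-r_g(0)\tilde\btheta^t-\int_0^B {r_g^B}'(s)\,\tilde\btheta^{t-s}\,\d s .\]
Then $\int_0^\infty|r_g'|<\infty$ (Lemma \ref{lemma:ttig}(a)), Jensen, and $\frac1n\langle\|\tilde\btheta^\tau-\tilde\btheta^0\|_2^2\rangle\le C\min(\tau,1)$ give both bounds. With them in hand:
\begin{enumerate}
\item $\tau\mapsto\langle F,P_\tau\mathcal AF\rangle$ is equicontinuous uniformly in $n$.
\item Arzel\`a--Ascoli upgrades correlation convergence to convergence of derivatives to ${-}{c_g^0}'=r_g$.
\item Trotter--Kato applied to $\mathcal A^\rho F$, together with $\|\Pi_m\mathcal A^\rho F-A_mu_m\|_2\to0$, transfers this to $u_m^\top e^{-\tau A_m}A_mu_m$. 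That quantity approximates $r_g^\rho=e^{-\rho\tau}(r_g+\rho c_g^0)$, and $\int|r_g^\rho-r_g|\to0$ as $\rho\to0$.
\end{enumerate}

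\textbf{Second gap: the quantifiers around $\ell_*$ fail with a fixed window $B$.} Your (i) holds only up to $Ce^{-cB}$, so $B$ must grow as $\delta\to0$. But your $\ell_*$ is a limit in $n$ at fixed $B$, so it depends on $B$ and hence on $\delta$. The lemma requires $\ell_*$ and $T_0$ to be fixed before $(\delta,T)$. This is exactly what Corollary \ref{cor:semigroupapprox} exploits to make the processes $\tilde g_m$ for different $m$ close to one another. The paper resolves this by taking $B=B(n)\to\infty$ and proving every semigroup estimate with $\sup_{B>0}$, including the decay used to pick $T_0$. A single subsequential limit $\ell_*$ on $[0,T_0]^2$ then serves all $(\delta,T)$.

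\textbf{Smaller points.}
\begin{itemize}
\item A Galerkin space containing the ``generator images'' $\mathcal AP_{kh}F$ must lie in $D(\mathcal A)$. That would require $\mathcal AF\in D(\mathcal A)$, which is not established: it needs more regularity of $r_g$ than Lemma \ref{lemma:ttig} provides. You should either drop those vectors and verify Trotter--Kato consistency on the cyclic subspace of $F$, or use the paper's $V_m=\operatorname{span}\{(\lambda+A^\rho)^{-1}h_i\}_{i\le m}$, which gives graph-norm consistency directly.
\item No exchangeability over coordinates is available for fixed $\X$, and none is needed. The normalized inner product $\frac1n\E[f^\top g]$ on $\R^n$-valued functions already yields the scalar kernel.
\end{itemize}
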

We emphasize that for our later coupling argument, it is important that the
constants $C,\eps,T_0$ in statements (iii)--(vi) are fixed irrespective of $m$.

\begin{proof}
We again assume, without loss of generality, that Assumption \ref{assump:as}
holds. The process $\{\tilde\btheta^t\}_{t \in \R}$ and conventions
for $\langle \cdot \rangle$ are the same as in the proof
of Lemma \ref{lemma:tti}.\\

{\bf Step 1 (Markov semigroup setup):} 
For a value $B=B(n)>0$ to be determined, let
$\cS=C([-B,0],\R^n)$ denote the space of continuous bounded
$\R^n$-valued functions on $[-B,0]$. We denote elements of this space by
$\bxi \in \cS$, indexed as $\bxi=\{\bxi_\tau\}_{\tau \in [-B,0]}$.
$\cS$ is equipped with the norm
\[\|\bxi\|=\sup_{\tau \in [-B,0]} \|\bxi_\tau\|_2\]
and its associated Borel sigma-algebra. Denote
\[\bxi^t=\{\tilde \btheta^{t+\tau}\}_{\tau \in [-B,0]},
\text{ i.e. } \bxi_\tau^t=\tilde \btheta^{t+\tau} \text{ for } \tau \in
[-B,0].\]
We use subscripts to index elements of $\cS$
and superscripts to index the process $\{\bxi^t\}_{t \in \R}$ on $\cS$.

Let $\nu$ be the law of $\bxi^0=\{\tilde\btheta^\tau\}_{\tau \in [-B,0]}$
on $\cS$. Then $\{\bxi^t\}_{t \in \R}$ is a
$\cS$-valued Markov process with stationary law $\nu$.
Let $L^2(\mu_\Gibbs)$ and $L^2(\nu)$ be the spaces of
Borel-measurable functions $f_0:\R^n \to \R^n$ and
$f:\cS \to \R^n$, respectively, such that
\[\|f_0\|_{L^2}^2:=\frac{1}{n}\,\E_{\btheta \sim \mu_\Gibbs}
\|f_0(\btheta)\|_2^2<\infty,\qquad \|f\|_{L^2}^2:=
\frac{1}{n}\,\E_{\bxi \sim \nu} \|f(\bxi)\|_2^2<\infty,\]
and write $\langle \cdot \rangle_{L^2}$ for the associated inner-products.
Consider the Markov semigroup $\{P_t\}_{t \geq 0}$ of
bounded operators on $L^2(\nu)$ given by
\[P_t f(\bxi)=\E[f(\bxi^t) \mid \bxi^0=\bxi].\]
Note that for any $f \in L^2(\nu)$, we have the contractivity property
$\|P_tf\|_{L^2} \leq \|f\|_{L^2}$ by Jensen's inequality. Furthermore,
this semigroup is strongly continuous in the sense
\begin{equation}\label{eq:strongcontinuity}
\lim_{t \to 0^+} \|P_tf-f\|_{L^2}=0 \text{ for each fixed } f \in L^2(\nu).
\end{equation}
Indeed, as $t \to 0^+$, we have $\bxi^t \to \bxi^0$ in
$\cS=C([-B,0],\R^n)$ by the uniform continuity of sample paths of
$\{\tilde \btheta^t\}_{t \in \R}$ on compact intervals. Thus if
$f:\cS \to \R^n$ is continuous and bounded, then by dominated convergence,
\[\lim_{t \to 0^+} \|P_tf-f\|_{L^2}^2
=\lim_{t \to 0^+} \frac{1}{n}\E\|\E[f(\bxi^t) \mid \bxi^0]-f(\bxi^0)\|_2^2
\leq \lim_{t \to 0^+} \frac{1}{n}\E\|f(\bxi^t)-f(\bxi^0)\|_2^2=0.\]
As $\cS$ is separable,
the space of such continuous and bounded functions on $\cS$
is dense in $L^2(\nu)$. Then, since $P_t$ is also contractive,
this implies the strong continuity \eqref{eq:strongcontinuity}. 

Define the generator $-A:D(A) \to L^2(\nu)$ for $\{P_t\}_{t \geq 0}$ by
\[-Af=\lim_{t \to 0^+} \frac{1}{t}(P_tf-f)\]
where $D(A) \subset L^2(\nu)$ is the domain where this limit exists in
$L^2(\nu)$. Let $r_g^B:[0,\infty) \to \R$ be a compactly supported,
smooth approximation of $r_g$ such that
\[r_g^B(\tau)=\begin{cases}r_g(\tau) & \text{ for } \tau \in [0,B-1],\\
0 & \text{ for } \tau \geq B,\end{cases}\]
and $|r_g^B(\tau)| \leq |r_g(\tau)|$ 
and $|{r_g^B}'(\tau)| \leq C|r_g'(\tau)|$  for a constant $C>0$ and
all $\tau \geq 0$. Consider the function $f_B \in L^2(\nu)$ given by
\begin{equation}\label{eq:Fdef}
f_B(\bxi)=\X\bxi_0-\int_{-B}^0 r_g^B(-s)\bxi_s \d s
-\E_{\bxi \sim \nu}\left[\X\bxi_0-\int_{-B}^0 r^B_g(-s)\bxi_s \d s\right].
\end{equation} 
We claim that $f_B \in D(A)$. To see this, let
$P_t^\theta f_0(\btheta)=\E[f_0(\tilde \btheta^t) \mid \tilde \btheta^0=\btheta]$ denote the Markov diffusion semigroup on $L^2(\mu_\Gibbs)$,
and let
\begin{equation}\label{eq:diffusiongenerator}
-A^\theta f_0(\btheta)=\Big((\X\btheta-U'(\btheta))^\top \nabla f_{0,i}(\btheta)
+\Tr \nabla^2 f_{0,i}(\btheta)\Big)_{i=1}^n
\end{equation}
denote its generator with domain $D(A^\theta) \subset
L^2(\mu_\Gibbs)$. Denote $f_{B,0}(\btheta)=\X\btheta$, and write $c_0$ for the
last constant term of \eqref{eq:Fdef} (not depending on $\bxi$). Then
\begin{align*}
P_tf_B(\bxi)&=\E[f_{B,0}(\bxi_0^t) \mid \bxi^0=\bxi]
-\int_{-B}^0 r_g^B({-}s)\E[\bxi_s^t \mid \bxi^0=\bxi]\d s-c_0\\
&=P_t^\theta f_{B,0}(\bxi_0)
-\int_{\max(-t,-B)}^0 r_g^B({-}s)\E[\tilde \btheta^{t+s} \mid
\tilde\btheta^0=\bxi_0]\d s-\int_{-B}^{\max(-t,-B)} r_g^B({-}s)\bxi_{t+s}\d
s-c_0.
\end{align*}
For $t \in (0,B)$, we have
\begin{align*}
\frac{1}{t}(P_tf_B(\bxi)-f_B(\bxi))
&=\frac{1}{t}(P_t^\theta f_{B,0}(\bxi_0)-f_{B,0}(\bxi_0))
-\frac{1}{t}\int_{-t}^0 r_g^B({-}s)\E[\tilde \btheta^{t+s} \mid \tilde \btheta^0
=\bxi_0]\d s\\
&\qquad-\frac{1}{t}\left(\int_{-B+t}^0 (r_g^B(t-s)-r_g^B({-}s))\bxi_s\d s\right)
+\frac{1}{t}\int_{-B}^{-B+t} r_g^B({-}s)\bxi_s \d s.
\end{align*}
Under the condition that Assumption \ref{assump:dynamics} holds for ${-}U'$,
$f_{B,0}$ belongs to $D(A^\theta)$, so
\[\lim_{t \to 0^+} \frac{1}{t}(P_t^\theta f_{B,0}-f_{B,0})(\btheta)
={-}A^\theta f_{B,0}(\btheta)=\X(\X\btheta-U'(\btheta)).\]
Pointwise over $\bxi \in \cS$, since $\bxi_s$ and
$\E[\tilde \btheta^s \mid \tilde \btheta^0=\bxi_0]$ are
continuous in $s$ and $r_g^B(s)$ is continuously-differentiable with
$r_g^B(B)=0$, we have
\begin{align*}
&\lim_{t \to 0^+}
{-}\frac{1}{t}\int_{-t}^0 r_g^B({-}s)\E[\tilde \btheta^{t+s} \mid \tilde \btheta^0=\bxi_0]\d s
-\frac{1}{t}\left(\int_{-B+t}^0 (r_g^B(t-s)-r_g^B({-}s))\bxi_s\d s\right)
+\frac{1}{t}\int_{-B}^{-B+t} r_g^B({-}s)\bxi_s \d s\\
&={-}r_g^B(0)\bxi_0-\int_{-B}^0 {r_g^B}'(s)\bxi_s \d s+0.
\end{align*}
By dominated convergence, this holds also as functions of $\bxi$ in
$L^2(\nu)$. Thus ${-}Af_B$ exists in $L^2(\nu)$, and is given explicitly by
\begin{equation}\label{eq:generatorformula}
-Af_B(\bxi)
=\lim_{t \to 0^+} \frac{1}{t}(P_tf_B(\bxi)-f_B(\bxi))
=\X(\X\bxi_0-U'(\bxi_0))-r_g^B(0)\bxi_0-\int_{-B}^0 {r_g^B}'(s)\bxi_s \d s.
\end{equation}
This checks that $f_B \in D(A)$.\\

{\bf Step 2 (semigroup estimates):} Let us establish bounds independent of
$n,B$ for the quantities
\begin{align*}
\|P_t f_B\|_{L^2},
\quad \|Af_B\|_{L^2},
\quad \|(P_t-P_s)f_B\|_{L^2},
\quad \|(P_t-P_s)Af_B\|_{L^2}.
\end{align*}
For convenience, define
\begin{align}
\tilde \g_B^t&=\X\tilde \btheta^t-\int_{-\infty}^t r_g^B(t-s)\tilde\btheta^s
\d s,\\
\tilde \a_B^t&=\X(\X\tilde \btheta^t-U'(\tilde \btheta^t))
-r_g(0)\tilde \btheta^t-\int_{-\infty}^t {r_g^B}'(t-s)\tilde \btheta^s \d s,
\label{eq:tildeaB}
\end{align}
and recall the notation $\langle \cdot \rangle$ for the
expectation over $\{\tilde \btheta^t\}_{t \in \R}$ conditional on $\X$. Then
\begin{equation}\label{eq:gareprs}
f_B(\bxi^t)=\tilde \g_B^t-\langle \tilde \g_B^0 \rangle,
\qquad {-}Af_B(\bxi^t)=\tilde \a_B^t.
\end{equation}
For $\|P_tf_B\|_{L^2}$, write
\begin{align*}
\tilde \g_B^t-\langle \tilde \g_B^0\rangle&=\g_{B,1}^t+\g_{B,2}^t,\\
\g_{B,1}^t&=\X(\tilde \btheta^t-\langle \btheta \rangle)
-\int_0^{\min(t/2,B)} r_g^B(s)(\tilde\btheta^{t-s}
-\langle \btheta \rangle)\d s,\\
\g_{B,2}^t&={-}\int_{\min(t/2,B)}^B r_g^B(s)(\tilde \btheta^{t-s}
-\langle \btheta \rangle) \d s
\quad \text{ if } t/2<B, \text{ or } \g_{B,2}^t=0 \text{ otherwise}.
\end{align*}
When $\g_{B,2}^t \neq 0$, we may bound
\begin{align*}
&\left\langle \frac{1}{n}\,\|\E[\g_{B,2}^t \mid \X,\{\tilde\btheta^r\}_{r \leq
0}] \|_2^2\right\rangle
\leq \frac{1}{n}\,\langle \|\g_{B,2}^t\|_2^2\rangle\\
&\leq \int_{\min(t/2,B)}^B \d t'
\int_{\min(t/2,B)}^B \d s'\,|r_g^B(t')|\,|r_g^B(s')|
\,\frac{1}{n}\Big|\langle (\tilde \btheta^{t-t'}-\langle \btheta \rangle)^\top
(\tilde\btheta^{s-s'}-\langle \btheta \rangle)\rangle\Big|\\
&\leq \frac{1}{n}\langle \|\btheta\|_2^2 \rangle
\left(\int_{t/2}^\infty |r_g(t')|\d t'\right)^2.
\end{align*}
Applying \eqref{eq:thetanormbound} and the exponential bound for $r_g$
from Lemma \ref{lemma:ttig},
\begin{equation}\label{eq:gB2bound}
\sup_{B>0}
\left\langle\frac{1}{n}\|\E[\g_{B,2}^t \mid \X,\{\tilde\btheta^r\}_{r \leq
0}]\|_2^2 \right\rangle^{1/2} \leq Ce^{-ct}
\text{ for all } t \geq 0.
\end{equation}
For $\g_{B,1}^t$, recall that for each $t \geq 0$,
there exists a coupling \eqref{eq:thetaequilibriumcoupling}
of $(\tilde\btheta^0,\tilde\btheta^t)$ with $\btheta \sim \mu_\Gibbs$
for which $\btheta$ is independent of $\tilde\btheta^0$ and
$\frac{1}{n}\E[\|\tilde \btheta^t-\btheta\|_2^2 \mid \X] \leq 
Ce^{-ct}$.
Then by Jensen's inequality, also
$\frac{1}{n}\langle \|\E[\tilde \btheta^t \mid \X,\tilde \btheta^0]-\langle
\btheta \rangle\|_2^2 \rangle \leq Ce^{-ct}$ for all $t \geq 0$. Hence
\begin{align*}
&\left\langle \frac{1}{n}\,\|\E[\g_{B,1}^t \mid \X,\{\tilde\btheta^r\}_{r \leq
0}]\|_2^2 \right\rangle\\
&=\left\langle\frac{1}{n}\,
\left\|\X(\E[\tilde \btheta^t \mid \X,\tilde \btheta^0]-\langle \btheta \rangle)
-\int_0^{\min(t/2,B)} r_g^B(s)(\E[\tilde \btheta^{t-s} \mid \X,\tilde \btheta^0]
-\langle \btheta \rangle)\d s\right\|_2^2\right\rangle\\
&\leq \frac{C}{n}\,\langle
\|\E[\tilde \btheta^t \mid \X,\tilde \btheta^0]-\langle \btheta \rangle\|_2^2
\rangle+\frac{C}{n}\sup_{s \in [t/2,t]}
\langle \|\E[\tilde \btheta^s \mid \X,\tilde \btheta^0]-\langle \btheta
\rangle\|_2^2 \rangle
\left(\int_0^\infty |r_g(s)|\,\d s\right)^2\\
&\leq C'e^{-c't}.
\end{align*}
Thus
\begin{equation}\label{eq:gB1bound}
\sup_{B>0}
\left\langle \frac{1}{n}\|\E[\g_{B,1}^t \mid \X,\{\tilde\btheta^r\}_{r \leq
0}]\|_2^2\right\rangle^{1/2} \leq Ce^{-ct} \text{ for all } t \geq 0.
\end{equation}
Combining \eqref{eq:gB2bound} and \eqref{eq:gB1bound}, for some constants $C,c>0$,
\begin{equation}\label{eq:IBbound}
\sup_{B>0} \|P_tf_B\|_{L^2} \leq Ce^{-ct} \text{ for all } t \geq 0.
\end{equation}

For $\|Af_B\|_{L^2}$,
by \eqref{eq:thetanormbound} and integrability of $|r_g'|$ shown in
Lemma \ref{lemma:ttig}, for a constant $C>0$ we have similarly
\begin{align}\label{eq:IIBbound}
\sup_{B>0} \|Af_B\|_{L^2}
=\sup_{B>0} \left\langle \frac{1}{n}\|\tilde \a_B^0\|_2^2
\right\rangle^{1/2} \leq C.
\end{align}
For $\|(P_t-P_s)f_B\|_{L^2}$ and $\|(P_t-P_s)Af_B\|_{L^2}$,
note that \eqref{eq:thetanormbound}
and \eqref{eq:thetatildeequicontinuous} imply
\begin{equation}\label{eq:thetacont}
\frac{1}{n}\langle \|\tilde \btheta^\tau-\tilde\btheta^0\|_2^2
\rangle\leq C\min(\tau,1) \text{ for all } \tau \geq 0.
\end{equation}
Then also by the integrability of $|r_g|$,
\begin{align}\label{eq:IIIBgbound}
\sup_{B>0} \|(P_{s+\tau}-P_s)f_B\|_{L^2}
&\leq \sup_{B>0} \left\langle\frac{1}{n}\|\tilde\g_B^{\tau}-\tilde\g_B^0\|_2^2
\right\rangle^{1/2}\notag\\
&\leq \sup_{B>0}
\left\langle \frac{1}{n}\|\X(\tilde\btheta^\tau-\tilde\btheta^0)\|_2^2
\right\rangle^{1/2}
+\left\langle\frac{1}{n}\left\|\int_0^\infty r_g^B(s)(\tilde \btheta^{\tau-s}
-\tilde\btheta^{-s})\d s \right\|_2^2 \right\rangle^{1/2}\notag\\
&\leq C\min(\tau^{1/2},1) \text{ for all } s,\tau \geq 0.
\end{align}
Similarly by the integrability of $|r_g'|$,
\begin{equation}\label{eq:IIIBabound}
\sup_{B>0} \|(P_{s+\tau}-P_s)Af_B\|_{L^2}
\leq \sup_{B>0} \left\langle\frac{1}{n}\|\tilde\a_B^{\tau}-\tilde\a_B^0\|_2^2
\right\rangle^{1/2}
\leq C\min(\tau^{1/2},1) \text{ for all } s,\tau \geq 0.
\end{equation}

{\bf Step 3 (diffusion approximation of $c_g^0$ and $r_g$):} We claim that for any 
sequence $B(n)$ satisfying $B(n) \to \infty$ as $n \to \infty$, and for
any $T>0$,
\begin{align}
&\lim_{n \to \infty}
\sup_{\tau \in [0,T]} \left|c_g^0(\tau)-
\left(\frac{1}{n}\langle \tilde{\g}_{B(n)}^{\tau\top} \tilde \g_{B(n)}^0
\rangle-\frac{1}{n}\|\langle \tilde{\g}_{B(n)}^0 \rangle\|_2^2
\right)\right|=0 \text{ a.s.},\label{eq:cgrepr}\\
&\lim_{n \to \infty}
\sup_{\tau \in [0,T]} \left|r_g(\tau)+
\frac{\d}{\d\tau}
\left(\frac{1}{n}\langle \tilde{\g}_{B(n)}^{\tau\top} \tilde \g_{B(n)}^0
\rangle\right)\right|=0 \text{ a.s.}\label{eq:rgrepr}
\end{align}
To show this claim, observe that
Theorem \ref{thm:dmft-approx}(c) and Lemma \ref{lemma:ttig} imply that
\begin{equation}\label{eq:cgrepr1}
c_g(\tau)=\lim_{t \to \infty}
C_g(t+\tau,t)=\lim_{t \to \infty} \lim_{n \to \infty} \frac{1}{n}\,
\langle \g^{t+\tau\top} \g^t \rangle \text{ a.s.}
\end{equation}
where here and below, all limits in $n$ exist a.s.\ over $(\X,\btheta^0)$.
First fixing some constant $B>0$, define for $t \geq B$
\[\g_B^t=\X\btheta^t-\int_{-\infty}^t r_g^B(t-s)\btheta^s \d s.\]
Denote $\Delta_B(t,s)=R_g(t,s)-r_g^B(t-s)$ for $s \in [0,t]$.
By the same argument as \eqref{eq:thetatildeequicontinuous}, using
\eqref{eq:thetatnormbound} to bound $\frac{1}{n}\langle
\|\btheta^t\|_2^2 \rangle$, we have
\begin{equation}\label{eq:thetaequicontinuous}
\frac{1}{n}\langle \|\btheta^t-\btheta^s\|_2^2 \rangle
\leq C(e^{C(t-s)}-1) \text{ for all } t \geq s \geq 0.
\end{equation}
This equicontinuity and Theorem \ref{thm:dmft-approx}(c) imply, for any $t \geq
0$,
\[\lim_{n \to \infty} \sup_{s \in [0,t]}
\left|\frac{1}{n}\btheta^{t\top}\btheta^s-C_\theta(t,s)\right|=0 \text{ a.s.}\]
Then for any $t \geq B$,
\begin{align*}
\lim_{n \to \infty} \frac{1}{n}\langle \|\g^t-\g_B^t\|^2 \rangle
=\lim_{n \to \infty} \frac{1}{n}\left\langle \left\|\int_0^t \Delta_B(t,s)
\btheta^s \d s\right\|^2\right\rangle
=\int_0^t \d s \int_0^t \d s' \Delta_B(t,s)\Delta_B(t,s')C_\theta(s,s') \text{
a.s.}
\end{align*}
Note that $\Delta_B(t,s)=R_g(t,s)$ for $s>t-B+1$.
Then applying the properties for $C_\theta,c_\theta,R_g,r_g$
in Lemmas \ref{lemma:tti} and \ref{lemma:ttig},
it is readily checked that for any fixed $B>0$:
\begin{align*}
&\lim_{t \to \infty}
\int_0^t \d s \int_0^t \d s' \Delta_B(t,s)\Delta_B(t,s')C_\theta(s,s')\\
&=\lim_{t \to \infty}
\int_0^{t-B+1} \d s \int_0^{t-B+1} \d s'
(R_g(t,s)-r_g^B(t-s))(R_g(t,s')-r_g^B(t-s'))C_\theta(s,s')\\
&=\lim_{t \to \infty}
\int_{B-1}^t \d s \int_{B-1}^t \d s' (R_g(t,t-s)-r_g^B(s))
(R_g(t,t-s')-r_g^B(s'))C_\theta(t-s,t-s')\\
&=\int_{B-1}^\infty \d s \int_{B-1}^\infty \d s'\,(r_g(s)-r_g^B(s))
(r_g(s')-r_g^B(s'))c_\theta(s-s').
\end{align*}
Then by the exponential bound for $r_g$ in Lemma \ref{lemma:ttig}, also
\begin{align*}
&\lim_{B \to \infty} \lim_{t \to \infty}
\int_0^t \d s \int_0^t \d s' \Delta_B(t,s)\Delta_B(t,s')C_\theta(s,s')=0.
\end{align*}
This shows that
$\lim_{B \to \infty} \lim_{t \to \infty}
\lim_{n \to \infty} \frac{1}{n}\langle \|\g^t-\g_B^t\|_2^2 \rangle=0$ a.s.
Applying this bound, \eqref{eq:cgrepr1}, and Cauchy-Schwarz, we have
\begin{align}\label{eq:cgrepr2}
\left|c_g(\tau)-\lim_{t \to \infty}
\lim_{n \to \infty} \frac{1}{n}\langle \g_B^{t+\tau\top}\g_B^t \rangle\right|
=\left|\lim_{t \to \infty} \lim_{n \to \infty}
\frac{1}{n}\langle \g^{t+\tau\top}\g^t \rangle
-\frac{1}{n}\langle \g_B^{t+\tau\top}\g_B^t \rangle\right| \leq o_B(1) \text{
a.s.}
\end{align}
where here and below, $o_B(1)$ is a constant bound not depending on $n,\tau$
and vanishing as $B \to \infty$.

For $t>B$, Assumption \ref{assump:convergence} implies
there exists a coupling of
$(\btheta^0,\btheta^{t-B})$ with $\tilde \btheta^{-B}$ 
having law $\mu_\Gibbs$ conditional on $\btheta^0$, for which
\[\frac{1}{n}\,\E[\|\btheta^{t-B}-\tilde \btheta^{-B}\|_2^2 \mid \X,\btheta^0]
\leq Ce^{-ct}.\]
Define from this coupling of $\btheta^{t-B}$ and $\tilde\btheta^{-B}$ a coupling
of $\{\btheta^{t-B+s}\}_{s \in [0,B+\tau]}$ and
$\{\tilde \btheta^{-B+s}\}_{s \in [0,B+\tau]}$ where both processes are driven
by the same Brownian motion. Then by Gr\"onwall's inequality,
\[\|\btheta^{t-B+s}-\tilde \btheta^{-B+s}\|_2 \leq e^{Cs}
\|\btheta^{t-B}-\tilde \btheta^{-B}\|_2 \text{ for all } s \in [0,B+\tau].\]
Thus for any fixed $B>0$ and $\tau>0$,
there exists a constant $C(B,\tau)>0$ such that also
\[\sup_{s \in [0,B+\tau]} \frac{1}{n}\E[\|\btheta^{t-B+s}
-\tilde\btheta^{-B+s}\|_2^2 \mid \X,\btheta^0] \leq C(B,\tau)e^{-ct}.\]
This implies from the definitions of $\g_B^t$ and $\tilde \g_B^t$ that
\[\lim_{t \to \infty}\limsup_{n \to \infty}
\left|\frac{1}{n}\,\langle \g_B^{t+\tau\top} \g_B^t\rangle
-\frac{1}{n}\,\langle \tilde \g_B^{\tau\top} \tilde \g_B^0\rangle
\right|=0,\]
so \eqref{eq:cgrepr2} implies 
\begin{equation}\label{eq:cgrepr3}
\sup_{\tau \geq 0}\left|c_g(\tau)-\lim_{n \to \infty}
\frac{1}{n}\,\langle \tilde\g_B^{\tau\top}\tilde \g_B^0
\rangle\right| \leq o_B(1) \text{ a.s.}
\end{equation}

Since \eqref{eq:cgrepr3} implies that the almost-sure limit
$\lim_{n \to \infty} \frac{1}{n}\,\langle
\tilde \g_B^{\tau\top}\tilde \g_B^0 \rangle$ converges uniformly to
$c_g(\tau)$ as $B \to \infty$, we have
\begin{equation}\label{eq:cgrepr4}
q_*\cR_\Lambda'(v_*-q_*)=\lim_{\tau \to \infty} c_g(\tau)=
\lim_{\tau \to \infty} \lim_{B \to \infty}
\lim_{n \to \infty}
\frac{1}{n}\,\langle \tilde \g_B^{\tau\top}\tilde \g_B^0 \rangle
=\lim_{B \to \infty}
\lim_{\tau \to \infty} \lim_{n \to \infty}
\frac{1}{n}\,\langle \tilde \g_B^{\tau\top}\tilde \g_B^0 \rangle \text{ a.s.}
\end{equation}
Now let $\{\check \btheta^t\}_{t \in \R}$ be a second diffusion following
\eqref{eq:langevin} with equilibrium
initial state $\check \btheta^0 \sim \mu_\Gibbs$. Again fix $B>0$, and
consider $\tau>B$. Applying again
Assumption \ref{assump:convergence} and the same coupling argument as above,
there exists a coupling of the laws of $\{\tilde \btheta^{\tau-B+s}\}_{s \in
[0,B]}$ and $\{\check{\btheta}^{\tau-B+s}\}_{s \in [0,B]}$ 
conditional on $\{\tilde\btheta^t\}_{t \leq 0}$,
where $\{\check{\btheta}^{\tau-B+s}\}_{s \in [0,B]}$ is independent of
$\{\tilde\btheta^t\}_{t \leq 0}$, for which
\[\sup_{s \in [0,B]} \frac{1}{n}\,\E[\|\tilde \btheta^{\tau-B+s}
-\check\btheta^{\tau-B+s}\|_2^2 \mid \X] \leq Ce^{-c\tau}.\]
Letting $\{\check \g_B^t\}_{t \in \R}$ be the copy of $\{\tilde \g_B^t\}_{t \in
\R}$ defined from $\{\check \btheta^t\}_{t \in \R}$,
this implies by Cauchy-Schwarz that
\[\lim_{\tau \to \infty} \limsup_{n \to \infty}
\left|\frac{1}{n}\langle \tilde \g_B^{\tau\top}\tilde \g_B^0 \rangle
-\frac{1}{n}\E[\check \g_B^{\tau\top}\tilde \g_B^0 \mid \X]\right|=0.\]
Here $\E[\check \g_B^{\tau\top}\tilde \g_B^0 \mid \X]=
\|\langle \tilde \g_B^0 \rangle\|_2^2$ since
$\check \g_B^\tau$ is a function of $\{\check \btheta^{\tau-B+s}\}_{s \in
[0,B]}$, $\tilde \g_B^0$ is a function of $\{\tilde \btheta^t\}_{t \leq 0}$, and
these are independent. Then, applying this to \eqref{eq:cgrepr4}, we have
\begin{equation}\label{eq:cgrepr5}
\left|q_*\cR_\Lambda'(v_*-q_*)-\lim_{n \to \infty}
\frac{1}{n}\|\langle \tilde \g_B^0 \rangle\|_2^2\right| \leq o_B(1) \text{ a.s.}
\end{equation}

For any fixed $B>0$ and $B(n)$ satisfying $B(n) \to \infty$ as $n \to \infty$,
\[\frac{1}{n}\langle \|\tilde \g_B^\tau-\tilde \g_{B(n)}^\tau\|_2^2 \rangle
=\frac{1}{n}\,\left\langle\left\|\int_{-\infty}^\tau
(r_g^B(\tau-s)-r_g^{B(n)}(\tau-s))\tilde\btheta^s \d s\right\|_2^2\right\rangle
\leq C\left(\int_{B-1}^\infty |r_g(s)| \d s\right)^2
\cdot \frac{1}{n}\langle \|\btheta\|^2 \rangle\]
which is uniformly bounded by \eqref{eq:thetanormbound}
and tends to 0 as $B \to \infty$. Thus
\begin{align*}
\sup_{\tau \geq 0} \left|\frac{1}{n}\,\langle \tilde\g_B^{\tau\top}
\tilde \g_B^0 \rangle
-\frac{1}{n}\,\langle \tilde\g_{B(n)}^{\tau\top}\tilde \g_{B(n)}^0
\rangle\right|& \leq o_B(1),\\
\sup_{\tau \geq 0} \left|\frac{1}{n}\|\langle \tilde\g_B^0
\rangle \|_2^2
-\frac{1}{n}\|\langle \tilde\g_{B(n)}^0\rangle\|_2^2\right| & \leq o_B(1).
\end{align*}
Combining with \eqref{eq:cgrepr3} and \eqref{eq:cgrepr4}
and noting that the fixed value $B>0$ may be taken arbitrarily large, this shows
\begin{equation}\label{eq:cgpointwiseapprox}
c_g^0(\tau)=c_g(\tau)-q_*\cR_\Lambda'(v_*-q_*)
=\lim_{n \to \infty} \frac{1}{n}\,\langle \tilde\g_{B(n)}^{\tau\top}
\tilde \g_{B(n)}^0 \rangle
-\frac{1}{n}\|\langle \tilde\g_{B(n)}^0\rangle\|_2^2 \text{ a.s.}
\end{equation}
The bound \eqref{eq:IIIBgbound} implies that
for all $\tau,\tau' \in \R$ and $n \geq 1$,
\[\left|\frac{1}{n}\langle \tilde{\g}_{B(n)}^{\tau\top} \tilde \g_{B(n)}^0
\rangle-\frac{1}{n}\langle \tilde{\g}_{B(n)}^{\tau'\top}
\tilde\g_{B(n)}^0 \rangle\right|
\leq C|\tau-\tau'|^{1/2}.\]
Then $\tau \mapsto \frac{1}{n}\langle \tilde{\g}_{B(n)}^{\tau\top}
\tilde\g_{B(n)}^0 \rangle$
is uniformly equicontinuous for all $n$, so the convergence in
\eqref{eq:cgpointwiseapprox} is
uniform over any compact interval $[0,T]$,
establishing the claim \eqref{eq:cgrepr} for $c_g^0$.

To show the second claim \eqref{eq:rgrepr} for $r_g$,
note that Lemma \ref{lemma:ttig} shows $r_g(\tau)={-}{c_g^0}'(\tau)$. Using
$B=B(n)$ to define the preceding semigroup $\{P_t\}_{t \geq 0}$ on $L^2(\nu)$
in Step 1, the identities \eqref{eq:gareprs} imply that
\[\frac{\d}{\d\tau}\,\frac{1}{n}\langle \tilde \g_{B(n)}^{\tau\top}
\tilde \g_{B(n)}^0 \rangle
=\frac{\d}{\d\tau} \langle f_{B(n)},P_\tau f_{B(n)} \rangle_{L^2}
={-}\langle f_{B(n)},P_\tau Af_{B(n)} \rangle_{L^2}
=\frac{1}{n}\langle \tilde \g_{B(n)}^{0\top}\tilde \a_{B(n)}^\tau \rangle.\]
The bound \eqref{eq:IIIBabound} implies that this is also uniformly
equicontinuous in $\tau$ for all $n$. Then by \eqref{eq:cgrepr} and
Arzel\`a-Ascoli, almost surely
\[\lim_{n \to \infty} \sup_{\tau \in [0,T]} \left|{c_g^0}'(\tau)
-\frac{\d}{\d\tau}\,\frac{1}{n}\langle \tilde \g_{B(n)}^{\tau\top}
\tilde \g_{B(n)}^0 \rangle\right|=0.\]
This shows \eqref{eq:rgrepr}.\\

{\bf Step 4 (semigroup regularization):} 
Take any $B(n)$ such that $B(n) \to \infty$ as $n \to \infty$.
Henceforth, we define the semigroup $\{P_t\}_{t \geq 0}$ with this $B(n)$ and
write simply $B \equiv B(n)$. For the given value of
$\eps$ in the lemma, let $T_0=T_0(\eps)>0$ be large enough such that
\eqref{eq:IBbound} ensures
\begin{equation}\label{eq:IBboundT0}
\|P_{T_0}f_B\|_{L^2}<\eps/3.
\end{equation}
The bounds
\eqref{eq:IBbound}, \eqref{eq:IIBbound},
\eqref{eq:IIIBgbound}, and \eqref{eq:IIIBabound} imply
that almost surely, $\langle P_tf_B,P_sAf_B \rangle_{L^2}$
is uniformly bounded and uniformly equicontinuous over $(t,s)$ for all $n$.
Hence along a subsequence of $n$,
there exists a limit function $\ell_*:[0,T_0]^2 \to \R$ 
(possibly random, depending on the infinite sequence of realizations
$\X \in \R^{n \times n}$ as $n \to \infty$) for which
\begin{equation}\label{eq:lstarlimit}
\sup_{s,t \in [0,T_0]} |\langle P_tf_B,P_sAf_B
\rangle_{L^2}-\ell_*(t,s)| \to 0.
\end{equation}

Now let $\delta>0$ and $T>0$ be as given in the lemma. For a sufficiently
small regularization parameter $\rho=\rho(\eps,T_0,\delta,T)>0$ to be
determined, define
\[A^\rho=A+\rho\,\Id, \qquad P_t^\rho=e^{-\rho t}P_t.\]
Note that ${-}A^\rho$ is then the generator of the regularized semigroup
$\{P_t^\rho\}_{t \geq 0}$ on $L^2(\nu)$, with the same domain $D(A^\rho)=D(A)$.
(Here $P_t^\rho$ no longer preserves constant functions, which no longer
belong to the kernel of $A^\rho$, but this will be irrelevant for the rest of
the analysis.) Then we have
\[\frac{\d}{\d t} \langle f_B,P_t^\rho f_B \rangle_{L^2}
={-}\langle f_B,P_t^\rho A^\rho f_B \rangle_{L^2}
={-}e^{-\rho t}\Big(\langle f_B,P_tAf_B \rangle_{L^2}
+\rho \langle f_B,P_tf_B \rangle_{L^2}\Big).\]
Define correspondingly
\[c_g^\rho(\tau)=e^{-\rho \tau}c_g^0(\tau),
\qquad r_g^\rho(\tau)={-}(c_g^\rho)'(\tau)
=e^{-\rho\tau}(r_g(\tau)+\rho c_g^0(\tau)),\]
where the last equality uses ${-}{c_g^0}'(\tau)=r_g(\tau)$.
Then \eqref{eq:cgrepr} and \eqref{eq:rgrepr} imply also, for any fixed
$\rho>0$ and $T>0$, almost surely
\begin{equation}\label{eq:rgrhorepr}
\lim_{n \to \infty} \sup_{\tau \in [0,T]}
|r_g^\rho(\tau)-\langle f_B,P_\tau^\rho A^\rho f_B \rangle_{L^2}|=0.
\end{equation}
Note that
\[\int_0^\infty |r_g^\rho(\tau)-r_g(\tau)|\,\d \tau
\leq \int_0^\infty (1-e^{-\rho \tau})|r_g(\tau)| \d \tau
+\int_0^\infty \rho e^{-\rho \tau}|c_g^0(\tau)| \d \tau.\]
Lemma \ref{lemma:ttig} shows that both $|r_g|,|c_g^0|$ are integrable, so
by dominated convergence, this approaches to 0 as $\rho \to 0$. Thus, for
all small enough $\rho$, we have
\begin{equation}\label{eq:rgrgrho}
\int_0^\infty |r_g^\rho(\tau)-r_g(\tau)|\,\d \tau<\delta/8.
\end{equation}
Given any $\rho>0$, we may pick $T(\rho)>0$ large enough such that
\begin{equation}\label{eq:Trhodef}
\int_{T(\rho)}^\infty |r_g(\tau)|\d \tau
<\delta/8, \qquad
\|f_B\|_{L^2} \|A^\rho f_B\|_{L^2} \int_{T(\rho)}^\infty e^{-\rho\tau}
\d \tau <\delta/8.
\end{equation}
Note finally that
$\|P_t^\rho f-P_t f\|_{L^2} \leq (1-e^{-\rho t})\|f\|_{L^2}$
for all $f \in L^2(\nu)$, and $\|(A^\rho-A)f\|_{L^2} \leq \rho\|f\|_{L^2}$
for all $f \in D(A)$. Then the statements \eqref{eq:cgrepr},
\eqref{eq:rgrhorepr},
\eqref{eq:lstarlimit}, \eqref{eq:IBbound}, \eqref{eq:IIBbound},
\eqref{eq:IIIBgbound}, and \eqref{eq:IIIBabound}
together imply that there exists a realization of $\ell_*:[0,T_0]^2 \to \R$ depending only on $\eps,T_0$ and some $n \geq 1$, $B=B(n)$, $\rho>0$,
and realization of $\X \in \R^{n \times n}$ depending on $\eps,T_0,\delta,T$   such that \eqref{eq:rgrgrho} holds for
this $\rho>0$,
\eqref{eq:Trhodef} holds for the corresponding $T(\rho)>0$, and  
\begin{align}
\sup_{\tau \in [0,T]} |c_g^0(\tau)
-\langle f_B,P_\tau^\rho f_B \rangle_{L^2}|&<\delta/2,\label{eq:cgsemigroupapprox}\\
\int_0^{T(\rho)} |r_g^\rho(\tau)-\langle f_B,P_\tau^\rho A^\rho f_B
\rangle_{L^2}|\d\tau&<\delta/8,\label{eq:rgsemigroupapprox}\\
\sup_{s,t \in [0,T_0]} |\langle P_t^\rho f_B,P_s^\rho A^\rho f_B
\rangle-\ell_*(t,s)|&<\delta/2,\label{eq:lstarapprox}\\
\|P_{T_0}^\rho f_B\|_{L^2}&<\eps/2,\label{eq:contractionatT0}\\
\|f_B\|_{L^2},\|A^\rho f_B\|_{L^2}&<C,\label{eq:AfBbound}\\
\|(P_t^\rho-P_s^\rho)f_B\|_{L^2} &\leq C|t-s|^{1/2}+\delta/2
\text{ for all } s,t \in [0,T_0],\label{eq:PtPsbound}\\
\|(P_t^\rho-P_s^\rho)A^\rho f_B\|_{L^2} &\leq C|t-s|^{1/2}+\delta/2
\text{ for all } s,t \in [0,T_0].\label{eq:PtPsAbound}
\end{align}
In the remainder of the proof, we fix these values of $n,B,\rho$
and realizations of $\X$ and $\ell_*$.\\

{\bf Step 5 (Trotter-Kato approximation):}
We now approximate ${-}A^\rho$ by a finite-dimensional generator ${-}A_m$, and
apply the Trotter-Kato theorem to conclude the proof.

Since $\{P_t^\rho\}_{t \geq 0}$ is a strongly continuous contractive semigroup, 
the resolvent $(\lambda\,\Id+A^\rho)^{-1}$ of its generator
${-}A^\rho$ exists for any
$\lambda>1$ as a bounded linear operator $(\lambda\,\Id+A^\rho)^{-1}:L^2(\nu) \to
D(A)$, with inverse $(\lambda\,\Id+A^\rho):D(A) \to L^2(\nu)$ \cite[Theorem
1.10]{engel2000one}. Let $\{h_i\}_{i=1}^\infty$ be any orthonormal basis of
$L^2(\nu)$, and for each $m \geq 1$, let
\[V_m=\sp(\{(\lambda\,\Id+A^\rho)^{-1}h_i\}_{i=1}^m) \subset D(A).\]
We note that for any $f \in D(A)$,
there exists $f_m \in V_m$ for which
\begin{equation}\label{eq:graphnormapprox}
f_m \to f, \qquad A^\rho f_m \to A^\rho f \qquad \text{ in } L^2(\nu)
\text{ as } m \to \infty.
\end{equation}
Indeed, let
$g=(\lambda\,\Id+A^\rho)f$, let $g_m \in \sp(h_1,\ldots,h_m)$ be such that
$g_m \to g$ in $L^2(\nu)$ as $m \to \infty$,
and let $f_m=(\lambda\,\Id+A^\rho)^{-1}g_m$. Then, since
$(\lambda\,\Id+A^\rho)^{-1}$ is a bounded operator, $f_m \to
(\lambda\,\Id+A^\rho)^{-1}g=f$. Furthermore $A^\rho
f_m=g_m-\lambda f_m$, so $A^\rho f_m \to g-\lambda f=A^\rho f$, showing
\eqref{eq:graphnormapprox}.

To define $A_m$, let $\{e_i\}_{i=1}^\infty$ be elements
of $D(A)$ such that $\{e_i\}_{i=1}^m$ forms an orthonormal basis of $V_m$ for
each $m \geq 1$ (e.g.\ obtained from a Gram-Schmidt procedure).
Define $\Pi_m:L^2(\nu) \to \R^m$ 
and its adjoint $\Pi_m^*:\R^m \to L^2(\nu)$ (in the sense
$v^\top \Pi_m f=\langle \Pi_m^* v,f \rangle_{L^2}$) by
\[\Pi_m f=(\langle e_i,f \rangle_{L^2})_{i=1}^m,
\qquad \Pi_m^* v=\sum_{i=1}^m v(i)e_i.\]
Thus $\Pi_m\Pi_m^*=\Id$ on $\R^m$, and
$\Pi_m^*\Pi_m:L^2(\nu) \to L^2(\nu)$ is the orthogonal projector onto
$V_m$. Define $A_m \in \R^{m \times m}$ by
\[A_m=\Pi_m A^\rho \Pi_m^*,
\qquad (A_m)(i,j)=\langle e_i,A^\rho e_j \rangle_{L^2}.\]
Note that for any $f \in L^2 (\nu)$, we have $\langle f,f-P_t^\rho f \rangle_{L^2} \geq
(1-e^{-\rho t})\|f\|_{L^2}^2$
by the contractivity $\|P_t^\rho f\|_{L^2}
=e^{-\rho t}\|P_t f\|_{L^2} \leq e^{-\rho t}\|f\|_{L^2}$
and Cauchy-Schwarz. Hence for any $f \in D(A)$, we have from definition of the
generator ${-}A^\rho$ that $\langle f,A^\rho f \rangle_{L^2} \geq
\rho\|f\|_{L^2}^2$, implying
\begin{equation}\label{eq:Apsd}
v^\top (A_m+A_m^\top) v 
=2\langle \Pi_m^* v,A^\rho \Pi_m^* v \rangle_{L^2}
\geq 2\rho\|v\|_2^2 \text{ for all } v \in \R^m.
\end{equation}
Thus $\lambda_{\min}(A_m+A_m^\top) \geq 2\rho$, so $A_m+A_m^\top$ is
strictly positive definite. This implies also, for any $v \in \R^m$, that
\begin{align*}
\frac{\d}{\d t}\|e^{-tA_m}v\|_2^2
&=-2(e^{-tA_m}v)^\top A_me^{-tA_m}v\\
&=-(e^{-tA_m}v)^\top(A_m+A_m^\top)(e^{-tA_m}v)\leq -2\rho\,\|e^{-tA_m}v\|_2^2,
\end{align*}
so $\|e^{-tA_m}v\|_2^2 \leq e^{-2\rho t}\|v\|_2^2$
and hence $\|e^{-tA_m}\|_\op \leq e^{-\rho t} \leq 1$ for all $t \geq 0$.
Thus $\{P_t^m\}_{t \geq 0}:=\{e^{-tA_m}\}_{t \geq 0}$ defines
a contraction semigroup on $\R^m$ with generator ${-}A_m$.
The property \eqref{eq:graphnormapprox} implies that for any $f \in D(A)$,
there exists $u_m \in \R^m$ with $\Pi_m^*u_m=f_m \in V_m$
for each $m \geq 1$ such that as $m \to \infty$,
\begin{equation}\label{eq:umapprox}
\|\Pi_m^*u_m-f\|_{L^2}=\|f_m-f\|_{L^2} \to 0,
\qquad \|A^\rho \Pi_m^*u_m-A^\rho f\|_{L^2}=\|A^\rho f_m-A^\rho f\|_{L^2} \to 0.
\end{equation}
Hence, from the above definition $A_m=\Pi_mA^\rho \Pi_m^*$, also
\begin{align}
\|\Pi_m^*A_mu_m-A^\rho f\|_{L^2}
&=\|\Pi_m^*A_m\Pi_mf_m-A^\rho f\|_{L^2}
=\|\Pi_m^*\Pi_m A^\rho f_m-A^\rho f\|_{L^2}\notag\\
&\leq \|\Pi_m^*\Pi_m A^\rho f-A^\rho f\|_{L^2}
+\|\Pi_m^*\Pi_m(A^\rho f_m-A^\rho f)\|_{L^2} \to 0.\label{eq:Aumapprox}
\end{align}
This checks property (C2) of \cite[Proposition 3.1]{ito1998trotter}
for $D=D(A)$. Property (C1) is automatic, since $D(A)$ is dense in $L^2(\nu)$
\cite[Theorem 1.4]{engel2000one}
and $(\lambda\,\Id+A^\rho):D(A) \to L^2(\nu)$ is surjective for any $\lambda$
in the resolvent set of ${-}A^\rho$. Thus, by the version of
the Trotter-Kato theorem stated in \cite[Theorem 2.1, Proposition
3.1]{ito1998trotter}, for any fixed $f \in L^2(\nu)$ and $T>0$,
\begin{equation}\label{eq:trotterkato}
\lim_{m \to \infty} \sup_{\tau \in [0,T]} \|\Pi_m^*P_\tau^m \Pi_m f
-P_\tau^\rho f\|_{L^2}=0.
\end{equation}

We now verify conditions (i)--(vi) of the lemma.
Recall the function $f_B$ in \eqref{eq:Fdef}, and let $u_m \in \R^m$
be a sequence satisfying \eqref{eq:umapprox} and \eqref{eq:Aumapprox}
for $f=f_B$. Let $f_{m,B} \in V_m$ be such that
\[u_m=\Pi_m f_{m,B} \in \R^m, \qquad f_{m,B}=\Pi_m^* u_m.\]
Recall that \eqref{eq:cgsemigroupapprox} shows
$\sup_{\tau \in [0,T]}
|c_g^0(\tau)-\langle f_B,P_\tau^\rho f_B \rangle_{L^2}|<\delta/2$.
Applying the Trotter-Kato result \eqref{eq:trotterkato} with $f=f_B$ and
Cauchy-Schwarz,
\begin{equation}\label{eq:trottercovapprox1}
\lim_{m \to \infty}
\sup_{\tau \in [0,T]} |\langle f_B,P_\tau^\rho f_B \rangle_{L^2}
-\langle f_B,\Pi_m^*P_\tau^m \Pi_m f_B \rangle_{L^2}|=0.
\end{equation}
Here $\langle f_B,\Pi_m^*P_\tau^m \Pi_mf_B \rangle_{L^2}=(\Pi_m f_B)^\top
e^{-\tau A_m} (\Pi_m f_B)$, and the first statement of
\eqref{eq:umapprox} implies
\[\lim_{m \to \infty}
\|\Pi_m f_B-u_m\|_2=\lim_{m \to \infty} 
\|\Pi_m^*u_m-f_B\|_{L^2}=0.\]
Then, using $\|e^{-\tau A_m}\|_\op \leq 1$ and Cauchy-Schwarz,
\begin{equation}\label{eq:trottercovapprox2}
\lim_{m \to \infty} \sup_{\tau \in [0,T]}
\Big|\langle f_B,\Pi_m^* P_\tau^m \Pi_m f_B \rangle_{L^2}-u_m^\top e^{-\tau A_m}u_m\Big|=0.
\end{equation}
Combining \eqref{eq:cgsemigroupapprox}, 
\eqref{eq:trottercovapprox1}, and \eqref{eq:trottercovapprox2} shows
statement (i) for all large enough $m$.

For (ii), note that 
\[|u_m^\top A_me^{-\tau A_m}u_m|
\leq \|e^{-\tau A_m}\|_{\op} \|u_m\|_2 \|A_mu_m\|_2
\leq e^{-\tau\rho}\|u_m\|_2 \|A_mu_m\|_2.\]
By \eqref{eq:umapprox} and \eqref{eq:Aumapprox}, $\|u_m\|_2 \to \|f_B\|_{L^2}$
and $\|A_mu_m\|_2 \to \|A^\rho f_B\|_{L^2}$ as $m \to \infty$. Then the second
statement of \eqref{eq:Trhodef} ensures that for all large enough $m$,
\[\int_{T(\rho)}^\infty
|u_m^\top A_me^{-\tau A_m}u_m|\d\tau<\delta/4.\]
Combining this with the first statement of
\eqref{eq:Trhodef} and with \eqref{eq:rgrgrho} and
\eqref{eq:rgsemigroupapprox},
\begin{align}
&\int_0^\infty |r_g(\tau)-u_m^\top A_me^{-\tau A_m}u_m|\d\tau\notag\\
&\leq \int_0^{T(\rho)} |r_g(\tau)-u_m^\top A_me^{-\tau A_m}u_m|\d\tau
+\int_{T(\rho)}^\infty |r_g(\tau)|\d\tau
+\int_{T(\rho)}^\infty |u_m^\top A_me^{-\tau A_m}u_m|\d\tau\notag\\
&\leq \int_0^{T(\rho)} |\langle f_B,P_\tau^\rho A^\rho f_B \rangle_{L^2}
-u_m^\top A_me^{- \tau A_m}u_m|\d\tau+5\delta/8.\label{eq:rgsemigroupapprox1}
\end{align}
Applying the Trotter-Kato result \eqref{eq:trotterkato} with $f=A^\rho f_B$   and
$T=T(\rho)$,
\[\lim_{m \to \infty}\sup_{\tau \in [0,T(\rho)]}
|\langle f_B,P_\tau^\rho A^\rho f_B \rangle_{L^2}
-\langle f_B,\Pi_m^* P_\tau^m\Pi_m A^\rho f_B\rangle_{L^2}|=0.\]
Here $\langle f_B,\Pi_m^* P_\tau^m\Pi_m A^\rho f_B\rangle_{L^2}
=(\Pi_m f_B)^\top e^{-\tau A_m}(\Pi_m A^\rho f_B)$, and \eqref{eq:umapprox}
and \eqref{eq:Aumapprox} imply $\|\Pi_m f_B-u_m\|_2 \to 0$
and $\|\Pi_m A^\rho f_B-A_mu_m\|_2 \to 0$ as $m \to \infty$. Thus
\[\lim_{m \to \infty}\sup_{\tau \in [0,T(\rho)]}
|\langle f_B,P_\tau^\rho A^\rho f_B \rangle_{L^2}
-u_m^\top e^{-\tau A_m}A_mu_m|=0.\]
Applying this to \eqref{eq:rgsemigroupapprox1} shows that (ii) holds for all
large $m$.

Statement (iii) follows immediately from the convergence $\|u_m\|_2 \to
\|f_B\|_{L^2}$ implied by \eqref{eq:umapprox}, and \eqref{eq:AfBbound}.

For statement (iv),
observe that at $t=T_0$,
\begin{align*}
\|e^{-T_0A_m}u_m\|_2
&=\sup_{v \in \R^m:\|v\|_2=1} v^\top e^{-T_0A_m}u_m\\
&\leq \sup_{g \in L^2(\nu):\|g\|_{L^2}=1} (\Pi_m g)^\top e^{-T_0A_m} u_m
=\|\Pi_m^*P_{T_0}^m u_m\|_{L^2}\\
&\leq \|\Pi_m^* P_{T_0}^m \Pi_m f_B\|_{L^2}
+\|\Pi_m^* P_{T_0}^m(\Pi_m f_B-u_m)\|_{L^2}
\leq \|\Pi_m^* P_{T_0}^m \Pi_m f_B\|_{L^2}+\|\Pi_m^*u_m-f_B\|_{L^2}.
\end{align*}
Then by the Trotter-Kato result \eqref{eq:trotterkato} applied with $f=f_B$,
$T=T_0$, and \eqref{eq:umapprox},
\[\limsup_{m \to \infty}
\|e^{-T_0A_m}u_m\|_2 \leq 
\lim_{m \to \infty} \|\Pi_m^* P_{T_0}^m \Pi_mf_B\|_{L^2}
+\|\Pi_m^*u_m-f_B\|_{L^2}=\|P_{T_0}^\rho f_B\|_{L^2}.\]
Then (iv) holds for all large $m$ by \eqref{eq:contractionatT0}.

For statement (v), note similarly as above that for any $s,t \geq 0$,
\begin{align*}
\|(e^{-tA_m}-e^{-sA_m})u_m\|_2
=\|\Pi_m^*(P_t^m-P_s^m)u_m\|_{L^2}
\leq \|\Pi_m^*(P_t^m-P_s^m)\Pi_mf_B\|_{L^2}+\|\Pi_m^* u_m-f_B\|_{L^2}.
\end{align*}
Then by \eqref{eq:trotterkato} applied with $f=f_B$, $T=T_0$,
and \eqref{eq:umapprox}, for any fixed $\delta>0$ and large enough $m$,
\[\|(e^{-tA_m}-e^{-sA_m})u_m\|_2<\|(P_t^\rho-P_s^\rho)f_B\|_{L^2}+\delta/2
\text{ for all } s,t \in [0,T_0].\]
Then by \eqref{eq:PtPsbound}, 
$\|(e^{-tA_m}-e^{-sA_m})u_m\|_2<C|t-s|^{1/2}+\delta$. Similarly, 
\begin{align*}
\|A_m(e^{-tA_m}-e^{-sA_m})u_m\|_2
&=\|\Pi_m^*(P_t^m-P_s^m)A_mu_m\|_{L^2}\\
&\leq \|\Pi_m^*(P_t^m-P_s^m)\Pi_m A^\rho f_B\|_{L^2}+\|\Pi_m^* A_mu_m-A^\rho f_B\|_{L^2}.
\end{align*}
Applying \eqref{eq:trotterkato} with $f=A^\rho f_B$, $T=T_0$, and
\eqref{eq:Aumapprox}, for any $\delta>0$ and all large enough $m$,
\[\|A_m(e^{-tA_m}-e^{-sA_m})u_m\|_2<
\|(P_t^\rho-P_s^\rho)A^\rho f_B\|_{L^2}+\delta/2 \text{ for all } s,t \in [0,T_0].\]
Then by \eqref{eq:PtPsAbound},
$\|A_m(e^{-tA_m}-e^{-sA_m})u_m\|_2<C|t-s|^{1/2}+\delta$,
showing statement (v).

For statement (vi), note that
\[u_m^\top e^{-tA_m^\top}A_me^{-sA_m}u_m
=\langle \Pi_m^* P_t^m u_m,\Pi_m^*P_s^mA_mu_m \rangle_{L^2}.\]
Then by the same arguments as above and Cauchy-Schwarz,
for any $\delta>0$ and all large enough $m$, 
\begin{align*}
\Big|u_m^\top e^{-tA_m^\top}A_me^{-sA_m}u_m
-\langle P_t^\rho f_B,P_s^\rho A f_B \rangle_{L^2}\Big|<\delta/2
\text{ for all } s,t \in [0,T_0].
\end{align*}
The bound \eqref{eq:lstarapprox} shows
$|\langle P_t^\rho f_B,P_s^\rho Af_B \rangle_{L^2}-\ell_*(t,s)|<\delta/2$
for all $s,t \in [0,T_0]$, showing (vi).
\end{proof}

The above lemma implies the following coupling result for stationary
Gaussian processes with covariances $c_g^0(\tau)$ and $\tilde c_g^m(\tau)=u_m^\top e^{-|\tau|A_m}u_m$.

\begin{corollary}\label{cor:semigroupapprox}
Suppose the conditions of Theorem \ref{thm:replica} hold.
Then there exists a constant $C>0$ such that
given any $\eps>0$, there exist $m \geq 1$, 
$u_m \in \R^m$ with $\|u_m\|_2<C$, and $A_m \in \R^{m \times m}$ with
$A_m+A_m^\top$ strictly positive definite, such that the following holds:

For all $\tau \in \R$ let
\[c_g^0(\tau)=c_g(\tau)-q_*\cR_\Lambda'(v_*-q_*),
\qquad \tilde c_g^m(\tau)=u_m^\top e^{-|\tau|A_m}u_m,\]
and for $\tau \geq 0$ let
\[r_g(\tau)=-c_g'(\tau),
\qquad \tilde r_g^m(\tau)=-(\tilde c_g^m)'(\tau)
=u_m^\top A_me^{-\tau A_m}u_m\]
(with derivatives understood from the right at $\tau=0$). Then
\begin{equation}\label{eq:cg0approx}
|\tilde c_g^m(0)-\cR_{\Lambda}(v_*-q_*)|<\eps,
\end{equation}
\begin{equation}\label{eq:rgapprox}
\int_0^\infty |r_g(\tau)-\tilde r_g^m(\tau)|\,\d \tau<\eps.
\end{equation}
Furthermore, $c_g^0$ and $\tilde c_g^m$ are both symmetric and
positive-semidefinite on $\R$, and for any $T>0$,
there exists a coupling of two stationary Gaussian processes
$\{g_0^t\}_{t \in \R} \sim \GP(0,c_g^0)$ and $\{\tilde g^t\}_{t \in \R}
\sim \GP(0,\tilde c_g^m)$ such that
\begin{equation}\label{eq:cgcoupling}
\sup_{t \in [0,T]} \E(g_0^t-\tilde g^t)^2<\eps^2.
\end{equation}
\end{corollary}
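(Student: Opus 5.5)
This follows directly from Lemma \ref{lemma:semigroupapprox}. Apply it with a small auxiliary tolerance $\eps'$ (the $\eps$ of that lemma), which fixes $C$, $T_0(\eps')$ and $\ell_*$. Then take the given $T$ and a $\delta>0$ to be chosen small relative to $\eps$. The lemma produces $m$, $u_m$ and $A_m$ with $A_m+A_m^\top\succ 0$ and $\|u_m\|_2<C$; this $C$ does not depend on $\eps$, as the corollary requires.

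For \eqref{eq:rgapprox} we use (ii) with $\delta<\eps$. For \eqref{eq:cg0approx} we use (i) at $\tau=0$, together with the identity $c_g^0(0)=c_g(0)-q_*\cR_\Lambda'(v_*-q_*)=\cR_\Lambda(v_*-q_*)$ from \eqref{eq:cgbounds} of Lemma \ref{lemma:ttig}.

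Symmetry and positive-semidefiniteness of $c_g^0$ follow from Lemma \ref{lemma:ttig}. That lemma says $c_g$ is symmetric p.s.d. Moreover $c_g^0$ is the covariance of the stationary limit with its constant mean part removed. To check this directly, write $c_g^0(\tau)=\lim_{\sigma\to\infty}\big(c_g(\tau)-c_g(\sigma)\big)$. Positive-semidefiniteness of $c_g^0$ is then equivalent to nonnegativity of its spectral measure away from the atom at $0$, and this holds because $c_g=c_g^0+\text{const}$ with $c_g^0\to0$, so the spectral measure of $c_g$ is that of $c_g^0$ plus an atom at zero.

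For $\tilde c_g^m$, positive-semidefiniteness holds because it is the stationary covariance $\E[y^{t\top}y^s]$-type kernel of a stable OU process. Since $A_m+A_m^\top\succ0$, the Lyapunov equation $A_m\Sigma+\Sigma A_m^\top=I$ has a solution $\Sigma\succ0$. Pick $\Sigma$ and the vector so that $\tilde c_g^m(\tau)=u_m^\top e^{-|\tau|A_m}u_m$ is realized. One route is $w^\top e^{-|\tau| A}\Sigma w$, but we need exactly $u^\top e^{-\tau A}u$. A cleaner route is Fourier: $\int u^\top e^{-|\tau|A}u\,e^{-i\omega\tau}\d\tau=u^\top[(A+i\omega)^{-1}+(A-i\omega)^{-1}]u=2\,\mathrm{Re}\,u^\top(A+i\omega)^{-1}u$. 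Writing $x=(A+i\omega)^{-1}u$, this equals $2\,\mathrm{Re}\,x^*(A+i\omega)^*x$, which is $x^*(A+A^\top)x\ge0$. So Bochner gives positive-semidefiniteness.

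The coupling \eqref{eq:cgcoupling} is the main step. By (i), $\sup_{\tau\in[0,T]}|c_g^0(\tau)-\tilde c_g^m(\tau)|<\delta$. Both kernels are uniformly continuous: $c_g^0$ is Lipschitz since $r_g$ is bounded, and $\tilde c_g^m$ is close to it and has modulus controlled by (v). We then invoke the Gaussian coupling lemma already used in the proof of Theorem \ref{thm:dmft-approx}(b) (cf.\ \cite[Lemma~D.1]{fan2025dynamicalII}). Discretize $[0,T]$ at mesh $h$ and couple the finite-dimensional Gaussian vectors through the p.s.d.\ square roots of their covariance matrices, where the square-root map is $1/2$-Hölder in operator norm. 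Interpolate between grid points using the increment bounds $\E(g_0^t-g_0^s)^2=2(c_g^0(0)-c_g^0(t-s))\le C|t-s|$ and the analogue for $\tilde g$, which comes from (i) and (v). This gives $\sup_t\E(g_0^t-\tilde g^t)^2\le C(h+\sqrt{\delta}\,N_h)$ up to constants. Choosing $h$ first and then $\delta$ small makes this less than $\eps^2$. The delicate point is that the matrix-square-root error grows with the grid size, so $\delta$ has to be chosen after $h$. This ordering is allowed because the lemma lets $\delta$ be arbitrary for fixed $T$.
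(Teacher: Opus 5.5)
Your handling of \eqref{eq:cg0approx}, \eqref{eq:rgapprox}, the bound $\|u_m\|_2<C$, and positive-semidefiniteness is fine. In particular, your Fourier/Bochner argument for $\tilde c_g^m$ (via $2\,\mathrm{Re}\,u_m^\top(A_m+i\omega)^{-1}u_m\ge 0$) is a valid alternative to the paper's moving-average realization.

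The gap is in the coupling step, and it is a quantifier problem. The corollary asks for a single triple $(m,u_m,A_m)$, depending only on $\eps$, such that for \emph{every} $T>0$ a coupling with $\sup_{t\in[0,T]}\E(g_0^t-\tilde g^t)^2<\eps^2$ exists. You instead pass the given $T$ into Lemma \ref{lemma:semigroupapprox}, so your $(m,u_m,A_m)$ depend on $T$. Your own bound $C(h+\sqrt{\delta}\,N_h)$ with $N_h\approx T/h$ shows that for fixed $m$ the error deteriorates as $T$ grows. Property (i) cannot repair this: for fixed $m$ it says nothing about $\tilde c_g^m-c_g^0$ beyond $[0,T]$. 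Moreover, the generic Gaussian coupling lemma you invoke has an error that grows with the horizon even when the covariances are uniformly close.

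This uniformity is exactly what is needed downstream. In Lemma \ref{lemma:thetacoupling} and the proof of Theorem \ref{thm:replica}, $(m,A,u)$ must depend only on $\eps$. Only then are the convergence constants $C(\eps),c(\eps)$ of the OU-augmented system \eqref{eq:auxiliarytheta} independent of $T$ and $T_0$, which is what allows sending $T\to\infty$ at fixed $\eps$.

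The missing idea is to combine the $m$-uniform properties (iii)--(vi) of Lemma \ref{lemma:semigroupapprox} with a stationary moving-average representation.
\begin{enumerate}[1.]
\item Take a sequence $(A_m,u_m)$ from the lemma with fixed $(\eps,T_0,\ell_*)$ and $(\delta_m,T_m)\to(0,\infty)$. Factor $A_m+A_m^\top=\Sigma_m\Sigma_m^\top$ and write $\tilde g_m^t=\int_{-\infty}^t (v_m^{t-s})^\top\d b^s$ with $v_m^\tau=\Sigma_m^\top e^{-\tau A_m}u_m$.
\item Embed $v_m,v_n$ isometrically into a common space and drive both processes by one Brownian motion. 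Then $\E(\tilde g_m^t-\tilde g_n^t)^2=\int_0^\infty\|v_m^s-v_n^s\|_2^2\,\d s$, which is independent of $t$.
\item Bound this integral in two pieces. The part over $[T_0,\infty)$ is $O(\eps^2)$ by (iv), since $\int_{T_0}^\infty\|v_m^s\|_2^2\,\d s=\|e^{-T_0A_m}u_m\|_2^2$. On $[0,T_0]$, discretize using (v) and match Gram matrices at the grid points using (vi).
\item This gives $\limsup_{m,n\to\infty}\sup_{t\in\R}\E(\tilde g_m^t-\tilde g_n^t)^2\le C\eps^2$, so you may fix $m=m(\eps)$ once and for all.
\item Only then, for a given $T$, pick an auxiliary index $n=n(\eps,T)$ with $T_n>T$ and $\delta_n$ small. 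Couple $\tilde g_n$ with $g_0$ on $[0,T]$ by the generic lemma, exactly as you proposed, and glue the two couplings.
\end{enumerate}
Your argument is essentially step 5. Without steps 1--4, the $T$-dependence lands on $m$ instead of on the discarded intermediate index $n$.
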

For our later arguments, we emphasize that the coupling \eqref{eq:cgcoupling} is
ensured for a single fixed $m=m(\eps)$, and arbitrarily large $T$.

\begin{proof}
Let $\eps$ be the given constant, and
let $T_0=T_0(\eps)$ be as given in Lemma \ref{lemma:semigroupapprox}. Let
$\{(A_m,u_m)\}_{m=1}^\infty$ be a sequence of pairs in $\R^{m \times m} \times
\R^m$, such that each $(A_m,u_m)$ attains the guarantees of Lemma
\ref{lemma:semigroupapprox} for $(\eps,T_0)$ and $(\delta,T)=(\delta_m,T_m)$,
where
\[\delta_m \to 0 \text{ and } T_m \to \infty \text{ as }
m \to \infty.\]
Recall from Lemma \ref{lemma:ttig} that $c_g^0(0)=\cR_\Lambda(v_*-q_*)$.
Then for any $m$ large enough such that $\delta_m<\eps$, \eqref{eq:cg0approx}
and \eqref{eq:rgapprox}
hold by statements (i) and (ii) of Lemma \ref{lemma:semigroupapprox}.

For the coupling statement \eqref{eq:cgcoupling},
for each $m \geq 1$, let $\{b_m^t\}_{t \in \R}$ be a standard bi-directional
Brownian motion on $\R^m$ with $b_m^0=0$,
and let $\Sigma_m \in \R^{m \times m}$ be any matrix such that
$A_m+A_m^\top=\Sigma_m\Sigma_m^\top$. Denote
\[v_m^t=\Sigma_m^\top e^{-t A_m}u_m \in \R^m.\]
We note that $\tilde c_g^m$ is the
covariance of the Gaussian process
\[\tilde g_m^t=\int_{-\infty}^t v_m^{t-s\top}\d b_m^s.\]
Indeed, note that for any $t_0<t$,
\begin{align*}
\E\left(\int_{t_0}^t v_m^{t-s\top}\d b_m^s\right)^2
&=\int_{t_0}^t u_m^\top e^{-(t-s)A_m^\top}\Sigma_m\Sigma_m^\top
e^{-(t-s)A_m}u_m\,\d s\\
&=\int_0^{t-t_0} u_m^\top e^{-sA_m^\top}(A_m+A_m^\top)
e^{-sA_m}u_m\,\d s\\
&=u_m^\top(I-e^{-(t-t_0)A_m^\top}e^{-(t-t_0)A_m})u_m\\
&=\|u_m\|_2^2-\|e^{-(t-t_0)A_m}u_m\|_2^2 \leq \|u_m\|_2^2.
\end{align*}
Since $A_m+A_m^\top$ is positive definite, we have
$\|e^{-tA_m}\|_\op<1$ and 
$\|e^{-(t-t_0)A_m}u_m\|_2^2 \to 0$ as $t_0 \to -\infty$. Then $\tilde g_m^t$ is
well-defined, and $\E(\tilde g_m^t)^2=\|u_m\|_2^2=\tilde c_g^m(0)$.
An analogous calculation shows, for $s \leq t$,
\begin{align*}
\E\tilde g_m^s\tilde g_m^t&=u_m^\top 
\left(\int_{-\infty}^s e^{-(s-r)A_m^\top}(A_m+A_m^\top)e^{-(s-r)A_m}\d r\right)
e^{-(t-s)A_m}u_m=u_m^\top e^{-(t-s)A_m}u_m=\tilde c_g^m(t-s),
\end{align*}
so $\tilde c_g^m(\tau)$ is the covariance of $\{\tilde g_m^t\}_{t \in \R}$ as
claimed. In particular, $\tilde c_g^m$ is symmetric positive-semidefinite.

Let us first couple $\{\tilde g_m^t\} \sim \GP(0,\tilde c_g^m)$ and
$\{\tilde g_n^t\} \sim \GP(0,\tilde c_g^n)$: 
Property (iv) of Lemma \ref{lemma:semigroupapprox} ensures
\[\int_{T_0}^\infty \|v_m^t\|_2^2 \d t
=\int_{T_0}^\infty u_m^\top e^{-tA_m^\top}(A_m+A_m^\top)e^{-tA_m}u_m \d t
=\|e^{-T_0A_m}u_m\|_2^2<\eps^2\]
for all $m \geq 1$. Set $\gamma=\eps^2/T_0$.
For any $t \in [0,T_0]$, let
$\lfloor t \rfloor=\max\{k\gamma:k \in \N,k\gamma \leq t\}$ be the largest
integer multiple of $\gamma$ that is at most $t$. Then by property (v) of Lemma
\ref{lemma:semigroupapprox}, for all $m \geq 1$ for which $\delta_m<\gamma^{1/2}$
and $T_m>T_0$, we have
\[\int_0^{T_0} \|v_m^t-v_m^{\lfloor t \rfloor}\|_2^2
\d t=\int_0^{T_0} 2u_m^\top (e^{-tA_m^\top}
-e^{-\lfloor t \rfloor A_m^\top})A_m
(e^{-tA_m}-e^{-\lfloor t \rfloor A_m})u_m \d t<CT_0\gamma=C\eps^2.\]
Let $E=[0,T_0] \cap \{k\gamma:k \in \N\}$. Then property
(vi) of Lemma \ref{lemma:semigroupapprox} ensures that
\[\lim_{m,n \to \infty}
\sup_{s,t \in E} |v_m^{s\top} v_m^t
-v_n^{s\top} v_n^t|
=\lim_{m,n \to \infty}
\sup_{s,t \in E} |2u_m^\top e^{-sA_m^\top}A_m e^{-tA_m}u_m
-2u_n^\top e^{-sA_n^\top}A_n e^{-tA_n}u_n|=0.\]
Then we may construct, for each pair $m,n \geq 1$, some
$N=N(n,m)$ and isometric embeddings $\iota_m:\R^m \to \R^N$ and
$\iota_n:\R^n \to \R^N$, such that
\[\lim_{m,n \to \infty} \sup_{t \in E} \|\iota_m(v_m^t)-\iota_n(v_n^t)\|_2=0.\]
(For example, letting $K_m \in \R^{|E| \times |E|}$ and $K_n \in \R^{|E| \times
|E|}$ have entries $(v_m^{s\top}v_m^t)_{s,t \in E}$
and $(v_n^{s\top}v_n^t)_{s,t \in E}$, we may set $N(n,m)=|E|$,
$\iota_m(v_m^t)=K_m^{1/2}e_t$, and $\iota_n(v_n^t)=K_n^{1/2}e_t$
where $e_t$ is the standard basis vector in $\R^{|E|}$ for each $t \in E$.)
Then, letting $\{b_N^t\}_{t \geq 0}$ be a standard Brownian motion on $\R^N$,
we may couple $\{\tilde g_m^t\} \sim \GP(0,\tilde c_g^m)$ and
$\{\tilde g_n^t\} \sim \GP(0,\tilde c_g^n)$ as
\[\tilde g_m^t=\int_{-\infty}^t \iota_m(v_m^{t-s})^\top \d b_N^s,
\qquad \tilde g_n^t=\int_{-\infty}^t \iota_n(v_n^{t-s})^\top \d b_N^s.\]
Denoting by $\E_{\pi(m,n)}$ the expectation over this coupling,
for any $m,n \geq 1$ such that
$\delta_m,\delta_n<\iota^{1/2}$ and $T_m,T_n>T_0$, we then have
\begin{align*}
&\E_{\pi(m,n)}(\tilde g_m^t-\tilde g_n^t)^2
=\int_{-\infty}^t \|\iota_m(v_m^{t-s})-\iota_n(v_n^{t-s})\|_2^2 \d s\\
&=\int_0^\infty \|\iota_m(v_m^s)-\iota_n(v_n^s)\|_2^2 \d s\\
&\leq 3\int_0^{T_0} \left(\|v_m^s-v_m^{\lfloor s \rfloor}\|_2^2
+\|v_n^s-v_n^{\lfloor s \rfloor}\|_2^2+\|\iota_m(v_m^{\lfloor s
\rfloor})-\iota_n(v_n^{\lfloor s \rfloor})\|_2^2\right)\d s
+2\int_{T_0}^\infty \left(\|v_m^s\|_2^2+\|v_n^s\|_2^2\right)\d s\\
&\leq C'\eps^2+3\int_0^{T_0} 
\|\iota_m(v_m^{\lfloor s \rfloor})-\iota_n(v_n^{\lfloor s \rfloor})\|_2^2
\d s
\end{align*}
for a constant $C>0$. Thus
\[\lim_{m,n \to \infty} \sup_{t \in \R}
\E_{\pi(m,n)}(\tilde g_m^t-\tilde g_n^t)^2 \leq C'\eps^2,\]
implying that there exists a fixed index $m=m(\eps)$ large enough so that
$\delta_m<\iota^{1/2}$, $T_m>T_0$, and
\[\lim_{n \to \infty} \sup_{t \in \R}
\E_{\pi(m,n)}(\tilde g_m^t-\tilde g_n^t)^2 \leq C'\eps^2.\]

Since Lemma \ref{lemma:semigroupapprox} ensures that $c_g^0$ is the pointwise
limit of positive-semidefinite functions $\tilde c_g^m$, it is also
positive-semidefinite. For any given time horizon $T>0$,
we may couple $\tilde g_n^t \sim \GP(0,\tilde c_g^n)$
and $g_0^t \sim \GP(0,c_g^0)$ generically over $[0,T]$, by noting that Lemma
\ref{lemma:ttig} implies
\begin{equation}\label{eq:cgcontinuity}
|c_g^0(t-t)+c_g^0(s-s)-2c_g^0(t-s)|
=2|c_g(0)-c_g(t-s)| \leq C|t-s|
\end{equation}
for a constant $C>0$. Then there exists a coupling of $\tilde g_n^t$ and $g_0^t$ such that
\[\sup_{t \in [0,T]} \E(\tilde g_n^t-g_0^t)^2
\leq C'(1+\sqrt{T}) \cdot \sqrt{\sup_{t,s \in [0,T]}
|c_g^0(t-s)-\tilde c_g^n(t-s)|}\]
(c.f.\ \cite[Lemma D.1]{fan2025dynamicalII}). Since $[0,T] \subset [0,T_n]$ for all large $n$, property
(i) of Lemma \ref{lemma:semigroupapprox} then ensures that there exists
$n=n(\eps,T)$ large enough such that
\[\sup_{t \in [0,T]} \E(\tilde g_n^t-g_0^t)^2<\eps^2.\]
Combining with the above, there exists a coupling of $\{\tilde g_m^t\} \sim
\GP(0,\tilde c_g^m)$ and $\{g_0^t\} \sim \GP(0,c_g^0)$ such that
\[\sup_{t \in [0,T]}\E(\tilde g_m^t-g_0^t)^2<C''\eps^2.\]
The lemma follows upon adjusting $\eps$, i.e.\ applying the preceding
with $\eps/(C'')^{1/2}$ in place of $\eps$.
\end{proof}

We now couple the generalized Langevin process $\{\theta^t\}_{t \geq 0}$ in the dynamical mean-field limit of
Definition \ref{def:DMFT} with a process $\tilde\theta^t$ defined from
$(\tilde c_g^m,\tilde r_g^m)$, which will admit a Markovian representation.

\begin{lemma}\label{lemma:thetacoupling}
Suppose the conditions of Theorem \ref{thm:replica} hold.
Fix any sufficiently large $T_0,T>0$ and sufficiently small $\eps>0$.
Let $(\tilde c_g,\tilde r_g) \equiv (\tilde c_g^m,\tilde r_g^m)$ be
the kernels of Corollary \ref{cor:semigroupapprox} for this $\eps$. Let
$\{\tilde g^t\}_{t \in \R},\{\tilde b^t\}_{t \geq T_0},z$
be mutually independent such that
$\{\tilde g^t\}_{t \in \R} \sim \GP(0,\tilde c_g)$,
$\{\tilde b^t\}_{t \geq T_0}$ is a standard Brownian motion with $\tilde b^{T_0}=0$, and $z \sim \cN(0,q_*\cR_\Lambda'(v_*-q_*))$.

Then there is a coupling of
$(\{\tilde g^t\}_{t \in \R},\{\tilde b^t\}_{t \geq T_0},z)$
with the process $\{\theta^t\}_{t \geq 0}$ in \eqref{eq:dmft-theta} of
Definition \ref{def:DMFT} such that the following holds: Define the process
$\{\tilde \theta^t\}_{t \geq 0}$ by
\begin{align*}
\tilde\theta^t&=\theta^t \text{ for all } t \in [0,T_0],\\
\d\tilde \theta^t&=
\left[-U'(\tilde \theta^t)+\int_0^t \tilde r_g(t-s)
\tilde \theta^s ds+\tilde g^t+z\right]
\d t+\sqrt{2}\,\d \tilde b^t \text{ for } t \geq T_0.
\end{align*}
Then for some constants $C,c>0$ not depending on $T_0,T,\eps$,
\begin{equation}\label{eq:thetafourthmoment}
\sup_{t \geq 0} \E(\theta^t)^4<C,
\qquad \sup_{t \geq 0} \E(\tilde \theta^t)^4<C,
\end{equation}
and
\begin{equation}\label{eq:thetacoupling}
\sup_{t \in [T_0,T_0+T]} \E(\tilde \theta^t-\theta^t)^2
<C(\eps+T^{1/4}e^{-cT_0})^{2/3}.
\end{equation}
\end{lemma}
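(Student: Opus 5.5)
The plan has two parts: uniform fourth-moment bounds from a Lyapunov argument, and then a long-horizon coupling whose error does not accumulate with $T$. Because $U$ is only $\alpha$-convex outside a compact set, a synchronous coupling with a Gr\"onwall bound would lose $e^{CT}$. I will therefore use a reflection-type coupling of the Brownian drivers after time $T_0$, which contracts in a concave $L^1$-type distance. The resulting $L^1$ bound is then upgraded to $L^2$ through the fourth moments, and this is where the exponent $2/3$ comes from:
\[\E\Delta^2\le(\E|\Delta|)^{2/3}(\E\Delta^4)^{1/3}.\]

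\emph{Moment bounds.} Apply It\^o's formula to $(\theta^t)^4$. The assumption \eqref{eq:Uconvexity} gives $-xU'(x)\ge \alpha x^2-C$. By \eqref{eq:intrgbound} and \eqref{eq:rgbounds}, the memory term satisfies
\[\int_0^t|R_g(t,s)|\,\d s\le \alpha-\eta\quad\text{for all large } t,\]
for some $\eta>0$. The noise $g^t$ has uniformly bounded variance, since $C_g(t,t)\to c_g(0)$. Together with Young's inequality, this gives a differential inequality of the form
\[\frac{\d}{\d t}\sup_{s\le t}\E(\theta^s)^4\le -c\,\E(\theta^t)^4+C,\]
and hence the first bound in \eqref{eq:thetafourthmoment}. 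The same argument applies to $\tilde\theta$ after $T_0$, using $\int_0^\infty|\tilde r_g|<\alpha$ (which follows from \eqref{eq:rgapprox} for small $\eps$), the bound $\tilde c_g(0)\le C$, and the fact that $z$ is Gaussian with bounded variance. All constants are independent of $T_0,T,\eps$.

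\emph{Coupling the noises.} On $[0,T_0+T]$, write the $\GP(0,C_g)$ process as $g^t$, and couple it with $g_0^t+z$ on $[T_0,T_0+T]$, where $g_0\sim\GP(0,c_g^0)$ and $z$ is independent. By Lemma \ref{lemma:ttig}, $|C_g(t,s)-c_g(t-s)|\le Ce^{-cT_0}$ for $s,t\ge T_0$, and $c_g=c_g^0+q_*\cR_\Lambda'(v_*-q_*)$. The Gaussian process coupling of \cite[Lemma D.1]{fan2025dynamicalII} then yields
\[\sup_{t\in[T_0,T_0+T]}\E(g^t-g_0^t-z)^2\le C(1+\sqrt T)\,e^{-cT_0/2},\]
which is the source of the factor $T^{1/4}e^{-cT_0}$ after taking square roots. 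Next, couple $g_0$ with $\tilde g$ through \eqref{eq:cgcoupling} on the same horizon, at cost $\eps^2$.

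\emph{Drift comparison.} For $t\ge T_0$, decompose the difference of memory terms as
\[\int_0^t\!\big(R_g(t,s)-r_g(t-s)\big)\theta^s\,\d s+\int_0^t\!\big(r_g-\tilde r_g\big)(t-s)\,\theta^s\,\d s+\int_0^t\!\tilde r_g(t-s)\big(\theta^s-\tilde\theta^s\big)\,\d s.\]
By \eqref{eq:rgbounds}, \eqref{eq:rgapprox} and the moment bounds, the first two terms have $L^1$ norm at most $C(e^{-cT_0}+\eps)$.

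\emph{Coupling the Brownian motions.} Drive $\tilde\theta$ by a Brownian motion reflected relative to that of $\theta$ while $\Delta^t=\tilde\theta^t-\theta^t\neq 0$, in the style of Eberle's reflection coupling. Use a concave distance $\phi(|\Delta|)$, comparable to $|\Delta|$, constructed from $U''\ge -K$ globally and $U''\ge\alpha$ outside a compact set. This yields
\[\frac{\d}{\d t}\E\phi(|\Delta^t|)\le -\kappa\,\E\phi(|\Delta^t|)+\alpha'\sup_{s\le t}\E\phi(|\Delta^s|)+C\big(\eps+\sqrt{T}e^{-cT_0}\big)^{1/2},\]
where the feedback constant satisfies $\alpha'<\kappa$. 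Taking a supremum over time then gives $\sup_t\E|\Delta^t|\le C(\eps+T^{1/4}e^{-cT_0})$ uniformly over the horizon, and interpolation with \eqref{eq:thetafourthmoment} gives \eqref{eq:thetacoupling}.

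\emph{Main obstacle.} The hard step is building $\phi$ so that the contraction rate $\kappa$ strictly dominates the memory feedback $\int_0^\infty|\tilde r_g|$. This is only known to be $<\alpha$, and $\phi$ flattens the $\alpha$-convexity inside the nonconvex region. If this margin fails, I would first try exploiting $\tilde r_g\ge$ (approximately) an $L^1$-small perturbation of the nonnegative kernel $r_g$. Failing that, I would use the second condition in \eqref{eq:alphacondition}, together with its $\eps$ slack, to absorb the memory term before the reflection argument.
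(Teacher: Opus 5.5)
There is a genuine gap, and it sits exactly at the step you flag as the main obstacle. The rest of your outline matches the paper: the Gaussian-process coupling of $g$ with $g_0+z$ and then with $\tilde g$, the fourth moments via a running supremum, a reflection-type coupling in a concave $L^1$-type distance, and the interpolation $\E\Delta^2\le(\E|\Delta|)^{2/3}(\E\Delta^4)^{1/3}$.

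\textbf{Why the inequality with $\alpha'<\kappa$ is not available.} With a distance $\phi(|\Delta^t|)$ of the current difference alone, the memory contribution to $\frac{\d}{\d t}\E\phi(|\Delta^t|)$ is $\E[\phi'(|\Delta^t|)\int_{T_0}^t|\tilde r_g(t-s)|\,|\Delta^s|\,\d s]$. Here $\phi'$ is evaluated at the present while $|\Delta^s|$ comes from past times. So in general you can only bound this term by $c_0^{-1}\int_0^\infty|\tilde r_g|\cdot\sup_{s\le t}\E\phi(|\Delta^s|)$, where $c_0=\inf\phi'<1$.

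On the other side, concavity gives $\phi'(r)r\le\phi(r)$. So $\kappa$ can be no better than the convexity $U$ provides, and it is degraded, often severely, by the region where $U$ is not $\alpha$-convex. The hypotheses only give $\int_0^\infty|r_g|<\alpha$ (cf.\ \eqref{eq:intrgbound}), so $c_0^{-1}\int|\tilde r_g|<\kappa$ is not implied. For instance, for the steep double wells behind Corollary \ref{cor:ising}, any such rate is exponentially small in $\alpha$, since it cannot beat the relaxation rate of the one-dimensional double-well diffusion. Meanwhile $\int|r_g|$ stays of order one.

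Neither fallback repairs this. $r_g$ may be signed, since free cumulants can be negative, and positivity would not remove the present/past mismatch anyway. Your second fallback points the right way, because $\int|r_g|<\alpha$ is exactly the margin needed. But the memory term acts on the past trajectory and cannot be folded into the potential.

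\textbf{The missing idea: put the history into the distance.}
\begin{enumerate}
\item Keep $r_g$, not $\tilde r_g$, in the main memory term $\int_0^t r_g(t-s)(\theta^s-\tilde\theta^s)\,\d s$, so that the constants do not depend on $\eps$. Put $(R_g-r_g)\theta^s$, $(r_g-\tilde r_g)\tilde\theta^s$ and $g^t-\tilde g^t-z$ into an error $\mathrm{Err}_t=O(\eps+T^{1/4}e^{-cT_0})$.
\item Choose $\iota>0$ with $\int_0^\infty e^{\iota s}|r_g(s)|\,\d s<\alpha-2\iota$, and set $A(t)=e^{-\iota t}\big(\alpha-2\iota-\int_0^te^{\iota s}|r_g(s)|\,\d s\big)>0$. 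Then $A(0)=\alpha-2\iota$ and $A'=-\iota A-|r_g|$.
\item Run the concave-distance argument on $f(r^t)$ with $r^t=|\Delta^t|+\int_0^tA(t-s)|\Delta^s|\,\d s$. The memory term $f'(r^t)\int_0^t|r_g(t-s)||\Delta^s|\,\d s$ is cancelled exactly by the $-|r_g|$ part of $f'(r^t)\int_0^tA'(t-s)|\Delta^s|\,\d s$. The multiplier is the same, so no present/past comparison is needed.
\end{enumerate}
With $\kappa_U(r)=\inf_{|x-y|=r}(U'(x)-U'(y))/(x-y)$, this gives
\[\frac{\d}{\d t}\E f(r^t)\le\E\Big[-\iota f'(r^t)r^t+f'(r^t)|\Delta^t|\big(\alpha-\iota-\kappa_U(|\Delta^t|)\big)+4f''(r^t)h_\delta(|\Delta^t|)^2\Big]+\mathrm{Err}_t.\]
The middle term uses the full convexity at infinity. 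It is positive only for $|\Delta^t|<R$, where it is bounded by $C_0R$. The concave $f$ only has to absorb this bounded defect: take $f''=-\frac14f'K$ on $[0,2C_0R/\iota]$, with $K$ the conditional size of the defect given $r^t$. This yields contraction of $\E f(r^t)$ at rate $c_0\iota/2$.

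\textbf{A secondary point.} Reflecting ``while $\Delta^t\neq0$'' is not well posed here, because the drifts differ and the processes separate again after meeting. The paper instead uses the sticky-type interpolation $h_\delta(|\Delta^t|)$ with $h_\delta(0)=0$, so there is no local time at $0$. This costs $C_0\delta$ on $\{|\Delta^t|\le\delta\}$, and then $\delta\to0$.
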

\begin{proof}
We construct the coupling as follows: By Lemma \ref{lemma:ttig}, there exist
$C,c>0$ such that
\[\sup_{s,t \in [T_0,T_0+T]} |C_g(t,s)-c_g(t-s)|<Ce^{-cT_0}.\]
Writing $c_g(\tau)=c_g^0(\tau)+q_*\cR_\Lambda'(v_*-q_*)$
where $q_*\cR_\Lambda'(v_*-q_*)=\lim_{\tau \to \infty} c_g(\tau) \geq 0$,
we may represent a centered stationary Gaussian process with covariance
$c_g$ on $\R$ as
\[\{g_0^t+z\}_{t \in \R} \sim \GP(0,c_g)\]
where
\[\{g_0^t\}_{t \in \R} \sim \GP(0,c_g^0),
\qquad z \sim \cN(0,q_*\cR_\Lambda'(v_*-q_*)),\]
and these are independent. Then by the bound \eqref{eq:cgcontinuity} and
\cite[Lemma D.1]{fan2025dynamicalII}, for some constants
$C,c>0$, there exists a coupling of $\{g^t\}_{t \in [T_0,T_0+T]}
\sim \GP(0,C_g)$ and $\{g_0^t+z\}_{t \in \R} \sim \GP(0,c_g)$ such that
\[\sup_{t \in [T_0,T_0+T]} \E(g_0^t+z-g^t)^2<C\sqrt{T}\,e^{-cT_0}.\]
By Corollary \ref{cor:semigroupapprox}, we may couple $\{g_0^t\}_{t \in \R} \sim
\GP(0,c_g^0)$ and $\{\tilde g^t\}_{t \in \R} \sim \GP(0,\tilde c_g)$
such that
\[\E(g_0^t-\tilde g^t)^2<\eps^2\]
for all $t \in [T_0,T_0+T]$, yielding a coupling of
$(\{\tilde g^t\}_{t \in \R},z)$ with $\{g^t\}_{t \in [T_0,T_0+T]}$
such that
\begin{equation}\label{eq:finalgcoupling}
\sup_{t \in [T_0,T_0+T]} \E(\tilde g^t+z-g^t)^2
<C(\eps^2+\sqrt{T} \cdot e^{-cT_0}).
\end{equation}

Next, we adapt a sticky/reflection coupling
argument of \cite{eberle2016reflection,eberle2019sticky} and \cite[Lemma
3.3]{fan2025dynamicalII} to couple $\{\theta^t\}_{t \geq T_0}$ and
$\{\tilde \theta^t\}_{t \geq T_0}$.
Fix $\delta>0$, and let $h_\delta:[0,\infty) \to [0,1]$ be a smooth
Lipschitz function such that
\begin{equation}\label{eq:couplingboundary}
h_\delta(0)=0, \qquad h_\delta(x)=1 \text{ for } x \geq \delta.
\end{equation}
Let $\{b_1^t\}_{t \geq T_0}$ and $\{b_2^t\}_{t \geq T_0}$
be two standard Brownian motions independent of each other and of
$(\{\theta^t\}_{t \in [0,T_0]},\{b^t\}_{t \in [0,T_0]},
\{g^t\}_{t \in [0,T_0+T]},\{\tilde g^t\}_{t \in \R},z)$.
We couple $\{\theta^t\}_{t \geq T_0}$ and
$\{\tilde \theta^t\}_{t \geq T_0}$ by the evolutions
\begin{align*}
\d \theta^t&=\left[{-}U'(\theta^t)+\int_0^t R_g(t,s)\theta^s \d s+g^t\right]\d t
\underbrace{+h_\delta(|\theta^t-\tilde \theta^t|)\sqrt{2}\,\d b_1^t
+\sqrt{2(1-h_\delta(|\theta^t-\tilde \theta^t|)^2)}\,\d b_2^t}_{=\sqrt{2} \d b^t},\\
\d \tilde \theta^t&=\left[{-}U'(\tilde \theta^t)+\int_0^t \tilde r_g(t-s)
\tilde \theta^s \d s+\tilde g^t+z\right]\d t
\underbrace{-h_\delta(|\theta^t-\tilde \theta^t|)\sqrt{2}\,\d b_1^t
+\sqrt{2(1-h_\delta(|\theta^t-\tilde \theta^t|)^2)}\,\d b_2^t}_{=\sqrt{2}\d \tilde b^t}.
\end{align*}
L\'evy's characterization of Brownian motion shows that $\{b^t\}_{t \geq T_0}$
and $\{\tilde b^t\}_{t \geq T_0}$ defined above
are both standard Brownian motions adapted to the filtration generated by
$\theta^t,\tilde \theta^t,g^t,\tilde g^t,b_1^t,b_2^t$,
and hence the above processes indeed
coincide in law with $\{\theta^t\}_{t \geq T_0}$ and
$\{\tilde \theta^t\}_{t \geq T_0}$.

Let us first check the fourth moment
bound \eqref{eq:thetafourthmoment} for all sufficiently small
$\eps>0$. For $\{\theta^t\}_{t \geq 0}$,
starting with the second moment, It\^o's formula gives
\begin{equation}\label{eq:itothetasq}
\frac{\d}{\d t}\E(\theta^t)^2=
\E\left[2\theta^t\left(-U'(\theta^t)+\int_0^t R_g(t,s)\theta^s \d
s+g^t\right)\right]+2.
\end{equation}
By Lemma \ref{lemma:ttig}, there exists some $\iota>0$ small enough
such that $\int_0^\infty |r_g(s)|\d s<\alpha-4\iota$, and hence also for all
large $t$,
\[\int_0^t |R_g(t,s)|\d s
\leq \int_0^t |R_g(t,s)-r_g(t-s)|\d s+\int_0^t |r_g(t-s)|\d s
<\alpha-3\iota.\]
By the condition \eqref{eq:Uconvexity} for $U(\cdot)$, there exists a
constant $C=C(\alpha,\iota)>0$ such that $\theta U'(\theta) \geq (\alpha-\iota)
\theta^2-C$ for all $\theta \in \R$.
Furthermore Lemma \ref{lemma:tti} implies $\E(g^t)^2=C_g(t,t)<C$
for some constant $C>0$ and all $t \geq 0$,
so $\E\theta^t g^t \leq \iota \E(\theta^t)^2+C'$ for some
$C'=C'(\iota)>0$.
Then for a constant $C(\iota)>0$ and all large $t$,
\begin{align*}
\frac{\d}{\d t}\E(\theta^t)^2
&\leq -2(\alpha-2\iota)\,\E(\theta^t)^2
+2(\alpha-3\iota)\sup_{s \in [0,t]} \E|\theta^t\theta^s|+C(\iota)\\
&\leq -2(\alpha-2\iota)\,\E(\theta^t)^2
+2(\alpha-3\iota)[\E(\theta^t)^2]^{1/2} \cdot \sup_{s \in [0,t]}
[\E(\theta^s)^2]^{1/2}+C(\iota).
\end{align*}
Consider $M(t)=\sup_{s \in [0,t]} \E(\theta^s)^2$. Since
$t \mapsto \E(\theta^t)^2$ is continuously-differentiable by
\eqref{eq:itothetasq}, then so is $M(t)$
with $\frac{\d}{\d t}M(t)=\frac{\d}{\d t} \E(\theta^t)^2$ if
$M(t)=\E(\theta^t)^2$, and $\frac{\d}{\d t}M(t)=0$ otherwise. Thus the above
gives
\[\frac{\d}{\d t}M(t)
\leq \max\left(0,{-}2\iota M(t)+C(\iota)\right)\]
for all large $t$, implying that $M(t)$ is uniformly bounded over $t \geq 0$.
Thus for a constant $C>0$,
\[\sup_{t \geq 0} \E(\theta^t)^2 \leq C.\]
Now for the fourth moment, It\^o's formula gives
\[\frac{\d}{\d t}\E(\theta^t)^4
=\E\left[4(\theta^t)^3\left({-}U'(\theta^t)+\int_0^t R_g(t,s)\theta^s \d s
+g^t\right)+12(\theta^t)^2\right].\]
Applying $\theta^3 U'(\theta) \geq (\alpha-\iota) \theta^4-C\theta^2$,
$\E(\theta^t)^3g^t
\leq \iota \E(\theta^t)^4+C\E(g^t)^4 \leq \iota \E(\theta^t)^4+C'$, H\"older's
inequality, and the above second moment bound,
\[\frac{\d}{\d t}\E(\theta^t)^4
\leq -4(\alpha-2\iota)\E(\theta^t)^4+4(\alpha-3\iota)[\E(\theta^t)^4]^{3/4}
\sup_{s \in [0,t]} [\E(\theta^s)^4]^{1/4}+C(\iota).\]
Then the same argument as above shows $\sup_{t \geq 0} \E(\theta^t)^4 \leq C$,
giving the first statement of \eqref{eq:thetafourthmoment}. The argument for
the second statement is the same, noting that
Corollary \ref{cor:semigroupapprox} implies
$\E(\tilde g^t)^2=\|u_m\|_2^2<C$ for all $t \in \R$ and some $C>0$ 
(thus $\E(\tilde g^t)^4 \leq C'$), and also for $\eps>0$ sufficiently small,
\[\int_0^t |\tilde r_g(t-s)|\d s
\leq \int_0^t |\tilde r_g(t-s)-r_g(t-s)|\d s
+\int_0^t |r_g(t-s)|\d s<\eps+(\alpha-4\iota)<\alpha-3\iota.\]
Then the same arguments as above applied for $\tilde\theta^t$ and
$t \geq T_0$ show the second statement of \eqref{eq:thetafourthmoment}.

Now set
\[\xi^t=\theta^t-\tilde \theta^t,\]
\[v^t=-U'(\theta^t)+\int_0^t R_g(t,s)\theta^s \d s+g^t,
\qquad \tilde v^t=-U'(\tilde \theta^t)+\int_0^t \tilde r_g(t-s)
\tilde \theta^s \d s+\tilde g^t+z.\]
Thus
\[\d \xi^t=(v^t-\tilde v^t)\d t+2\sqrt{2}h_\delta(|\xi^t|)\,\d b_1^t.\]
Applying It\^o's formula with a smooth approximation $S_{\delta'}(x)$
of $|x|$, and then taking $\delta' \to 0$, gives
(c.f.\ \cite[Eq.\ (4.4)]{eberle2019sticky})
\begin{equation}\label{eq:dabsxi}
\d|\xi^t|=\sign(\xi^t)(v^t-\tilde v^t)\d t
+2\sqrt{2}\sign(\xi^t)h_\delta(|\xi^t|)\d b_1^t,
\end{equation}
where the property $h_\delta(0)=0$ ensures there is no additional term from the
local time of $\xi^t$ at 0.  
Let $A:[0,\infty) \to [0,\infty)$ be a function chosen later, and set
\[r^t=|\xi^t|+\int_0^t A(t-s)|\xi^s|\d s\]
Let $f:[0,\infty) \to [0,\infty)$ be a function chosen later, such that
$f(r)$ is differentiable and $f'(r)$ is absolutely continuous, and
\[f'(r) \in [0,1], \qquad f''(r) \leq 0 \text{ for a.e. } r \geq 0.\]
Then applying It\^o's formula again with \eqref{eq:dabsxi} for $\d|\xi^t|$ gives
\begin{equation}\label{eq:itobound1}
\frac{\d}{\d t}
\E f(r^t)=\E\left[f'(r^t)\left(\sign(\xi^t)(v^t-\tilde v^t)+A(0)|\xi^t|
+\int_0^t A'(t-s)|\xi^s|\d s\right)+4f''(r^t)h_\delta(|\xi^t|)^2\right].
\end{equation}

By Lemma \ref{lemma:ttig} and dominated convergence,
$\lim_{\iota \to 0} \int_0^\infty e^{\iota s} |r_g(s)|\d s<\alpha$. Then
there must exist a sufficiently small constant $\iota>0$ such that
\[\int_0^\infty e^{\iota s} |r_g(s)|\d s<\alpha-2\iota.\]
Let us set
\[\qquad A(t)=e^{-\iota t}\left(\alpha-2\iota-\int_0^t e^{\iota s}|r_g(s)|\d s
\right).\]
This satisfies
\[A(0)=\alpha-2\iota, \qquad A(t)>0, \qquad A'(t)={-}\iota A(t)-|r_g(t)|.\]
Define
\[\Delta_t=\int_0^t |R_g(t,s)-r_g(t-s)| \cdot \E|\theta^s|\d s
+\int_0^t |\tilde r_g(t-s)-r_g(t-s)| \cdot \E|\tilde \theta^s|\d s
+\E|\tilde g^t+z-g^t|.\]
Then, applying a triangle inequality and $f'(r) \in [0,1]$,
\begin{align}
&\E[f'(r^t)\sign(\xi^t)(v^t-\tilde v^t)]\notag\\
&\leq \E\left[f'(r^t)\sign(\xi^t)\Big({-}U'(\theta^t)+U'(\tilde \theta^t)\Big)
+f'(r^t)\left|\int_0^t R_g(t,s)\theta^s\d s
-\int_0^t \tilde r_g(t-s)\tilde \theta^s\d s+g^t-(\tilde g^t+z)\right|\right]\notag\\
&\leq \E\left[f'(r^t)\sign(\xi^t)\Big({-}U'(\theta^t)+U'(\tilde \theta^t)\Big)
+f'(r^t)\int_0^t |r_g(t-s)| \cdot |\xi^s|\d s\right]+\Delta_t.
\label{eq:itobound2}
\end{align}
Let
\[\kappa(r)=\inf_{x,y:|x-y|=r} \frac{U'(x)-U'(y)}{x-y}.\]
Then
\[\sign(\xi^t)\Big({-}U'(\theta^t)+U'(\tilde \theta^t)\Big)
={-}|\theta^t-\tilde \theta^t|
\frac{U'(\theta^t)-U'(\tilde \theta^t)}{\theta^t-\tilde \theta^t}
\leq -|\xi^t|\kappa(|\xi^t|).\]
Applying this bound and the identities $A(0)=\alpha-2\iota$ and
$|r_g(t-s)|={-}\iota A(t-s)-A'(t-s)$ into
\eqref{eq:itobound2} and \eqref{eq:itobound1},
and re-identifying the definition of $r^t$,
\begin{equation}\label{eq:itoboundfinal}
\frac{\d}{\d t}
\E f(r^t)\leq \E\left[-\iota f'(r^t)r^t
+f'(r^t)|\xi^t|(\alpha-\iota-\kappa(|\xi^t|))
+4f''(r^t)h_\delta(|\xi^t|)^2\right]+\Delta_t.
\end{equation}
Here, for some constants $C,c>0$, note that $\Delta_t$ satisfies the bound
\begin{equation}\label{eq:Deltatbound}
\sup_{t \in [T_0,T_0+T]} \Delta_t \leq C(\eps+T^{1/4}e^{-cT_0})
\end{equation}
by \eqref{eq:thetafourthmoment}, \eqref{eq:finalgcoupling}, 
$\int_0^t |R_g(t,s)-r_g(t-s)|\d s<Ce^{-cT_0}$
from Lemma \ref{lemma:ttig}, and
$\int_0^t |r_g(t-s)-\tilde r_g(t-s)|\d s<\eps$ from
Corollary \ref{cor:semigroupapprox}.

We proceed to derive a differential inequality for $\E f(r^t)$ from
\eqref{eq:itoboundfinal}.
By the condition \eqref{eq:Uconvexity} for $U(\cdot)$,
for some $C,R>0$, we must have
\[\kappa(r) \geq -C \text{ for all }
r \geq 0, \qquad \kappa(r) \geq \alpha-\iota \text{ for all } r \geq R.\]
Denote
\[\tilde \kappa(r)=\max(\alpha-\iota-\kappa(r),0)\]
so that
\[0 \leq \tilde \kappa(r) \leq C_0:=C+\alpha-\iota \text{ for all } r \geq 0,
\qquad \tilde \kappa(r)=0 \text{ for all } r \geq R.\]
Note that this implies
\[r\tilde \kappa(r) \leq C_0R \text{ for all } r \geq 0.\]
Recalling the cutoff $\delta>0$ defining $h_\delta(x)$ in
\eqref{eq:couplingboundary}, denote
\[K(r)=\sup_{t \in [T_0,T_0+T]} \E\Big[|\xi^t|\tilde \kappa(|\xi^t|)\,\Big|\,
r^t=r,\,|\xi^t|>\delta\Big] \in [0,C_0R].\]
Then, using that $h_\delta(x)=1$ when $x>\delta$, for any $t \in [T_0,T_0+T]$ we have
\begin{align*}
&\E\left[-\iota f'(r^t)r^t
+f'(r^t)|\xi^t|(\alpha-\iota-\kappa(|\xi^t|))
+4f''(r^t)h_\delta(|\xi^t|)^2 \,\Big|\,r^t=r,\,|\xi^t|>\delta \right]\\
&\leq \E\left[-\iota f'(r^t)r^t
+f'(r^t)|\xi^t|\tilde \kappa(|\xi^t|)
+4f''(r^t) \,\Big|\,r^t=r,\,|\xi^t|>\delta \right]\\
&={-}\iota f'(r)r+f'(r)K(r)+4f''(r).
\end{align*}
Let us set $f:[0,\infty) \to [0,\infty)$ as
\[f(0)=0,
\qquad f'(r)=\exp\left(-\frac{1}{4}\int_0^{\max(r,2C_0R/\iota)}
K(s)\d s\right)\]
Thus $f'(r)$ is absolutely continuous and satisfies $f'(r) \in [c_0,1]$ for a
constant $c_0>0$ (depending on $C_0,R,\iota$), and
\[f''(r)=\begin{cases} {-}\frac{1}{4}f'(r)K(r) & \text{ if } r \leq
2C_0R/\iota \\
0 & \text{ if } r>2C_0R/\iota. \end{cases}\]
For this choice of $f(r)$, substituting above gives
\begin{align*}
&\E\left[-\iota f'(r^t)r^t
+f'(r^t)|\xi^t|(\alpha-\iota-\kappa(|\xi^t|))
+4f''(r^t)h_\delta(|\xi^t|)^2 \,\Big|\,r^t=r,\,|\xi^t|>\delta \right]\\
&\leq {-}\iota f'(r)r
+\begin{cases} 0 & r \leq 2C_0R/\iota \\
f'(r)K(r) & r>2C_0R/\iota \end{cases}\\
&\leq {-}(\iota/2)f'(r)r
\end{align*}
where the second inequality applies $K(r) \leq C_0R<\iota
r/2$ when $r>2C_0R/\iota$. On the complementary event $|\xi^t| \leq \delta$,
let us simply apply $f'(r) \leq 1$, $f''(r) \leq 0$, and
$\tilde \kappa(r) \leq C_0$ to get
\[\E\left[-\iota f'(r^t)r^t
+f'(r^t)|\xi^t|(\alpha-\iota-\kappa(|\xi^t|))
+4f''(r^t)h_\delta(|\xi^t|)^2 \,\Big|\,r^t=r,\,|\xi^t| \leq \delta \right]
\leq -\iota f'(r)r+C_0\delta.\]
Combining these bounds, taking the expectation over $r^t$, and applying this to
\eqref{eq:itoboundfinal} yields
\[\frac{\d}{\d t}
\E f(r^t)\leq {-}(\iota/2)\E[f'(r^t)r^t]+C_0\delta+\Delta_t.\]
Finally, applying $f(0)=0$ and $f'(r^t) \in [c_0,1]$, hence
$r^t \geq f(r^t) \geq c_0r^t$, we obtain the desired differential inequality
\[\frac{\d}{\d t}
\E f(r^t)\leq {-}(c_0\iota/2)\E[f(r^t)]+C_0\delta+\Delta_t.\]
Here $\delta>0$ is arbitrary, $C_0,c_0,\iota$ do not depend on $\delta$,
and $f(r^{T_0})=f(0)=0$, so Gr\"onwall's inequality and \eqref{eq:Deltatbound}
show
\[\sup_{t \in [T_0,T_0+T]} \E f(r^t) \leq C \sup_{t \in [T_0,T_0+T]} \Delta_t
\leq C'(\eps+T^{1/4}e^{-cT_0}).\]
Since
also $\E|\xi^t| \leq \E r^t \leq c_0^{-1}\E f(r^t)$, this establishes for some
constants $C,c>0$ that
\[\sup_{t \in [T_0,T_0+T]} \E|\theta^t-\tilde \theta^t|
\leq C(\eps+T^{1/4}e^{-cT_0}).\]
The desired bound for $\E(\theta^t-\tilde\theta^t)^2$
then follows from $\E(\theta^t-\tilde\theta^t)^2 \leq
\E[|\theta^t-\tilde\theta^t|]^{2/3}\E[|\theta^t-\tilde\theta^t|^4]^{1/3}$ and
the fourth moment bounds of \eqref{eq:thetafourthmoment}.
\end{proof}

We now prove Theorem \ref{thm:replica}.

\begin{proof}[Proof of Theorem \ref{thm:replica}]

Fix constants $T_0,T,\eps>0$ to be determined, and let
$(\{\tilde \theta^t\}_{t \geq 0},z)$
be as constructed in Lemma \ref{lemma:thetacoupling} for these values.
Recall that $z \sim \cN(0,q_*\cR_\Lambda'(v_*-q_*))$, and that
 $\{\tilde \theta^t\}_{t \geq 0}$ is defined via the kernels
$(\tilde c_g,\tilde r_g) \equiv (\tilde c_g^m,\tilde r_g^m)$
corresponding to a pair $(A,u) \in \R^{m \times m} \times
\R^m$ in Corollary \ref{cor:semigroupapprox},
where $m,A,u,\tilde c_g,\tilde r_g$ depend only on $\eps$ and not on $T_0,T$.

Recall from Corollary \ref{cor:semigroupapprox} that
$|\tilde c_g(0)-\cR_\Lambda(v_*-q_*)|<\eps$.
Then for $\eps>0$ sufficiently small, the second condition in
\eqref{eq:alphacondition} implies
\begin{equation}\label{eq:Uconvexityreplica}
\liminf_{|\theta| \to \infty} U''(\theta)>\tilde c_g(0)+\eps.
\end{equation}
Then given any $z \in \R$, we may define a probability density on $\R$ by
\begin{equation}\label{eq:mum}
\mu_m(\tilde \theta \mid z) \propto \exp\bigg({-}U(\tilde\theta)
+z\tilde\theta+\frac{\tilde c_g(0)}{2}\tilde\theta^2\bigg).
\end{equation}
Let $(\tilde\theta,\tilde\theta',z)$ be random variables such that
$z \sim \cN(0,q_*\cR_\Lambda'(v_*-q_*))$ as above, and
$\tilde\theta,\tilde\theta'$ are conditionally independent given $z$, each
having law $\mu_m(\tilde\theta \mid z)$. We claim that for some constants
$C(\eps),c(\eps)>0$ depending only on $\eps$ and not on $T_0,T$,
\begin{equation}\label{eq:W2tildethetabound}
W_2(\Law(\tilde \theta^{T_0+T/2},\tilde \theta^{T_0+T},z),\,
\Law(\tilde\theta,\tilde\theta',z)) \leq C(\eps)e^{-c(\eps)T}.
\end{equation}

To show this claim, let $\Sigma \in \R^{m \times m}$ be any matrix such that
$\Sigma\Sigma^\top=A+A^\top$. Observe that the
joint law of $(\{\tilde\theta^t\}_{t \geq 0},z)$
is equivalent to that in the system
\begin{equation}\label{eq:auxiliarytheta}
\begin{aligned}
\tilde \theta^t&=\theta^t \text{ for all } t \in [0,T_0]\\
\d \tilde \theta^t&=[{-}U'(\tilde \theta^t)
+u^\top x^t+z]\d t+\sqrt{2}\,\d\tilde b^t \text{ for } t \geq T_0,\\
\d x^t&={-}A[x^t-u\,\tilde \theta^t]\d t+\Sigma\,\d b_x^t \text{ for } t \geq 0,
\qquad x^0 \sim \cN(0, I_m),
\end{aligned}
\end{equation}
where $\{b_x^t\}_{t \geq 0}$ is a standard Brownian motion in $\R^m$ independent
of $(\{\theta^t\}_{t \in [0,T_0]},\{\tilde b^t\}_{t \geq 0},z,x^0)$,
and $\{x^t\}_{t \geq 0}$ is an auxiliary process in $\R^m$. Indeed, the third
equation of \eqref{eq:auxiliarytheta} has the explicit solution
\begin{equation}\label{eq:xtsolution}
x^t=e^{-At} x^0 + \int_0^t Ae^{-A(t-s)}u\,\tilde\theta^s\,\d s
+\int_0^t e^{-A(t-s)}\Sigma\,\d b_x^s.
\end{equation}
Substituting this solution into the second equation gives
\[\d\tilde\theta^t=\bigg[{-}U'(\tilde\theta^t)+\int_0^t u^\top
Ae^{-A(t-s)}u\,\tilde \theta^s \d s+u^\top e^{-At} x^0+\int_0^t u^\top e^{-A(t-s)}\Sigma\,\d b_x^s
+z\bigg]\d t+\sqrt{2}\,\d\tilde b^t \text{ for } t \geq T_0.\]
Since $u^\top Ae^{-A(t-s)}u=\tilde r_g(t-s)$, and since $u^\top e^{-At} x^0+\int_0^t u^\top
e^{-A(t-s)}\Sigma\,\d b_x^s$ is a Gaussian process with covariance kernel
$\tilde c_g$ as shown in the proof of Corollary \ref{cor:semigroupapprox},
this gives the same law for $(\{\tilde\theta^t\}_{t \geq 0},z)$
as defined in Lemma \ref{lemma:thetacoupling}.

Conditional on $z$, the process $\{(\tilde \theta^t,x^t)\}_{t \geq T_0}$ in
\eqref{eq:auxiliarytheta} is a Markov diffusion after time $T_0$, taking
the form
\begin{equation}\label{eq:jointmarkov}
\d \begin{pmatrix} \tilde \theta^t \\ x^t \end{pmatrix}
={-}\begin{pmatrix} 1 & 0 \\ 0 & A \end{pmatrix}
\nabla H_m(\tilde\theta^t,x^t \mid z)+\begin{pmatrix} \sqrt{2} & 0 \\ 0 &
\Sigma \end{pmatrix} \d\begin{pmatrix} \tilde b^t \\ b_x^t
\end{pmatrix}
\end{equation}
where $H_m(\cdot \mid z)$ is the Hamiltonian on $\R^{m+1}$ given by
\begin{align*}
H_m(\tilde\theta,x \mid z)&=U(\tilde\theta)-(u^\top x+z)\tilde\theta+\frac{1}{2}\|x\|_2^2\\
&=U(\tilde\theta)-z\tilde\theta-\frac{\tilde c_g(0)}{2}\tilde\theta^2+\frac{1}{2}\|x-\tilde\theta u\|_2^2,
\end{align*}
the second equality applying $\|u\|_2^2=\tilde c_g(0)$. Then
\[\mu_m(\tilde\theta,x \mid z) \propto e^{-H_m(\tilde\theta,x \mid z)}\]
defines the joint probability density for a pair $(\tilde\theta,x)$ where
$\tilde\theta \sim \mu_m(\tilde\theta \mid z)$ has the conditional
law \eqref{eq:mum} given $z$, and $x \sim \cN(\tilde\theta u,\Id_m)$
given both $(\tilde\theta,z)$. Since $A+A^\top=\Sigma\Sigma^\top$,
it is standard to check that $\mu_m(\tilde\theta,x \mid z)$
is a stationary law of the diffusion process \eqref{eq:jointmarkov}.

To quantify the convergence of \eqref{eq:jointmarkov} to $\mu_m(\tilde\theta,x
\mid z)$,
let us write $C(\eps),c(\eps)>0$ for constants depending on $\eps$
but not $T_0,T$ or $z$. We check that $\mu_m(\tilde\theta,x \mid z)$ satisfies a
(Euclidean) log-Sobolev inequality on $\R^{m+1}$
with some log-Sobolev constant $C(\eps)>0$. Indeed,
the condition \eqref{eq:Uconvexityreplica} implies that there exists a
continuous, compactly supported function
$f:\R \to \R$ for which $\int_\R f=0$
and $f(\theta) \leq U''(\theta)-\tilde c_g(0)-\eps/2$ for all $\theta \in \R$.
Set $W(\theta)=\int_{-\infty}^\theta  \d x \int_{-\infty}^x \d y\,f(y)$, so that
$W'(\theta)=\int_{-\infty}^\theta f(y)\d y$ remains compactly supported (with
the same support as $f$),
and hence $W(\theta)$ is bounded. Define $V(\theta)$ by
\begin{equation}\label{eq:logsobolevdecomp}
U(\theta)-z\theta-\frac{\tilde c_g(0)}{2}\theta^2=V(\theta)+W(\theta),
\end{equation}
so $V''(\theta)=U''(\theta)-\tilde c_g(0)-f(\theta) \geq \eps/2$ for all
$\theta \in \R$. Then by the Holley-Stroock perturbation lemma,
$\mu_m(\tilde\theta \mid z)$ satisfies a log-Sobolev inequality on $\R$ with
log-Sobolev constant $C(\eps)$ not depending on $z$.
Writing $x=\tilde\theta u+w$ where $w \sim \cN(0,\Id_m)$ is independent of
$(\tilde\theta,z)$, we note that $(\tilde\theta,x)$ is a Lipschitz map of
$(\tilde\theta,w)$, so this implies also a log-Sobolev inequality
for $\mu_m(\tilde\theta,x \mid z)$.

Since $\nabla^2 H(\tilde\theta,x \mid z)$ is bounded below by some 
constant ${-}c(\eps)<0$ also not depending on $z$,
a log-Harnack/entropy-cost inequality \cite[Corollary 1.2]{rockner2010log}
shows, for any $t>s \geq T_0$, that $\Law(\tilde \theta^t,x^t \mid \tilde \theta^s,x^s,z)$ admits a density
with respect to the stationary measure $\mu_m(\tilde\theta,x \mid z)$
and satisfies
\begin{align*}
\DKL(\Law(\tilde \theta^t,x^t \mid \tilde \theta^s,x^s,z)\,\|\,\mu_m(\tilde\theta,x \mid z))
&\leq \frac{C(\eps)}{e^{c(\eps)(t-s)}-1}
W_2(\delta_{(\tilde \theta^s,x^s)},\,\mu_m(\tilde\theta,x \mid z))^2\\
&\leq \frac{2C(\eps)}{e^{c(\eps)(t-s)}-1}\Big(\|(\tilde \theta^s,x^s)\|_2^2
+\E[\|(\tilde\theta,x)\|_2^2 \mid z]\Big)
\end{align*}
where $\delta_{(\tilde \theta^s,x^s)}$ denotes the unit point measure
at $(\tilde \theta^s,x^s) \in \R^{m+1}$, and $(\tilde \theta,x)$ denotes a sample
from $\mu_m(\tilde \theta,x \mid z)$. By the log-Sobolev inequality for
$\mu_m(\tilde \theta,x \mid z)$ and $\eps$-dependent lower bound for
$\lambda_{\min}(A+A^\top)$, for any
$t'>t>s \geq T_0$, we have also the decay in relative entropy, 
\begin{align*}
\DKL(\Law(\tilde \theta^{t'},x^{t'} \mid \tilde \theta^s,x^s,z)\,\|\,\mu_m(\tilde\theta,x \mid z))
\leq e^{-c(\eps)(t'-t)}
\DKL(\Law(\tilde \theta^t,x^t \mid \tilde \theta^s,x^s,z)\,\|\,\mu_m(\tilde \theta,x \mid z)),
\end{align*}
as well as the $T_2$-transportation inequality \cite{otto2000generalization}
\[W_2(\Law(\tilde \theta^{t'},x^{t'} \mid \tilde \theta^s,x^s,z),\,\mu_m(\tilde\theta,x \mid z))^2
\leq C(\eps)\DKL(\Law(\tilde \theta^{t'},x^{t'} \mid \tilde \theta^s,x^s,z)\,\|\,\mu_m(\tilde\theta,x \mid z)).\]
Applying the above with $t=s+1$ and $t'=s+\tau$ shows, for some
$C(\eps),c(\eps)>0$ and any $s \geq T_0$ and $\tau \geq 1$, that
\begin{equation}\label{eq:auxwassersteincontraction}
W_2(\Law(\tilde \theta^{s+\tau},x^{s+\tau} \mid \tilde \theta^s,x^s,z),\,\mu_m(\tilde\theta,x \mid z))^2
\leq C(\eps)e^{-c(\eps)\tau}\Big(\|(\tilde \theta^s,x^s)\|_2^2
+\E[\|(\tilde\theta,x)\|_2^2 \mid z]\Big).
\end{equation}

Let $(\tilde\theta,x,\tilde\theta',x',z)$ be such that $z \sim
\cN(0,q_*\cR_\Lambda'(v_*-q_*))$, and $(\tilde\theta,x),(\tilde\theta',x')$
are conditionally independent given $z$ with law $\mu_m(\tilde\theta,x \mid z)$.
Then, applying \eqref{eq:auxwassersteincontraction} with $s=T_0$ and
$\tau=T/2$ to couple $(\tilde \theta^{T_0+T/2},x^{T_0+T/2})$ with
$(\tilde\theta,x)$ conditional on $(\tilde \theta^{T_0},x^{T_0})$ and $z$,
and then with $s=T_0+T/2$ and $\tau=T/2$ to couple
$(\tilde \theta^{T_0+T},x^{T_0+T})$ with $(\tilde\theta',x')$
conditional on $(\tilde \theta^{T_0+T/2},x^{T_0+T/2})$ and $z$,
and finally marginalizing over $z$, we get for some (different)
constants $C(\eps),c(\eps)>0$ that
\begin{align}
&W_2(\Law(\tilde \theta^{T_0+T/2},\tilde \theta^{T_0+T},z),
\,\Law(\tilde\theta,\tilde\theta',z))^2\notag\\
&\leq \E_{z \sim \cN(0,q_*\cR_\Lambda'(v_*-q_*))}
W_2(\Law(\tilde \theta^{T_0+T/2},x^{T_0+T/2},\tilde \theta^{T_0+T},
x^{T_0+T} \mid z),\,\Law(\tilde\theta,x,\tilde\theta',x' \mid z))^2\notag\\
& \leq C(\eps)e^{-c(\eps)T}\Big(\E\|(\tilde \theta^{T_0},x^{T_0})\|_2^2
+\E\|(\tilde \theta^{T_0+T/2},x^{T_0+T/2})\|_2^2
+\E\|(\tilde\theta,x)\|_2^2
+\E\|(\tilde\theta',x')\|_2^2\Big).\label{eq:W2jointcoupling}
\end{align}
Here, $\E(\tilde \theta^t)^2$ is uniformly bounded
for all $t \geq 0$ by \eqref{eq:thetafourthmoment}.
Then also by the explicit form \eqref{eq:xtsolution}, for any $t \geq T_0$,
\begin{align*}
\E\|x^t\|_2^2
&\leq 2\E\left\|\int_0^t Ae^{-(t-s)A}u\,\theta^s\d s\right\|_2^2
+2\E\left\|\int_0^t e^{-A(t-s)}\Sigma\,\d b_x^s + e^{-At} x^0\right\|_2^2\\
&\leq C\left(\int_0^\infty \|Ae^{-sA}u\|_2\,\d s\right)^2
+2m.%\int_0^\infty \Tr e^{-sA}\Sigma\Sigma^\top e^{-sA^\top}\d s.
\end{align*}
This is at most some $C(\eps)>0$ by the bound $\|e^{-sA}\|_\op \leq
e^{-(s/2)\lambda_{\min}(A+A^\top)}$. Since $(\tilde \theta^t,x^t)$
converges weakly in law to $(\tilde \theta,x)$ as $t \to \infty$,
\eqref{eq:thetafourthmoment} and Fatou's lemma imply also
\begin{equation}\label{eq:staticfourthmoment}
\E\tilde\theta^4=\E(\tilde\theta')^4 \leq C
\end{equation} for a constant $C>0$. Then, since $x \sim \cN(\tilde \theta
u,\Id_m)$ given $(\tilde \theta,z)$, also
$\E\|x\|_2^2=\E\|x'\|_2^2 \leq C(\eps)$.
Applying these bounds to the right side of \eqref{eq:W2jointcoupling}
yields \eqref{eq:W2tildethetabound} as claimed.

Under the second condition in \eqref{eq:alphacondition}, we may define also
the probability density on $\R$
\[\mu(\theta \mid z) \propto
\exp\Big({-}U(\theta)+z\theta+\frac{\cR_\Lambda(v_*-q_*)}{2}\theta^2\Big).\]
Let $(\theta,\theta',z)$ be random variables such that $z \sim
\cN(0,q_*\cR_\Lambda'(v_*-q_*))$ and $\theta,\theta'$ are conditionally
independent given $z$ with law $\mu(\theta \mid z)$, and let
$(\tilde\theta \tilde\theta',z)$ be defined from $\mu_m(\theta \mid z)$
as above. As $\eps \to 0$,
considering a sequence of $m=m(\eps)$ defining $\mu_m(\theta \mid z)$
and recalling that $|\tilde c_g(0)-\cR_\Lambda(v_*-q_*)|<\eps$,
we have $\mu_m(\cdot \mid z) \to \mu(\cdot \mid z)$ weakly as $\eps \to 0$
for each fixed $z \in \R$. Then also marginalizing over $z$,
$\Law(\tilde\theta,\tilde\theta') \to \Law(\theta,\theta')$ weakly as
$\eps \to 0$. Since \eqref{eq:staticfourthmoment} implies that
$(\tilde\theta,\tilde\theta')$ is uniformly integrable in $L^2$ for all
$\eps>0$, this implies
\[\E[\|(\tilde\theta,\tilde\theta')\|_2^2] \to \E[\|(\theta,\theta')\|_2^2]\]
and hence also $W_2(\Law(\tilde\theta,\tilde\theta'),\Law(\theta,\theta')) \to
0$. Thus we have
\begin{equation}\label{eq:replicalawconvergence}
W_2(\Law(\tilde\theta,\tilde\theta'),\Law(\theta,\theta'))^2
\leq o_\eps(1)
\end{equation}
for a quantity $o_\eps(1)$ depending only on $\eps$ and vanishing
as $\eps \to 0$. Combining the conclusion of Lemma \ref{lemma:thetacoupling}
with \eqref{eq:W2tildethetabound} and \eqref{eq:replicalawconvergence},
\[W_2(\Law(\theta^{T_0+T/2},\theta^{T_0+T}),
\Law(\theta,\theta'))^2
\leq C(\eps+T^{1/4}e^{-cT_0})^{2/3}
+C(\eps)e^{-c(\eps)T}+o_\eps(1).\]
Given $\eps>0$, we may take $T=T(\eps)>0$ large enough such that the second
term is small, followed by $T_0=T_0(\eps)$ large enough such that the first term
is small. Thus, there exist some $T(\eps),T_0(\eps) \to \infty$ for which,
as $\eps \to 0$,
\begin{equation}\label{eq:finalW2convergence}
\Law(\theta^{T_0(\eps)+T(\eps)/2},\theta^{T_0(\eps)+T(\eps)}) \to 
\Law(\theta,\theta') \text{ weakly and in Wasserstein-2}.
\end{equation}

We now turn to the statements of Theorem \ref{thm:replica}. We may again assume,
without loss of generality, that Assumption \ref{assump:as} holds.\\

{\bf Part (a).}
Identifying $z=\sqrt{q_*\cR_\Lambda'(v_*-q_*)}\,\xi$ where $\xi
\sim \cN(0,1)$, the above law $\mu(\theta \mid z)$ is precisely
$\mu(\theta \mid v_*,q_*,\xi)$ defined in Theorem \ref{thm:replica}.
Then \eqref{eq:finalW2convergence} implies,
by the characterizations of $v_*$ and $q_*$ in Lemma \ref{lemma:tti}, that
\begin{align*}
v_*&=c_\theta(0)=\lim_{t \to \infty} \E(\theta^t)^2=\E[\theta^2]
=\E_{\xi \sim \cN(0,1)} \langle \theta^2 \rangle_{v_*,q_*,\xi},\\
q_*&=\lim_{\tau \to \infty} c_\theta(\tau)
=\lim_{s,\tau \to \infty} \E(\theta^s\theta^{s+\tau})=\E[\theta\theta']
=\E_{\xi \sim \cN(0,1)} \langle \theta \rangle_{v_*,q_*,\xi}^2,
\end{align*}
establishing the fixed point conditions \eqref{eq:replicafixedpoint} for
$(v_*,q_*)$.

Let $\btheta \sim \mu_\Gibbs$. Assumption
\ref{assump:convergence} implies
that for some constants $C,c>0$,
\begin{equation}\label{eq:langevinWasserstein}
\frac{1}{n}W_2(\Law(\btheta^t \mid \X,\btheta^0),\,\Law(\btheta \mid \X))^2
\leq Ce^{-ct}.
\end{equation}
This implies, for any function $f:\R \to \R$ satisfying $|f(x)-f(y)| \leq
C(1+|x|+|y|)|x-y|$, that
\[\left|\frac{1}{n}\sum_{i=1}^n \langle f(\theta_i^t)
\rangle-\frac{1}{n}\sum_{i=1}^n \langle f(\theta_i) \rangle \right|
\leq C(f)e^{-ct}\]
for a constant $C(f)>0$. Theorem \ref{thm:dmft-approx} implies, for each fixed
$t \geq 0$,
\[\lim_{n \to \infty}
\left|\frac{1}{n}\sum_{i=1}^n \langle f(\theta_i^t) \rangle-\E[f(\theta^t)]
\right|=0 \text{ a.s.}\]
where $\E[\cdot]$ is the expectation over $\theta^t$ in the dynamical mean-field
limit of Definition \ref{def:DMFT}. The Wasserstein convergence
\eqref{eq:finalW2convergence} implies
\[\lim_{t \to \infty} \E[f(\theta^t)]=\E_{\xi \sim \cN(0,1)} \langle f(\theta)
\rangle_{v_*,q_*,\xi}.\]
Combining the above bounds, and taking $n \to \infty$ followed by $t \to
\infty$, shows \eqref{eq:pseudoliptest}.

For the final statement \eqref{eq:thetaXthetalimit}, recall the process
$\g^t=\X\btheta^t-\int_0^t r_g(t-s)\btheta^s \d s$. Then
\[\frac{1}{n} \langle \btheta^{t\top} \X\btheta^t \rangle
=\left\langle \frac{1}{n}\btheta^{t\top}\g^t+
\int_0^t r_g(t-s)\frac{1}{n}\btheta^{t\top} \btheta^s \d s \right\rangle.\]
Recall from the equicontinuity \eqref{eq:thetaequicontinuous} 
and Theorem \ref{thm:dmft-approx}(c) that
\[\lim_{n \to \infty} \sup_{s \in [0,t]}
\left|\frac{1}{n}\langle \btheta^{t\top}\btheta^s
\rangle-C_\theta(t,s)\right|=0 \text{ a.s.},
\qquad \lim_{n \to \infty} \frac{1}{n}\langle \btheta^{t\top} \g^t \rangle
=\E[\theta^t g^t] \text{ a.s.}\]
where $\E[\theta^t g^t]$ is the expectation in the dynamical mean-field limit of
Definition \ref{def:DMFT}. To compute this expectation, note that It\^o's lemma
applied to \eqref{eq:dmft-theta} gives
\[\frac{\d}{\d t}\frac{1}{2}\,\E (\theta^t)^2
=\E\theta^t \left({-}U'(\theta^t)+\int_0^t R_g(t,s)\theta^s \d  s
+g^t\right)+1.\]
Therefore
\[\E[\theta^t g^t]
=\frac{\d}{\d t}\frac{1}{2}\,\E(\theta^t)^2
+\E[\theta^t U'(\theta^t)]-\int_0^t R_g(t,s)C_\theta(t,s) \d s-1.\]
Applying this above,
\[\lim_{n \to \infty} \frac{1}{n}\langle \btheta^{t\top}\X\btheta^t \rangle
=\frac{\d}{\d t}\frac{1}{2}\E(\theta^t)^2
+\E[\theta^tU'(\theta^t)]-1
+\int_0^t (r_g(t-s)-R_g(t,s))C_\theta(t,s)\d s \text{ a.s.}\]
By the equicontinuity \eqref{eq:thetaequicontinuous},
for any $T>0$, this convergence holds uniformly over $t \in [T,2T]$. Then also
\begin{align*}
\lim_{n \to \infty}
\frac{1}{T}\int_T^{2T} \frac{1}{n} \langle \btheta^{t\top}\X\btheta^t \rangle
\d t&=\frac{1}{2T}\big(\E(\theta^{2T})^2-\E(\theta^T)^2\big)
+\frac{1}{T}\int_T^{2T} \Big(\E[\theta^tU'(\theta^t)]-1\Big)\d t\\
&\hspace{1in}+\frac{1}{T}\int_T^{2T}\d t
\int_0^t (r_g(t-s)-R_g(t,s))C_\theta(t,s)\d s \text{ a.s.}
\end{align*}
The Wasserstein convergence \eqref{eq:finalW2convergence} and bounds for
$|C_\theta(t,s)|$ and $\int_0^t |r_g(t-s)-R_g(t,s)|\d s$ in Lemmas
\ref{lemma:tti} and \ref{lemma:ttig} imply that the first and third
terms on the right side vanish as $T \to \infty$, while the second term
converges to
\[\E_{\xi \sim \cN(0,1)} \langle \theta U'(\theta) \rangle_{v_*,q_*,\xi}-1.\]
This may be evaluated by noting that, for any fixed $\xi \in \R$,
\begin{align*}
&\left\langle\theta\left({-}U'(\theta)+\cR_\Lambda(v_*-q_*)\theta
+\sqrt{q_*\cR_\Lambda'(v_*-q_*)}\,\xi\right)
\right\rangle_{v_*,q_*,\xi}\\
&=\int_\R \theta \mu'(\theta \mid v_*,q_*,\xi) \d \theta
=-\int \mu(\theta \mid v_*,q_*,\xi) \d \theta=-1.
\end{align*}
Therefore, integrating by parts also in $\xi$,
\begin{align*}
&\E_{\xi \sim \cN(0,1)}\langle{\theta U'(\theta)}\rangle_{v_*,q_*,\xi}-1\\
&=\cR_\Lambda(v_*-q_*)\E_{\xi \sim \cN(0,1)}\langle\theta^2
\rangle_{v_*,q_*,\xi}+\sqrt{q_*\cR_\Lambda'(v_*-q_*)}\E_{\xi \sim \cN(0,1)}[\xi \langle \theta
\rangle_{v_*,q_*,\xi}]\\
&=\cR_\Lambda(v_*-q_*)\E_{\xi \sim \cN(0,1)}\langle\theta^2
\rangle_{v_*,q_*,\xi}+q_*\cR_\Lambda'(v_*-q_*)
\E_{\xi \sim \cN(0,1)}[\langle \theta^2 \rangle_{v_*,q_*,\xi}
-\langle \theta \rangle_{v_*,q_*,\xi}^2]\\
&=v_*\cR_\Lambda(v_*-q_*)+q_*(v_*-q_*)\cR_\Lambda'(v_*-q_*).
\end{align*}
This shows
\[\lim_{n \to \infty}
\frac{1}{T}\int_T^{2T} \frac{1}{n} \langle \btheta^{t\top}\X\btheta^t \rangle
=v_*\cR_\Lambda(v_*-q_*)+q_*(v_*-q_*)\cR_\Lambda'(v_*-q_*)+o_T(1) \text{
a.s.},\]
where $o_T(1)$ denotes an error vanishing as $T \to \infty$. The Wasserstein
bound \eqref{eq:langevinWasserstein} and Assumption \ref{assump:as}
imply that for any $t \in [T,2T]$,
\[\left|\frac{1}{n} \langle \btheta^{t\top}\X\btheta^t \rangle
-\frac{1}{n} \langle \btheta^\top\X\btheta \rangle\right|
\leq Ce^{-ct}.\]
Then taking the limit $n \to \infty$ followed by $T \to \infty$ shows
\eqref{eq:thetaXthetalimit}.\\

{\bf Part (b).} Let
\[\mu_\beta(\btheta)=\frac{1}{\cZ_\beta}
\exp\left(\frac{\beta}{2}\btheta^\top \X\btheta
-\sum_{i=1}^n U(\theta_i)\right) \d\btheta,
\quad \cZ_\beta=\int \exp\left(
\frac{\beta}{2}\btheta^\top \X\btheta
-\sum_{i=1}^n U(\theta_i) \right) \d \btheta,\]
and write $\langle f(\btheta) \rangle_\beta$ for the Gibbs average under
$\mu_\beta$. Then
\[\frac{\d}{\d\beta} \frac{1}{n}\log \cZ_\beta
=\frac{1}{2n} \langle \btheta^\top \X\btheta \rangle_\beta.\]
The result \eqref{eq:thetaXthetalimit} for $\beta\X$ in place of $\X$ shows, for
each $\beta \in [0,1]$, almost surely
\[\lim_{n \to \infty}
\frac{1}{n}\langle \btheta^\top(\beta \X)\btheta \rangle_\beta
=v_*(\beta)\cR_{\beta\Lambda}(v_*(\beta)-q_*(\beta))+q_*(\beta)(v_*(\beta)-q_*(\beta))\cR_{\beta\Lambda}'(v_*(\beta)-q_*(\beta))\]
where $v_*(\beta),q_*(\beta)$ are the limits ensured by Theorem
\ref{thm:replica}(a) for $\mu_\beta$,
and $\cR_{\beta\Lambda}(x)$ is the R-transform of
$\beta\Lambda$. Writing $\{\kappa_p\}_{p \geq 1}$ for the free cumulants of
$\Lambda$, those of $\beta\Lambda$ are then $\{\beta^p \kappa_p\}_{p \geq 1}$,
so
\[\cR_{\beta\Lambda}(x)=\sum_{p \geq 1} \beta^{p+1}\kappa_{p+1} x^{p}
=\beta \cR_\Lambda(\beta x),
\qquad \cR_{\beta\Lambda}'(x)=\beta^2 \cR'_\Lambda(\beta x).\]
Applying this above, integrating over $\beta \in [0,1]$, and applying dominated
convergence (noting that the bound \eqref{eq:thetanormbound} for
$\langle \frac{1}{n}\|\btheta\|_2^2 \rangle_\beta$ is uniform in $\beta$),
\begin{align}
\lim_{n \to \infty} \frac{1}{n}\log \cZ
&=\lim_{n \to \infty}\frac{1}{n}\log \cZ_0
+\int_0^1 \frac{1}{2n}\langle \btheta^\top \X\btheta \rangle_\beta\d \beta\notag\\
&=\log \int e^{-U(\theta)}\d\theta+\int_0^1 \frac{1}{2}\Big(
v_*(\beta)\cR_\Lambda(\beta(v_*(\beta)-q_*(\beta)))\notag\\
&\hspace{1.5in}+\beta
q_*(\beta)(v_*(\beta)-q_*(\beta))\cR_\Lambda'(\beta(v_*(\beta)-q_*(\beta))\Big)\d\beta.\label{eq:integralfreeenergy}
\end{align}

To compare this to the replica formula, note first that
when Assumption \ref{assump:convergence} holds with
uniform constants $C,c>0$ for the dynamics defined by $\beta\X$ and each
$\beta \in [0,1]$, the functions $v_*(\beta),q_*(\beta)$ must be 
H\"older-continuous over $\beta \in [0,1]$.
Indeed, letting $\{\btheta_\beta^t\}_{t \geq 0}$
denote the solution of \eqref{eq:langevin} for $\beta\X$ and letting
$\btheta_\beta \sim \mu_\beta$ denote a sample from the Gibbs measure,
Assumption \ref{assump:convergence} implies
\begin{equation}\label{eq:vcontinuous1}
\left|\frac{1}{n}\langle \|\btheta_\beta^t\|_2^2 \rangle
-\frac{1}{n}\langle \|\btheta_\beta\|_2^2 \rangle\right| \leq Ce^{-ct}
\text{ for all } t \geq 0,\,\beta \in [0,1].
\end{equation}
For $\beta,\beta' \in [0,1]$, coupling $\{\btheta_\beta^t\}_{t \geq 0}$ and
$\{\btheta_{\beta'}^t\}_{t \geq 0}$ by the same Brownian motion and applying
the bound \eqref{eq:thetatnormbound} which holds for
$\{\btheta^t_\beta\}_{t \geq 0}$ uniformly over $\beta \in [0,1]$, we have
\begin{align*}
\left|\frac{\d}{\d t}\langle \|\btheta_\beta^t-\btheta_{\beta'}^t\|_2^2
\rangle\right|
&=\left|\langle 2(\btheta_\beta^t-\btheta_{\beta'}^t)^\top
(\beta \X\btheta_\beta^t-U'(\btheta_\beta^t)
-\beta' \X\btheta_{\beta'}^t+U'(\btheta_{\beta'}^t))\rangle\right|\\
&\leq C\langle \|\btheta_\beta^t-\btheta_{\beta'}^t\|_2^2 \rangle
+Cn|\beta-\beta'|
\end{align*}
for a constant $C>0$. Then by Gr\"onwall's inequality, $\frac{1}{n}\langle
\|\btheta_\beta^t-\btheta_{\beta'}^t\|_2^2 \rangle \leq Ce^{Ct}|\beta-\beta'|$.
By \eqref{eq:thetatnormbound} and Cauchy-Schwarz, this implies for a (different)
constant $C>0$ that
\begin{equation}\label{eq:vcontinuous2}
\left|\frac{1}{n}\langle \|\btheta_\beta^t\|_2^2 \rangle
-\frac{1}{n}\langle \|\btheta_{\beta'}^t\|_2^2 \rangle\right|
\leq Ce^{Ct}|\beta-\beta'| \text{ for all } t \geq 0,\,\beta,\beta' \in [0,1].
\end{equation}
Combining \eqref{eq:vcontinuous1} and \eqref{eq:vcontinuous2}, taking the limit
$n \to\infty$, and optimizing over $t$ shows that $v_*(\beta)$ is H\"older
continuous over $\beta \in [0,1]$ with some exponent $\rho \in (0,1)$,
i.e.\ $|v_*(\beta)-v_*(\beta')| \leq C|\beta-\beta'|^\rho$ for a constant $C>0$.
 A similar argument establishes H\"older continuity
of $q_*(\beta)$ with the same exponent $\rho \in (0,1)$.

Now let $\alpha$ be the bound for
$U''(\cdot)$ in \eqref{eq:Uconvexity}. Fixing some small enough constant
$\eps>0$, define 
\[S=\left\{(\beta,v,q):\beta \in [0,1],\;
v>q \geq 0,\;
\sum_{p \geq 1} \beta^{p+1}|\kappa_{p+1}|(v-q+\eps)^p<\alpha,\;
q\cR_{\beta\Lambda}'(v-q) \geq 0\right\}.\]
By the given condition \eqref{eq:alphacondition} and Lemma \ref{lemma:ttig}(b),
$(\beta,v_*(\beta),q_*(\beta)) \in S$ for all $\beta \in [0,1]$.
For $(\beta,v,q) \in S$, we may define
\[\begin{aligned}
f(\beta,v,q)&=\E_{\xi \sim \cN(0,1)} \log \int
\exp\left(-U(\theta)+\frac{1}{2}\,\cR_{\beta\Lambda}(v-q)\theta^2
+\sqrt{q\cR_{\beta\Lambda}'(v-q)}\,\xi\theta\right) \d \theta  \\
&\quad
+\frac{1}{2}\int_0^{v-q} \cR_{\beta\Lambda}(s) \d  s
-\frac{1}{2}\,(v-q)\cR_{\beta\Lambda}(v-q)
-\frac{1}{2}\,q(v-q)\cR_{\beta\Lambda}'(v-q)
\end{aligned}\]
and also the probability density on $\R$
\[\mu_\beta(\theta \mid v,q,\xi)
\propto \exp\left(-U(\theta)+\frac{1}{2}\,\cR_{\beta\Lambda}(v-q)\theta^2
+\sqrt{q\cR_{\beta\Lambda}'(v-q)}\,\xi\theta\right) \d \theta.\]
Let us momentarily write $\langle \cdot \rangle$ without subscript
for its associated expectation.
Note that at any $(\beta,v,q) \in S$ where $q\cR_{\beta\Lambda}'(v-q)>0$
strictly, $f(\beta,v,q)$ is continuously-differentiable,
and a calculation via integration-by-parts on $\xi$ shows 
\begin{align*}
\partial_\beta f(\beta,v,q)&=
\frac{1}{2}\bigg(\E_\xi\left[\partial_\beta \cR_{\beta\Lambda}(v-q)\langle
\theta^2 \rangle+q\partial_\beta \cR_{\beta\Lambda}'(v-q)
(\langle \theta^2 \rangle-\langle \theta \rangle^2)\right]\\
&\hspace{1in}
+\int_0^{v-q} \partial_\beta \cR_{\beta \Lambda}(s)\d s
-(v-q)\partial_\beta \cR_{\beta\Lambda}(v-q)
-q(v-q)\partial_\beta\cR_{\beta\Lambda}'(v-q)\bigg),\\
\partial_v f(\beta,v,q)&=
\frac{1}{2}\bigg(\E_\xi\left[\cR_{\beta\Lambda}'(v-q)\langle \theta^2 \rangle
+q\cR_{\beta\Lambda}''(v-q)
(\langle \theta^2 \rangle-\langle \theta \rangle^2)\right]
-v\cR_{\beta\Lambda}'(v-q)
-q(v-q)\cR_{\beta\Lambda}''(v-q)\bigg),\\
\partial_q f(\beta,v,q)&=
\frac{1}{2}\bigg(\E_\xi\left[{-}\cR_{\beta\Lambda}'(v-q)\langle \theta \rangle^2
-q\cR_{\beta\Lambda}''(v-q)
(\langle \theta^2 \rangle-\langle \theta \rangle^2)\right]
+q\cR_{\beta\Lambda}'(v-q)
+q(v-q)\cR_{\beta\Lambda}''(v-q)\bigg).
\end{align*}
These derivative formulas extend continuously to points $(\beta,v,q) \in S$ where
$q\cR_{\beta\Lambda}'(v-q)=0$, and we take these extensions to be the
definitions of $\partial_\beta f,\partial_v f,\partial_q f$ at such points.
Since $v_*(\beta),q_*(\beta)$ satisfy the fixed point
conditions \eqref{eq:replicafixedpoint}, the above forms of $\partial_v
f,\partial_q f$ imply
\begin{equation}\label{eq:vqderivvanish}
\partial_v f(\beta,v_*(\beta),q_*(\beta))=0,
\qquad \partial_q f(\beta,v_*(\beta),q_*(\beta))=0.
\end{equation}
From the identity $\cR_{\beta\Lambda}(x)=\beta\cR_\Lambda(\beta x)$, we have
$\partial_\beta \cR_{\beta\Lambda}(x)=\cR_{\Lambda}(\beta x)+\beta
x\cR_\Lambda'(\beta x)$ and
\[\int_0^x \partial_\beta \cR_{\beta\Lambda}(s) \d  s
=\int_0^x \left[\cR_\Lambda(\beta s)+\beta s\cR_\Lambda'(\beta s)
\right]\d s=x\cR_\Lambda(\beta x),\]
the last equality applying integration-by-parts for the second term. Then also
\begin{align*}
\partial_\beta f(\beta,v_*(\beta),q_*(\beta))
&=\frac{1}{2}\left(\int_0^{v_*(\beta)-q_*(\beta)}
\partial_\beta \cR_{\beta\Lambda}(s) \d s
+q_*(\beta)\partial_\beta
\cR_{\beta\Lambda}(v_*(\beta)-q_*(\beta))\right)\\
&=\frac{1}{2}\Big(v_*(\beta)\cR_\Lambda(\beta(v_*(\beta)-q_*(\beta)))
+\beta q_*(\beta)(v_*(\beta)-q_*(\beta))
\cR_\Lambda'(\beta(v_*(\beta)-q_*(\beta)))\Big)
\end{align*}
which matches the integrand in \eqref{eq:integralfreeenergy}.

The above continuity of $v_*(\beta),q_*(\beta)$ implies that
$\beta \mapsto f(\beta,v_*(\beta),q_*(\beta))$ is continuous on $[0,1]$. We
claim that it is in fact continuously-differentiable on $(0,1)$, with derivative
\begin{equation}\label{eq:fbetacontdiff}
\frac{\d}{\d \beta}
f(\beta,v_*(\beta),q_*(\beta))
=\partial_\beta f(\beta,v_*(\beta),q_*(\beta)).
\end{equation}
To check differentiability, we extend $f(\beta,v,q)$ smoothly to an open domain: Let $\ell_{\beta, v, q}(z) = \log \int \exp\left(-U(\theta)+\frac{1}{2}\cR_{\beta\Lambda}(v-q)\theta^2 + z\theta\right) d\theta$,  which is well-defined and analytic over $z \in \R$ for all $(\beta, v, q)$ such that $\beta \in (0,1)$, $v>q$, and $\sum_{p \geq 1} \beta^{p+1}|\kappa_{p+1}|(v-q+\eps)^p< \alpha$. Consider $\tilde{\ell}_{\beta,v,q}(s) = \E_{\xi \sim \mathcal{N}(0,1)}[\ell_{\beta,v,q}(\sqrt{s}\xi)]$ for $s \ge 0$. By Gaussian integration by parts, \[
\frac{\partial^j}{\partial s^j} \tilde{\ell}_{\beta,v,q}(s) =\frac{1}{2^j} \E_{\xi \sim \mathcal{N}(0,1)}\left[\frac{\partial^{2j}}{\partial z^{2j}} \ell_{\beta,v,q}(\sqrt{s}\xi)\right], \quad j = 1,2,\ldots
\]
By dominated convergence, we can take the limit $s \to 0+$ and conclude that $\tilde{\ell}_{\beta,v,q}(s)$ has finite derivatives of all orders at $s = 0$ from the positive side. Pick $N$ to be a large enough integer such that $(N+1)\rho > 1$. We extend $\tilde \ell_{\beta,v,q}(s)$ by \begin{align}
   \nonumber \tilde \ell_{\beta,v,q}(s) = \begin{cases}
    \E_{\xi \sim \mathcal{N}(0,1)}[\ell_{\beta,v,q}(\sqrt{s}\xi)], & s \ge 0,\\
    \sum_{j=0}^{N+1} \frac{\tilde \ell^{(j)}(0)}{j!} s^j, & s < 0.
    \end{cases}
\end{align}
From now on, we understand $f(\beta,v,q) = \tilde \ell_{\beta,v,q}(q\cR_{\beta\Lambda}'(v-q)) + \frac{1}{2}\int_0^{v-q} \cR_{\beta\Lambda}(s) ds - \frac{1}{2}(v-q)\cR_{\beta\Lambda}(v-q) - \frac{1}{2}q(v-q)\cR_{\beta\Lambda}'(v-q)$ with this extension on the enlarged domain
\begin{align}
   \nonumber  S' = \left\{(\beta,v,q): \beta \in (0,1),\,v > q,\,\text{there exists } \delta>0\text{ s.t. }\sum_{p \ge 1} \beta^{p+1}|\kappa_{p+1}|(v-q+\delta)^p < \alpha\right\}.
\end{align}
Importantly, this domain $S'$ is open, and $f$ is $C^{N+1}$ over $(\beta,v,q) \in S'$. Now fix any $\beta_0 \in (0,1)$. For any $\beta$ sufficiently close to $\beta_0$, we have
\begin{align*}
    &f(\beta,v_*(\beta),q_*(\beta)) - f(\beta_0,v_*(\beta_0),q_*(\beta_0))\\
    &= (f(\beta,v_*(\beta),q_*(\beta)) - f(\beta_0,v_*(\beta),q_*(\beta)) )+ (f(\beta_0,v_*(\beta),q_*(\beta)) - f(\beta_0,v_*(\beta_0),q_*(\beta_0)) )\\
    &=\partial_\beta f(\beta_0,v_*(\beta),q_*(\beta))(\beta - \beta_0) + o(|\beta - \beta_0|) +  (f(\beta_0,v_*(\beta),q_*(\beta)) - f(\beta_0,v_*(\beta_0),q_*(\beta_0)) ).
\end{align*} 
It then suffices to show $f(\beta_0,v_*(\beta),q_*(\beta)) - f(\beta_0,v_*(\beta_0),q_*(\beta_0)) = o(|\beta - \beta_0|)$.   Denote $\Delta v_{\beta} = v_*(\beta) - v_*(\beta_0)$ and $\Delta q_{\beta} = q_*(\beta) - q_*(\beta_0)$. By the H\"older continuity of $v_*$ and $q_*$, we have $|\Delta v_{\beta}| + |\Delta q_{\beta}| = O(|\beta - \beta_0|^{\rho})$.  By Taylor expansion of $f(\beta_0, \cdot, \cdot)$ at $(v_*(\beta_0), q_*(\beta_0))$, we have
\begin{align}
   \nonumber  f(\beta_0,v_*(\beta),q_*(\beta)) - f(\beta_0,v_*(\beta_0),q_*(\beta_0)) = P_N(\Delta v_{\beta}, \Delta q_{\beta}) + O(|\Delta v_{\beta}|^{N+1} + |\Delta q_{\beta}|^{N+1}),
\end{align}
where $P_N$ is a polynomial of degree at most $N$. From the condition \eqref{eq:vqderivvanish}, the first order term of $P_N$ vanishes, and also 
\begin{align}
&|\partial_v  f(\beta_0, v_*(\beta),q_*(\beta))|  \nonumber= |\partial_v  f(\beta_0, v_*(\beta),q_*(\beta))-\partial_v  f(\beta ,  v_*(\beta),q_*(\beta))| = O( |\beta - \beta_0|).   
\end{align}
Combining with the fact that 
$  
|\partial_v  P_N(\Delta v_{\beta}, \Delta q_{\beta})-\partial_v  f(\beta_0, v_*(\beta),q_*(\beta))| = O(|\Delta v_{\beta}|^{N} + |\Delta q_{\beta}|^{N})
 $, we have $|\partial_v  P_N(\Delta v_{\beta}, \Delta q_{\beta})| = O(|\beta - \beta_0| + |\Delta v_{\beta}|^{N} + |\Delta q_{\beta}|^{N})$. Similarly, we have $|\partial_q  P_N(\Delta v_{\beta}, \Delta q_{\beta})| = O(|\beta - \beta_0| + |\Delta v_{\beta}|^{N} + |\Delta q_{\beta}|^{N})$. Then by the Bochnak--{\L}ojasiewicz inequality
\cite[Lemma~4.3]
{KurdykaMostowskiParusinski2000}, $|P_N(\Delta v_{\beta}, \Delta q_{\beta})| \leq C(|\Delta v_{\beta}| + |\Delta q_{\beta}|) \cdot (|\partial_v  P_N(\Delta v_{\beta}, \Delta q_{\beta})| + |\partial_q  P_N(\Delta v_{\beta}, \Delta q_{\beta})|) = O(|\beta - \beta_0|^{1+\rho}+ |\beta - \beta_0|^{(N+1)\rho}) = o(|\beta - \beta_0|)$, where the last equality uses $\rho \in (0,1)$ and $N$ large enough such that $(N+1)\rho > 1$.  This shows $f(\beta_0,v_*(\beta),q_*(\beta)) - f(\beta_0,v_*(\beta_0),q_*(\beta_0)) = o(|\beta - \beta_0|)$, and hence \eqref{eq:fbetacontdiff} holds immediately by the definition of the derivative.

Note that for $\beta=0$, we have $\cR_{\beta\Lambda}(x)=0$
and $\cR_{\beta\Lambda}'(x)=0$ for all $x \in \R$, so
$f(0,v_*(0),q_*(0))=\log \int e^{-U(\theta)}\d\theta$.
Applying this and \eqref{eq:fbetacontdiff} to \eqref{eq:integralfreeenergy}, and
using the continuous differentiability of $\beta \mapsto f(\beta,v_*(\beta),q_*(\beta))$,
\[\lim_{n \to \infty} \frac{1}{n}\log \cZ
=f(0,v_*(0),q_*(0))
+\int_0^1 \frac{\d}{\d \beta} f(\beta,v_*(\beta),q_*(\beta))\d \beta
=f(1,v_*(1),q_*(1)).\]
The right side is the desired replica formula in Theorem
\ref{thm:replica}(b), concluding the proof.
\end{proof}

\begin{proof}[Proof of Corollary \ref{cor:hightemp}]
We let $\btheta^0$ have i.i.d.\ entries with (arbitrary) law supported on $[-1,1]$, and restrict to an event holding a.s.\ for all large $n$ where
\[\|\X\|_\op \leq \Lambda_{\max}.\]
Write
\[\d \nu_x(y)=\frac{e^{\alpha xy-\frac{\alpha}{2}y^2}\d \nu(y)}
{\int_{-M}^M e^{\alpha xy-\frac{\alpha}{2}y^2}\d \nu(y)},
\qquad \langle f(y) \rangle_{\nu_x}=\int_{-M}^M f(y) \d \nu_x(y),
\qquad \Var_{\nu_x}[y]=\langle y^2 \rangle_{\nu_x}
-\langle y \rangle_{\nu_x}^2.\]
Then
\begin{align*}
U''(x)&=\frac{\d^2}{\d x^2}\left[-\log
\int_{-M}^M \sqrt{\frac{\alpha}{2\pi}}e^{-\frac{\alpha}{2}(x-y)^2}\d
\nu(y)\right]\\
&=\frac{\d^2}{\d x^2}\left[\frac{\alpha}{2}x^2-\log \int_{-M}^M
e^{\alpha xy-\frac{\alpha}{2}y^2} \d \nu(y)\right]
=\alpha-\alpha^2\Var_{\nu_x}[y].
\end{align*}
As $x \to \pm\infty$, the measure $\nu_x$ converges weakly to a point mass
at the maximum/minimum point of support of $\nu$. Then $\Var_{\nu_x}[y] \to 0$.
Thus $U(\cdot)$ satisfies
\eqref{eq:Uconvexity}. The same argument shows that the 3rd and 4th cumulants of $\nu_x$
converge to 0 as $x \to \pm \infty$, so also
$\lim_{|x| \to \pm \infty} U'''(x)=0$ and $\lim_{|x| \to \pm \infty} U^{(4)}(x)=0$. Hence $f={-}U'$ satisfies the conditions
of Assumption \ref{assump:dynamics}.

To check the condition \eqref{eq:alphacondition}, observe that we may represent
$\btheta \sim \prod_{i=1}^n e^{-U(\theta_i)}\d\theta_i$
as $\btheta=\y+\z$ where $\y,\z$ are independent vectors with
$y_i \overset{iid}{\sim} \nu$ and $\z \sim \cN(0,\alpha^{-1}\Id)$.
Thus
\begin{align*}
\cZ&=\E\left[\exp\left(\frac{1}{2}(\y+\z)^\top
\X(\y+\z)\right)\bigg|\X\right],\\
\frac{1}{n}\langle \btheta^\top \btheta'\rangle
&=\frac{1}{n\cZ^2}\left\|\E\left[(\y+\z)
\exp\left(\frac{1}{2}(\y+\z)^\top
\X(\y+\z)\right)\bigg|\X\right]\right\|_2^2,\\
\frac{1}{n}\langle \|\btheta\|_2^2\rangle
&=\frac{1}{n\cZ}\,\E\left[\|\y+\z\|_2^2
\exp\left(\frac{1}{2}(\y+\z)^\top
\X(\y+\z)\right)\bigg|\X\right]
\end{align*}
where expectations are over the joint law of $(\y,\z)$.
Evaluating the expectation over $\z$ for fixed $\X,\y$,
where $\|\X\|_\op \leq \Lambda_{\max}<\alpha$, an explicit calculation shows
\begin{align*}
\E\left[\exp\left(\frac{1}{2}(\y+\z)^\top
\X(\y+\z)\right)\bigg|\X,\y\right]
&=\underbrace{\frac{\exp(\frac{1}{2}\y^\top\X(\Id-\alpha^{-1}\X)^{-1}\y)}
{\det(\Id-\alpha^{-1}\X)^{1/2}}}_{:=\cZ(\X,\y)}
\end{align*}
and
\begin{align*}
\E\left[(\y+\z)\exp\left(\frac{1}{2}(\y+\z)^\top
\X(\y+\z)\right)\bigg|\X,\y\right]
&=\cZ(\X,\y)(\Id-\alpha^{-1}\X)^{-1}\y,\\
\E\left[\|\y+\z\|_2^2
\exp\left(\frac{1}{2}(\y+\z)^\top
\X(\y+\z)\right)\bigg|\X,\y\right]
&=\cZ(\X,\y)\left(\alpha^{-1}\Tr (\Id-\alpha^{-1}\X)^{-1}
+\y^\top(\Id-\alpha^{-1}\X)^{-2}\y\right).
\end{align*}
Thus
\begin{equation}\label{eq:gaussianmixtureZ}
\begin{aligned}
\cZ&=\E[\cZ(\X,\y) \mid \X],\\
\frac{1}{n}\langle \btheta^\top \btheta' \rangle
&=\frac{1}{n}\,\frac{\|\E[\cZ(\X,\y)(\Id-\alpha^{-1}\X)^{-1}\y \mid \X]\|_2^2}
{\E[\cZ(\X,\y) \mid \X]^2},\\
\frac{1}{n}\langle \|\btheta\|_2^2 \rangle
&=\frac{1}{n}\,\frac{\E[\cZ(\X,\y)(\alpha^{-1}\Tr (\Id-\alpha^{-1}\X)^{-1}
+\y^\top(\Id-\alpha^{-1}\X)^{-2}\y) \mid \X]}{\E[\cZ(\X,\y) \mid \X]}.
\end{aligned}
\end{equation}
Since
\[\frac{1}{n}\left|\alpha^{-1}\Tr (\Id-\alpha^{-1}\X)^{-1}
+\y^\top(\Id-\alpha^{-1}\X)^{-2}\y\right|
\leq \frac{1}{\alpha-\Lambda_{\max}}+\frac{\alpha^2
M^2}{(\alpha-\Lambda_{\max})^2}\]
for all $\y$ in the support $[-M,M]^n$
of $\prod_{i=1}^n \d\nu(y_i)$, this implies
\[0 \leq \frac{1}{n}\langle \|\btheta\|_2^2\rangle
-\frac{1}{n}\langle \btheta^\top \btheta'\rangle
\leq \frac{1}{n}\langle \|\btheta\|_2^2\rangle
\leq \frac{1}{\alpha-\Lambda_{\max}}+\frac{\alpha^2
M^2}{(\alpha-\Lambda_{\max})^2}\].
Then the given conditions of \eqref{eq:sufficientalphacondition} in the
corollary imply the required conditions \eqref{eq:alphacondition} for $\alpha$
in Theorem \ref{thm:replica}.

It remains to check Assumption \ref{assump:convergence}. We verify a log-Sobolev inequality for
$\mu_\Gibbs$ following the argument of \cite{bauerschmidt2019very}: Let
\[\omega=\Lambda_{\max}+\eps\]
and write
\[\mu_\Gibbs(\btheta) \propto
\exp\left({-}\frac{1}{2}\btheta^\top(\omega\,\Id-\X)\btheta
+\sum_{i=1}^n \left(\frac{\omega}{2}\theta_i^2-U(\theta_i)\right)\right)\]
Then $\eps\,\Id \preceq \omega\,\Id-\X \preceq (2\omega-\eps)\,\Id$
in the positive-definite ordering, so we have
\[(\omega\,\Id-\X)^{-1}=(2\omega\,\Id)^{-1}+\B^{-1}\]
for some $\B \succ 0$. Hence,
expressing the Gaussian measure with covariance $(\omega\,\Id-\X)^{-1}$
as a convolution of those with covariances $(2\omega\,\Id)^{-1}$ and $\B^{-1}$,
\[\exp\left({-}\frac{1}{2}\btheta^\top(\omega\,\Id-\X)\btheta\right)
\propto \int \exp\left({-}\frac{2\omega}{2}\|\btheta-\bphi\|_2^2
\right)\exp\left({-}\frac{1}{2}\bphi^\top \B\bphi\right)\d\bphi.\]
Thus
\begin{equation}\label{eq:measuredecomp}
\mu_\Gibbs(\btheta) \propto
\int \d\bphi \exp\underbrace{\left({-}\frac{1}{2}\bphi^\top \B\bphi
+\log \sum_{i=1}^n
\int
\exp\left(\frac{\omega}{2}\theta_i^2-U(\theta_i)-\omega(\theta_i-\varphi_i)^2\right)\d\theta\right)}_{:={-}V(\bphi)}\prod_{i=1}^n
\mu_{\varphi_i}(\theta_i)
\end{equation}
where
\[\mu_\varphi(\theta)
=\frac{\exp((\omega/2)\theta^2-U(\theta)-\omega(\theta-\varphi)^2)}
{\int \exp((\omega/2)\theta^2-U(\theta)-\omega(\theta-\varphi)^2)\d\theta}.\]
Under the condition \eqref{eq:alphacondition} checked above, each univariate law
$\mu_{\varphi_i}$ satisfies a log-Sobolev inequality on $\R$ with
log-Sobolev constant depending only on $\omega$ and $U(\cdot)$ and not on
$\varphi_i$, by the same argument as \eqref{eq:logsobolevdecomp}. Furthermore,
writing
\[\langle f(\theta) \rangle_{\varphi}
=\int f(\theta)\mu_\varphi(\theta)\d\theta,\quad
\Var_{\varphi}[\theta]=\langle \theta^2 \rangle_{\varphi}-\langle \theta
\rangle_{\varphi}^2,\]
we have
\begin{equation}\label{eq:LSIderivativebound}
\frac{\d^2}{\d\varphi^2}
\left[{-}\log \int
\exp\Big(\frac{\omega}{2}\theta^2-U(\theta)-\omega(\theta-\varphi)^2\Big)\d\theta\right]
=2\omega-4\omega^2\Var_{\varphi}[\theta].
\end{equation}
Recalling that $e^{-U(\theta)}$ is the law of
$y+z$ where $y \sim \nu$ and $z \sim \cN(0,\alpha^{-1})$, we have furthermore
\begin{align*}
\mu_\varphi(\theta)
&\propto \int_{-M}^M
\exp\left(\frac{\omega}{2}\theta^2-\omega(\theta-\varphi)^2\right)
\exp\left({-}\frac{\alpha}{2}(\theta-y)^2\right)\nu(\d y)\\
&\propto \int_{-M}^M
\exp\Big({-}\frac{\alpha+\omega}{2}\theta^2
+(\alpha y+2\omega \varphi)\theta-\frac{\alpha}{2}y^2\Big)\nu(\d y)\\
&\propto \int_{-M}^M \exp\Big({-}\frac{\alpha+\omega}{2}
\Big(\theta-\frac{\alpha y+2\omega \varphi}{\alpha+\omega}\Big)^2\Big)
\tilde \nu_\varphi(\d y),
\end{align*}
for some probability measure $\tilde \nu_\varphi$ supported on $[-M,M]$ and
depending on $(\alpha,\omega,\varphi)$. Thus $\mu_\varphi(\theta)$
is the law of 
\[\theta=\frac{2\omega\varphi}{\alpha+\omega}
+\frac{\alpha}{\alpha+\omega}y+z,
\qquad y \sim \tilde \nu_\varphi,
\qquad z \sim \cN(0,(\alpha+\omega)^{-1}),\]
and $y,z$ are independent. Then
\[\Var_\varphi[\theta]
=\frac{\alpha^2}{(\alpha+\omega)^2}\Var_{\tilde \nu}[y]
+\frac{1}{\alpha+\omega}
\leq \frac{\alpha^2M^2}{(\alpha+\omega)^2}+\frac{1}{\alpha+\omega}
\leq M^2+\frac{1}{\alpha+\omega}.\]
Recalling $\omega=\Lambda_{\max}+\eps$ and choosing $\eps$ small enough, the
second given condition of \eqref{eq:sufficientalphacondition} ensures
\[2\omega-4\omega^2\Var_\varphi[\theta] \geq \eps.\]
Applying this to \eqref{eq:LSIderivativebound} shows that
$V(\bphi)$ defined in \eqref{eq:measuredecomp} is a
uniformly strongly convex potential, so $e^{-V(\bphi)}$
satisfies a log-Sobolev inequality on $\R^n$ with log-Sobolev constant depending
only on $\eps$. Then the argument of \cite{bauerschmidt2019very} verifies that
$\mu_\Gibbs$ also satisfies a log-Sobolev inequality on $\R^n$, with
log-Sobolev constant depending only on $\omega,\eps,U(\cdot)$.
This implies by a $T_2$-transportation inequality, contraction of relative
entropy, and log-Harnack/entropy-cost inequality (as in the proof of Theorem \ref{thm:replica}) that for any
deterministic vector $\btheta^0 \in \R^n$ and $t \geq 1$,
\begin{align*}
W_2(\Law(\btheta^t \mid \btheta^0),\,\mu_\Gibbs)^2
&\leq C\DKL(\Law(\btheta^t \mid \btheta^0)\|\,\mu_\Gibbs)\\
&\leq Ce^{-c(t-1)}\DKL(\Law(\btheta^1 \mid \btheta^0)\|\,\mu_\Gibbs)\\
&\leq C'e^{-ct}W_2(\delta_{\btheta^0},\,\mu_\Gibbs)^2\\
&\leq 2C'e^{-ct}(\|\btheta^0\|_2^2+\langle \|\btheta\|_2^2 \rangle).
\end{align*}

Recall from \eqref{eq:thetanormbound} that $\frac{1}{n}\langle\|\btheta\|_2^2\rangle \leq C$, where this requires only the assumptions $\|\X\|_\op \leq \Lambda_{\max}<\alpha$. Then,
applying the above bound with the specified initial condition $\btheta^0$ having i.i.d.\ entries on $[-1,1]$, this implies for all $t \geq 1$ that
\[\frac{1}{n}\langle \|\btheta^t\|_2^2 \rangle \leq C.\]
This holds also for $t \in [0,1]$, by It\^o's lemma which shows $\frac{\d}{\d t}\frac{1}{n}\langle \|\btheta^t\|_2^2 \rangle \leq C(\frac{1}{n}\langle \|\btheta^t\|_2^2 \rangle +1)$ and Gr\"onwall's inequality. Then, applying the above bound with
$\btheta^s$ in place of $\btheta^0$
shows the first statement of Assumption \ref{assump:convergence} for $t \geq s+1$.
The bound for $t \in [s,s+1]$ again follows from It\^o's lemma for $\frac{\d}{\d t}\frac{1}{n}\langle \|\btheta^t-\btheta^s\|_2^2 \rangle$ and Gr\"onwall's inequality. The second statement of
Assumption \ref{assump:convergence} follows similarly by applying the above with $\tilde\btheta^0 \sim \mu_\Gibbs$ in place of $\btheta^0$.
This checks all needed conditions for Theorem \ref{thm:replica}(a).

Note that if \eqref{eq:sufficientalphacondition}
holds, then it also holds for $(\beta\Lambda_{\max},\beta^{p+1}\kappa_{p+1})$ in
place of $(\Lambda_{\max},\kappa_{p+1})$ for every $\beta \in [0,1]$, because
the left side of each condition in \eqref{eq:sufficientalphacondition}
for $(\beta\Lambda_{\max},\beta^{p+1}\kappa_{p+1})$ is increasing
in $\beta$. Thus, all of the above statements hold also for $\beta\X$ in place
of $\X$, verifying the needed condition for Theorem \ref{thm:replica}(b).
\end{proof}

\begin{proof}[Proof of Corollary \ref{cor:ising}]
Define the discrete law on $\{\pm 1\}$
\[\nu=\frac{e^h}{e^{-h}+e^h}\,\delta_{+1}+\frac{e^{-h}}{e^{-h}+e^h}\,\delta_{-1},
\qquad M=1.\]
Since $\Lambda_{\max}<1/2$
and $\limsup_{p \to \infty} |\kappa_p|^{1/p}<1$ by Proposition
\ref{prop:freecumulants},
the condition \eqref{eq:sufficientalphacondition} holds for all sufficiently
large $\alpha$. Thus, for all large $\alpha$,
Corollary \ref{cor:hightemp} holds with the potential
\[e^{-U_\alpha(\theta)}=\frac{e^h}{e^{-h}+e^h}
\sqrt{\frac{\alpha}{2\pi}}e^{-\frac{\alpha}{2}(\theta-1)^2}
+\frac{e^{-h}}{e^{-h}+e^h}
\sqrt{\frac{\alpha}{2\pi}}e^{-\frac{\alpha}{2}(\theta+1)^2}\]
Let us define $p_\alpha(\d\theta)=e^{-U_\alpha(\theta)}\d\theta$ for
$\alpha<\infty$, and $p_\alpha(\d\theta)=\nu(\d\theta)$ for $\alpha=\infty$.
We denote also
\[\mu_\alpha(\d\btheta)=\frac{1}{\cZ_\alpha}
\exp\left(\frac{1}{2}\btheta^\top \X\btheta\right)
\prod_{i=1}^n p_\alpha(\d\theta_i),
\qquad \cZ_\alpha= \int \exp\left(\frac{1}{2}\btheta^\top \X\btheta\right)
\prod_{i=1}^n p_\alpha(\d\theta_i)\]
and write $\langle \cdot \rangle_\alpha$ for the associated Gibbs average.
Note then that $\mu_\infty \equiv \mu_{\text{Ising}}$ is precisely the Ising
Gibbs measure \eqref{eq:ising}, and
\begin{equation}\label{eq:ZinftyIsing}
\cZ_\infty=\cZ_{\text{Ising}} \prod_{i=1}^n \frac{1}{e^{-h}+e^h}.
\end{equation}

Let $v_*(\alpha),q_*(\alpha)$ be the values corresponding to $\mu_\alpha$,
for all large enough $\alpha<\infty$ such that
Corollary \ref{cor:hightemp} holds.
Recall the explicit forms \eqref{eq:gaussianmixtureZ}
for $\cZ_\alpha$, $\frac{1}{n}\langle \btheta^\top \btheta' \rangle_\alpha$,
and $\frac{1}{n}\langle \|\btheta\|_2^2 \rangle_\alpha$, where the expectation
$\E$ is over $\y \in \{\pm 1\}^n$ with entries $y_i \overset{iid}{\sim} \nu$.
Write $\cZ_\alpha(\X,\y)
\equiv \cZ(\X,\y)$ to now make explicit the dependence on $\alpha$. Note that
for any $\y \in \{\pm 1\}^n$, $\X \in \R^{n \times n}$
with $\|\X\|_\op<\Lambda_{\max}$, and
$\alpha>\Lambda_{\max}$, we have
\[\frac{1}{n}\left|\alpha^{-1}\Tr (\Id-\alpha^{-1}\X)^{-1}
+\y^\top(\Id-\alpha^{-1}\X)^{-2}\y-1\right|
\leq \frac{1}{\alpha-\Lambda_{\max}}+\frac{\alpha^2}{(\alpha-\Lambda_{\max})^2}-1.\]
Then the third identity of \eqref{eq:gaussianmixtureZ} implies 
\[|v_*(\alpha)-1| \leq \limsup_{n \to \infty}
\left|\frac{1}{n}\langle \|\btheta\|_2^2 \rangle_\alpha-1\right| \leq
\frac{1}{\alpha-\Lambda_{\max}}+\frac{\alpha^2}{(\alpha-\Lambda_{\max})^2}-1,\]
so
\begin{equation}\label{eq:qstardef}
1=\lim_{\alpha \to \infty} v_*(\alpha)
\end{equation}

For $q_*(\alpha)$, observe first
that
\[\frac{1}{\sqrt{n}}\|(\Id-\alpha^{-1}\X)^{-1}\y-\y\|_2
\leq \frac{\Lambda_{\max}}{\alpha-\Lambda_{\max}},
\qquad
\frac{1}{\sqrt{n}}\|(\Id-\alpha^{-1}\X)^{-1}\y+\y\|_2
\leq 2+\frac{\Lambda_{\max}}{\alpha-\Lambda_{\max}}.\]
Hence, by the second identity of \eqref{eq:gaussianmixtureZ} and Cauchy-Schwarz,
\begin{align}
&\left|\frac{1}{n}\langle \btheta^\top\btheta'\rangle_\alpha
-\frac{1}{n}\frac{\|\E[\cZ_\alpha(\X,\y)\y \mid \X]\|_2^2}
{\E[\cZ_\alpha(\X,\y) \mid \X]^2}\right|\notag\\
&\leq \frac{1}{n}\frac{\|\E[\cZ_\alpha(\X,\y)((\Id-\alpha^{-1}\X)^{-1}\y-\y) \mid
\X]\|_2 \cdot \|\E[\cZ_\alpha(\X,\y)((\Id-\alpha^{-1}\X)^{-1}\y+\y) \mid
\X]\|_2}{\E[\cZ_\alpha(\X,\y) \mid \X]^2}\notag\\
&\leq \frac{\Lambda_{\max}}{\alpha-\Lambda_{\max}}
\left(2+\frac{\Lambda_{\max}}{\alpha-\Lambda_{\max}}\right)\label{eq:overlapcompare}
\end{align}
Momentarily denoting
\[[f(\y)]_\omega
=\frac{\E[\cZ_{\omega^{-1}}(\X,\y)f(\y) \mid \X]}{\E[\cZ_{\omega^{-1}}(\X,\y) \mid \X]}
=\frac{\E[\exp(\frac{1}{2}\y^\top\X(\Id-\omega \X)^{-1}\y) \cdot f(\y) \mid
\X]}{\E[\exp(\frac{1}{2}\y^\top\X(\Id-\omega \X)^{-1}\y) \mid \X]}\]
and writing $\Var_{[\cdot]_\omega}$ and
$\Cov_{[\cdot]_\omega}$ for the associated variance and covariance, observe that
\[\frac{\d}{\d\omega}[y_i]_\omega
=\Cov_{[\cdot]_\omega}\left[y_i,\,\frac{\d}{\d\omega}\frac{1}{2}\y^\top\X(\Id-\omega\X)^{-1}\y\right]
=\Cov_{[\cdot]_\omega}\left[y_i,\,\frac{1}{2}\y^\top\X^2(\Id-\omega\X)^{-2}\y\right].\]
Then
\[\left\|\frac{\d}{\d \omega}[\y]_\omega\right\|_2
=\sup_{\u:\|\u\|_2=1} \Cov_{[\cdot]_\omega}\left[\u^\top\y,
\,\frac{1}{2}\y^\top\X^2(\Id-\omega\X)^{-2}\y\right]
\leq C\sqrt{n}\sup_{\u:\|\u\|_2=1} \Var_{[\cdot]_\omega}[\u^\top\y]^{1/2}\]
for a constant $C>0$ and all small $\omega$. Since $\|\X\|_\op \leq
\Lambda_{\max}<1/2$, also
$\|\X(\Id-\omega \X)^{-1}\|_\op<c_0<1/2$ for some constant
$c_0 \in (\Lambda_{\max},1/2)$ and all small enough $\omega$. Then by the result
of \cite{bauerschmidt2019very}, the Gibbs measure on $\{\pm 1\}^n$ defining
$[\cdot]_\omega$ (having Hamiltonian $\frac{1}{2}\y^\top \X(\Id-\omega
\X)^{-1}\y+h\sum_{i=1}^n y_i$ with respect to the uniform measure over
$\y \in \{\pm 1\}^n$)
satisfies a Poincar\'e inequality with a dimension-free
Poincar\'e constant depending only on $c_0,h$. Thus,
$\Var_{[\cdot]_\omega}[\u^\top\y] \leq C\|\u\|_2^2 \leq C$ for a constant $C>0$, any unit vector $\u$,
and all small $\omega$. Applying this above shows that for all
small enough $\omega$, $\|[\y]_\omega-[\y]_0\|_2 \leq C'\omega\sqrt{n}$.
Thus, there is a constant $C>0$ such that for all large enough
$\alpha<\infty$,
\[\left|\frac{1}{n}\langle \btheta^\top \btheta' \rangle_\infty
-\frac{1}{n}\frac{\|\E[\cZ_\alpha(\X,\y)\y \mid \X]\|_2^2}
{\E[\cZ_\alpha(\X,\y) \mid \X]^2}\right|
=\left|\frac{1}{n}\frac{\|\E[\cZ_\infty(\X,\y)\y \mid \X]\|_2^2}
{\E[\cZ_\infty(\X,\y) \mid \X]^2}
-\frac{1}{n}\frac{\|\E[\cZ_\alpha(\X,\y)\y \mid \X]\|_2^2}
{\E[\cZ_\alpha(\X,\y) \mid \X]^2}\right|
\leq \frac{C}{\alpha}.\]
Combining this with \eqref{eq:overlapcompare} shows
\[\lim_{\alpha \to \infty}
\sup_{n \geq 1}\left|\frac{1}{n}\langle \btheta^\top \btheta' \rangle_\alpha
-\frac{1}{n}\langle \btheta^\top \btheta' \rangle_\infty\right|=0.\]
Then, since the limit
$q_*(\alpha)=\lim_{n \to \infty} \frac{1}{n}\langle \btheta^\top
\btheta' \rangle_\alpha$ exists a.s.\ for all large $\alpha$,
this implies that there exists a limit
\begin{equation}\label{eq:vstardef-alpha}
q_*:=\lim_{\alpha \to \infty} q_*(\alpha)
\end{equation}
for which also
\[q_*=\lim_{n \to \infty} \frac{1}{n}\langle \btheta^\top \btheta'
\rangle_\infty
=\lim_{n \to \infty}
\frac{1}{n}\langle \btheta^\top \btheta'
\rangle_\text{Ising}\text{ a.s.}\]

For all large $\alpha$, by Theorem \ref{thm:replica},
$v_*(\alpha),q_*(\alpha)$ satisfy the fixed point equation
\begin{align*}
q_*(\alpha)&=\E_{\xi \sim \cN(0,1)}
\left(\frac{\int \theta
\exp(\frac{1}{2}\cR_\Lambda(v_*(\alpha)-q_*(\alpha))\theta^2
+\sqrt{q_*(\alpha)\cR_\Lambda'(v_*(\alpha)-q_*(\alpha))}\xi\theta)p_\alpha(\d\theta)}
{\int
\exp(\frac{1}{2}\cR_\Lambda(v_*(\alpha)-q_*(\alpha))\theta^2
+\sqrt{q_*(\alpha)\cR_\Lambda'(v_*(\alpha)-q_*(\alpha))}\xi\theta)p_\alpha(\d\theta)}\right)^2.
\end{align*}
Then taking $\alpha \to \infty$ and applying
 \eqref{eq:qstardef}, \eqref{eq:vstardef-alpha}, and the weak convergence of 
$p_\alpha$ to $p_\infty=\nu$,
\begin{align*}
q_*&=\E_{\xi \sim \cN(0,1)}
\left(\frac{\int \theta
\exp(\frac{1}{2}\cR_\Lambda(1-q_*)\theta^2+\sqrt{q_*\cR_\Lambda'(1-q_*)}\xi\theta)
\nu(\d\theta)}{\int
\exp(\frac{1}{2}\cR_\Lambda(1-q_*)\theta^2
+\sqrt{q_*\cR_\Lambda'(1-q_*)}\xi\theta)\nu(\d\theta)}\right)^2\\
&=\E_{\xi \sim \cN(0,1)}
\tanh\Big(h+\sqrt{q_*\cR_\Lambda'(1-q_*)}\xi\Big)^2,
\end{align*}
the second equality following from definition of $\nu(\d\theta)$ which is
supported on $\{\pm 1\}$.

We have also
\begin{align*}
|\log \cZ_\alpha(\X,\y)-\log \cZ_\infty(\X,\y)|
&\leq \left|\frac{1}{2}\y^\top \X(\Id-\alpha^{-1}\X)^{-1}\y
-\frac{1}{2}\y^\top \X\y\right|
+\left|\frac{1}{2}\log \det(\Id-\alpha^{-1}\X)\right|\\
&\leq \frac{n}{2} \cdot \frac{\Lambda_{\max}^2}{\alpha-\Lambda_{\max}}
+\frac{n}{2}\left|\log\left(1-\frac{\Lambda_{\max}}{\alpha}\right)\right|
\end{align*}
Thus, applying the first identity of \eqref{eq:gaussianmixtureZ} and
taking the expectation over $\y$,
\[\lim_{\alpha \to \infty} \sup_{n \geq 1}
\left|\frac{1}{n}\log \cZ_\alpha-\frac{1}{n}\log \cZ_\infty\right|=0.\]
Since $f(\alpha):=\lim_{n \to \infty} \frac{1}{n}\log \cZ_\alpha$ exists
a.s.\ for all large $\alpha$, this implies that almost surely,
\begin{align*}
\lim_{n \to \infty} \frac{1}{n}\log \cZ_\infty
&=\lim_{\alpha \to \infty} f(\alpha)\\
&=\E_{\xi \sim \cN(0,1)}
\log \int \exp\left(\frac{1}{2}\cR_\Lambda(1-q_*)\theta^2
+\sqrt{q_*\cR_\Lambda'(1-q_*)}\xi\theta\right)\nu(\d\theta)\\
&\hspace{0.5in}
+\frac{1}{2}\int_0^{1-q_*} \cR_\Lambda(s)\d s
-\frac{1}{2}(1-q_*)\cR_\Lambda(1-q_*)
-\frac{1}{2}q_*(1-q_*)\cR_\Lambda'(1-q_*)\\
&=\E_{\xi \sim \cN(0,1)} \log 2\cosh
\left(h+\sqrt{q_*\cR_\Lambda'(1-q_*)}\xi\right)
-\log 2\cosh(h)\\
&\hspace{0.5in}+\frac{1}{2}\int_0^{1-q_*} \cR_\Lambda(s)\d s
+\frac{1}{2}q_*\cR_\Lambda(1-q_*)-\frac{1}{2}q_*(1-q_*)\cR_\Lambda'(1-q_*).
\end{align*}
From \eqref{eq:ZinftyIsing} we have
$\frac{1}{n}\log \cZ_\infty=\frac{1}{n}\log \cZ_\text{Ising}
-\log 2\cosh(h)$, proving all statements of the corollary.
\end{proof}

\section{Universality over the disorder}\label{sec:universality}

We now prove Corollary \ref{cor:universality} on universality over the
disorder $\X$, using the results of \cite{wang2024universality}.

\begin{proof}[Proof of Corollary \ref{cor:universality}]
Let $\tilde \X \in \R^{n \times n}$ satisfy Assumption \ref{assump:X-universal}.
Let $\X \in \R^{n \times n}$ be an orthogonally-invariant matrix
satisfying Assumption \ref{assump:X} with the same bound $\Lambda_{\max}$ and
asymptotic eigenvalue distribution $\Lambda$ as $\tilde\X$.
Let $\S \in \R^{n \times n}$ be
a random diagonal sign matrix as in Corollary \ref{cor:universality}(d),
and let $\bPi \in \R^{n \times n}$ be a uniformly random permutation matrix,
where $(\S,\bPi)$ are independent of each other and of $\btheta^0,\{\b^t\}_{t
\geq 0}$. Then \cite[Proposition 2.7, Theorem 2.8]{wang2024universality}
implies that the AMP algorithm \eqref{eq:discreteAMP} applied with $\tilde\X$ has the
same dynamical mean-field limit \eqref{eq:AMPconvergence} (in the sense of almost-sure
Wasserstein-2 convergence of the coordinatewise empirical distribution of its
iterates) as applied with the matrix
\[\W=\bPi\S\tilde\X\S\bPi^\top.\]
As \eqref{eq:AMPconvergence} and the bound $\|\X\|_\op \leq \Lambda_{\max}$ are the only
properties of $\X$ used in all preceding proofs, this implies that
Theorems \ref{thm:dmft-approx}, \ref{thm:replica} and Corollaries
\ref{cor:hightemp}, \ref{cor:ising} hold equally with $\W$ in place of
$\X$.

The dynamics \eqref{eq:dynamics} driven by $\W$ have the form
\[\d\btheta^t=[f(\btheta^t)+\bPi\S\tilde\X\S\bPi^\top\btheta^t]\d t
+\sqrt{2\gamma}\,\d\b^t.\]
Define $\g^t=\W\btheta^t-\int_0^t r_g(t-s)\btheta^s \d s$ as in Theorem
\ref{thm:dmft-approx}, and let
\[\tilde\btheta^t=\bPi^\top \btheta^t,
\qquad \tilde\g^t=\bPi^\top \g^t,
\qquad \tilde\b^t=\bPi^\top \b^t.\]
The conclusion of Theorem \ref{thm:dmft-approx} 
for $\{(\btheta^t,\g^t,\b^t)\}_{t \geq 0}$ implies the same convergence for
$\{(\tilde\btheta^t,\tilde\g^t,\tilde\b^t)\}_{t \geq 0}$, since this has the
same empirical distribution of sample paths.
Multiplying the above dynamics by $\bPi^\top$ and noting that
$\bPi^\top f(\btheta^t)=f(\tilde\btheta^t)$ since $f:\R \to \R$ is applied
entrywise, the dynamics of $\{(\tilde\btheta^t,\tilde\g^t,\tilde\b^t)\}_{t \geq
0}$ are given by
\begin{equation}\label{eq:permutedprocess}
\d\tilde\btheta^t=[f(\tilde\btheta^t)+\S\tilde\X\S\tilde\btheta^t]\d t
+\sqrt{2\gamma}\,\d\tilde \b^t,
\qquad \tilde \g^t=\S\tilde\X\S\tilde\btheta^t-\int_0^t
r_g(t-s)\tilde\btheta^s\d s,
\end{equation}
coinciding with the dynamics \eqref{eq:dynamics} driven by $\S\tilde\X\S$.
This shows Corollary \ref{cor:universality}(d) for the dynamics.
Since the Gibbs measures $\mu_\Gibbs$ and $\mu_{\text{Ising}}$ defined by
$\W$ are also identical to those defined by $\S\tilde\X\S$,
Corollary \ref{cor:universality}(d) holds also for the Gibbs measures.

If $f(\cdot)$ is odd and $\theta^0$ is sign-invariant in law, then 
the joint law of $\{(\theta^t,g^t,b^t)\}_{t \geq 0}$ in the dynamical
mean-field limit of Definition \ref{def:DMFT} is also invariant under
multiplication by a single sign $s \in \{\pm 1\}$. Setting
\[\tilde\btheta^t=\S\bPi^\top \btheta^t,
\qquad \tilde\g^t=\S\bPi^\top \g^t,
\qquad \tilde\b^t=\S\bPi^\top \b^t,\]
the conclusion of Theorem \ref{thm:dmft-approx} 
for $\{(\btheta^t,\g^t,\b^t)\}_{t \geq 0}$ implies that the empirical
distribution of sample paths of
$\{(\tilde\btheta^t,\tilde\g^t,\tilde\b^t)\}_{t \geq 0}$ converges to
the law of $\{(s\theta^t,sg^t,sb^t)\}_{t \geq 0}$ where $s \in \{\pm 1\}$ is
an independent and uniformly random sign. This is the same law as
that of $\{(\theta^t,g^t,b^t)\}_{t \geq 0}$, so Theorem \ref{thm:dmft-approx} 
holds also for $\{(\tilde\btheta^t,\tilde\g^t,\tilde\b^t)\}_{t \geq 0}$.
Since $\S\bPi^\top f(\btheta^t)=f(\S\bPi^\top\btheta^t)=f(\tilde\btheta^t)$ when
$f(\cdot)$ is odd, we have
\[\d\tilde\btheta^t=[f(\tilde\btheta^t)+\tilde\X\tilde\btheta^t]\d t
+\sqrt{2\gamma}\,\d\tilde \b^t,
\qquad \tilde \g^t=\tilde\X\tilde\btheta^t-\int_0^t
r_g(t-s)\tilde\btheta^s\d s\]
coinciding with the dynamics \eqref{eq:dynamics} driven by $\tilde\X$.
This shows
Corollary \ref{cor:universality}(a). If $U(\cdot)$ is even and $h=0$ in the
Ising model, then the Gibbs measures $\mu_\Gibbs$ and $\mu_\text{Ising}$ defined
by $\W$ are also identical to those defined by $\tilde \X$, implying
Corollary \ref{cor:universality}(b--c).
\end{proof}

\appendix

\section{Free cumulant bound}

\begin{proposition}\label{prop:freecumulants}
Suppose $\Lambda$ is a random variable with $|\Lambda| \leq K$
for a constant $K>0$. Let $\{\kappa_p\}_{p \geq 1}$ be the free
cumulants of $\Lambda$. Then
\[\limsup_{p \to \infty} |\kappa_p|^{1/p} \leq 2K.\]
\end{proposition}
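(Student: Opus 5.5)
We will bound the free cumulants through the moment–cumulant relation over non-crossing partitions. Write $m_p=\E[\Lambda^p]$, so that $|m_p|\le K^p$. By the free moment–cumulant formula and Möbius inversion on the lattice $NC(p)$ of non-crossing partitions of $\{1,\ldots,p\}$,
\[\kappa_p=\sum_{\pi \in NC(p)} \mu(\pi,1_p)\prod_{B \in \pi} m_{|B|},\]
where $\mu$ is the Möbius function of $NC(p)$. The value of $\mu(\pi,1_p)$ is known explicitly. The interval $[\pi,1_p]$ is isomorphic to a product of smaller lattices $NC(k)$, and $\mu(0_k,1_k)=(-1)^{k-1}\mathrm{Cat}_{k-1}$ (Kreweras complement; see Nica--Speicher, Lecture~10). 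Since $\prod_B m_{|B|}$ is bounded in absolute value by $K^p$ for every $\pi$, we get
\[|\kappa_p|\le K^p\sum_{\pi\in NC(p)}|\mu(\pi,1_p)|.\]

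It therefore remains to show that $S_p:=\sum_{\pi\in NC(p)}|\mu(\pi,1_p)|$ grows at most like $4^p$ up to subexponential factors. In fact we only need $\limsup S_p^{1/p}\le 2$ to get the claimed $2K$. The cleanest way to get this is to avoid the Möbius sum and use generating functions directly. Let $M(z)=1+\sum_{p\ge1}m_pz^p$ and $\cR(z)=\sum_{p\ge1}\kappa_pz^{p-1}$, linked by the functional equation $M(z)=1+zM(z)\,\cR(zM(z))$, or equivalently through the Cauchy transform $G(w)=w^{-1}M(1/w)$ and the relation $G(\cR(z)+1/z)=z$. The radius of convergence of $\cR$ then gives $\limsup|\kappa_p|^{1/p}$.

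Concretely, we will use the following steps.
\begin{enumerate}[1.]
\item $G$ is analytic on $\{|w|>K\}$ and satisfies $G(w)=1/w+O(K^2/|w|^3)$ when $m_1=0$; in general, use the bound $|G(w)-1/w|\le \sum_{p\ge1}K^p|w|^{-p-1}$.
\item By Lagrange inversion, or Rouché's theorem applied on $\{|w|>2K\}$, the map $G$ is univalent there. Its inverse $K_G(z)=\cR(z)+1/z$ is analytic on a punctured disc $0<|z|<r$, where $r$ is computed as follows.
\item For $|w|=\rho>K$ we have $|G(w)|\ge 1/\rho-K/(\rho(\rho-K))$ and $|wG(w)-1|\le K/(\rho-K)$. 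Taking $\rho=2K$ gives $|wG(w)-1|\le1$ only at the boundary. Instead we use the classical estimate (Benaych-Georges / Nica--Speicher, Lecture~13) that $\cR$ is analytic in $|z|<1/(cK)$. The cleanest route is the direct combinatorial bound $|\kappa_p|\le K^p\,|NC(p)|\cdot\max|\mu|$, combined with the fact that $\sum_{\pi}|\mu(\pi,1_p)|$ counts pairs of nested non-crossing partitions. That count equals the Fuss--Catalan number $\frac{1}{2p+1}\binom{3p}{p}$, which grows like $(27/4)^p$. This is too weak, so the generating-function route is needed.
\end{enumerate}

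We therefore commit to the following argument. We apply the free cumulant bound via the subordination/Rouché argument to $zG$. Fix $|z|<1/(2K)$. We seek $w$ with $G(w)=z$, i.e. $w=1/z+\phi(w)$ where $\phi(w)=w-1/G(w)$. For $|w|\ge 2K$ one checks $|\phi(w)|\le K$: indeed $1/G(w)=w-m_1-\ldots$ is the reciprocal Cauchy transform (the $F$-transform), and $w-F(w)$ is the Cauchy transform of a positive measure of mass $\Var\Lambda\le K^2$ supported in $[-2K,2K]$ (Nevanlinna representation), hence bounded by $K^2/(|w|-2K)$. A fixed-point argument on $\{|w-1/z|\le K\}$ then yields a unique analytic solution $w(z)$. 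Hence $\cR(z)=w(z)-1/z$ extends analytically to $0<|z|<1/(c K)$, and it is bounded near $0$, so it extends to the full disc. Cauchy estimates then give $\limsup|\kappa_p|^{1/p}\le cK$.

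The main obstacle is sharpening the constant to exactly $2K$. This requires controlling $\phi$ on the region $|w-1/z|\le$ small when $|1/z|$ is just above $2K$, and we expect to need the exact support bound of the measure in the Nevanlinna representation. Taking $|z|$ slightly less than $1/(2K)$ and using the fact that $F(w)-w$ is bounded by $K^2/\mathrm{dist}(w,[-K,K])$ should close the fixed point with margin. We accept any constant loss and then track it carefully.
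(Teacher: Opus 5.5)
Your proposal does not prove the proposition as stated. It never reaches the constant $2K$, and at the end you explicitly settle for some $cK$ (``we accept any constant loss''). That loss is not harmless. The bound is sharp: for $\Lambda=\pm K$ with probability $1/2$ each, $\cR_\Lambda(z)=(\sqrt{1+4K^2z^2}-1)/(2z)$ has radius of convergence exactly $1/(2K)$. The paper also uses the exact constant in Corollary~\ref{cor:ising}, to turn $\Lambda_{\max}<1/2$ into $\limsup_p|\kappa_p|^{1/p}<1$.

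Each of your routes loses a constant for the same reason: it controls the Cauchy transform by the triangle inequality.
\begin{itemize}
\item The M\"obius sum $\sum_{\pi\in NC(p)}|\mu(\pi,1_p)|$ gives the large Schr\"oder numbers $1,2,6,22,\ldots$ (not Fuss--Catalan). These grow like $(3+2\sqrt2)^p$.
\item Your bound $|\cG_\Lambda(w)|\ge(\rho-2K)/(\rho(\rho-K))$ on $|w|=\rho$, combined with Rouch\'e, gives radius only $(3-2\sqrt2)/K$. That is the same $(3+2\sqrt2)K$.
\item The committed $F$-transform fixed point does not close near $|z|=1/(2K)$. In fact $w-1/\cG_\Lambda(w)=\E\Lambda+\int\frac{\rho(\d t)}{w-t}$, where $\rho$ has mass $\Var\Lambda$ and is supported in $[-K,K]$; you dropped the mean term. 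Your claim $|\phi(w)|\le K$ on $|w|\ge 2K$ does not follow from your own bound $K^2/(|w|-2K)$, which is infinite at $|w|=2K$.
\item Even granting that claim, the disc $\{|w-1/z|\le K\}$ contains points with $|w|$ barely above $K$ when $|1/z|$ is barely above $2K$. With only the estimate $\Var\Lambda/\mathrm{dist}(w,[-K,K])$ the self-map property cannot be closed there; for $\E\Lambda=0$, $\Var\Lambda=K^2$ it requires $|1/z|\ge 3K$.
\end{itemize}

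The missing idea is a lossless lower bound on the Cauchy transform that comes from positivity, not from an expansion: for $|\zeta|<1$ one has $\Re\frac{1}{1-\zeta}>\frac12$.
\begin{itemize}
\item Set $H(z):=\cG_\Lambda(1/z)=\E[z/(1-z\Lambda)]$. Then $|H(z)|\ge |z|\,\Re\,\E[1/(1-z\Lambda)]\ge |z|/2$ for all $|z|<1/K$. Equivalently, $|\cG_\Lambda(w)|\ge 1/(2|w|)$ for every $|w|>K$, with no degradation as $w$ approaches the support.
\item Thus $z=0$ is the only zero of $H$ in this disc, and it is simple. Rouch\'e's theorem on $|z|=r<1/K$ shows that $H(z)=w$ has exactly one simple root in $|z|<r$ whenever $|w|<r/2$. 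This gives an analytic inverse $H^{-1}$ on $|w|<1/(2K)$.
\item Near $w=0$, $H^{-1}(w)=1/\cG_\Lambda^{-1}(w)$, by univalence of $\cG_\Lambda$ near infinity. Hence $1/H^{-1}(w)-1/w$ analytically continues $\cR_\Lambda$ to $|w|<1/(2K)$. The singularity at $0$ is removable because $H'(0)=1$ and $H^{-1}$ vanishes only at $0$.
\item The root test then gives $\limsup_p|\kappa_p|^{1/p}\le 2K$.
\end{itemize}
This is the paper's proof, after rescaling to $K=1$. Your generating-function framework works once the triangle-inequality estimates are replaced by this half-plane bound.
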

\begin{proof}
By rescaling $\Lambda$, assume without loss of generality $K=1$.
For $z \in \CC$ outside the support of $\Lambda$, define the Cauchy transform
$\cG_\Lambda(z)=\E[1/(z-\Lambda)]$. It is shown in \cite[Theorem
17]{mingo2017free} that $\cG_\Lambda$ is univalent over $\{z:|z|>4\}$, that
$\{w:|w| \in (0,1/6)\} \subset \cG_\Lambda(\{z:|z|>4\})$, and that
$\cR_\Lambda(w)$ defined by \eqref{eq:Rtransform} is absolutely
convergent for $|w|<1/6$ and given by
\begin{equation}\label{eq:cauchyR}
\cR_\Lambda(w)=\cG_\Lambda^{-1}(w)-\frac{1}{w} \text{ for } |w| \in (0,1/6)
\end{equation}
where $\cG_\Lambda^{-1}(w)$ denotes the unique inverse in $\{z:|z|>4\}$.

We check that $\cR_\Lambda$ extends analytically to $|w|<1/2$:
Define, on the open disk
$\DD=\{z:|z|<1\}$, the analytic function
\[H(z)=\cG_\Lambda(1/z)=\E[z/(1-z\Lambda)].\]
For any $z \in \DD$ and $|\Lambda| \leq 1$, note that
$|1-z\Lambda|^2 \leq 1+\Lambda^2-2\Re[z]\Lambda \leq (2+\eps)(1-\Re[z]\Lambda)$.
Then $\Re[1/(1-z\Lambda)]=(1-\Re[z]\Lambda)/|1-z\Lambda|^2 \geq 1/(2+\eps)$.
This implies $\E[1/(1-z\Lambda)] \neq 0$,
so $z=0$ is the unique simple root of $H(z)$ on $\DD$. Furthermore,
\[|H(z)| \geq |z| \cdot \Re\,\E[1/(1-z\Lambda)] \geq \frac{1-\eps}{2+\eps}
\text{ for } |z|=1-\eps.\]
Then by Rouch\'e's theorem, for each $w \in \CC$ with $|w|<(1-\eps)/(2+\eps)$,
$H(z)=w$ also has a unique simple root $z$ in the disk $\{z:|z|<1-\eps\}$.
Since $H'(z) \neq 0$ at this root,
the inverse function theorem implies that there exists an analytic
inverse $H^{-1}$ on $\{w:|w|<(1-\eps)/(2+\eps)\}$.
Since $H^{-1}(0)=0$, we have $|H^{-1}(w)|<1/4$ on some neighborhood $\cO$
of $w=0$, and $\cG_\Lambda(1/H^{-1}(w))=H(H^{-1}(w))=w$. Then the
above univalence implies $1/H^{-1}(w)=\cG_\Lambda^{-1}(w)$ for all $w \in \cO$.
Then the right side of \eqref{eq:cauchyR} has an analytic extension
to $\{w:|w| \in (0,(1-\eps)/(2+\eps))\}$. Since $\eps>0$ is arbitrary, this
implies $\cR_\Lambda(w)$ extends analytically to $|w|<1/2$, as claimed. Then
$\cR_\Lambda(w)$ is given by the convergent series \eqref{eq:Rtransform}
for all $|w|<1/2$, so the root test implies that $\limsup_{p \to \infty}
|\kappa_p|^{1/p} \leq 2$.
\end{proof}

\subsection*{Acknowledgments}
Z.F.\ would like to thank Rishabh Dudeja for helpful conversations about the dynamical approach to free energy universality. This research was supported in part by NSF DMS-2142476 and a Sloan Research Fellowship.

\bibliographystyle{plain}
\bibliography{main}

\begin{thebibliography}{10}

\bibitem{arous2001aging}
G~Ben Arous, Amir Dembo, and Alice Guionnet.
\newblock Aging of spherical spin glasses.
\newblock {\em Probability theory and related fields}, 120(1):1--67, 2001.

\bibitem{arous1997symmetric}
G~Ben Arous and Alice Guionnet.
\newblock Symmetric langevin spin glass dynamics.
\newblock {\em The Annals of Probability}, 25(3):1367--1422, 1997.

\bibitem{bakry2014analysis}
Dominique Bakry, Ivan Gentil, and Michel Ledoux.
\newblock {\em Analysis and geometry of {M}arkov diffusion operators}, volume 103.
\newblock Springer, 2014.

\bibitem{barbier2018mutual}
Jean Barbier, Nicolas Macris, Antoine Maillard, and Florent Krzakala.
\newblock The mutual information in random linear estimation beyond iid matrices.
\newblock In {\em 2018 IEEE International Symposium on Information Theory (ISIT)}, pages 1390--1394. IEEE, 2018.

\bibitem{bauerschmidt2019very}
Roland Bauerschmidt and Thierry Bodineau.
\newblock A very simple proof of the {LSI} for high temperature spin systems.
\newblock {\em Journal of Functional Analysis}, 276(8):2582--2588, 2019.

\bibitem{bayati2011lasso}
Mohsen Bayati and Andrea Montanari.
\newblock The {LASSO} risk for {G}aussian matrices.
\newblock {\em IEEE Transactions on Information Theory}, 58(4):1997--2017, 2011.

\bibitem{bhattacharya2016high}
Bhaswar~B Bhattacharya and Subhabrata Sen.
\newblock High temperature asymptotics of orthogonal mean-field spin glasses.
\newblock {\em Journal of Statistical Physics}, 162(1):63--80, 2016.

\bibitem{bolthausen2014iterative}
Erwin Bolthausen.
\newblock An iterative construction of solutions of the {TAP} equations for the {S}herrington--{K}irkpatrick model.
\newblock {\em Communications in Mathematical Physics}, 325(1):333--366, 2014.

\bibitem{bolthausen2018morita}
Erwin Bolthausen.
\newblock A morita type proof of the replica-symmetric formula for {SK}.
\newblock In {\em International Conference on Statistical Mechanics of Classical and Disordered Systems}, pages 63--93. Springer, 2018.

\bibitem{ccakmak2019memory}
Burak {\c{C}}akmak and Manfred Opper.
\newblock Memory-free dynamics for the {T}houless-{A}nderson-{P}almer equations of {I}sing models with arbitrary rotation-invariant ensembles of random coupling matrices.
\newblock {\em Physical Review E}, 99(6):062140, 2019.

\bibitem{carmona2006universality}
Philippe Carmona and Yueyun Hu.
\newblock Universality in {S}herrington-{K}irkpatrick's spin glass model.
\newblock In {\em Annales de l'IHP Probabilit{\'e}s et statistiques}, volume~42, pages 215--222, 2006.

\bibitem{celentano2021high}
Michael Celentano, Chen Cheng, and Andrea Montanari.
\newblock The high-dimensional asymptotics of first order methods with random data.
\newblock {\em arXiv preprint arXiv:2112.07572}, 2021.

\bibitem{celentano2025state}
Michael Celentano, Chen Cheng, Ashwin Pananjady, and Kabir~Aladin Verchand.
\newblock State evolution beyond first-order methods {I}: {R}igorous predictions and finite-sample guarantees.
\newblock {\em arXiv preprint arXiv:2507.19611}, 2025.

\bibitem{cherrier2003role}
Rapha{\"e}l Cherrier, David~S Dean, and Alexandre Lef{\`e}vre.
\newblock Role of the interaction matrix in mean-field spin glass models.
\newblock {\em Physical Review E}, 67(4):046112, 2003.

\bibitem{dandi2025sequential}
Yatin Dandi, David Gamarnik, Francisco Pernice, and Lenka Zdeborov{\'a}.
\newblock Sequential dynamics in {I}sing spin glasses.
\newblock {\em arXiv preprint arXiv:2506.09877}, 2025.

\bibitem{dembo2010markovian}
Amir Dembo and Jean-Dominique Deuschel.
\newblock Markovian perturbation, response and fluctuation dissipation theorem.
\newblock In {\em Annales de l'IHP Probabilit{\'e}s et statistiques}, volume~46, pages 822--852, 2010.

\bibitem{dembo2021diffusions}
Amir Dembo and Reza Gheissari.
\newblock Diffusions interacting through a random matrix: {U}niversality via stochastic taylor expansion.
\newblock {\em Probability Theory and Related Fields}, 180(3):1057--1097, 2021.

\bibitem{dembo2007limiting}
Amir Dembo, Alice Guionnet, and Christian Mazza.
\newblock Limiting dynamics for spherical models of spin glasses at high temperature.
\newblock {\em Journal of Statistical Physics}, 126(4):781--815, 2007.

\bibitem{dembo2021universality}
Amir Dembo, Eyal Lubetzky, and Ofer Zeitouni.
\newblock Universality for {L}angevin-like spin glass dynamics.
\newblock {\em The Annals of applied probability}, 31(6):2864--2880, 2021.

\bibitem{dembo2019dynamics}
Amir Dembo and Eliran Subag.
\newblock Dynamics for spherical spin glasses: disorder dependent initial conditions.
\newblock {\em arXiv preprint arXiv:1908.01126}, 2019.

\bibitem{dembo2025dynamics}
Amir Dembo and Eliran Subag.
\newblock Dynamics for spherical spin glasses: Gibbs distributed initial conditions.
\newblock {\em arXiv preprint arXiv:2503.23342}, 2025.

\bibitem{ding2019capacity}
Jian Ding and Nike Sun.
\newblock Capacity lower bound for the {I}sing perceptron.
\newblock In {\em Proceedings of the 51st Annual ACM SIGACT Symposium on Theory of Computing}, pages 816--827, 2019.

\bibitem{donoho2016high}
David Donoho and Andrea Montanari.
\newblock High dimensional robust m-estimation: {A}symptotic variance via approximate message passing.
\newblock {\em Probability Theory and Related Fields}, 166(3):935--969, 2016.

\bibitem{donoho2009message}
David~L Donoho, Arian Maleki, and Andrea Montanari.
\newblock Message-passing algorithms for compressed sensing.
\newblock {\em Proceedings of the National Academy of Sciences}, 106(45):18914--18919, 2009.

\bibitem{dudeja2023universality}
Rishabh Dudeja, Yue M.~Lu, and Subhabrata Sen.
\newblock Universality of approximate message passing with semirandom matrices.
\newblock {\em The Annals of Probability}, 51(5):1616--1683, 2023.

\bibitem{eberle2016reflection}
Andreas Eberle.
\newblock Reflection couplings and contraction rates for diffusions.
\newblock {\em Probability theory and related fields}, 166(3):851--886, 2016.

\bibitem{eberle2019sticky}
Andreas Eberle and Raphael Zimmer.
\newblock Sticky couplings of multidimensional diffusions with different drifts.
\newblock In {\em Annales de l’Institut Henri Poincar{\'e}-Probabilit{\'e}s et Statistiques}, volume~55, pages 2370--2394, 2019.

\bibitem{engel2000one}
Klaus-Jochen Engel and Rainer Nagel.
\newblock {\em One-parameter semigroups for linear evolution equations}.
\newblock Springer, 2000.

\bibitem{fan2022approximate}
Zhou Fan.
\newblock Approximate message passing algorithms for rotationally invariant matrices.
\newblock {\em The Annals of Statistics}, 50(1):197--224, 2022.

\bibitem{fan2025dynamicalI}
Zhou Fan, Justin Ko, Bruno Loureiro, Yue~M Lu, and Yandi Shen.
\newblock Dynamical mean-field analysis of adaptive langevin diffusions: {P}ropagation-of-chaos and convergence of the linear response.
\newblock {\em arXiv preprint arXiv:2504.15556}, 2025.

\bibitem{fan2025dynamicalII}
Zhou Fan, Justin Ko, Bruno Loureiro, Yue~M Lu, and Yandi Shen.
\newblock Dynamical mean-field analysis of adaptive langevin diffusions: Replica-symmetric fixed point and empirical bayes.
\newblock {\em arXiv preprint arXiv:2504.15558}, 2025.

\bibitem{fan2022tap}
Zhou Fan, Yufan Li, and Subhabrata Sen.
\newblock {TAP} equations for orthogonally invariant spin glasses at high temperature.
\newblock {\em arXiv preprint arXiv:2202.09325}, 2022.

\bibitem{fan2026high}
Zhou Fan and Leda Wang.
\newblock High-dimensional learning dynamics of multi-pass stochastic gradient descent in multi-index models.
\newblock {\em arXiv preprint arXiv:2601.21093}, 2026.

\bibitem{fan2024replica}
Zhou Fan and Yihong Wu.
\newblock The replica-symmetric free energy for {I}sing spin glasses with orthogonally invariant couplings.
\newblock {\em Probability Theory and Related Fields}, 190(1-2):1--77, 2024.

\bibitem{gerbelot2022asymptotic}
Cedric Gerbelot, Alia Abbara, and Florent Krzakala.
\newblock Asymptotic errors for teacher-student convex generalized linear models (or: {H}ow to prove {K}abashima’s replica formula).
\newblock {\em IEEE Transactions on Information Theory}, 69(3):1824--1852, 2022.

\bibitem{gerbelot2024rigorous}
Cedric Gerbelot, Emanuele Troiani, Francesca Mignacco, Florent Krzakala, and Lenka Zdeborova.
\newblock Rigorous dynamical mean-field theory for stochastic gradient descent methods.
\newblock {\em SIAM Journal on Mathematics of Data Science}, 6(2):400--427, 2024.

\bibitem{gorini2026universality}
Nicola Gorini, Chris Jones, Dmitriy Kunisky, and Lucas Pesenti.
\newblock Universality of first-order methods on random and deterministic matrices.
\newblock {\em arXiv preprint arXiv:2604.11729}, 2026.

\bibitem{guerra2003broken}
Francesco Guerra.
\newblock Broken replica symmetry bounds in the mean field spin glass model.
\newblock {\em Communications in mathematical physics}, 233(1):1--12, 2003.

\bibitem{guionnet1997averaged}
Alice Guionnet.
\newblock Averaged and quenched propagation of chaos for spin glass dynamics.
\newblock {\em Probability Theory and Related Fields}, 109(2):183--215, 1997.

\bibitem{han2025entrywise}
Qiyang Han.
\newblock Entrywise dynamics and universality of general first order methods.
\newblock {\em The Annals of Statistics}, 53(4):1783--1807, 2025.

\bibitem{ito1998trotter}
Kazufumi Ito and Franz Kappel.
\newblock The {T}rotter-{K}ato theorem and approximation of {PDEs}.
\newblock {\em Mathematics of computation}, 67(221):21--44, 1998.

\bibitem{kabashima2003cdma}
Yoshiyuki Kabashima.
\newblock A {CDMA} multiuser detection algorithm on the basis of belief propagation.
\newblock {\em Journal of Physics A: Mathematical and General}, 36(43):11111--11121, 2003.

\bibitem{kunita2006stochastic}
Hiroshi Kunita.
\newblock Stochastic differential equations and stochastic flows of diffeomorphisms.
\newblock In {\em Ecole d'{\'e}t{\'e} de probabilit{\'e}s de Saint-Flour XII-1982}, pages 143--303. Springer, 2006.

\bibitem{KurdykaMostowskiParusinski2000}
Krzysztof Kurdyka, Tadeusz Mostowski, and Adam Parusi{\'n}ski.
\newblock Proof of the gradient conjecture of {R. Thom}.
\newblock {\em Annals of Mathematics}, 152(3):763--792, 2000.

\bibitem{li2023random}
Yufan Li, Zhou Fan, Subhabrata Sen, and Yihong Wu.
\newblock Random linear estimation with rotationally-invariant designs: {A}symptotics at high temperature.
\newblock {\em IEEE Transactions on Information Theory}, 70(3):2118--2153, 2023.

\bibitem{liu2024unifying}
Songbin Liu and Junjie Ma.
\newblock Unifying {AMP} algorithms for rotationally-invariant models.
\newblock {\em arXiv preprint arXiv:2412.01574}, 2024.

\bibitem{lopatto2026replica}
Patrick Lopatto.
\newblock Replica symmetry up to the de almeida-thouless line in the sherrington-kirkpatrick model.
\newblock {\em arXiv preprint arXiv:2604.11921}, 2026.

\bibitem{ma2017orthogonal}
Junjie Ma and Li~Ping.
\newblock Orthogonal {AMP}.
\newblock {\em IEEE Access}, 5:2020--2033, 2017.

\bibitem{maillard2019high}
Antoine Maillard, Laura Foini, Alejandro~Lage Castellanos, Florent Krzakala, Marc M{\'e}zard, and Lenka Zdeborov{\'a}.
\newblock High-temperature expansions and message passing algorithms.
\newblock {\em Journal of Statistical Mechanics: Theory and Experiment}, 2019(11):113301, 2019.

\bibitem{marinari1994replica}
Enzo Marinari, Giorgio Parisi, and Felix Ritort.
\newblock Replica field theory for deterministic models. {II. A} non-random spin glass with glassy behaviour.
\newblock {\em Journal of Physics A: Mathematical and General}, 27(23):7647--7668, 1994.

\bibitem{mingo2017free}
James~A Mingo and Roland Speicher.
\newblock {\em Free probability and random matrices}, volume~35.
\newblock Springer, 2017.

\bibitem{nishiyama2026high}
Sota Nishiyama and Masaaki Imaizumi.
\newblock High-dimensional limit of stochastic gradient flow via dynamical mean-field theory.
\newblock {\em arXiv preprint arXiv:2602.06320}, 2026.

\bibitem{opper2016theory}
Manfred Opper, Burak Cakmak, and Ole Winther.
\newblock A theory of solving {TAP} equations for {I}sing models with general invariant random matrices.
\newblock {\em Journal of Physics A: Mathematical and Theoretical}, 49(11):114002, 2016.

\bibitem{opper2001adaptive}
Manfred Opper and Ole Winther.
\newblock Adaptive and self-averaging {T}houless-{A}nderson-{P}almer mean-field theory for probabilistic modeling.
\newblock {\em Physical Review E}, 64(5):056131, 2001.

\bibitem{otto2000generalization}
Felix Otto and C{\'e}dric Villani.
\newblock Generalization of an inequality by {T}alagrand and links with the logarithmic {S}obolev inequality.
\newblock {\em Journal of Functional Analysis}, 173(2):361--400, 2000.

\bibitem{panchenko2013sherrington}
Dmitry Panchenko.
\newblock {\em The {S}herrington-{K}irkpatrick model}.
\newblock Springer Science \& Business Media, 2013.

\bibitem{parisi1979infinite}
Giorgio Parisi.
\newblock Infinite number of order parameters for spin-glasses.
\newblock {\em Physical Review Letters}, 43(23):1754, 1979.

\bibitem{parisi1995mean}
Giorgio Parisi and Marc Potters.
\newblock Mean-field equations for spin models with orthogonal interaction matrices.
\newblock {\em Journal of Physics A: Mathematical and General}, 28(18):5267--5285, 1995.

\bibitem{rangan2019vector}
Sundeep Rangan, Philip Schniter, and Alyson~K Fletcher.
\newblock Vector approximate message passing.
\newblock {\em IEEE Transactions on Information Theory}, 65(10):6664--6684, 2019.

\bibitem{rockner2010log}
Michael R{\"o}ckner and Feng-Yu Wang.
\newblock Log-{H}arnack inequality for stochastic differential equations in {H}ilbert spaces and its consequences.
\newblock {\em Infinite Dimensional Analysis, Quantum Probability and Related Topics}, 13(01):27--37, 2010.

\bibitem{sherrington1975solvable}
David Sherrington and Scott Kirkpatrick.
\newblock Solvable model of a spin-glass.
\newblock {\em Physical review letters}, 35(26):1792, 1975.

\bibitem{sompolinsky1982relaxational}
Haim Sompolinsky and Annette Zippelius.
\newblock Relaxational dynamics of the {E}dwards-{A}nderson model and the mean-field theory of spin-glasses.
\newblock {\em Physical Review B}, 25(11):6860, 1982.

\bibitem{takeuchi2019rigorous}
Keigo Takeuchi.
\newblock Rigorous dynamics of expectation-propagation-based signal recovery from unitarily invariant measurements.
\newblock {\em IEEE Transactions on Information Theory}, 66(1):368--386, 2019.

\bibitem{talagrand2006parisi}
Michel Talagrand.
\newblock The parisi formula.
\newblock {\em Annals of mathematics}, pages 221--263, 2006.

\bibitem{talagrand2010mean}
Michel Talagrand.
\newblock {\em Mean field models for spin glasses: {V}olume {I}: {B}asic examples}, volume~54.
\newblock Springer Science \& Business Media, 2010.

\bibitem{talagrand2011mean}
Michel Talagrand.
\newblock {\em Mean field models for spin glasses: {V}olume {II}: {A}dvanced replica-symmetry and low temperature}, volume~55.
\newblock Springer Science \& Business Media, 2011.

\bibitem{vershynin2018high}
Roman Vershynin.
\newblock {\em High-dimensional probability: An introduction with applications in data science}, volume~47.
\newblock Cambridge university press, 2018.

\bibitem{wang2024universality}
Tianhao Wang, Xinyi Zhong, and Zhou Fan.
\newblock Universality of approximate message passing algorithms and tensor networks.
\newblock {\em The Annals of Applied Probability}, 34(4):3943--3994, 2024.

\bibitem{wang2017stochastic}
Yanqing Wang.
\newblock Stochastic volterra integral equations with a parameter.
\newblock {\em Advances in Difference Equations}, 2017(1):333, 2017.

\end{thebibliography}

\end{document}